\title{Disconnection probability of Brownian motion on an annulus}
\author{Gefei Cai\thanks{Peking University.}\qquad Xuesong Fu\thanks{Xiangtan University.}\qquad Xin Sun$^*$\qquad Zhuoyan Xie$^*$}
\date{}
\newcommand{\R}{\mathbbm{R}}
\newcommand{\A}{\mathbbm{A}}
\newcommand{\D}{\mathbbm{D}}
\newcommand{\C}{\mathbbm{C}}
\newcommand{\cS}{\mathcal{S}}
\newcommand{\tcL}{\tilde{\mathcal{L}}}
\newcommand{\cD}{\mathcal{D}}
\newcommand{\E}{\mathbbm{E}}
\renewcommand{\P}{\mathbbm{P}}
\renewcommand{\S}{\mathbbm{S}}
\newcommand{\Z}{\mathbbm{Z}}
\newcommand{\hH}{\mathbbm{H}}
\newcommand{\QD}{\mathrm{QD}}
\newcommand{\Leb}{\mathrm{Leb}}
\newcommand{\QS}{\mathrm{QS}}
\newcommand{\BA}{\mathrm{BA}}
\newcommand{\BS}{\mathrm{BS}}
\newcommand{\BE}{\mathrm{BE}}
\newcommand{\QT}{\mathrm{QT}}
\newcommand{\QA}{\mathrm{QA}}
\newcommand{\LF}{\mathrm{LF}}
\newcommand{\CR}{\mathrm{CR}}
\newcommand{\tBA}{\mathrm{mBA}}
\newcommand{\RA}{\mathcal{BA}}
\newcommand{\sep}{\mathrm{sep}}
\DeclareMathOperator{\SLE}{SLE}
\DeclareMathOperator{\Mod}{Mod}
\DeclareMathOperator{\raSLE}{raSLE}
\def\cM{\mathcal{M}}
\newcommand{\disk}{\mathrm{disk}}
\newcommand{\lp}{\mathrm{loop}}
\newcommand{\sm}{\mathsf{m}}
\newcommand{\sn}{\mathsf{n}}
\newcommand{\st}{\mathbf{t}}
\newcommand{\fl}{\mathrm{f.l.}}
\newcommand{\fd}{\mathrm{h.d.}}
\newcommand{\fdd}{\mathrm{f.d.}}
\newcommand{\Weld}{\operatorname{Weld}}
\newcommand{\wt}{\widetilde}
\numberwithin{equation}{section}
\newtheorem{theorem}{Theorem}[section]
\newtheorem{definition}[theorem]{Definition}
\newtheorem{lemma}[theorem]{Lemma}
\newtheorem{corollary}[theorem]{Corollary}
\newtheorem{proposition}[theorem]{Proposition}
\newtheorem{remark}[theorem]{Remark}
\newcommand\wh[1]{\widehat{#1}}
\newcommand\ol[1]{\overline{#1}}
\begin{document}

\maketitle
\date{}

\begin{abstract}
We derive an exact formula for the probability that a Brownian path on an annulus does not disconnect the two boundary components of the annulus. The leading asymptotic behavior of this probability is governed by the disconnection exponent  obtained by Lawler-Schramm-Werner (2001) using the connection to Schramm-Loewner evolution (SLE). The derivation of our formula is based on this connection and the coupling with Liouville quantum gravity (LQG). As byproducts of our proof, we obtain a precise relation between Brownian motion on a disk stopped upon hitting the boundary and the SLE$_{8/3}$ loop measure on the disk; we also obtain a detailed description of the LQG surfaces cut by the outer boundary of stopped Brownian motion on a $\sqrt{8/3}$-LQG disk.  
\end{abstract}

\section{Introduction}

Two-dimensional (2D) Brownian motion is an extensively studied planar stochastic process. It enjoys conformal invariance and is deeply connected to the Schramm-Loewner evolution (SLE).
This  leads to the determination of non-intersection exponents and the proof of Mandelbrot's conjecture~\cite{LSW01a,LSW01b,LSW2002b,LSW02b}.
One of the most basic exponents for 2D Brownian motion is the disconnection exponent. Let $(B_t)_{t\ge0}$ be a standard 2D Brownian motion starting from the origin. Fix $\tau>0$, and consider two hitting times $T_0:=\inf\{t>0: |B_t| =1 \}$ and $T_\tau:=\inf\{t>0: |B_t| =e^{-2\pi \tau}\}$. Let $E_\tau$ be the event that the trajectory $B[T_\tau,T_0]$ does not disconnect the circles $\S^1=\{z:|z|= 1\}$ and $e^{-2\pi \tau}\S^1$. 
\begin{theorem}[Lawler-Schramm-Werner~\cite{LSW01b}]\label{thm:lsw}
    $\lim_{\tau\to \infty} -\frac{\log \P[E_\tau]}{2\pi\tau} =\frac{1}{4}$. 
\end{theorem}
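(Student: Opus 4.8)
The plan is to derive an exact closed-form expression for $\P[E_\tau]$, valid for all $\tau>0$, and then read off its leading behavior as $\tau\to\infty$; the value $\frac14$ will emerge as the smallest conformal weight appearing in that formula.

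First I would recast the event conformally. By rotational symmetry and the strong Markov property at $T_\tau$, the arc $B[T_\tau,T_0]$ has the law of a planar Brownian motion started from a uniformly chosen point of $e^{-2\pi\tau}\S^1$ and stopped when it first hits $\S^1$; the round annulus $\{e^{-2\pi\tau}<|z|<1\}$ has modulus $\tau$, and $E_\tau$ depends only on the conformal type of the configuration (trajectory together with the two boundary circles). The bare existence of the limit $\lim_{\tau\to\infty}-\log\P[E_\tau]/(2\pi\tau)$ can be obtained by a standard sub-multiplicativity argument, using the strong Markov property at an intermediate circle and comparing hitting distributions up to bounded factors; the real content is to identify the constant.

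Next I would pass from Brownian motion to $\SLE_{8/3}$. The outer boundary of the appropriately filled trajectory $B[T_\tau,T_0]$ is, up to an explicit multiplicative constant, an $\SLE_{8/3}$-type curve, and $E_\tau$ translates into a crossing-type event for this curve in the annulus (rather than the curve surrounding the inner circle). So I would first prove a precise identity between Brownian motion on a disk stopped on its boundary and the $\SLE_{8/3}$ loop measure --- in the spirit of the known relation between the Brownian loop/bubble measure and Werner's self-avoiding loop measure --- which pins down not only the law of the frontier but, crucially, the conformal moduli and boundary lengths of the regions it cuts out. Then, via the coupling of the $\SLE_{8/3}$ loop with $\sqrt{8/3}$-Liouville quantum gravity, I would decorate a quantum annulus of modulus $\tau$ by this frontier; it cuts the surface into independent quantum surfaces --- quantum disks together with a quantum annulus --- with explicitly known area and boundary-length laws. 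Integrating out the quantum area and lengths and matching the resulting identity against the known Liouville disk and annulus partition functions then produces an exact formula for $\P[E_\tau]$ as a ratio of partition functions, with $\tau$ entering only through the annulus modulus.

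Finally I would extract the asymptotics. Expanding the relevant annulus partition function as a series in $q=e^{-2\pi\tau}$ (a standard Hamiltonian expansion over conformal weights), the leading term is a constant times $q^{\Delta}$, where $\Delta$ is the smallest conformal weight in the pertinent channel; hence $-\log\P[E_\tau]/(2\pi\tau)\to\Delta$, and one checks $\Delta=\frac14$, recovering the one-sided Brownian disconnection exponent of Lawler-Schramm-Werner. I expect the main obstacle to be the combination of the second and third steps: establishing the Brownian-to-$\SLE_{8/3}$-loop dictionary \emph{with enough control on the moduli and boundary data} of the cut-out regions, and then carrying out the conformal-welding bookkeeping on a doubly connected surface --- where the extra modulus parameter must be tracked throughout --- to close the partition-function identity. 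By comparison, the final asymptotic step is routine once the exact formula is in hand.
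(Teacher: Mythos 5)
Two points, one substantive and one contextual.

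\textbf{The substantive gap.} You claim to derive an exact closed-form expression for $\P[E_\tau]$ and read off the exponent. But the event $E_\tau$ concerns $B[T_\tau,T_0]$, where $T_\tau$ is the \emph{first} hitting time of $e^{-2\pi\tau}\S^1$; after time $T_\tau$ the trajectory is free to wander back inside the inner disk before eventually hitting $\S^1$. That path is \emph{not} a Brownian excursion on the annulus, so it does not fit the conformal-welding picture of a curve decorating a quantum annulus of modulus $\tau$ (the ``configuration'' is only invariant under conformal maps of the whole disk, not of the annulus). The exact formula that the SLE/LQG machinery actually produces is for the different event $G_\tau$, defined via the \emph{last} hitting time $\sigma_\tau$, whose trajectory $B[\sigma_\tau,T_0]$ is a genuine Brownian excursion on the annulus. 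Since $\sigma_\tau\ge T_\tau$ one has $E_\tau\subset G_\tau$ and hence $\P[E_\tau]\le\P[G_\tau]$, which gives the upper bound on the exponent for free, but a matching lower bound on $\P[E_\tau]$ requires a separate comparison argument (e.g.\ restrict to trajectories that stay inside a slightly larger annulus before the last visit to the inner circle, and relate that to $\P[G_{\tau'}]$ for a nearby $\tau'$). You gesture at ``comparing hitting distributions up to bounded factors'' but do not spell out this step, and as written the plan incorrectly asserts the exact formula is for $\P[E_\tau]$ itself.

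\textbf{The contextual point.} The paper does not prove Theorem~\ref{thm:lsw}: it is quoted from Lawler--Schramm--Werner, and the paper's actual contribution (Theorem~\ref{thm:main}) is the exact formula for $\P[G_\tau]$, whose asymptotics are then observed to be \emph{consistent} with the LSW exponent. Your plan (once the $E_\tau$-versus-$G_\tau$ issue is repaired) is therefore not a reconstruction of the paper's proof of this theorem --- there is none --- but rather an outline of the paper's Theorem~\ref{thm:main} together with a new, exact-formula-based derivation of the LSW disconnection exponent. That is a legitimate route, and the SLE/LQG steps you describe (Brownian-to-$\SLE_{8/3}$-loop dictionary with control of moduli, conformal welding on a quantum annulus, expansion of the annulus partition function) do correspond to what the paper carries out; but the plan should clearly separate ``exact formula for the excursion event'' from ``comparison to $E_\tau$'' rather than conflating them.
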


Since Theorem~\ref{thm:lsw} only concerns the exponent, the precise definition of the disconnection event can vary. The main result of this paper is that for a natural variant we can exactly compute the disconnection probability. 
\begin{theorem}\label{thm:main}
 Let $\sigma_\tau=\sup\{0<t<T_0: |B_t| =e^{-2\pi \tau}\}$, and let $G_\tau$ be the event that the trajectory $B[\sigma_\tau,T_0]$  does not disconnect the circles $e^{-2\pi \tau}\S^1$ and $\S^1$. Then
\begin{equation}\label{eq:exact}
\P[G_\tau]=\frac{8\tau}{\sqrt{3}\eta(2i\tau)}\sum_{n=1}^\infty (-1)^{n-1}n\sin\Big(\frac{2\pi}{3}n\Big)\exp\Big(-\frac{2\pi}{3}n^2\tau\Big).
\end{equation}
Here $\eta(2i\tau):=e^{-\frac{\pi}{6}\tau}\prod_{n=1}^{\infty}\left(1-e^{-4\pi n\tau}\right)$ is the Dedekind eta function.
\end{theorem}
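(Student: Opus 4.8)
The plan is to evaluate $\P[G_\tau]$ by passing to the $\SLE_{8/3}$ loop measure and then to $\sqrt{8/3}$-Liouville quantum gravity, where annulus amplitudes are explicitly known. \textbf{Step 1 (reduction to an $\SLE_{8/3}$ loop observable on the annulus).} A last-exit decomposition at the circle $e^{-2\pi\tau}\S^1$ identifies $B[\sigma_\tau,T_0]$, up to the rotational symmetry that makes $B_{\sigma_\tau}$ uniform on that circle, with a Brownian excursion in the annulus $\{e^{-2\pi\tau}<|z|<1\}$ from its inner boundary circle to its outer boundary circle. The event $G_\tau$ is exactly the event that the filled trajectory fails to separate the two boundary circles, and on $G_\tau$ its complement in the annulus is a simply connected domain bounded in part by the outer frontier of $B[\sigma_\tau,T_0]$. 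I would then invoke the first byproduct of the paper — the exact correspondence between the outer boundary of stopped Brownian motion on a disk and the $\SLE_{8/3}$ loop measure — to express $\P[G_\tau]$ as an explicit constant times the mass assigned by the $\SLE_{8/3}$ loop measure on the annulus to loops of the ``non-separating'' homotopy class (those whose complement still contains a crossing of the annulus), divided by a boundary Poisson-kernel normalization associated with the two marked points on the circles. Conformal invariance of the $\SLE_{8/3}$ loop measure moves the computation to the flat cylinder of modulus $\tau$.

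\textbf{Step 2 (LQG coupling and conformal welding).} Next I would decorate an independent $\sqrt{8/3}$-quantum disk with this $\SLE_{8/3}$ loop and apply the second byproduct: cutting the quantum disk along the outer frontier of the (LQG-)Brownian motion splits it into independent quantum surfaces whose laws are identified explicitly — a quantum disk on the inside and an annulus-type (or degenerate-triangle-type) quantum surface on the outside. Integrating over the quantum areas and boundary lengths of the two pieces against the corresponding Liouville disk and annulus amplitudes converts the loop-measure mass of Step 1 into a closed expression, schematically $\P[G_\tau]=C\,Z_{\mathrm{disk}}/Z_{\mathrm{annulus}}(\tau)$ times explicit bubble/quantum-triangle amplitudes, where $Z_{\mathrm{annulus}}(\tau)$ is the $\sqrt{8/3}$-LQG annulus partition function.

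\textbf{Step 3 (exact evaluation and sanity check).} Inserting the known closed forms for the $\sqrt{8/3}$-LQG annulus amplitude — which contributes the factor $1/\eta(2i\tau)$ — and for the disk and $\SLE_{8/3}$ amplitudes, and using the elementary identity $(-1)^{n-1}\sin(\tfrac{2\pi}{3}n)=\sin(\tfrac{\pi}{3}n)$, I would recognize the numerator as the theta-type series $\sum_{n\ge1}n\sin(\tfrac{\pi}{3}n)e^{-\frac{2\pi}{3}n^2\tau}$ and arrive at \eqref{eq:exact}. As a consistency check, the $n=1$ term together with $\eta(2i\tau)\sim e^{-\pi\tau/6}$ gives $\P[G_\tau]\asymp \tau\, e^{-\pi\tau/2}$, matching the exponent $\tfrac14$ of Theorem~\ref{thm:lsw}.

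\textbf{Main obstacle.} The difficulty is concentrated in Steps 1 and 2: making the passage ``outer boundary of stopped Brownian motion $\leftrightarrow$ $\SLE_{8/3}$ loop'' quantitatively exact with every normalizing constant, correctly handling the last-exit conditioning and — crucially — the homotopy class of loops in the annulus, so that precisely the known annulus $\SLE_{8/3}$/Liouville observable appears; and pinning down the laws of the cut-out LQG surfaces sharply enough that the area and boundary-length integrations reproduce exactly the known annulus partition function rather than an uncontrolled one. Granted these inputs, Step 3 is elementary manipulation of $\eta$ and theta functions.
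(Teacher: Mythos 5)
Your high-level picture — couple the excursion to $\sqrt{8/3}$-LQG and read off the formula from the $1/\eta(2i\tau)$ annulus amplitude — matches the philosophy the paper sketches in Section~\ref{subsection-proof-strategy}, and Steps~2 and~3 are sound as a summary of how Theorem~\ref{thm:sm} is proved. But your Step~1, which is where the theorem actually gets proved, has a real gap, and you have bundled two distinct arguments (the proof of Theorem~\ref{thm:sm} and the derivation of Theorem~\ref{thm:main} from it) into a single LQG computation.

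The concrete problem is in the phrase ``express $\P[G_\tau]$ as an explicit constant times the mass assigned by the $\SLE_{8/3}$ loop measure on the annulus to loops of the non-separating homotopy class, divided by a boundary Poisson-kernel normalization.'' That quantity is not well-defined: the $\SLE_{8/3}$ loop measure gives infinite mass to the contractible (``non-separating'') loops in any annulus, because arbitrarily small loops dominate. More fundamentally, $G_\tau$ is an event about the \emph{excursion path}, not about a loop, and there is no a priori identity turning $\P[G_\tau]$ into a loop-measure mass. The paper's bridge is quite different: it introduces the loop-valued measures $\sm_D$ (boundary of the $0$-component of $D\setminus\ell_D$) and, separately, $\wt\sm_D$ (the counting measure on outer boundaries at cut times, Definition~\ref{def:counting}), and proves the two key Lemmas~\ref{lem:derivative} and~\ref{lem: sm-restriction} via the Brownian-beads / cut-time filtration machinery of~\cite{Brownian-beads}. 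These give Proposition~\ref{prop:restriction}, which says $\tau\,P(\tau)^{-1}\sm_D$ is a conformally restricted family of loop measures; Werner's uniqueness (Proposition~\ref{prop:werner}) then identifies it with $\SLE_{8/3,D}^{\lp}$ up to a constant, and plugging in Theorem~\ref{thm:sm} yields $P(\tau)=C\,\tau\,f(\tau)/\eta(2i\tau)$. This final step uses \emph{no} LQG at all — LQG enters only inside Theorem~\ref{thm:sm}. The constant is fixed by $P(0^+)=1$, via the modular ($\tau\to0$) asymptotics of $\eta$ and $f$, not by normalizing disk/annulus amplitudes.

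So the missing idea is the conformal-restriction characterization: without Lemmas~\ref{lem:derivative}--\ref{lem: sm-restriction} (or some equivalent cut-time analysis) you cannot convert $\P[G_\tau]$ into a statement about the $\SLE_{8/3}$ loop measure, and the LQG amplitudes in your Step~2 never get attached to the disconnection probability. The ``matter-coupled Brownian annulus as a ratio of partition functions'' route you seem to be reaching for is precisely what the paper declines to pursue directly, presenting it instead as an after-the-fact identification (Theorems~\ref{prop-bdy-BM} and~\ref{thm-equivalence-two-annuli} in Section~\ref{section-weld-matter-BA}) once the main theorem is in hand. If you wish to follow that route, you would still need an analogue of Lemmas~\ref{lem:derivative}--\ref{lem: sm-restriction} to make the last-exit/cut-time conditioning exact, and you would need the annulus-level analogue of~\cite{ARS2022moduli} to extract the modulus law of $\tBA$ — neither of which is elementary.
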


Note that as $\tau\to\infty$, ~\eqref{eq:exact} gives $\P[G_\tau]= 4\tau e^{-\frac{\pi}{2}\tau} + 8\tau e^{-\frac{5\pi}{2}\tau} + O(\tau e^{-\frac{9\pi}{2}\tau})$. In particular, we have $\lim_{\tau\to\infty}-\frac{\log \P[G_\tau]}{2\pi\tau}=\frac{1}{4}$, consistent with
Theorem~\ref{thm:lsw}. It would also be interesting to find a probabilistic explanation to the sub-leading terms of $\P[G_\tau]$.

Before making actual calculations, our belief that $\P[G_\tau]$ is exactly solvable comes from two-dimensional (2D) statistical physics.  
We can view the 2D simple random walk as a lattice model at criticality with a conformal invariant scaling limit (i.e. the 2D Brownian motion). 
Then the probability $\P[G_\tau]$ can be expressed as the limit of the ratio between two partition functions. Namely, for $\delta>0$, let $\delta\Z^2$ be the square lattice with mesh size $\delta$, and 
let $\A_{\tau,\delta}$ be the sub-lattice of $\delta \Z^2$ that are contained in the annulus $\A_\tau:=\{z:e^{-2\pi\tau}<|z|<1\}$. 
The boundary of $\A_{\tau,\delta}$ is defined as $\partial\A_{\tau,\delta}:=\{z\in\delta \Z^2\backslash\A_{\tau,\delta}:\mathrm{dist}(z,\A_{\tau,\delta})=\delta\}$.
Denote $\Omega^\delta$ to be the collection of random walk paths in  $\A_{\tau,\delta}$ with one endpoint at each boundary of $\A_{\tau,\delta}$. Then define  
\begin{equation*}
    Z_\delta=\sum_{\omega^\delta\in \Omega^\delta}\left(\frac{1}{4}\right)^{|\omega^\delta|}\quad \text{and}\quad    Z_{\delta,\mathrm{disconnect}}= \sum_{\omega^\delta\in \Omega^\delta}\left(\frac{1}{4}\right)^{|\omega^\delta|} \mathbf{1}_{\omega^\delta \textrm{ does not disconnects the two boundaries of }  \A_{\tau,\delta}},
\end{equation*}
where $|\mathcal \omega^\delta|$ is the number of steps in $\omega^\delta$.
Now we can express $\P[G_\tau]$ as (see Appendix~\ref{appendix_A} for details)
\begin{equation}\label{eq-scaling}
\P[G_\tau]=\lim_{\delta\to 0}   \frac{Z_{\delta,\mathrm{disconnect}}}{Z_{\delta}}. 
\end{equation}    
The ratio ${Z_{\delta,\mathrm{disconnect}}}\big/{Z_{\delta}}$ is reminiscent of the  probability that there exists an crossing open path  for a critical Bernoulli percolation on an annulus. For this crossing probability,  Cardy~\cite{Cardy06} predicted a beautiful formula, which was recently proved in~\cite{SXZ24} based on SLE and Liouville quantum gravity (LQG). This is our inspiration for the exact solvability in Theorem~\ref{thm:main}. Indeed, our proof of Theorem~\ref{thm:main} is again based on LQG and the connection between 2D Brownian motion and SLE$_6$. \\

\subsection{Proof strategy based on SLE and Liouville quantum gravity}
\label{subsection-proof-strategy}
We first review the philosophy behind our proof of  Theorem \ref{thm:main} based on quantum gravity on annulus.
The uniformly sampled random planar maps are the canonical discretizations of random surfaces in 2D pure quantum gravity. The precise scaling limit results as metric-measure spaces
were first constructed for the sphere case \cite{LeGall13,Mie13}, then for the disk case \cite{BM17}, annulus case \cite{LeGall-Annulus} and other topologies \cite{BM22}. These limiting objects are called the \textit{Brownian surfaces} in general. As shown in \cite{lqg-tbm1,lqg-tbm2,lqg-tbm3}, once we conformally embed the Brownian sphere or the Brownian disk onto the complex plane, the random geometry is given by LQG with $\gamma=\sqrt{8/3}$. Moreover, it was established in \cite{AHS17,cercle2021unit,AHS21} that the random fields inducing the random geometry are governed by the Liouville conformal field theory (CFT) on the sphere or the disk. 
According to the matter-Liouville-ghost decomposition of pure gravity, it is conjectured in \cite{Rem20} that when conditioning on the modulus, the random geometry of Brownian annulus under conformal embedding is given by the Liouville CFT on annulus with central charge 26, and the law of modulus has an explicit expression.
This is proved in \cite{ARS2022moduli} by combining the integrability of Liouville CFT on annulus \cite{Wu22} and the enumeration results for random planar maps.

The 2D Brownian motion can be interpreted as a conformal matter with central charge 0, see e.g. \cite[Remark A.3]{AG23a}. Consider now an independent Brownian excursion path on the Brownian annulus, restricted on the event that the path does not disconnect the two boundaries. 
The random geometry of the resulting matter-coupled Brownian annulus is again governed by the Liouville CFT with central charge 26, except that the law of modulus is changed. Actually, the Radon-Nikodym derivative between these two laws on moduli is the total mass of the Brownian excursion on annulus that does not disconnect two boundaries, which only depends on the modulus by conformal invariance. Consequently, 
the non-disconnection probability is obtained by dividing this Radon-Nikodym derivative by the partition function of the Brownian excursion on annulus.
This ratio formula is the continuum analog of~\eqref{eq-scaling}.

The only remaining unknown is the modulus of matter-coupled Brownian annulus, with the matter being a Brownian excursion restricted to the event that the path does not disconnect two boundaries.
In this case, as shown in Theorem~\ref{thm-equivalence-two-annuli} and Corollary~\ref{thm-weld-matter-BA},
the unexplored region is a Brownian disk with four boundary marked points, and the explored region turns out to be the concatenation of a Poissonian chain of Brownian disks.
Given Theorem~\ref{thm-equivalence-two-annuli} and Corollary~\ref{thm-weld-matter-BA}, we can in principle extract the law of the modulus of the matter-coupled Brownian annulus using the technique developed in \cite{ARS2022moduli}. However, in order to employ the connection between Brownian motion and SLE$_6$, when proving Theorem~\ref{thm:main}, it is more convenient to work with a Brownian motion on a Brownian disk starting from an interior marked point and ending when reaching the boundary. (We will prove Theorem~\ref{thm-equivalence-two-annuli} for its own interest after proving Theorem~\ref{thm:main}.)

Our actual proof of Theorem~\ref{thm:main} consists of two key steps.
The first step is to show that the outer boundary of the stopped Brownian motion arises as the interface under the conformal welding of a Brownian disk with four boundary marked points, the concatenation of a Poissonian chain of Brownian disks, and a smaller Brownian disk containing the starting point. This step is achieved in Section~\ref{section: BM-welding}, based on the $\SLE_6$ description of the outer boundary of the Brownian motion established in \cite{lawler2003conformal} and the conformal welding result from \cite{ASYZ24} for radial $\SLE_6$ on quantum disk. See Figure~\ref{fig:weld-BM1} for illustration.
The second step is to identify the complement of the new Brownian disk  with the matter-coupled Brownian annulus, which is done in Sections~\ref{section:proof of thm1.2} and~\ref{section: conformal restriction}. As an intermediate step to achieve this identification, we relate the outer boundary of the smaller Brownian disk with Werner's SLE$_{8/3}$ loop measure~\cite{werner-sle-loop}. This intermediate result can be formulated purely as a statement relating the stopped Brownian motion and the SLE$_{8/3}$  loop measure. We present it here for its independent interest.

Let $\D:=\{z\in\C:|z|<1\}$ be the unit disk.
For a Brownian motion starting from 0 until hitting the disk boundary $\S^1$, let $\ell$ be its outer boundary, and define $\sm$ to be the law of the boundary of the connected component of $\D\backslash\ell$ containing 0. 
Recall that the $\SLE_{8/3}$ loop measure is a canonical infinite measure  on simple loops characterized by the conformal restriction property~\cite{werner-sle-loop}, which is unique modulo a multiplicative constant. For each simply connected domain $D$ containing 0, denote $\SLE_{8/3,D}^{\lp}$ to be the restriction of the $\SLE_{8/3}$ loop measure to the loops contained in $D$ and surrounding~0. 

\begin{theorem}\label{thm:sm}\label{prop: key lemma}
The measures $\sm$ and $\SLE_{8/3,\D}^\lp$ are mutually absolutely continuous. Furthermore, their Radon-Nikodym derivative is given by
    \begin{equation}
    \label{eq-thm-sm}
        \frac{d\sm}{d\SLE_{8/3,\D}^{\lp}}(\eta)=C\frac{f(\tau)}{\eta(2i\tau)}.
    \end{equation}
Here $C>0$ is a constant,
$\eta(2i\tau)$ is the Dedekind eta function as in Theorem~\ref{thm:main}, and $f$ is given by
\begin{equation}\label{eq:f}
f(\tau):=\sum_{n\ge1}(-1)^{n-1}n\sin\left(\frac{2\pi}{3} n\right)\exp\left(-\frac{2\pi}{3}n^2\tau\right),
\end{equation}
\end{theorem}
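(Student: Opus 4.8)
The plan is to route the whole argument through Liouville quantum gravity: realize $\eta$ as a conformal-welding interface, compare with the analogous realization of Werner's $\SLE_{8/3}$ loop, and thereby reduce the computation of $d\sm/d\SLE_{8/3,\D}^{\lp}$ to a ratio of annulus partition functions depending on the loop only through its conformal modulus. Throughout write $\eta$ for the random (a.s.\ simple) loop around $0$ sampled from $\sm$, and let $\tau=\tau(\eta)\in(0,\infty)$ be the modulus of the doubly connected region between $\eta$ and $\S^1$. First I would combine the $\SLE_6$ description of the outer boundary of Brownian motion from~\cite{lawler2003conformal} --- so that $\eta$ has the law of the inner boundary of the hull of a radial $\SLE_6$ targeted at $0$, at the first time this hull disconnects $0$ from $\S^1$ --- with the radial-$\SLE_6$ conformal welding of~\cite{ASYZ24}. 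This yields the welding statement of Section~\ref{section: BM-welding}: embedding a $\sqrt{8/3}$-quantum disk with boundary $\S^1$ and a quantum-typical interior point at $0$, the curve $\eta$ is the interface separating a quantum disk inside $\eta$ containing $0$ from an outside surface between $\eta$ and $\S^1$, the latter being the conformal welding of a Poissonian chain of Brownian disks --- the bubbles cut off by the $\SLE_6$ --- with a Brownian disk with four marked boundary points coming from the unexplored region; moreover, given $\eta$ and its quantum boundary length $L$, the inside disk is an independent quantum disk of boundary length $L$.

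Next I would set up the matching picture for Werner's loop: that the suitably reweighted $\SLE_{8/3}$ loop around $0$ on the same $\sqrt{8/3}$-quantum disk embedded on $\D$ likewise separates an independent quantum disk of boundary length $L$ inside from an annular quantum surface outside --- a fact that can be derived from the $\SLE_6$ welding by passing to the frontier, and cross-checked against Werner's conformal restriction characterization~\cite{werner-sle-loop}. Since the inside surface has the same conditional law given $(\eta,L)$ in both pictures and the ambient $\sqrt{8/3}$-quantum disk is taken with the same law, marginalizing all quantum data on both sides of $\eta$ leaves a Radon--Nikodym derivative between $\sm$ and $\SLE_{8/3,\D}^{\lp}$ that factors through $(L,\tau)$; integrating out $L$ it depends on $\eta$ only through $\tau$, which already gives $\sm\ll\SLE_{8/3,\D}^{\lp}$. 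Concretely this derivative equals, up to a constant absorbing the normalization of Werner's loop measure and the welding constants, the total mass of a Brownian excursion on the annulus of modulus $\tau$ conditioned not to disconnect the two boundary circles, divided by the partition function of the $\sqrt{8/3}$-quantum annulus of modulus $\tau$ --- the continuum analogue of the ratio in~\eqref{eq-scaling}.

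The remaining, and hardest, task is to evaluate this $\tau$-dependent ratio explicitly. The denominator is handled via the integrability of Liouville CFT on the annulus~\cite{Wu22} together with the random-planar-map enumeration input entering~\cite{ARS2022moduli}: the ``free'' part of the central-charge-$26$ (matter central charge $0$) annulus partition function is governed by a bosonic/GFF determinant and contributes the Dedekind eta factor $\eta(2i\tau)$. For the numerator one again uses the $\SLE_6/\SLE_{8/3}$ description of the outer boundary of the Brownian excursion together with annulus-$\SLE$ technology, paralleling the critical-percolation annulus-crossing computation of~\cite{SXZ24} and the annulus moduli computation of~\cite{ARS2022moduli}; this is where the theta-type series $f$ of~\eqref{eq:f} appears, the exponent $\tfrac{2\pi}{3}n^{2}\tau$ and the phase $\sin(\tfrac{2\pi}{3}n)$ encoding the $\gamma^{2}=8/3$, $\kappa=6$ data. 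Matching the two pieces yields~\eqref{eq-thm-sm} with the positive constant $C$ left undetermined. I expect the main obstacle to be precisely this explicit evaluation --- in particular, propagating the four marked boundary points and the Poissonian chain of Brownian disks through the annulus Liouville computation so as to isolate exactly $f(\tau)/\eta(2i\tau)$ --- together with making the two welding identifications (weights, marked points, the role of the chain) precise enough to justify the reduction.
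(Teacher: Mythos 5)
Your high-level route is essentially the paper's: realize $\eta$ under $\sm$ and under $\SLE_{8/3,\D}^{\lp}$ as conformal-welding interfaces on a $\sqrt{8/3}$-quantum disk with a bulk marked point (Theorem~\ref{thm:weld-BM1} and Proposition~\ref{thm:weld-BA}), observe that the inside surface has the same conditional law given the boundary length in both pictures, and thereby reduce the Radon--Nikodym derivative to a ratio of modulus densities of the two annular complements, as in~\cite{ARS2022moduli}. This is exactly what Section~\ref{section:proof of thm1.2} does, and you have correctly placed the ingredients from~\cite{lawler2003conformal},~\cite{ASYZ24},~\cite{werner-sle-loop} and~\cite{Wu22}.

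The gap is that you leave the crux --- producing the explicit series $f(\tau)$ as the modulus density of the annular surface $\RA$ --- as an acknowledged black box, invoking ``annulus-SLE technology'' and a parallel with the percolation computation. No further SLE computation is in fact needed. The paper's mechanism (Propositions~\ref{prop-LF-identify-RA}--\ref{prop:LF-description}) is: (i) establish the inner/outer boundary symmetry $\RA(b,L)=\RA(L,b)$ by combining the welding of Theorem~\ref{thm:weld-BM1} with the quantum sphere picture (Proposition~\ref{prop-weld-loop}) and the inversion $z\mapsto 1/z$; (ii) together with the boundary-resampling characterization of the annulus Liouville field from~\cite{ARS2022moduli}, this forces $\RA=\LF_\tau(d\phi)\,m(d\tau)$ for some measure $m$; (iii) compute $|\RA(b,L)|=C(b+L)^{-2}$ directly from~\eqref{eq:weldBM1} and the explicit masses of $\QD_{0,4}$ and $\cM^{\disk}_{0,2}(\frac{2}{3})$; (iv) match the Mellin transform $\RA[Le^{-L}b^{ix}]$ against Wu's formula for $\LF_\tau[Le^{-L}b^{ix}]$, getting $\int_0^\infty e^{-\frac{2}{3}\pi\tau x^2}m(d\tau)=C\sinh(\frac{2}{3}\pi x)/\sinh(\pi x)$, whose inverse Laplace transform is the theta series $f(\tau)$. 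Without steps (i)--(iv) the proposal does not actually produce~\eqref{eq-thm-sm}.

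There is also a concrete error in your heuristic identification of the derivative. You write that $d\sm/d\SLE_{8/3,\D}^{\lp}$ equals (up to constant) the total mass of the non-disconnecting Brownian excursion on $\A_\tau$ \emph{divided by} the $\sqrt{8/3}$-quantum-annulus partition function. The former is $P(\tau)/\tau$ and the latter is $\eta(2i\tau)$, so your expression is $P(\tau)/(\tau\,\eta(2i\tau))$; but by~\eqref{eq:proba-c} the correct value $f(\tau)/\eta(2i\tau)$ is proportional to $P(\tau)/\tau$ with \emph{no} extra $\eta(2i\tau)^{-1}$. The ratio ``excursion mass over LQG annulus partition function'' is the continuum analogue of~\eqref{eq-scaling} for $P(\tau)$ itself, not for the loop-measure Radon--Nikodym derivative; conflating the two leaves an extraneous $\eta(2i\tau)$, which is $\tau$-dependent and cannot be absorbed into the constant $C$.
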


The proof of Theorem~\ref{prop: key lemma} relies on LQG techniques, while the derivation of Theorem~\ref{thm:main} from Theorem~\ref{prop: key lemma} does not (it is instead based on the conformal restriction properties of Brownian motion), see Sections~\ref{section:proof of thm1.2} and~\ref{section: conformal restriction} respectively. 

As explained in Section~\ref{section-weld-matter-BA}, our proof of Theorem~\ref{thm:main} yields the following more precise relation between the SLE$_{8/3}$ loop and the Brownian motion on the disk. Let $(B_t)_{0\le t\le \tau_\D}$ be the Brownian motion starting from $0$ until the hitting time $\tau_\D$ of $\S^1$. For $t\in(0,\tau_{\D})$, let $\partial^o B[0,t]$ be the boundary of the unbounded component of $\C\backslash B[0,t]$. Define $t_1$ to be the largest cut time $t\in(0,\tau_\D)$ such that $\partial^o B[0,t_1]$ is a simple loop (see Figure~\ref{fig:counting}).
\begin{theorem}
    \label{prop-bdy-BM}
    For $\eta$ sampled from $\SLE_{8/3,\D}^{\lp}$, denote $A_{\eta}$ to be the annular region between $\eta$ and $\S^1$. Let $\nu_\eta$ be the Brownian excursion measure on $A_\eta$ between its two boundaries (see Section~\ref{section 2.1}), restricted to the paths that do not disconnect $\eta$ from $\S^1$. Then the law of $(\eta, W)$ sampled from $\nu_\eta(dW)\SLE_{8/3,\D}^{\lp}(d\eta)$ is a constant multiple of the law of $(\partial^oB[0,t_1], B[t_1,\tau_\D]) $. 
\end{theorem}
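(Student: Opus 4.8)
The plan is to prove Theorem~\ref{prop-bdy-BM} by combining the $\SLE_{8/3}$ loop description of the outer boundary of stopped Brownian motion with the domain Markov / conformal restriction properties of 2D Brownian motion, in the same spirit as the proof of Theorem~\ref{prop: key lemma} but keeping track of the ``residual'' path after the last simple-loop cut time $t_1$. Recall from \cite{lawler2003conformal} that the outer boundary $\partial^o B[0,\tau_\D]$ of the full stopped Brownian motion has (up to constant) the law of an $\SLE_{8/3}$-type loop, and that $\sm$ (the law of the boundary of the $0$-component of $\D\setminus\ell$) is described by Theorem~\ref{prop: key lemma}. I would first show that $\partial^o B[0,t_1]$ is \emph{exactly} the outer boundary $\ell$ of the whole path: by definition $t_1$ is the largest cut time at which the outer boundary is a simple loop, so $\partial^o B[0,t_1]=\partial^o B[0,\tau_\D]=\ell$ (the outer boundary can only gain, never lose, simple-loop structure after a cut time that already produced one, and no later cut time does so by maximality). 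Consequently $\partial^o B[0,t_1]$ has law $\sm$, and the residual piece $B[t_1,\tau_\D]$ is precisely the excursion of the Brownian path inside the annular region $A_\ell$ between $\ell$ and $\S^1$, run from a point on $\ell$ until it hits $\S^1$, and by construction it does not disconnect $\ell$ from $\S^1$ (otherwise $\ell$ would not be the outer boundary, or $t_1$ would not be maximal).

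Next I would identify the conditional law of $B[t_1,\tau_\D]$ given $\ell$. Here the key input is the strong Markov property of Brownian motion applied at the stopping-time-like decomposition at $t_1$ — more carefully, $t_1$ is not a stopping time, so I would instead decompose the Brownian path as a concatenation according to its sequence of cut times/cut points and use the last-exit decomposition together with the fact that, conditionally on its outer boundary $\partial^o B[0,t]=\ell'$ up to a cut time $t$, the future of the path is an independent Brownian motion started from the cut point, killed on exiting the domain. Passing to the limit over cut times up to $t_1$ and using the restriction property, I would argue that conditionally on $\ell$ the law of $B[t_1,\tau_\D]$ is the Brownian excursion in $A_\ell$ between $\ell$ and $\S^1$ conditioned (restricted) to stay in $A_\ell$ and not disconnect the two boundary circles. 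The normalization — i.e.\ that the joint law of $(\partial^o B[0,t_1],B[t_1,\tau_\D])$ is a \emph{constant} multiple of $\nu_\eta(dW)\,\SLE_{8/3,\D}^\lp(d\eta)$ — then follows by combining Theorem~\ref{prop: key lemma}, which gives $d\sm/d\SLE_{8/3,\D}^\lp(\eta)=C f(\tau)/\eta(2i\tau)$ with $\tau=\tau(\eta)$ the modulus of $A_\eta$, with the fact that the total mass of the non-disconnecting Brownian excursion measure $\nu_\eta$ on an annulus of modulus $\tau$ is, by conformal invariance, a function $h(\tau)$ of the modulus alone. Matching $C f(\tau)/\eta(2i\tau) = (\text{const})\cdot h(\tau)$ is exactly the content that makes the disintegration $\nu_\eta(dW)\SLE_{8/3,\D}^\lp(d\eta)$ reproduce $\sm$ after integrating out $W$; this is where the proof of Theorem~\ref{thm:main} in Sections~\ref{section:proof of thm1.2}--\ref{section: conformal restriction} already establishes, via the conformal restriction computation, that $h(\tau)$ is proportional to $f(\tau)/\eta(2i\tau)$.

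The main obstacle I expect is the rigorous treatment of the decomposition at the non-stopping time $t_1$ and the identification of the conditional law of $B[t_1,\tau_\D]$ as precisely the restricted excursion measure $\nu_\eta$ — in particular justifying that conditioning on the \emph{entire} outer boundary loop $\ell$ (rather than on the path up to some cut time) leaves the residual path distributed as a genuine Brownian excursion in $A_\ell$ between the two boundaries, restricted to the non-disconnecting event, with the right boundary measure for its starting point on $\ell$. This requires care with the last-exit decomposition of Brownian motion from the component $\D\setminus\ell$ containing $0$, and with the fact that $\ell$ is measurable with respect to the whole path but the residual excursion is conditionally independent of the ``interior'' excursions given $\ell$ — a statement most cleanly obtained from the loop-measure/conformal-restriction characterization rather than by direct path decomposition. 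Once this conditional description is in place, the remaining identification of constants is a bookkeeping consequence of Theorem~\ref{prop: key lemma} and the modulus-dependence established en route to Theorem~\ref{thm:main}, so no new computation is needed beyond what Sections~\ref{section:proof of thm1.2} and~\ref{section: conformal restriction} already supply.
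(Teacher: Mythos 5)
Your proposal shares the overall architecture of the paper's proof (identify the law of the simple loop via Theorem~\ref{prop: key lemma}, identify the conditional law of the residual path as a non-disconnecting excursion, then match the modulus-dependent factors), but it contains one factual error and one substantial gap that the paper fills with specific technical machinery that your sketch does not supply.

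The error: you claim $\partial^o B[0,t_1]=\partial^o B[0,\tau_\D]=\ell$. These are not equal. The set $\partial^o B[0,t_1]$ is by definition a simple loop, whereas $\ell=\partial^o B[0,\tau_\D]$ is the outer boundary of a path that reaches $\S^1$ and hence is not a Jordan curve in $\D$. What is actually true, and what the proof of Lemma~\ref{lem:derivative}/Lemma~\ref{lem:estimate} uses implicitly, is that $\partial^o B[0,t_1]$ is the boundary of the connected component of $\D\setminus\ell$ containing $0$, so it has law $\sm$. Your conclusion is right, but the stated premise is false, and as written it conflates the simple loop $\eta$ (the object of the theorem) with the frontier $\ell$ of the whole path. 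Your parenthetical reference to the $\SLE_{8/3}$ description of $\partial^o B[0,\tau_\D]$ is part of the same confusion — Theorem~\ref{thm:equivalence} gives an $\SLE_6$, not $\SLE_{8/3}$, description of $\ell$; the $\SLE_{8/3}$ loop comparison in Theorem~\ref{prop: key lemma} is for $\eta$, not $\ell$.

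The gap: establishing that, given $\eta=\partial^o B[0,t_1]$, the residual path $B[t_1,\tau_\D]$ is distributed as $\nu_\eta^\#$. You acknowledge that $t_1$ is not a stopping time and gesture at a last-exit decomposition and conformal restriction, but the step is left unresolved, and your final paragraph treats it as if no new work beyond Sections~\ref{section:proof of thm1.2}--\ref{section: conformal restriction} is required; the constant-matching $|\nu_\eta|=P(\tau)/\tau\propto f(\tau)/\eta(2i\tau)$ is only a consistency check on the marginals and does not pin down the conditional law. The paper avoids conditioning at the non-stopping time $t_1$ by instead working with the counting measure $\wt\sm$ over all $\{t_n\}$ (Definition~\ref{def:counting}): Lemma~\ref{lem-condtion-wt-sm} uses the first cut time after hitting $\gamma_\varepsilon^i$ with simple-loop outer boundary as a genuine stopping time of Vir\'ag's cut-time filtration, and Lemma~\ref{lem-condition-law-point} (which depends on Corollary~\ref{cor-condition-law-point}, i.e.~on $H(\tau)=C/\tau$ from Section~\ref{section: conformal restriction}) shows the cut point $B_{\bf t}$ is uniformly distributed on the image circle after conformal uniformization, so that the conditional law of the residual path given $\partial^o B[0,\bf t]$ is exactly $\BE(A_{\bf t})^\#$. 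Combined with $d\wt\sm/d\SLE_{8/3,\D}^\lp=C/\tau$ and $|\BE(A)|=1/\tau$, this gives the joint law as $C\,\BE(A_\eta)(dW)\,\SLE_{8/3,\D}^\lp(d\eta)$; the theorem then follows by restriction to the event $\{{\bf t}=t_1\}$, which is precisely the event that $W$ does not disconnect $\eta$ from $\S^1$. Your sketch would need essentially all of this machinery, in particular Lemma~\ref{lem-condition-law-point}, to be made into a proof; without it, the key conditional identification remains unproved.
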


\begin{remark}
 The statements of Theorems~\ref{prop: key lemma} and~\ref{prop-bdy-BM} only involve the Brownian motion and the $\SLE_{8/3}$ loop measure while our proof is heavily based on LQG. We expect that Theorem~\ref{prop-bdy-BM} has a proof based on conformal restriction.   It is interesting to see if there is a more direct proof for Theorems~\ref{prop: key lemma}.
\end{remark}

\subsection{Outlook}
\begin{itemize}
    \item Our Theorem~\ref{thm:main} 
can be extended in two natural directions. Consider an independent coupling of a Brownian excursion on an annulus and a Brownian loop soup with intensity $c\in (0,1]$ restricted to the same annulus.  
In a forthcoming work, we plan to derive the exact non-disconnection probability of the union of a Brownian excursion and the loop-soup cluster (of intensity $c$) it intersects. Our Theorem~\ref{thm:main} can be viewed as the limiting case as $c\to 0$. We plan to 
use tools from this paper together with   the generalized radial restriction measures introduced in~\cite{qian2019} and further studied in~\cite{cai2025}. 
    
       Another natural extension is to compute the non-intersection probabilities of multiple Brownian excursions on an annulus. 
        The leading behavior of these probabilities were given by the Brownian intersection exponents in~\cite{LSW01b}. For the case of percolation on the annulus,  Cardy~\cite{Car02} predicted the exact formulae for the crossing formulae for polychromatic 2N-arm events. In~\cite{SXZ24}, the $N=1$ case and the counterpart for the monochromatic 2-arm event were rigorously derived. Inspired by these results, we expect that the aforementioned non-intersection probabilities for Brownian motion should admit nice exact formulae, and plan to derive them in a subsequent work. 
        
    \item  Various crossing formulae for percolation were derived by Cardy using ideas from conformal field theory (CFT). Recently, there are substantial advances in both mathematics~\cite{acsw-cle,CF24} and physics~\cite{DV10, NRJ24} that connects percolation observables to CFT. The full CFT description underlying planar percolation remains to be understood. We believe that there is a non-trivial CFT that governs a large class of geometric observables for planar Brownian motion, and its loup-soup extensions. (See~\cite{Cam16,Cam22} where a class of observables for the Brownian loop soup are encoded in a CFT.) To support our belief,  note that   
by~\eqref{eq:exact}, the annulus partition function $ \frac{\P[G_\tau]}{\tau}$ for non-disconnecting Brownian motion
admits two expansions that have potential CFT interpretations. Namely, expressing~\eqref{eq:exact} in term of $\tilde{q}=e^{-2\pi\tau}$ gives the so-called closed channel expansion
        $$
            \frac{\mathbb P[G_\tau]}{\tau}=4\cdot \frac{\sum_{k\in\mathbb{Z}}(-1)^{k-1}(3k-2)\tilde{q}^{\frac{1}{3}(3k-2)^2-\frac{1}{12}}}{\prod_{n=1}^\infty (1-\tilde{q}^{2n})}=4\cdot \frac{\sum_{k\in\mathbb{Z}}(-1)^{k-1}(3k-2)\tilde{q}^{2h_{2k-\frac{4}{3},0}}}{\prod_{n=1}^\infty (1-\tilde{q}^{2n})},
        $$
        while using Poisson summation, rewriting~\eqref{eq:exact} in term of $q = e^{-\pi/\tau}$ yields the so-called open channel expansion
        $$
             \frac{\mathbb P[G_\tau]}{\tau}=\frac{6}{\tau}\cdot \frac{\sum_{k\in\mathbb{Z}}(k+\frac{1}{6})q^{\frac{3}{2}(k+\frac{1}{6})^2-\frac{1}{24}}}{\prod_{n=1}^\infty (1-q^{n})}=\frac{6}{\tau}\cdot \frac{\sum_{k\in\mathbb{Z}}(k+\frac{1}{6})q^{h_{1,3k+2}}}{\prod_{n=1}^\infty (1-q^{n})}.
        $$
        Here, $h_{r,s}:=\frac{(3r-2s)^2-1}{24}$ corresponds to the $(r,s)$-type conformal weight in the Kac table. 
        See the discussion above Theorem~1.2 in~\cite{SXZ24} for more background. In the future, we plan to explore the CFT desciption of planar Brownian motion with guidance from planar percolation. In particular, we plan to link the imaginary DOZZ formula with non-intersecting Brownian motions as done in~\cite{acsw-cle} for percolation.
        
\end{itemize}

\medskip
\noindent\textbf{Organization of the paper.} In Section \ref{section-pre}, we provide preliminaries on Brownian motion and LQG surfaces. In Section \ref{section: BM-welding}, we give the conformal welding description for the Brownian motion decorated LQG disks. Based on this, we prove Theorem~\ref{thm:sm} in Section~\ref{section:proof of thm1.2} and  finish the proof of Theorem~\ref{thm:main} in Section \ref{section: conformal restriction}. In Section~\ref{section-weld-matter-BA}, we prove Theorems~\ref{prop-bdy-BM}  and~\ref{thm-equivalence-two-annuli}.

\medskip

\noindent{\bf Acknowledgment.} We thank Xinyi Li for helpful discussions, and Michael Aizenman for historical remarks.
G.C., X.S., and Z.X.\ were partially supported by National Key R\&D Program of China (No.\ 2023YFA1010700). G.C. was partially supported by National Key R\&D Program of China (No. 2021YFA1002700). X.F. was partially supported by CNNSF (No.12171410).

\section{Preliminaries}
\label{section-pre}
We will often use probabilistic terminologies in the context of non-probability measures. For a $\sigma$-finite measure space $(\Omega,\mathcal{F},M)$, let $X$ be an $\mathcal{F}$-measurable function. 
We call the pushforward measure $M_X:=X_*M$ the \textit{law} of $X$.
We say $X$ is \textit{sampled} from $M_X$
and write $M_X[f]=\int f(x)M_X(dx)$. \textit{Weighting} the law of $X$ by some $f(X)$ refers to the measure $\wt{M}_X$ satisfying $\frac{d\wt{M}_X}{dM_X}=f$.
\textit{Conditioning} on an event $E\in\mathcal{F}$ with $0<M[E]<\infty$ corresponds to 
the probability measure $\frac{M[\cdot\cap E]}{M[E]}$ on
$(E,\mathcal{F}_E)$, where $\mathcal{F}_E=\{A\cap E:A\in \mathcal{F}\}$. When $M$ is a finite measure, let $|M|:=M(\Omega)$ be its total mass and $M^\#:=\frac{M}{|M|}$ be the normalized probability measure. For background on the notion of \textit{disintegration} on measures, we refer the reader to \cite[Definition 2.1]{acsw-loop}.

We fix some notations on annulus and modulus. For $\tau>0$, let $\A_\tau:=\{z\in\C:|z|\in(e^{-2\pi\tau},1)\}$. For each domain $A$ with annular topology, there exists a unique $\tau>0$ such that $A$ and $\A_\tau$ are conformally equivalent. We call $\tau$ the \textit{modulus} of $A$, and denote it by $\Mod(A)$.  For a bounded simply connected domain $D$ and a Jordan loop $\eta\subset D$, we also write $\Mod(\eta,D)$ as the modulus of the annular connected component of $D\backslash\eta$.

\subsection{Brownian motion and $\SLE_6$}
\label{section 2.1}

Let $A\subset\C$ be a domain with annular topology, and denote $\partial^i A$, $\partial^o A$ be its inner and outer boundaries, respectively. We first recall the definition of the \emph{Brownian excursion} on $A$ (see e.g.~\cite[Section 5.2]{Law08} for further details). For $z\in\partial^i A$ and $w\in\partial^o A$, let $\mu^\#_A(z,w)$ be the probability measure on Brownian paths in $A$ from $z$ to $w$. When $A$ has smooth boundary, denote $H_A(z,w)$ to be the boundary Poisson kernel on $A$. Then 
\begin{equation*}
    \BE(A):=\int_{\partial^iA}\int_{\partial^oA}H_{A}(z,w)\mu_A^\#(z,w)dzdw
\end{equation*}
is called the Brownian excursion measure on $A$ (between its two boundaries). Note that $|\BE(A)|=\int_{\partial^iA}\int_{\partial^oA}H_{A}(z,w)dzdw=\frac{1}{\tau}$ for $\tau=\Mod(A)$~\cite[(9)]{Law11}. Furthermore, the Brownian excursion measure is conformally invariant. Namely, for a conformal map $f:A\to A'$, the pushforward measure of $\BE(A)$ under $f$ equals $\BE(A')$. This extends the definition of $\BE(A)$ to any annular region $A\subset\C$ via the conformal map $f:A\to\A_{\Mod(A)}$.

The Brownian excursion measure above is closely related to the last hitting decomposition of the Brownian motion. Namely, for a Brownian motion $(B_t)_{0\le t\le \tau_\D}$ starting from $0$ until hitting $\S^1$, let $\sigma_\tau=\sup\{t<\tau_\D:|B_t|=e^{-2\pi\tau}\}$ be the last hitting time of $e^{-2\pi\tau}\S^1$. Then the law of $(B_t)_{\sigma_\tau\le t\le\tau_\D}$ equals $\BE(A_\tau)^\#$. In particular, the probability $\P[G_\tau]$ from Theorem~\ref{thm:main} is equal to the probability that the Brownian excursion sampled from $\BE(\A_\tau)^\#$ does not disconnect the two boundaries of $\A_\tau$. Note that the latter is invariant if we replace $\A_\tau$ by $A$ for any $A$ conformally equivalent to $\A_\tau$. For this reason, we denote $\P[G_\tau]$ to be $P(\tau)$ in the following.

We now recall the following relation between 2D Brownian motion and $\SLE_6$ in \cite{lawler2003conformal}.
\begin{theorem}[{\cite[Theorem 9.1]{lawler2003conformal}}]\label{thm:equivalence}
Suppose $D\subset\C$ is a Jordan domain containing $0$. For a Brownian motion $(B_t)_{t\ge0}$ starting from $0$, let $\tau_D$ be the hitting time of $\partial D$. Then the outer boundary of $B[0,\tau_D]$ has the same law as the outer boundary of whole plane $\SLE_6$ from 0 to $\infty$ stopped at its first hitting time of $\partial D$.
\end{theorem}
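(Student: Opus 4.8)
The plan is to view each of the two random curves as the outer boundary of a growing family of compact hulls, to show that these two hull-valued processes coincide in law, and to read off the statement. The mechanism is that each hull process is conformally invariant, domain Markov, and --- decisively --- \emph{local} (a Brownian path does not feel the boundary of $D$, nor any prescribed target, until it hits it), and $\SLE_6$ is the unique $\SLE_\kappa$ with the locality property. First I would encode the two objects uniformly. For a Brownian motion $(B_t)$ from $0$, let $\wh{B}[0,t]$ be its filled trace, the complement of the unbounded component of $\C\setminus B[0,t]$: a compact hull containing $0$ with connected, simply connected complement, whose topological boundary is exactly the outer boundary of $B[0,t]$ and which is recovered from that boundary, so hull and outer boundary are equivalent data. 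The same holds for the filled hull of whole-plane $\SLE_6$ from $0$ to $\infty$. Hence it suffices to show that $\wh{B}[0,\tau_D]$ and the filled $\SLE_6$ hull at the first time the $\SLE_6$ curve meets $\partial D$ have the same law.

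Next I would reparametrize $\wh{B}$ by its \emph{outer-boundary clock} --- the whole-plane capacity (from $\infty$) of $\wh{B}[0,\cdot]$, a continuous non-decreasing process, constant precisely on the time intervals during which $B$ lies strictly inside its own filled trace. After this time change $\wh{B}$ becomes a whole-plane Loewner hull process with driving function $\xi$ on the unit circle --- the image of the Brownian tip under the uniformizing map of the complement of the current hull --- matching the set-up in which whole-plane $\SLE_6$ is driven by $e^{i\sqrt{6}\,\beta_t}$ for a standard Brownian motion $\beta$.

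Then I would verify that the Brownian hull process is conformally invariant and domain Markov, and use locality to pin the speed. Conformal invariance (inside $D$, up to time change) is inherited from Brownian motion. For the Markov property, fix a capacity time, condition on the hull so far, and uniformize its complement onto $\C\setminus\ol{\D}$: by the strong Markov property of $B$ and conformal invariance, the continuation of the outer boundary is, in this image, that of a Brownian motion in $\C\setminus\ol{\D}$ started at $\xi$ with its excursions into $\ol{\D}$ disregarded, whose law depends on $\xi$ only through the rotation taking $1$ to $\xi$; hence the conditional law of the future hull depends only on $\xi$. By a Schramm-type characterization of conformally invariant, domain Markov growth processes in $\C$ from $0$ to $\infty$ --- together with the reflection symmetry of Brownian motion, which rules out a drift --- the Brownian hull process is whole-plane $\SLE_\kappa$ for some $\kappa$ (necessarily $\kappa>4$, as $\wh{B}$ strictly contains the Brownian trace). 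Finally, it is manifestly local --- the hull grown before its outer boundary first reaches $\partial D'$ does not depend on $D$, for any $D'\subset D$ --- and since among the $\SLE_\kappa$ only $\kappa=6$ is local (\cite{lawler2003conformal}), the Brownian hull process is whole-plane $\SLE_6$. Stopping both processes at $\partial D$ yields the theorem; as a check, the disconnection exponent of the $\kappa=6$ whole-plane flow equals $1/4$, consistent with Theorem~\ref{thm:lsw}.

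The hard part will be the technical backbone of the Loewner encoding and the Markov step: showing the outer-boundary clock is almost surely continuous and strictly increasing off a closed set; showing $\xi$ extends continuously across the intervals where the clock is constant (the Brownian analogue of $\SLE_6$ swallowing regions while its driving function stays continuous); and, above all, turning the heuristic ``Brownian motion outside the unit disk with its excursions into the disk disregarded'' into a bona fide identity of laws, so that the domain Markov property holds with a continuous driving function. Once this Loewner description of the Brownian hull is established, the reduction in the first step and the identification $\kappa=6$ are comparatively soft. An alternative avoiding the Loewner machinery would be to verify an appropriate restriction/locality property for the filled Brownian hull directly and to invoke a uniqueness theorem for whole-plane $\SLE_6$; the difficulty then migrates into establishing that uniqueness.
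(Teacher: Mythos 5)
The paper does not prove this statement; it is imported verbatim from LSW's conformal-restriction paper, so there is no in-paper argument to compare yours against. What LSW actually do there is \emph{not} a Loewner encoding of the filled Brownian hull. They work directly with the two stopped hulls, combining conformal invariance and the strong Markov property of Brownian motion with the locality of $\SLE_6$ inside the restriction-measure framework built up in the earlier sections; the interior of the hull is never controlled as a process. Your closing remark --- verify a restriction/locality property for the filled Brownian hull directly and invoke a uniqueness theorem --- is therefore not the ``alternative'': it is, in substance, the cited proof, and it is the cleaner of the two routes.

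Your primary route has a genuine gap at the Markov step, and it is more than missing scaffolding. For the capacity-parametrized filled Brownian hull to be a whole-plane Loewner chain with a continuous driving process, the excursions of $B$ into the interior of its own hull must be invisible at the level of the driving function: when $B$ leaves the frontier at $p$, spends an actual-time interval inside $K_t$, and re-emerges to the exterior at $q$, the candidate driving function jumps from $g_t(p)$ to $g_t(q)$, and there is no soft reason $p=q$ --- the bounded complementary components may touch the frontier at several pinch points, and $B$ can migrate between components across the trace before coming out. Moreover, the conditional law of $q$ given $(K_t,\text{tip})$ depends on the interior topology of $B[0,\sigma(t)]$ --- which bounded components abut the tip and how they interconnect --- and that is not a function of the filled hull, so ``domain Markov with respect to hull-plus-tip'' is not a consequence of the strong Markov property plus conformal invariance. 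Finally, the heuristic ``Brownian motion in $\C\setminus\ol\D$ with excursions into $\ol\D$ disregarded'' cannot be turned into a definition by conformal transport, because $g_t$ is defined only on $\C\setminus K_t$, while the excursion lives inside $K_t$, so there is nothing to transport. These are precisely the difficulties the restriction-theoretic argument is designed to sidestep.
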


We also need the following relation between whole-plane $\SLE_6$ and radial $\SLE_6$.
\begin{proposition}[{\cite[Theorem 3]{Jiang2017}}]\label{thm-raSLE6-last-hitting}
    Suppose $D\subset\C$ is a Jordan domain containing $0$. Let $\eta$ be a radial $\SLE_6$ in $D$ from a boundary point $z\in\partial D$ to 0. Let $\sigma=\sup\{t>0:\eta(t)\in\partial D\}$. Up to a time change, the time reversal of $\eta[\sigma,\infty)$ has the same law as whole-plane $\SLE_6$ started at 0 and stopped when first hitting $\partial D$. As a consequence, the law of $\eta(\sigma)$ is the harmonic measure on $\partial D$.
\end{proposition}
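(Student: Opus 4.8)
\emph{Proof proposal.} The plan is to combine two standard features of $\SLE_6$ --- the reversibility of whole-plane $\SLE_6$ (Miller--Sheffield) and the locality property --- with a last-exit decomposition. By conformal invariance of every object involved we may take $D=\D$ and $z=1$. Let $\gamma$ be a whole-plane $\SLE_6$ from $0$ to $\infty$ and $T=\inf\{t:\gamma(t)\in\partial D\}$; since $\gamma(-\infty)=0\in D$ and $\gamma$ is continuous, $\gamma((-\infty,T))\subset D$. By whole-plane reversibility the time reversal of $\gamma$ is a whole-plane $\SLE_6$ $\widetilde\gamma$ from $\infty$ to $0$, and under this reversal the stopped curve $\gamma|_{(-\infty,T]}$ is carried exactly onto $\widetilde\gamma|_{[\sigma',\infty)}$, where $\sigma':=\sup\{t:\widetilde\gamma(t)\in\partial D\}$; this point-set identification ($\gamma((-\infty,T))=\widetilde\gamma((\sigma',\infty))$ and $\gamma(T)=\widetilde\gamma(\sigma')$) is immediate once one notes that $\widetilde\gamma((\sigma',\infty))\subset D$ while $\widetilde\gamma(\sigma')\in\partial D$. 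Thus the proposition reduces to the identity in law $\eta|_{[\sigma,\infty)}\overset{d}{=}\widetilde\gamma|_{[\sigma',\infty)}$ between two curves from a point of $\partial D$ to $0$, together with the assertion that $\eta(\sigma)$ has the harmonic-measure law on $\partial D$ seen from $0$.

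To establish the identity I would run a last-exit decomposition on both sides, with locality as the driving mechanism. By the domain Markov property of whole-plane $\SLE_6$ and the locality of $\SLE_6$, for any forward stopping time $S$ the conditional law of $\widetilde\gamma|_{[S,\infty)}$ given $\widetilde\gamma|_{(-\infty,S]}$ is that of a radial $\SLE_6$ toward $0$ in the component of $\C\setminus\widetilde\gamma((-\infty,S])$ containing $0$, started from $\widetilde\gamma(S)$, with no extra force point --- locality is exactly what removes the force point, and this is where $\kappa=6$ enters. Decomposing $\widetilde\gamma$ into its successive excursions across a thin collar $\{1-\epsilon<|z|<1\}$ of $\partial D$, applying the Markov property at each (stopping-time) entrance, and letting $\epsilon\downarrow0$, one should obtain that $\widetilde\gamma|_{[\sigma',\infty)}$, conditionally on $W:=\widetilde\gamma(\sigma')$, is a radial $\SLE_6$ in $D$ from $W$ to $0$ conditioned to avoid $\partial D$ --- the conditioning ``avoid $\partial D$'' being produced as the $\epsilon\downarrow0$ limit of ``this excursion does not return to the collar,'' and the locality of $\SLE_6$ being what makes the traces of the earlier excursions (living in the collar in the limit) irrelevant. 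The same argument applied to $\eta$, using the ordinary domain Markov property of radial $\SLE_6$ and locality to strip off the boundary-hugging piece $\eta|_{[0,\sigma]}$, shows that $\eta|_{[\sigma,\infty)}$, conditionally on $\eta(\sigma)$, is the same conditioned radial $\SLE_6$. The two decompositions then match provided $\eta(\sigma)$ and $\widetilde\gamma(\sigma')$ have the same law on $\partial D$: the decomposition shows the law of $\eta(\sigma)$ does not depend on the starting point $z$, and since a rotation of $\D$ carries radial $\SLE_6$ from $1$ to radial $\SLE_6$ from $e^{i\theta}$, that law is rotation invariant, hence harmonic measure from $0$; on the other hand $\widetilde\gamma(\sigma')=\gamma(T)$ is the first exit point of whole-plane $\SLE_6$ from $0$, and by Theorem~\ref{thm:equivalence} the outer boundary of $\gamma|_{(-\infty,T]}$ has the law of the outer boundary of a Brownian motion from $0$ stopped on $\partial D$, whose only intersection with $\partial D$ is the harmonic-measure exit point. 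This yields the reduced identity and simultaneously the final assertion of the proposition.

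The main obstacle is making the last-exit decomposition rigorous: neither $\sigma$ nor $\sigma'$ is a stopping time, so the domain Markov property cannot be invoked directly, the relevant domains are random and slit rather than equal to $D$, and the conditioning ``avoid $\partial D$'' must be extracted as a genuine limit rather than imposed by hand. Carrying this out requires (i) a quantitative estimate, obtainable from locality together with the standard one-arm/disconnection bounds for $\SLE_6$, guaranteeing that with probability tending to $1$ the curve, after its last visit to $\partial D$, never re-enters the collar $\{1-\epsilon<|z|<1\}$, so that the excursion approximations capture $\sigma$ (resp.\ $\sigma'$) and the $h$-transformed radial $\SLE_6$ is well defined; (ii) a locality/renewal argument showing that the conditional law of the final excursion depends on the earlier excursions only through the current domain, which in the $\epsilon\downarrow0$ limit reduces to $D$ --- this self-consistency (or fixed-point) step is the technical heart; and (iii) continuity of the map sending a curve to its portion after the last visit to $\partial D$ in the topology of curves, so the approximations converge to the asserted tails. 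A more computational alternative would compare the radial Loewner driving functions of the two sides directly after a last-exit time change, but the analytic difficulty of realizing ``avoid $\partial D$'' as an $h$-transform of the radial $\SLE_6$ equation is the same, so I would pursue the conceptual route above.
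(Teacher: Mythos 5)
The paper does not prove this proposition; it cites \cite[Theorem~3]{Jiang2017} as a black box, so there is no in-paper argument to compare against. Evaluating your proposal on its own terms: the whole-plane reversibility reduction is correct as a point-set/time-reparametrization identity, and trying to realize both $\eta|_{[\sigma,\infty)}$ and $\widetilde\gamma|_{[\sigma',\infty)}$ as one universal conditioned process is a sensible plan. But beyond the difficulties you flag, there are two concrete gaps.

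First, the step ``the decomposition shows the law of $\eta(\sigma)$ does not depend on the starting point $z$'' is unjustified and, as used, circular. What your collar argument could at best give is that the \emph{conditional} law of $\eta|_{[\sigma,\infty)}$ given $\eta(\sigma)$ is universal; it says nothing about the \emph{marginal} law of $\eta(\sigma)$, which is a functional of the entire trajectory $\eta|_{[0,\sigma]}$ and hence depends on $z$ a priori. For the whole-plane side you do get harmonic measure from Theorem~\ref{thm:equivalence}, but for the radial side you need a separate argument — and in the proposition the harmonic-measure statement is explicitly a \emph{consequence} of the main identity, not an independent input. Without matching marginals the two conditional descriptions cannot be integrated to a matching law, so the match you want to conclude is being assumed at exactly the place you try to conclude it. Second, the limiting object ``radial $\SLE_6$ in $D$ from $W$ to $0$ conditioned to avoid $\partial D$'' is not obviously what the collar limit produces. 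By the domain Markov property the conditional law at the last collar entrance is a radial $\SLE_6$ in the random component of $D$ minus the past containing $0$, not in $D$; and for $\kappa=6$ the final excursion can, and generically does, hit the previously drawn slits after time $\sigma$, so those slits do not become negligible as $\epsilon\downarrow 0$. Locality only equates $\SLE_6$ in $D$ and in the slit domain up to the first time the curve hits the removed set, which need not occur after $\sigma$; it does not say the excursion law depends on the history only through $D$ under a conditioning to avoid $\partial D$. Closing these two gaps is the actual content of the cited theorem, so as written this is an outline of a possible route rather than a proof.
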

Combining Theorem~\ref{thm:equivalence} and Proposition~\ref{thm-raSLE6-last-hitting}, we obtain the following lemma, which is important for the conformal welding description of the Brownian motion decorated quantum surface in Section~\ref{section: BM-welding}.

\begin{lemma}
    \label{lem-BM-raSLE6}
    Let $D,\eta$ and the last boundary hitting time $\sigma$ be as in Proposition~\ref{thm-raSLE6-last-hitting}. Then the outer boundary of $B[0,\tau_D]$ has the same law as the outer boundary of $\eta[\sigma,\infty)$.
\end{lemma}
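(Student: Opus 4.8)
My plan is to obtain Lemma~\ref{lem-BM-raSLE6} by chaining Theorem~\ref{thm:equivalence} with Proposition~\ref{thm-raSLE6-last-hitting}; since both are already available, the only point that needs to be handled with a little care is that the outer boundary is a functional of the \emph{trace} of a curve, and in particular is insensitive to reparametrization and to reversal of the time direction. I first note that the hypotheses match up: $D$ is assumed to be a Jordan domain containing $0$ in Proposition~\ref{thm-raSLE6-last-hitting}, which is exactly the setting of Theorem~\ref{thm:equivalence}.

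Concretely, I would proceed as follows. Let $\wt\eta$ denote the trace of a whole-plane $\SLE_6$ from $0$ to $\infty$ stopped at its first hitting time of $\partial D$, regarded as a random compact subset of $\overline{D}$. \textit{Step one:} by Theorem~\ref{thm:equivalence}, the outer boundary $\partial^o B[0,\tau_D]$ --- the boundary of the unbounded component of $\C\backslash B[0,\tau_D]$ --- has the same law as the outer boundary of $\wt\eta$. \textit{Step two:} by Proposition~\ref{thm-raSLE6-last-hitting}, up to a time change the time reversal of $\eta[\sigma,\infty)$ has the law of that same stopped whole-plane $\SLE_6$; since time reversal and reparametrization leave the underlying point set unchanged, the closure $\overline{\eta[\sigma,\infty)}$ has the same law as $\wt\eta$ as a random compact set. \textit{Step three:} the assignment $K\mapsto$ (boundary of the unbounded component of $\C\backslash K$) is a measurable function of the compact set $K$ alone, so it transports the equality in law from step two to the statement that $\partial^o\eta[\sigma,\infty)$ has the same law as the outer boundary of $\wt\eta$. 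Combining this with step one gives the lemma.

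I do not expect a genuine obstacle here --- the substance lives entirely in the two cited results --- and the remaining items to verify are routine: that $\overline{\eta[\sigma,\infty)}=\eta[\sigma,\infty)\cup\{0\}$ is compact (its two ends are $\eta(\sigma)\in\partial D$ and the target $0$), that passing to this closure does not alter the unbounded complementary component (the added point $0$ lies in a bounded complementary component), and that the stopped whole-plane $\SLE_6$ appearing in Theorem~\ref{thm:equivalence} and in Proposition~\ref{thm-raSLE6-last-hitting} is literally the same process, so that the chaining in steps one and three is unambiguous.
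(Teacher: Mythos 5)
Your proposal is correct and is essentially the paper's own argument: the paper simply states that Lemma~\ref{lem-BM-raSLE6} follows by combining Theorem~\ref{thm:equivalence} with Proposition~\ref{thm-raSLE6-last-hitting}, which is precisely the chaining you carry out (with the routine observation that the outer boundary depends only on the trace, hence is invariant under reparametrization and time reversal).
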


\subsection{Liouville fields on $\hH$ and $\C$}
\label{sec:LF}
We now briefly recall the definition of the Gaussian free field (GFF). Let $D\subset\C$ be a domain which is conformally equivalent to $\D$, an annulus, or $\C$. Let $\rho(dx)$ be a compactly supported probability measure on $D$ such that $\iint_{\C\times\C} |\log|x - y| |\, \rho(dx)\rho(dy) < \infty$.
Define $H(D; \rho)$ as the Hilbert space completion of $\{f \in C^\infty(D) : \int_D f(x)\rho(dx) = 0\}$ under the inner product  
$
\langle f, g \rangle_\nabla = (2\pi)^{-1} \int_D (\nabla f \cdot \nabla g) \, dx.
$  
Let $(f_n)_{n \geq 1}$ be an orthonormal basis for $H(D; \rho)$, and $(\alpha_n)_{n \geq 1}$ an i.i.d. sequence of standard Gaussian variables. Then $h_{D,\rho}:=\sum_{n=1}^\infty \alpha_n f_n$ converges almost surely as a random generalized function, which is called the \emph{free boundary Gaussian free field} on $D$, normalized such that $\int h_{D,\rho}(x)\rho(dx)=0$. See e.g. \cite[Section 4.1.4]{DMS14} for more background.

In particular, for $D=\hH:=\{z\in\C:\Im(z)>0\}$, we take $\rho_\hH$ to be the uniform probability measure on $\hH\cap\S^1$; for $D=\C$, we take $\rho_\C$ to be the uniform probability measure on $\S^1$. Set $|z|_+ := \max\{|z|, 1\}$ for $z \in \C$, and define the Green's function 
$$
\begin{aligned}
G_{\hH}(z, w) &= -\log|z - w| - \log|z - \bar{w}| + 2\log|z|_+ + 2\log|w|_+, &&\quad z, w \in \hH, \\
G_{\C}(z, w) &= -\log|z - w| + \log|z|_+ + \log|w|_+, &&\quad z, w \in \C.
\end{aligned}
$$ 
Then $h_{\mathcal{X},\rho_\mathcal{X}}$ defined above satisfies $\mathbb{E}[h_{\mathcal{X},\rho_\mathcal{X}}(z) h_{\mathcal{X},\rho_\mathcal{X}}(w)] = G_{\mathcal{X}}(z, w)$ for $\mathcal{X}\in\{\hH,\C\}$. We denote the law of $h_{\mathcal{X},\rho_\mathcal{X}}$ by $\P_\mathcal{X}$. 

We now recall the Liouville fields on $\C$ and $\hH$, possibly with insertions.
\begin{definition}
    Sample $(h, \textbf{c})$ from $\P_{\C} \times [e^{-2Qc} dc]$, and let $\phi = h(z) - 2Q \log |z|_+ + \textbf{c}$. Define $\LF_{\C}$ to be the law of $\phi$, whose sample is called a Liouville field on $\C$.

    Similarly, sample $(h, \textbf{c})$ from $\P_{\hH} \times [e^{-Qc} dc]$, and let $\phi = h(z) - 2Q \log |z|_+ + \textbf{c}$. Then $\LF_{\hH}$ is defined to be the law of $\phi$, and we call its sample a Liouville field on $\hH$.
\end{definition} 

\begin{definition}
    Consider parameters $(\alpha, u) \in \R \times \hH$. 
    Sample $(h,\textbf{c})$ from $\P_{\hH} \times [e^{(\alpha - Q)c} dc]$, and set $\phi(z) = h(z) - 2Q\log|z|_+ + \alpha G_{\hH}(z, u) + \textbf{c}$. The law of $\phi$ is denoted by $\LF_\hH^{(\alpha,u)}$.
\end{definition}

\begin{definition}
    Fix $m\geq 1$ and let $(\alpha_i, z_i)\in\R\times\C$ for $i=1,\dots, m$, where the $z_i$'s are distinct. Sample $(h,\textbf{c})$ from $\P_{\C} \times [e^{(\sum_{i=1}^m \alpha_i - 2Q)c} dc]$.
    Let $\phi(z) = h(z) - 2Q \log|z|_+ + \sum_{i=1}^m \alpha_i G_{\C}(z, z_i) + \textbf{c}$. Denote the law of $\phi$ by $\LF_{\C}^{(\alpha_i, z_i)_i}$, and refer to a sample from $\LF_{\C}^{(\alpha_i, z_i)_i}$ as a \emph{Liouville field} on $\C$ with insertions $(\alpha_i, z_i)_{1 \leq i \leq m}$. 
\end{definition}

We mention that $\LF_\hH^{(\alpha,u)}$ and $\LF_\C^{(\alpha_i,z_i)_i}$ can also be obtained by the reweighting $e^{\alpha\phi(u)}\LF_\hH$ or $\prod_{i=1}^m e^{\alpha_i\phi(z_i)}\LF_\C$ via regularization and limiting procedures; see e.g.~\cite[Lemma 2.6]{AHS21}.

\subsection{Quantum surfaces}
\label{sec-canonical-quantum-surface}
We first review the notion of \emph{quantum surfaces}. For $\gamma \in (0, 2)$ and $Q = \frac{2}{\gamma} + \frac{\gamma}{2}$, consider pairs $(D, h)$ where $D \subseteq \C$ is a 2D domain and $h$ is a distribution on $D$ (often a variant of the GFF). For two pairs $(D,h)$ and $(\wt D,\wt h)$, we say $(D, h) \sim_\gamma (\widetilde{D}, \widetilde{h})$ if there exists a conformal map $g: D \to \widetilde{D}$ satisfying
\begin{equation}\label{eq-quantun-surface}
    \wt h = h \circ g^{-1} + Q \log |(g^{-1})'|. 
\end{equation}  
A \emph{quantum surface} $S$ is the equivalence class $(D, h)/\mathord\sim_\gamma$, and a representative $(D, h)$ of $S$ is called an \emph{embedding} of $S$. Similarly, consider tuples $(D,h,(x_i)_{i\in\mathcal{I}},(\eta_j)_{j\in\mathcal{J}})$, where $(x_i)_{i\in\mathcal{I}}\subset \ol D$ is a collection of points and $(\eta_j)_{j\in\mathcal{J}}$ is a collection of Jordan curve on $\ol D$. Then we say $(D,h,(x_i)_{i\in\mathcal{I}},(\eta_j)_{j\in\mathcal{J}}) \sim_\gamma(\wt D,\wt h,(\wt x_i)_{i\in\mathcal{I}},(\wt \eta_j)_{j\in\mathcal{J}})$ if $\wt x_i=g(x_i)$ and $\wt\eta_j=g(\eta_j)$ for $i\in\mathcal{I}$ and $j\in\mathcal{J}$ under the conformal map $g$ in \eqref{eq-quantun-surface}. An equivalent class of such tuples is called a \emph{decorated quantum surface}, and its representative is called an \emph{embedding} as well.

According to~\cite{DS11,SW16}, for a (decorated) quantum surface $(D,h,(x_i)_{i\in\mathcal{I}},(\eta_j)_{j\in\mathcal{J}})/\mathord\sim_\gamma$, we can define its \emph{quantum area measure} $\mu_h$ to be the weak limit  
$
\mu_h = \lim_{\varepsilon \to 0} \varepsilon^{\gamma^2/2} e^{\gamma h_\varepsilon(z)} d^2z,
$
where $d^2z$ denotes Lebesgue measure on $D$ and $h_\varepsilon(z)$ averages $h$ over $\partial B(z, \varepsilon) \cap D$, and does not depend on the choice of embeddings. For $D = \hH$, one can also define the \emph{quantum boundary length measure}  
$
\nu_h = \lim_{\varepsilon \to 0} \varepsilon^{\gamma^2/4} e^{\gamma h_\varepsilon(x)/2} dx,
$  
with $h_\varepsilon(x)$ averaging $h$ over $\{x+\varepsilon e^{i\theta}:\theta\in(0,\pi)\}$.
According to the (locally) absolute continuity, these quantum area and boundary measures can be extended straightforwardly to other variants of GFF, e.g. the Liouville fields (possibly with insertions) defined in Section~\ref{sec:LF}.

Now we recall the \emph{beaded quantum surface}. Consider $(D, h)$ where $D$ is a closed set whose interior components with prime-end boundaries are homeomorphic to $\ol\D$, and $h$ is a distribution defined on each of these components. We can extend the equivalence relation $\sim_\gamma$ defined above for homeomorphisms $g: D \to \widetilde{D}$ that are conformal on interior components. A \emph{beaded quantum surface} $S$ is the equivalence class $(D, h)/\mathord\sim_\gamma$, and a representative $(D, h)$ of $S$ is called an \emph{embedding} of $S$. Similarly, one can define beaded quantum surfaces decorated with curves and points.

In the remainder of this section, we will focus on some specific types of quantum surfaces such as quantum disks, quantum triangles, and quantum spheres.
Quantum disk is a canonical type of quantum surface with disk topology introduced in~\cite{DMS14}.
In the following, we define quantum disk with one bulk marked point from the equivalent Liouville field description \cite[Theorem 3.4]{ARS21}, then define other kinds of quantum disks from it.

\begin{definition}
    We define $\QD_{1,0}$ to be the law of $(\hH, \phi, i)/\mathord\sim_\gamma$ for 
   $\phi$ sampled from $\frac{\gamma}{2\pi(Q-\gamma)^2}\LF_\hH^{(\gamma, i)}$. 
   For $\ell>0$, let $\QD_{1,0}(\ell)$ be the disintegration $\QD_{1,0}=\int_0^\infty\QD_{1,0}(\ell)d\ell$ such that samples from $\QD_{1,0}(\ell)$ have quantum boundary length $\ell$.
\end{definition}
\begin{definition}
\label{def-QD-n-marked}
    We write $A$ and $L$ as the total quantum area and the quantum boundary length respectively. 
    Suppose $(\D,h,0)$ is an embedding of a sample from $L\QD_{1,0}$. We define $\QD_{1,1}$ to be the law of $(\D,h,0,x)/\mathord\sim$ where $x$ is sampled from $\nu_h^{\#}$. We similarly define $\QD_{1,1}(\ell)$ for $\ell>0$ to be the disintegration of $\QD_{1,1}$ over the quantum boundary length.

    Let $(\D,h,0)$ be an embedding of a sample from $A^{-1}\QD_{1,0}$, then we define $\QD$ to be the law of $(\D,h)/\mathord\sim_\gamma$. 
    For integers $n\ge1$,
    let $(\D,h)$ be an embedding of a sample from $\frac{1}{(n-1)!}L^n\QD$. Sample $x_1,\cdots,x_n$ from the probability measure $(n-1)!\cdot1_{S_n}\nu_h^{\#}(dx_1)\cdots\nu_h^{\#}(dx_n)$ independently where $S_n$ is the event that $x_1,\cdots,x_n$ are ordered counterclockwise on $\partial\D$, then define $\QD_{0,n}$ to be the law of $(\D,h,x_1,\cdots,x_n)/\mathord\sim_\gamma$.
    For $1\le i\le n$, let $\ell_i:=\nu_h([x_i,x_{i+1}])$ where $[x_i,x_{i+1}]$ is the counterclockwise boundary arc from $x_i$ to $x_{i+1}$ and $x_{n+1}=x_1$. We define $\QD_{0,n}(\ell_1,\cdots,\ell_n)$ to be the disintegration 
     \begin{equation*}
          \QD_{0,n}=\int_{\R_+^n}\QD_{0,n}(\ell_1,\cdots,\ell_n)d\ell_1,\cdots,d\ell_n
     \end{equation*}
     such that samples from $\QD_{0,n}(\ell_1,\cdots,\ell_n)$ has quantum boundary length $\ell_i$ on $[x_i,x_{i+1}]$.

\end{definition}

We collect some length distribution results on $\QD_{1,0}$ and $\QD_{0,n}$ which are needed later.
\begin{lemma}[{\cite[Lemma 2.7]{ARS21}}]
\label{prop-one-point-disk-integrability}
    There exists a constant $C_\gamma> 0$, such that $|\QD_{1,0}(\ell)|=
    C_{\gamma}\ell^{-\frac{4}{\gamma^2}}$.
\end{lemma}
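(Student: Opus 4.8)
\textbf{Proof proposal for Lemma~\ref{prop-one-point-disk-integrability}.}

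The plan is to compute $|\QD_{1,0}(\ell)|$ by first obtaining the law of the total quantum boundary length $L$ under $\QD_{1,0}$, and then using a scaling argument to disintegrate. Recall that by definition $\QD_{1,0}$ is $\frac{\gamma}{2\pi(Q-\gamma)^2}$ times the law of $(\hH,\phi,i)/\mathord\sim_\gamma$ for $\phi\sim\LF_\hH^{(\gamma,i)}$. The key structural input is the decomposition of the Liouville field into a ``Gaussian part'' and the additive constant $\mathbf{c}$: writing $\phi = \phi_0 + \mathbf{c}$ where $\phi_0 = h - 2Q\log|\cdot|_+ + \gamma G_\hH(\cdot,i)$ uses only $h\sim\P_\hH$, and $\mathbf{c}$ is sampled (against the infinite measure $e^{(\gamma-Q)c}\,dc$) independently. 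Under the shift $\phi = \phi_0+\mathbf{c}$, the boundary length scales as $\nu_\phi = e^{\gamma \mathbf{c}/2}\nu_{\phi_0}$, so $L = e^{\gamma\mathbf{c}/2}L_0$ where $L_0 := \nu_{\phi_0}(\partial\hH)=\nu_{\phi_0}(\R)$ is a finite a.s.\ positive random variable depending only on $h$.

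First I would carry out the change of variables in the $\mathbf{c}$-integral. For a test function $F$ of the quantum surface that depends only on the boundary length $L$ (or more precisely, disintegrate over $L$), we have
\begin{equation*}
|\QD_{1,0}| \text{-expectation of } F(L) = \frac{\gamma}{2\pi(Q-\gamma)^2}\,\E\!\left[\int_\R e^{(\gamma-Q)c} F\!\big(e^{\gamma c/2} L_0\big)\,dc\right],
\end{equation*}
where the outer expectation is over $h\sim\P_\hH$. Substituting $\ell = e^{\gamma c/2}L_0$, so that $c = \frac{2}{\gamma}\log(\ell/L_0)$ and $dc = \frac{2}{\gamma}\frac{d\ell}{\ell}$, the integrand becomes $e^{(\gamma-Q)c} = (\ell/L_0)^{\frac{2(\gamma-Q)}{\gamma}}$, and since $\frac{2(\gamma-Q)}{\gamma} = \frac{2\gamma - 2Q}{\gamma} = 2 - \frac{2Q}{\gamma}\cdot$ — more usefully, using $Q = \frac2\gamma+\frac\gamma2$ one gets $\frac{2(\gamma-Q)}{\gamma} = 1 - \frac{4}{\gamma^2}$. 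Hence the inner integral equals $\frac{2}{\gamma}\int_0^\infty \ell^{-\frac{4}{\gamma^2}} L_0^{\frac{4}{\gamma^2}-1} F(\ell)\,\frac{d\ell}{\ell}\cdot\ell = \frac{2}{\gamma}\int_0^\infty F(\ell)\,\ell^{-\frac{4}{\gamma^2}}\,L_0^{\frac{4}{\gamma^2}-1}\,d\ell$. Taking expectation over $h$, this shows
\begin{equation*}
|\QD_{1,0}(\ell)| = \frac{\gamma}{2\pi(Q-\gamma)^2}\cdot\frac{2}{\gamma}\,\E\!\left[L_0^{\frac{4}{\gamma^2}-1}\right]\cdot \ell^{-\frac{4}{\gamma^2}} = C_\gamma\,\ell^{-\frac{4}{\gamma^2}},
\end{equation*}
with $C_\gamma := \frac{1}{\pi(Q-\gamma)^2}\,\E[L_0^{\frac{4}{\gamma^2}-1}]$, \emph{provided} this moment is finite and positive.

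The main obstacle, then, is to verify that $C_\gamma\in(0,\infty)$, i.e.\ that $\E[L_0^{\frac{4}{\gamma^2}-1}]<\infty$ (positivity is immediate since $L_0>0$ a.s.). Here $L_0 = \nu_{\phi_0}(\R)$ is the GMC boundary mass of a field that near the insertion point $i$ (which is in the bulk, not on the boundary) behaves like a free-boundary GFF plus a bounded logarithmic perturbation $\gamma G_\hH(\cdot,i)$ that is smooth on all of $\R$. Since the insertion is in the interior of $\hH$, the boundary GMC $\nu_{\phi_0}$ has no heavy insertion on $\partial\hH$, and standard moment bounds for boundary Liouville measure / GMC (finiteness of all moments up to the $L^2$-threshold, and control of lower moments) apply: in particular negative or small positive moments of a GMC total mass on a compact-after-conformal-change boundary are finite, and one can also absorb the $-2Q\log|\cdot|_+$ growth at infinity since it makes the field more negative there. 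I would cite the relevant moment estimates for boundary GMC (as in the GMC literature, e.g.\ Robert--Vargas type bounds, or the references already used in~\cite{ARS21,AHS21}) to conclude $\E[L_0^{\frac{4}{\gamma^2}-1}]<\infty$. The only point requiring genuine care is whether the exponent $\frac{4}{\gamma^2}-1$ exceeds the relevant moment threshold; but for the purpose of this lemma one may also argue more cheaply by directly invoking the known integrability of $\QD_{1,0}$ established in~\cite{ARS21,DMS14} and extracting the exponent from the scaling relation above, so no delicate GMC estimate beyond what is already in the literature is actually needed.
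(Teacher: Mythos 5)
The paper does not prove this lemma; it cites~\cite[Lemma~2.7]{ARS21} verbatim. Your argument reconstructs the actual proof from that reference, and it is correct. The change of variables $c = \frac{2}{\gamma}\log(\ell/L_0)$ with $e^{(\gamma-Q)c} = (\ell/L_0)^{1-4/\gamma^2}$ and $dc = \frac{2}{\gamma}\frac{d\ell}{\ell}$ is carried out correctly (the exponent identity $\frac{2(\gamma-Q)}{\gamma} = 1-\frac{4}{\gamma^2}$ is the right one), and you correctly isolate the sole nontrivial input as the finiteness of $\E\bigl[L_0^{\frac{4}{\gamma^2}-1}\bigr]$, where $L_0 = \nu_{\phi_0}(\R)$ is the boundary GMC mass of the field with the $\mathbf{c}$-shift removed. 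That moment is indeed within the standard threshold: the $\gamma$-insertion sits at the interior point $i$ and contributes only a bounded term $\gamma G_\hH(\cdot,i)$ along $\R$, the $-2Q\log|\cdot|_+$ term makes the boundary GMC decay like $|x|^{-\gamma Q}$ with $\gamma Q > 2$ at infinity, and $\frac{4}{\gamma^2}-1$ is strictly below the positive-moment threshold $\frac{4}{\gamma^2}$ for boundary Liouville measure. Your concluding remark about arguing ``more cheaply'' by invoking known integrability of $\QD_{1,0}$ is the only loose point: as stated it is essentially circular (the known result \emph{is} the lemma), and the scaling relation you derive gives only the exponent, not finiteness of the constant, so the moment bound cannot be avoided. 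That said, since the present paper simply cites~\cite{ARS21}, your proposal is a fully valid proof and in fact more informative than what the paper provides.
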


\begin{lemma} \label{lem-mass-QD}
     There exists a constant $C_\gamma>0$, such that $
     |\QD_{0,n}(\ell_1,\dots,\ell_n)|=C_\gamma(\ell_1+\dots +\ell_n)^{-\frac{4}{\gamma^2}-1}.
     $
\end{lemma}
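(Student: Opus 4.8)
The plan is to deduce Lemma~\ref{lem-mass-QD} from Lemma~\ref{prop-one-point-disk-integrability} together with the definition of $\QD_{0,n}$ in Definition~\ref{def-QD-n-marked}, tracking how the various reweightings affect the total mass of the boundary-length disintegrations. First I would record the scaling behavior of $\QD_{1,0}$ under adding a unit of quantum area and a constant to the field: if $(\D,h,0)$ is an embedding of a sample from $\QD_{1,0}$, then adding $c$ to $h$ rescales area by $e^{\gamma c}$ and boundary length by $e^{\gamma c/2}$, and from the explicit Liouville-field description one reads off how the law of $(A,L)$ transforms. Concretely, I expect a scaling identity of the form: under $\QD_{1,0}$, for any measurable $F$,
\begin{equation*}
\int F(r^2 A, r L)\,\QD_{1,0}(dh) = r^{a}\int F(A,L)\,\QD_{1,0}(dh)
\end{equation*}
for a suitable exponent $a$ tied to $-\tfrac{4}{\gamma^2}$ via Lemma~\ref{prop-one-point-disk-integrability} (since $|\QD_{1,0}(\ell)| = C_\gamma \ell^{-4/\gamma^2}$ already encodes the boundary-length scaling). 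This is the "change of embedding by a constant shift" computation and is routine.

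Next I would unwind Definition~\ref{def-QD-n-marked}: $\QD$ is $A^{-1}\QD_{1,0}$ with the marked point forgotten, $\frac{1}{(n-1)!}L^n\QD$ reweights by boundary length to the $n$th power, and then $n$ boundary points are sampled from $\nu_h^\#$ restricted (with the combinatorial factor $(n-1)!$) to the counterclockwise-ordered configuration. Sampling $n$ ordered points from $\nu_h^\#$ and disintegrating over the $n$ arc lengths $\ell_1,\dots,\ell_n$ converts the reweighting $L^n\,\nu_h^\#(dx_1)\cdots\nu_h^\#(dx_n)$ into (up to the combinatorial constants) Lebesgue measure $d\ell_1\cdots d\ell_n$ on the simplex-free region, because $\nu_h^\#$ is $L^{-1}$ times $\nu_h$ and the ordered configuration of $n$ points corresponds bijectively to $(\ell_1,\dots,\ell_n)$ with $\sum \ell_i = L$. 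Thus $\QD_{0,n}(\ell_1,\dots,\ell_n)$ is, up to constants, the disintegration of $A^{-1}\QD_{1,0}$ over the event that the total boundary length equals $\ell_1+\cdots+\ell_n =: \ell$, i.e. governed by the law of $(A,L)$ under $A^{-1}\QD_{1,0}$ conditioned on $L = \ell$. The key point is that the dependence on $(\ell_1,\dots,\ell_n)$ enters only through $\ell$.

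Then I would compute $|\QD_{0,n}(\ell_1,\dots,\ell_n)|$ as a constant times the "$L=\ell$ density" of $A^{-1}\QD_{1,0}$, namely $\int A^{-1}\,\QD_{1,0}(\ell)(dh)\cdot(\text{Jacobian factors})$. Using the scaling identity from the first step to reduce to $\ell = 1$ and Lemma~\ref{prop-one-point-disk-integrability} to control the $\ell$-power, one gets $|\QD_{0,n}(\ell_1,\dots,\ell_n)| = C_\gamma \ell^{-b}$ for the appropriate exponent $b$; matching the known $n$-dependence (the extra $L^n$ weighting shifts the exponent of $\ell$ by $n$, and the $A^{-1}$ together with the area-to-length scaling $A \sim L^2$ contributes the remaining shift) should yield exactly $b = \tfrac{4}{\gamma^2}+1$, independent of $n$ after the combinatorial constants are absorbed into $C_\gamma$. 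I would double-check the exponent bookkeeping by the special case $n=2$ against any computation of $\QD_{0,2}$ already in the literature, or by directly matching with the $\gamma=\sqrt{8/3}$ normalization.

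The main obstacle I anticipate is the careful bookkeeping of the measure-theoretic reweightings and the combinatorial $(n-1)!$ / ordering factors in Definition~\ref{def-QD-n-marked} — in particular, verifying rigorously that sampling $n$ ordered boundary points from $\nu_h^\#$ and disintegrating over arc lengths exactly reproduces Lebesgue measure $d\ell_1\cdots d\ell_n$ (with the right constant), and that the $A^{-1}$ weighting interacts correctly with the finiteness of the relevant integrals. The scaling argument itself is standard, but getting the exponent $-\tfrac{4}{\gamma^2}-1$ (rather than off by a power of $\ell$ coming from a mis-tracked Jacobian between $A$, $L$, and the constant shift $c$) requires care; I would isolate this into a short lemma on how $|\QD_{1,0}(\ell)|$, the $L^n$-weighting, and the $A^{-1}$-weighting each rescale under $h \mapsto h + c$.
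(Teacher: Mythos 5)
Your approach is genuinely different from the paper's, and the difference is worth flagging. The paper's proof has three short steps: (i) cite the explicit mass formula $|\QD_{0,2}(\ell_1,\ell_2)|=C_\gamma(\ell_1+\ell_2)^{-4/\gamma^2-1}$ from the literature ([DMS14, Prop.~A.8] and [AHS20, Prop.~7.8]); (ii) deduce $|\QD_{0,1}(\ell)|=\ell^{-1}\int_0^\ell|\QD_{0,2}(\ell_1,\ell-\ell_1)|\,d\ell_1 = C_\gamma\ell^{-4/\gamma^2-1}$ directly from Definition~\ref{def-QD-n-marked}; and (iii) observe that for $n\ge 2$, a sample of $\QD_{0,n}(\ell_1,\dots,\ell_n)$ is obtained from a sample of $\QD_{0,1}(\ell)$ (with $\ell=\sum\ell_i$) by placing $n-1$ boundary points at deterministic arc-length positions, a mass-preserving operation. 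Your second paragraph essentially rediscovers step (iii) — that $|\QD_{0,n}(\ell_1,\dots,\ell_n)|$ depends only on $\ell$ and equals $\ell\,|\QD(\ell)|$ — so that part of the bookkeeping is on target.

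Where the two routes diverge is step (i). You try to avoid the $\QD_{0,2}$ input and instead derive the exponent from the scaling of $\QD_{1,0}$ under $h\mapsto h+c$ (which sends $(A,L)\mapsto (e^{\gamma c}A, e^{\gamma c/2}L)$ and multiplies $\QD_{1,0}$ by $e^{(Q-\gamma)c}$), combined with Lemma~\ref{prop-one-point-disk-integrability}. The scaling computation is correct and does pin down the $\ell$-power: writing $g(\ell):=\QD_{1,0}(\ell)[A^{-1}]$, one finds $g(\ell)=g(1)\ell^{-4/\gamma^2-2}$ and hence $|\QD_{0,n}(\ell_1,\dots,\ell_n)|=\ell\,g(\ell)=g(1)\ell^{-4/\gamma^2-1}$. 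The genuine gap is that scaling alone tells you nothing about whether $g(1)=\QD_{1,0}(1)[A^{-1}]$ is finite and positive — this is a nontrivial left-tail estimate on the quantum area under $\QD_{1,0}(1)$, and it is \emph{not} a consequence of Lemma~\ref{prop-one-point-disk-integrability}, which only gives $\QD_{1,0}(\ell)[1]=C_\gamma\ell^{-4/\gamma^2}$. Your proposal repeatedly invokes Lemma~\ref{prop-one-point-disk-integrability} ``to control the $\ell$-power,'' but that lemma cannot supply the finiteness of the constant. The paper sidesteps this entirely because the $\QD_{0,2}$ formula already packages both the exponent and the finite positive constant. If you wanted to make your route rigorous you would need an independent argument that $\QD_{1,0}(\ell)[A^{-1}]<\infty$, which in practice amounts to re-deriving or citing the same kind of area/boundary-length moment estimate that underlies the $\QD_{0,2}$ formula — at which point the paper's direct citation is the cleaner path.
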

\begin{proof}
By \cite[Proposition A.8]{DMS14} and \cite[Proposition 7.8]{AHS20}, we have $|\QD_{0,2}(\ell_1,\ell_2)|=C_\gamma(\ell_1+\ell_2)^{-\frac{4}{\gamma^2}-1}$.
    From Definition~\ref{def-QD-n-marked}, $|\QD_{0,1}(\ell)|=\ell^{-1}\int_0^\ell |\QD_{0,2}(\ell_1,\ell-\ell_1)|d\ell_1=C_\gamma\ell^{-\frac{4}{\gamma^2}-1}$. For $n\ge2$ and $\ell=\ell_1+\cdots+\ell_n$, let $(\D,h,1)$ be an embedding of a sample from $\QD_{0,1}(\ell)$. If $x_1=1$ and $x_2,\cdots,x_n$ are on $\partial\D$ in a counterclockwise order such that $\nu_h([x_i,x_{i+1}])=\ell_i$ for $1\le i\le n$ and $x_{n+1}=x_1$, then $(\D,h,1,x_2,\cdots,x_n)/\mathord\sim_\gamma$ has the law $\QD_{0,n}(\ell_1,\cdots,\ell_n)$ by Definition~\ref{def-QD-n-marked}. Hence we have $|\QD_{0,n}(\ell_1,\cdots,\ell_n)|=|\QD_{1,0}(\ell)|=C_\gamma\ell^{-\frac{4}{\gamma^2}-1}$.
\end{proof}

The \emph{thin quantum disk} is a beaded quantum surface obtained by the concatenation of Poisson point process of quantum disks with two marked points \cite[Section 2.4]{AHS20}. 

\begin{definition}\label{def-QD}
    For $\gamma\in(\sqrt{2},2)$,
    sample $T$ from the infinite measure $(\frac{4}{\gamma^2}-1)^{-2} \rm{Leb}_{\R_+}$, and then sample a Poisson point process $\{(u, \mathcal{D}_u)\}$ with the intensity measure $\mathbf{1}_{t \in [0, T]} dt \times \QD_{0,2}$. Then define the infinite measure $\cM_{0,2}^{\disk}(\gamma^2-2)$ to be the law of the ordered (according to the order by $u$) collection of doubly-marked  quantum disks $\{\mathcal{D}_u\}$. We call a sample from $\cM_{0,2}^{\disk}(\gamma^2-2)$ a thin quantum disk with weight $\gamma^2-2$.
    
    For a sample from $\cM_{0,2}^{\disk}(\gamma^2-2)$, we define its left (resp. right) quantum boundary length to be the sum of left (resp. right) quantum boundary lengths of the quantum disks $\{\mathcal{D}_u\}$. For $\ell_1,\ell_2>0$, let $\cM_{0,2}^\disk(\gamma^2-2)(\ell_1,\ell_2)$ be the disintegration $\cM_{0,2}^\disk(\gamma^2-2)=\int_{\R_+^2}\cM_{0,2}^\disk(\gamma^2-2)(\ell_1,\ell_2)d\ell_1d\ell_2$ such that samples from $\cM_{0,2}^\disk(\gamma^2-2)(\ell_1,\ell_2)$ have left (resp. right) quantum boundary length $\ell_1$ (resp. $\ell_2$) .
\end{definition}

Next we recall the notion of quantum triangle defined in~\cite{ASY22}. The following definition is consistent with~\cite[Definition 2.17 and Definition 2.18]{ASY22} according to~\cite[Proposition 2.18]{AHS21}.

\begin{definition}
\label{def-QT}
     Let $\QT(2,2,2):=\frac{2}{\gamma(Q-\gamma)}\QD_{0,3}$. We call a sample from $\QT(2,2,2)$ a thick quantum triangle with weight 2. Next we define quantum triangles with thin vertices for $\gamma\in(\sqrt{2},2)$.

     For $W_1, W_2, W_3 \in \{2,\gamma^2-2\}$, let $I=\{i\in\{1,2,3\}:W_i=\gamma^2-2\}$. 
     Suppose $I\neq\emptyset$, sample $(S_0, (S_i)_{i \in I})$ from
    $$
    \QT(2, 2, 2) \times \prod_{i \in I}\left(\frac{4}{\gamma^2}-1\right)\cM^{\disk}_{0,2}(\gamma^2-2).
    $$  
    Embed $S_0$ as $(\widetilde{D}, \phi, \tilde{a}_1, \tilde{a}_2, \tilde{a}_3)$. For each $i \in I$, embed $S_i$ as $(\widetilde{D}_i, \phi, \tilde{a}_i, a_i)$
     such that $\widetilde{D}_i$ are disjoint and $\widetilde{D}_i \cap \widetilde{D} = \tilde{a}_i$. 
    For each $i \notin I$, we set $a_i = \tilde{a}_i$. 
    Let $D = \widetilde{D} \cup \bigcup_{i \in I} \widetilde{D}_i$, and define $\QT(W_1, W_2, W_3)$ to be the law of $(D, \phi, a_1, a_2, a_3)/\mathord\sim_{\gamma}$. We call a sample from $\QT(W_1,W_2,W_3)$ a quantum triangle with weight $W_1,W_2,W_3$.
\end{definition}
\begin{figure}[htbp]
    \centering
    \includegraphics[width=0.5\linewidth]{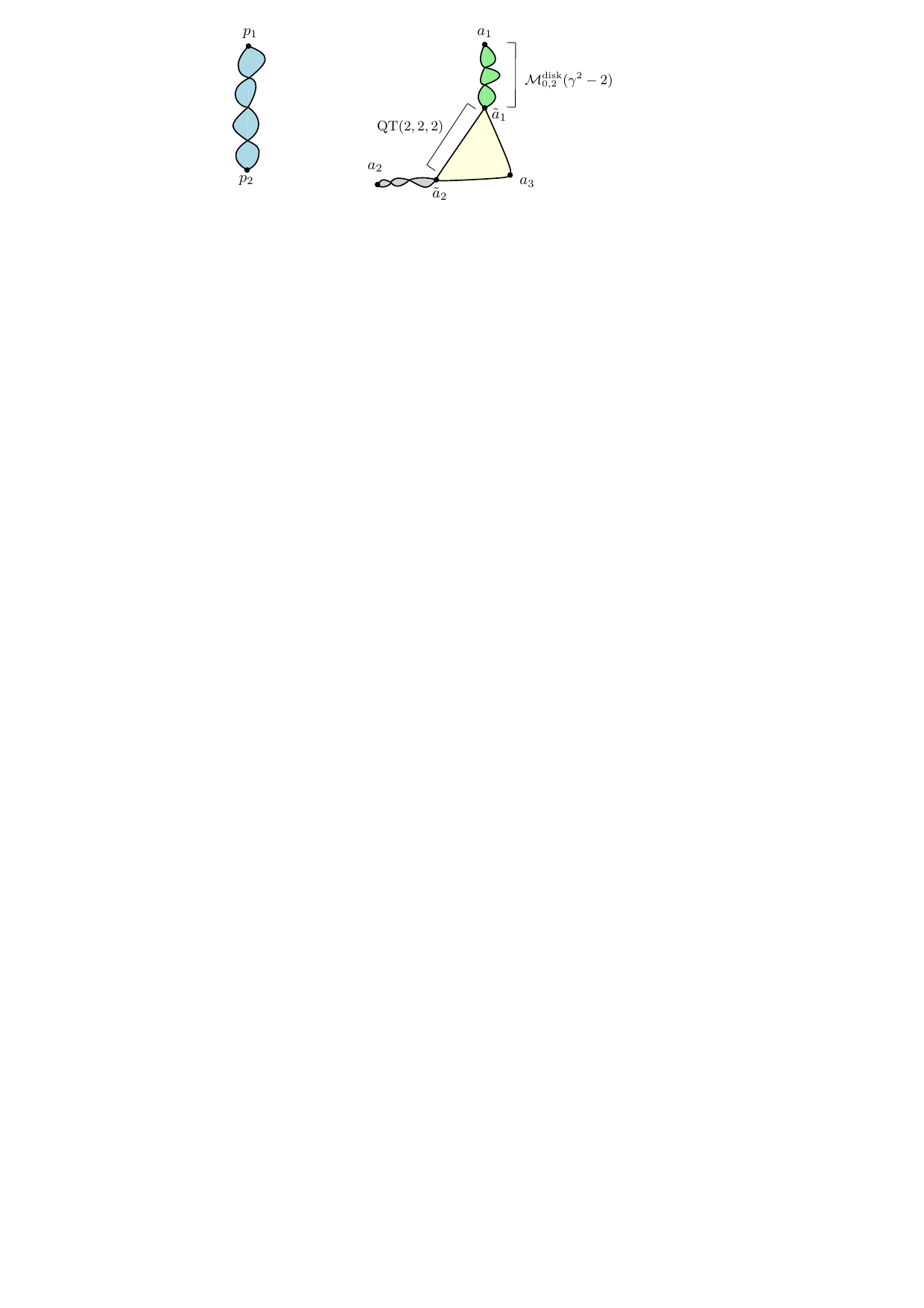}
    \caption{
    \textbf{Left:} A thin quantum disk $\cM^{\disk}_{0,2}(\gamma^2-2)$ embedded as $(D,\phi,p_1,p_2)$.
    \textbf{Right:} A quantum triangle $\QT(\gamma^2-2,\gamma^2-2,2)$ embedded as $(D,\phi,a_1,a_2,a_3)$. }
    \label{fig:weld-BM2}
\end{figure}

For $\ell_1,\ell_2,\ell_3>0$ and $W_1, W_2, W_3 \in \{2,\gamma^2-2\}$, define $\QT(W_1,W_2,W_3;\ell_1,\ell_2,\ell_3)$ by the disintegration 
    $$ \QT(W_1,W_2,W_3)=\int_{\R_+^3}\QT(W_1,W_2,W_3;\ell_1,\ell_2,\ell_3)d\ell_1d\ell_2d\ell_3,
    $$
    where a sample from $\QT(W_1,W_2,W_3;\ell_1,\ell_2,\ell_3)$ has quantum lengths $\ell_1,\ell_2,\ell_3$ for the boundary arc between the two vertices with weights $(W_1,W_2)$,  $(W_2,W_3)$, and $(W_3,W_1)$ respectively.
We also define 
$$\QT(W_1,W_2,W_3;\ell_1,\ell_2)=\int_0^\infty \QT(W_1,W_2,W_3;\ell_1,\ell_2,\ell_3)d\ell_3$$ 
and $\QT(W_1,W_2,W_3;\ell_1)=\int_{0}^\infty\QT(W_1,W_2,W_3;\ell_1,\ell_2)d\ell_2$.

The following lemma relates the quantum triangle with the thin quantum disk above.

\begin{lemma}[{\cite[Lemma 6.12]{ASY22}}]
    \label{lem-add-point}
    For $\gamma\in(\sqrt{2},2)$ and a sample from $\cM_{0,2}^{\disk}(\gamma^2-2)$, let $L$ be its left quantum boundary length.     
    Consider the quantum surface obtained by first sampled from $L\cM_{0,2}^{\disk}(\gamma^2-2)$, then sample a boundary marked point on the left boundary arc according to the probability measure proportional to the left quantum boundary length measure. Then the law of this resulting three-pointed quantum surface is a constant multiple of $\QT(2,\gamma^2-2, \gamma^2-2)$.
\end{lemma}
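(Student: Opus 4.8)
\textbf{Proof proposal for Lemma~\ref{lem-add-point}.}

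The plan is to compare both sides as disintegrations over boundary lengths and check that the length-weighted total masses and the conditional laws match. First I would recall from Definition~\ref{def-QD} that a sample from $\cM_{0,2}^{\disk}(\gamma^2-2)$ is a Poissonian concatenation of doubly-marked quantum disks $\{\mathcal{D}_u\}_{u\in[0,T]}$ indexed by a Poisson point process with intensity $\mathbf{1}_{t\in[0,T]}dt\times\QD_{0,2}$, where $T$ is drawn from $(\frac{4}{\gamma^2}-1)^{-2}\mathrm{Leb}_{\R_+}$. Weighting by the left boundary length $L=\sum_u \ell_u^{\mathrm{left}}$ and then adding a marked point $p$ on the left boundary arc proportionally to the left length measure amounts, by the standard Poissonian ``size-biasing'' / Palm calculus argument, to distinguishing one of the disks $\mathcal{D}_{u_*}$ (the one containing $p$) with its left length biased, while the remaining disks still form two independent thin quantum disks (one to the left of $u_*$, one to the right) with the same intensity structure. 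So the picture on the left-hand side is: a thin quantum disk, then a $\QD_{0,2}$ with a marked point on its left arc, then another thin quantum disk, all concatenated. This is exactly the structure in Definition~\ref{def-QT} for $\QT(2,\gamma^2-2,\gamma^2-2)$ once we identify the distinguished middle piece with a thick quantum triangle $\QT(2,2,2)$ carrying an extra marked point: indeed, $\QD_{0,2}$ with an independent uniform-by-length marked point on one arc is, up to the constant in Definition~\ref{def-QT} relating $\QT(2,2,2)$ and $\QD_{0,3}$, a thick quantum triangle $\QT(2,2,2)$ (the three marked points being the two original ones plus the new one).

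Concretely, the steps I would carry out are: (1) Apply the Poissonian Palm formula: for a Poisson point process weighted by a linear functional of the points (here $L$), sampling a point proportionally to that functional produces the original PPP plus one extra independent ``typical'' atom, here an independent $\QD_{0,2}$ biased by its left length; and splitting $[0,T]$ at the location of that atom, together with the factorization of the intensity $\mathbf{1}_{t\in[0,T]}dt$ and of the $T$-distribution $(\frac{4}{\gamma^2}-1)^{-2}\mathrm{Leb}_{\R_+}$ into a convolution, shows the two leftover halves are independent samples from $(\frac{4}{\gamma^2}-1)\cM_{0,2}^{\disk}(\gamma^2-2)$ each (the constants bookkept carefully). (2) Identify the biased middle disk: $L\,\QD_{0,2}$ with a uniform-by-left-length marked point equals (a constant times) $\QD_{0,3}$ by Definition~\ref{def-QD-n-marked} — this is precisely the ``add a point by length'' construction of $\QD_{0,n}$ from $\QD_{0,n-1}$ — and hence equals a constant times $\QT(2,2,2)$ by Definition~\ref{def-QT}. (3) Match the geometry with Definition~\ref{def-QT}: the resulting surface is $\QT(2,2,2)$ with two thin quantum disks glued at two of its three vertices, which by definition is $\QT(2,\gamma^2-2,\gamma^2-2)$; one must check the orientation/labelling so that the thin vertices are the two weight-$(\gamma^2-2)$ vertices and the new marked point is the weight-2 vertex, consistent with the arc structure. (4) Track all multiplicative constants (the $(\frac{4}{\gamma^2}-1)^{\pm2}$ factors, the $\frac{2}{\gamma(Q-\gamma)}$ from $\QT(2,2,2)=\frac{2}{\gamma(Q-\gamma)}\QD_{0,3}$, and the normalization of the marked-point measure) to confirm the two laws agree up to a finite positive constant — but since the statement only claims proportionality, this bookkeeping need not be done in full.

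The main obstacle I anticipate is step (1): making the Poissonian Palm/size-biasing argument rigorous in the setting of an infinite measure $\cM_{0,2}^{\disk}(\gamma^2-2)$ with the extra layer of randomness coming from $T$ being drawn from an infinite measure, and correctly handling the convolution structure when the distinguished disk ``cuts'' the interval $[0,T]$ at a random location — in particular verifying that conditionally on the distinguished disk the two leftover pieces are genuinely independent thin quantum disks with the claimed intensities, rather than something coupled through $T$. The identity $\mathbf{1}_{t\in[0,T_1+T_2]}dt = \mathbf{1}_{t\in[0,T_1]}dt + \mathbf{1}_{t\in[T_1,T_1+T_2]}dt$ together with $\mathrm{Leb}_{\R_+}(dT) = \int \mathrm{Leb}_{\R_+}(dT_1)\mathrm{Leb}_{\R_+}(dT_2)\,\delta(T-T_1-T_2)$ is the key computational input, and I would lean on the analogous arguments already in \cite[Section 2.4]{AHS20} and in the proof of \cite[Lemma 6.12]{ASY22} rather than redoing them from scratch. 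A secondary subtlety is checking that the ``add a point proportionally to the left length'' operation commutes appropriately with the concatenation, i.e. that conditionally on the point landing in the distinguished disk, its law within that disk is indeed uniform-by-left-length; this is immediate from the definition of the length-weighting but worth stating explicitly.
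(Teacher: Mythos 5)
The paper does not give its own proof of this statement; it is quoted verbatim from \cite[Lemma 6.12]{ASY22}. Your proposal is correct and is essentially the standard argument used there: apply the Mecke/Palm formula to the Poisson point process underlying $\cM_{0,2}^\disk(\gamma^2-2)$ to realize the left-length weighting and uniform-by-length mark as a size-biased distinguished bead plus an independent copy of the PPP, use the identity $\mathbf{1}_{u\in[0,T]}\,du\,dT=dT_1\,dT_2$ (with $T_1=u$, $T_2=T-u$) together with the restriction property of the PPP to split into two independent thin quantum disks, and identify the biased bead with its extra marked point as $\QD_{0,3}\propto\QT(2,2,2)$ via Definition~\ref{def-QD-n-marked}. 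Your bookkeeping of the $(\frac{4}{\gamma^2}-1)$ factors also matches: the convolution produces two copies of $\cM_{0,2}^\disk(\gamma^2-2)$ each scaled by $(\frac{4}{\gamma^2}-1)^{-1}$ relative to the target, which is exactly absorbed by the factor $\prod_{i\in I}(\frac{4}{\gamma^2}-1)$ in Definition~\ref{def-QT}, leaving only the overall constant $\frac{2}{\gamma(Q-\gamma)}$ from $\QT(2,2,2)=\frac{2}{\gamma(Q-\gamma)}\QD_{0,3}$. Your flagged concern about the infinite measure on $T$ is not actually a gap: the Palm step is applied conditionally on $T$ where the PPP is a genuine probability measure, and the subsequent $dT$-integration is then what produces the convolution identity, so no additional regularization is needed.
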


We end this section by recalling the quantum sphere with marked points. The following definition is also consistent with~\cite{DMS14} by~\cite[Proposition 2.26]{AHS21}.

\begin{definition}
    Fix three distinct points $z_1, z_2, z_3\in\C$, sample $\phi$ from $\frac{\pi\gamma}{2(Q-\gamma)^2}\LF_\C^{(\gamma,z_1),(\gamma,z_2),(\gamma,z_3)}$. 
    We define $\QS_3$ to be the law of $(\C,\phi,z_1,z_2,z_3)/\mathord\sim_\gamma$.
    
    Let $A$ be the total quantum area. For $(\C,\phi,z_1,z_2,z_3)$ as an embedding of a sample from $A^{-1}\QS_3$, we define $\QS_2$ to be the law of $(\C,\phi,z_1,z_2)/\mathord\sim_\gamma$.
\end{definition}

\subsection{SLE$_{8/3}$ loop and conformal welding of quantum surfaces}
\label{sec-conformal-welding}
We first recall the notion of conformal welding of quantum surfaces. See e.g. \cite[Section 3.5]{DMS14}, \cite[Section 4.1]{ASY22} and \cite[Section 2.5]{Ang24} for more details. Fix $\gamma\in(0,2)$ and $\kappa=\gamma^2$, given a certain pair of independent quantum surfaces $(\mathcal{D}_1,\mathcal{D}_2)$ and a homeomorphism identifying there boundaries, we can always topologically glue these two surfaces together to obtain a surface $(\mathcal{D},\eta)$, where $\eta$ is the gluing interface. Then $\mathcal{D}\backslash\eta$ has a conformal structure inherited from $\mathcal{D}_1$ and $\mathcal{D}_2$. The \textit{conformal welding} is a way of extending the conformal structure to the whole surface $\mathcal{D}$. In this paper, the homeomorphism is given by preserving the boundary quantum length. Since the conformal structure is local, the existence of conformal welding is due to the local absolutely continuity of quantum surfaces \cite{She16a} and the welding interface is locally absolutely continuous with respect to $\SLE_{\kappa}$. The uniqueness of conformal welding follows from the conformal removability of $\SLE_{\kappa}$ for $\kappa\in(0,4)$ and its variants~\cite{JS-removable,Rohde-Schramm-basic}. We write $\Weld(\mathcal{D}_1,\mathcal{D}_2)$ for the conformal welding of $\mathcal{D}_1$ and $\mathcal{D}_2$. Moreover, $\Weld(\mathcal{D}_1,\mathcal{D}_2)$ is measurable with respect to  $\mathcal{D}_1$ and $\mathcal{D}_2$.

We also need the notion of the \textit{uniform conformal welding} considered in \cite{AHS23-loop,acsw-loop}. Let $(\mathcal{D}_1,\mathcal{D}_2)$ be a a certain pair of independent quantum surfaces. For $i=1,2$, suppose $\mathcal{B}_i$ is a boundary component of $\mathcal{D}_i$ with the same finite total quantum length which is homeomorphism to a simple loop. As discussed above, for each $p_1\in\mathcal{B}_1$ and $p_2\in\mathcal{B}_2$, there exists a unique conformal welding of $(\mathcal{D}_1,\mathcal{D}_2)$  identifying $p_1$ and $p_2$ and preserving the boundary quantum length on $\mathcal{B}_1$ and $\mathcal{B}_2$. Now, let $\textbf{p}_1\in\mathcal{B}_1$ and $\mathbf{p}_2\in\mathcal{B}_2$ be independently sampled from the probability measure proportional to the corresponding boundary quantum length measure. Then the conformal welding of  $(\mathcal{D}_1,\mathcal{D}_2)$ identifying $\mathbf{p}_1$ and $\mathbf{p}_2$ and preserving the boundary quantum length on $\mathcal{B}_1$ and $\mathcal{B}_2$ is called the \textit{uniform conformal welding} of  $(\mathcal{D}_1,\mathcal{D}_2)$. In the following, we will also use $\Weld(\mathcal{D}_1,\mathcal{D}_2)$ to denote the uniform conformal welding of $\mathcal{D}_1$ and $\mathcal{D}_2$ in case of no ambiguity. 

The $\SLE_{8/3}$ loop measure on $\C$ is a canonical infinite measure on simple loops characterized by the conformal restriction property~\cite{werner-sle-loop}.  Namely, let $\mu$ be a measure on the simple loops on $\C$. For any simply connected $D\subset\C$, denote $\mu_D$ to be the restriction of $\mu$ to the loops contained in $D$. $\mu$ is said to satisfy \emph{conformal restriction}, if for any simply connected $D,D'\subset\C$ and any conformal map $f:D\to D'$, the pushforward of $\mu_D$ under $f$ equals $\mu_{D'}$. Then we have

\begin{proposition}[{\cite[Theorem 1]{werner-sle-loop}}]\label{prop:werner}
Suppose $\mu$ is a measure on the simple loops on $\C$ satisfying conformal restriction. Then up to a multiplicative constant, $\mu$ is equal to the $\SLE_{8/3}$ loop measure.
\end{proposition}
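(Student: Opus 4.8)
The result is Werner's characterization of the $\SLE_{8/3}$ loop measure, and the plan is to reproduce the two halves of his argument: \emph{existence} of a conformally restricting measure on simple loops, and \emph{uniqueness} up to a positive multiplicative constant. For existence I would exhibit a concrete reference measure --- a constant multiple of the law of the outer boundary of a loop sampled from the Brownian loop measure on $\C$, equivalently the outer boundary of an $\SLE_{8/3}$ bubble. That this measure satisfies conformal restriction follows from the restriction property of $\SLE_{8/3}$ (Lawler--Schramm--Werner) together with the conformal invariance and restriction behaviour of the Brownian loop measure. This is what makes the statement non-vacuous; it is standard and I would only sketch it.

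For uniqueness, let $\mu$ be any measure on simple loops (which we view as living on the Riemann sphere $\widehat{\C}$) satisfying conformal restriction. First I would reduce the problem. Every simple loop on $\widehat{\C}$ lies in a proper simply connected subdomain --- e.g.\ the complement of a small round ball inside one of its two complementary components --- so by the restriction axiom $\mu$ is determined by its restriction to loops contained in $\D$; and a loop in $\D$ surrounds some interior point, which an automorphism of $\D$ sends to $0$, so by conformal covariance $\mu$ is in turn determined by $\mu_0 :=$ the restriction of $\mu$ to loops in $\D$ surrounding $0$.

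Next I would identify $\mu_0$ with a conformal restriction measure on hulls. For a loop $\ell$ around $0$ let $K_\ell$ be the filled loop, i.e.\ the complement in $\widehat{\C}$ of the component of $\widehat{\C}\setminus\ell$ containing $\infty$; this is a compact hull with $\partial K_\ell=\ell$ and $0$ in its interior. For a hull $A$ attached to $\partial\D$ with $\D\setminus A$ simply connected and containing $0$, let $g_A\colon\D\setminus A\to\D$ be the conformal map with $g_A(0)=0$ and $g_A'(0)>0$. Applying the restriction axiom to the pair of domains $(\D\setminus A,\D)$ with the map $g_A$, and noting that $\{\ell\subset\D\setminus A\}=\{K_\ell\cap A=\emptyset\}$, one obtains the exact covariance $(g_A)_*\big(\mu_0\big|_{\{K_\ell\cap A=\emptyset\}}\big)=\mu_0$. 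So $\mu_0$, regarded as a measure on hulls around $0$, is a conformal restriction measure, with the extra feature that its hull boundary is always a simple loop.

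The final step is the classification of such measures. Exploring $\ell$ as a curve and using the covariance above, each chordal piece of $\ell$ inherits the chordal restriction property; by the Lawler--Schramm--Werner classification the only simple curve with that property is $\SLE_{8/3}$, so the interface is forced to be of $\SLE_{8/3}$ type. Assembling the pieces --- using that a loop has no distinguished side, so the same description governs both complementary components of $\ell$ --- shows that $\mu_0$ is the $\SLE_{8/3}$ loop measure restricted to loops in $\D$ around $0$, up to a positive constant; by the reduction in the second step, so is $\mu$. I expect this classification to be the main obstacle, specifically the passage from the abstract restriction axiom to an SLE description of the loop: it is \emph{simplicity} of the loop that pins down $\kappa=8/3$ and rules out the other conformally restricting loop measures (such as the Brownian loop measure, which is supported on non-simple loops), whereas the reduction and the covariance computation in the earlier steps are comparatively routine.
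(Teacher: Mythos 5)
The paper does not give a proof of this proposition: it imports it wholesale as Theorem~1 of Werner's paper \cite{werner-sle-loop}, so the relevant comparison is with Werner's argument. Your sketch gets the high-level architecture right (existence via outer boundaries of Brownian loops, equivalently $\SLE_{8/3}$ bubbles; uniqueness by reducing to loops in $\D$ around $0$), and your reduction and the exact covariance $(g_A)_*\big(\mu_0|_{\{K_\ell\cap A=\emptyset\}}\big)=\mu_0$ are both correct and indeed routine.

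The gap is in the step you yourself flag as the obstacle, and it is more than a missing detail: the mechanism you describe is not one that works. The phrase ``exploring $\ell$ as a curve and using the covariance above, each chordal piece of $\ell$ inherits the chordal restriction property'' suggests either a Loewner-type exploration with a domain Markov property --- which is not assumed and does not follow from restriction alone --- or a decomposition of a single loop into arcs, which does not make sense at the level of a $\sigma$-finite measure on whole loops. The covariance you derived constrains how $\mu_0$ transforms when a boundary hull is removed; it says nothing directly about conditional laws of arcs of $\ell$. Werner's actual route is to look at loops coming within distance $\varepsilon$ of a marked boundary point (or two), normalize by an a priori unknown power $\varepsilon^{-\alpha}$, and show that a non-degenerate limit exists and defines a measure on boundary-touching arcs that \emph{does} satisfy chordal conformal restriction; the LSW classification then forces $\alpha=5/8$ and identifies the limit as chordal $\SLE_{8/3}$. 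Establishing the existence and non-triviality of this renormalized limit --- in particular that a scaling exponent $\alpha$ exists at all --- and then going back from the chordal marginal to the full loop measure (including fixing the overall normalization) are the technical core of Werner's uniqueness proof, and they are entirely absent from the sketch. As written, the proposal replaces the hard step with an assertion that the conclusion holds; to repair it you would need to supply the pinning/renormalization argument and the reconstruction step, not just invoke the LSW chordal classification.
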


In particular, as pointed out in~\cite{werner-sle-loop}, the $\SLE_{8/3}$ loop measure can be obtained by taking outer boundaries from the Brownian loop measure on $\C$. (See~\cite{zhan-loop} for the construction of the $\SLE_\kappa$ loop measure with $\kappa\in(0,8)$ and~\cite{baverez2024cftsleloopmeasures} for its uniqueness with $\kappa\in(0,4]$.)

The following proposition shows that the conformal welding of two independent quantum disks gives an SLE-decorated quantum surface. Let $\SLE_{8/3}^{\sep}$ be the restriction of the $\SLE_{8/3}$ loop measure on $\C$ to the loops that separate 0 and $\infty$.

\begin{proposition}[{\cite[Proposition 6.5]{acsw-loop}}]
    \label{prop-weld-loop}
    Fix $\gamma=\sqrt{8/3}$. Let $(\C,\phi,0,\infty)$ be an embedding of a sample from $\QS_2$. Let $\eta$ be independently sampled from $\SLE^{\sep}_{8/3}$. Then there exists a constant $C>0$, such that the law of the decorated quantum surface $(\C,\phi,\eta,0,\infty)/\mathord\sim_\gamma$ is given by
    \begin{equation*}
    C\int_{0}^\infty\ell\Weld(\QD_{1,0}(\ell),\QD_{1,0}(\ell))d\ell.
    \end{equation*}
\end{proposition}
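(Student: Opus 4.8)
\textbf{Proof proposal for Proposition~\ref{prop-weld-loop}.}

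The plan is to identify the law of the welding interface $\eta$ under $\Weld(\QD_{1,0}(\ell),\QD_{1,0}(\ell))$ (integrated against $C\ell\,d\ell$) as a separating loop for the underlying sphere, and then use Proposition~\ref{prop:werner} to pin it down as $\SLE_{8/3}^{\sep}$. First I would recall that the quantum sphere $\QS_2$ with two marked points $0,\infty$ decomposes along any fixed quantum-length-$\ell$ curve separating $0$ and $\infty$ into two independent one-pointed quantum disks of boundary length $\ell$: this is the standard quantum sphere decomposition (the two-pointed quantum sphere is the conformal welding of two $\QD_{1,0}$'s of matching length, integrated with the appropriate weight $\ell\,d\ell$), which follows from the Markov/uniform-welding structure developed in \cite{AHS20,AHS23-loop,acsw-loop}. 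So the content to establish is precisely that the \emph{gluing interface} carries the law $C\,\SLE_{8/3}^{\sep}$ for some constant.

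The key steps, in order: (1) Fix the embedding $(\C,\phi,0,\infty)$ of $\QS_2$ and argue, using the locality of conformal welding and the local absolute continuity of quantum disks with respect to the GFF \cite{She16a}, that conditionally on $\phi$ the interface $\eta$ is a simple loop separating $0$ and $\infty$, locally absolutely continuous with respect to $\SLE_{8/3}$. (2) Define a measure $\mu$ on simple loops in $\C$ by pushing forward $C\int_0^\infty \ell\,\Weld(\QD_{1,0}(\ell),\QD_{1,0}(\ell))\,d\ell$ under the map "take the interface $\eta$ and forget the field," then show that $\mu$ satisfies the conformal restriction property of \cite{werner-sle-loop}: for a conformal map $f:D\to D'$ between simply connected domains, the restriction of $\mu$ to loops in $D$ pushes forward under $f$ to the restriction of $\mu$ to loops in $D'$. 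This should follow because cutting out the region bounded by a loop $\eta\subset D$ from the welded sphere produces, by the quantum-disk cutting rules, again a welding of quantum disks whose law is conformally natural; the Jacobian $Q\log|f'|$ terms in the coordinate change \eqref{eq-quantun-surface} are exactly absorbed by the definition of quantum surface, so no anomalous factor appears and the loop law transforms covariantly. (3) Conclude from Proposition~\ref{prop:werner} that $\mu = c\,\SLE_{8/3}^{\lp}$ for a constant $c$, and since $\mu$ is supported on loops separating $0$ and $\infty$, in fact $\mu = c\,\SLE_{8/3}^{\sep}$; absorbing $c$ into $C$ and matching total masses via the length disintegration (using Lemma~\ref{prop-one-point-disk-integrability} and $\gamma=\sqrt{8/3}$) gives the stated identity, and uniqueness of conformal welding (conformal removability of $\SLE_{8/3}$, \cite{JS-removable,Rohde-Schramm-basic}) ensures the joint law of $(\phi,\eta)$ is determined.

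The main obstacle I expect is step (2): verifying the exact conformal restriction property for the loop marginal of the welded surface, rather than just a quasi-invariance up to a Radon--Nikodym factor. One must be careful that the weight $\ell\,d\ell$ in the integral, the one-pointed (as opposed to two-pointed) nature of $\QD_{1,0}$, and the normalization of the quantum sphere all conspire so that restricting to $\{\eta\subset D\}$ and uniformizing $D$ reproduces the same family with no leftover density. The cleanest route is probably to prove this directly from the known decomposition $\QS_2 = C\int_0^\infty \ell\,\Weld(\QD_{1,0}(\ell),\QD_{1,0}(\ell))\,d\ell$ together with the conformal restriction property of the \emph{loop-decorated} quantum sphere (which itself follows from Weyl anomaly / KPZ-type invariance of the LQG measures), so that the loop marginal inherits conformal restriction essentially for free; this is the argument structure used in \cite{acsw-loop}, whose Proposition 6.5 we are invoking, so in the final write-up one may simply cite that and note the matching of constants.
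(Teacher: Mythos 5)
The paper does not prove this proposition; it is quoted from \cite[Proposition~6.5]{acsw-loop}, so there is no in-house argument to compare yours against. Judged on its own terms, your proposal has two genuine gaps. First, it is circular: your opening claim --- that $\QS_2$ ``decomposes along any fixed quantum-length-$\ell$ curve separating $0$ and $\infty$ into two independent one-pointed quantum disks of boundary length $\ell$'' --- is false for a deterministic curve (restricting a GFF-type field to a subdomain does not yield a free-boundary quantum disk), and for the random welding interface it is exactly the content of the proposition you are trying to prove; you even cite \cite{acsw-loop}, the source of the result, to justify it. With that ``known decomposition'' granted, there is nothing left to prove.

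Second, the Werner-uniqueness step is misapplied. Proposition~\ref{prop:werner} characterizes a measure $\mu$ on \emph{all} simple loops of $\C$ satisfying conformal restriction for every conformal map $f:D\to D'$ between simply connected domains. Your $\mu$ is the pushforward of the welding measure under ``forget the field,'' so it is supported only on loops separating $0$ and $\infty$. Its restriction to a simply connected $D$ not containing $0$ is the zero measure, while the pushforward under $f:D\to D'$ to a domain $D'\ni 0$ would have to match $\mu_{D'}\neq 0$; conformal restriction in Werner's sense fails trivially. You would need either to integrate out the marked points $0,\infty$ to produce a genuine measure on all loops (using the M\"obius covariance of $\QS_2$ and of $\SLE_{8/3}^{\sep}$), or to use a characterization of the restricted family $\SLE^{\lp}_{8/3,D}$, neither of which appears in your outline. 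Finally, even if the loop marginal were identified, the statement is a \emph{joint} law assertion: the decorated surface $(\C,\phi,\eta,0,\infty)/\!\sim_\gamma$ must equal $\QS_2\otimes\SLE^{\sep}_{8/3}$ with $\phi$ and $\eta$ independent, and your argument says nothing about that independence. The actual proof in \cite{acsw-loop} proceeds by a different route (rerooting/resampling arguments built on the chordal quantum-disk weldings of \cite{AHS20,AHS23-loop}), which you should consult and cite if you wish to keep this step short.
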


\section{Brownian motion from conformal welding}
\label{section: BM-welding}

In this section we fix $\gamma=\sqrt{8/3}$.
Recall that $(B_t)_{t\ge0}$ is a Brownian motion starting from 0 and $\tau_{\D}$ is its hitting time on $\S^1$. Let $\mathsf{P}$
be the law of the outer boundary $\ell$ of $(B_t)_{t\in[0,\tau_{\D}]}$. For $L>0$, let $(\D,\phi,0)$ be an embedding of an independent sample from $\QD_{1,0}(L)$. We write $\QD_{1,0}(L)\otimes\mathsf{P}$ for the law of the curve decorated quantum surface $(\D,\phi,\ell,0)/\mathord\sim_\gamma$. Note that it does not depend on choices of embeddings of $(\D,\phi,0)$ due to the conformal invariance of $\mathsf{P}$.

The main result in this section is the following conformal welding description of $\QD_{1,0}(L)\otimes\mathsf{P}$, which is an important ingredient in the proof of Theorem~\ref{prop: key lemma} in Section~\ref{section:proof of thm1.2}.
\begin{theorem}\label{thm:weld-BM1}
    There exists a constant $C>0$, such that
\begin{equation}
\label{eq:weldBM1}
\QD_{1,0}(L)\otimes\mathsf{P}=C\int_{\R_+^3}\Weld\left(\QD_{1,1}(b),\QD_{0,4}(a,b,c,L),\cM^{\disk}_{0,2}\left(\frac23\right)(a,c)\right)dadbdc
\end{equation}
Here $\Weld\left(\QD_{1,1}(b),\QD_{0,4}(a,b,c,L),\cM^{\disk}_{0,2}\left(\frac23\right)(a,c)\right)$ denotes the law of the curve decorated quantum surfaces obtained by conformally welding a triple of quantum surfaces sampled from $\QD_{1,1}(b)\times\QD_{0,4}(a,b,c,L)\times\cM^{\disk}_{0,2}(\frac{2}{3})(a,c)$.
See Figure \ref{fig:weld-BM1}.

\end{theorem}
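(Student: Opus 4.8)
\textbf{Proof strategy for Theorem~\ref{thm:weld-BM1}.}

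The plan is to decompose the Brownian motion $B[0,\tau_\D]$ into three pieces using the radial $\SLE_6$ description from Lemma~\ref{lem-BM-raSLE6}, and to match each piece with one of the three quantum surfaces on the right-hand side of~\eqref{eq:weldBM1} via a known conformal welding result for radial $\SLE_6$ on the quantum disk. First I would replace the outer boundary $\ell$ of $B[0,\tau_\D]$ by the outer boundary of $\eta[\sigma,\infty)$, where $\eta$ is a radial $\SLE_6$ in $\D$ from a uniformly chosen (harmonic measure) boundary point to $0$ and $\sigma$ is its last hitting time of $\S^1$; by Lemma~\ref{lem-BM-raSLE6} these two random loops have the same law, so $\QD_{1,0}(L)\otimes\mathsf{P}$ is the law of $(\D,\phi,\partial^o(\eta[\sigma,\infty)),0)/\mathord\sim_\gamma$ with $(\D,\phi,0)$ an independent sample from $\QD_{1,0}(L)$. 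The key input is the conformal welding result of~\cite{ASYZ24} for radial $\SLE_6$ on a quantum disk: welding $\QD_{1,0}(L)$ with an independent radial $\SLE_6$ started from a boundary point produces a decorated quantum surface whose law is an explicit integral over boundary-length parameters of a weld of a one-pointed quantum disk, a four-pointed quantum disk, and a thin quantum disk of weight $\frac23 = \gamma^2-2$ (the weight being forced by the $\SLE_6$ strand having $\kappa=6$, whose ``dual'' pieces carry weight $\frac{8/3 \cdot 8/3}{?}$... more precisely the boundary-touching structure of radial $\SLE_6$ produces thin-disk bubbles of weight $2-\gamma^2/2$-type; I will cite the exact statement of~\cite{ASYZ24}).

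Second I would analyze the geometry carefully. The curve $\eta[\sigma,\infty)$ is the terminal portion of radial $\SLE_6$ after its last visit to $\S^1$; because $\SLE_6$ is boundary-touching, $\eta[\sigma,\infty)$ together with the boundary arc cut off at time $\sigma$ bounds the unexplored region containing $0$. Its outer boundary $\partial^o(\eta[\sigma,\infty))$ is a simple loop surrounding $0$. The region it cuts off from $\D$ is the ``inside'' containing $0$, which should be identified with the four-pointed quantum disk $\QD_{0,4}(a,b,c,L)$ (the four marked points being the endpoints of the three arcs of $\S^1$ and $\eta$ that touch it); the region between $\partial^o(\eta[\sigma,\infty))$ and $\eta[\sigma,\infty)$ itself — a chain of bubbles pinched off by the boundary-touching of the $\SLE_6$ — should be the thin quantum disk $\cM^{\disk}_{0,2}(\frac23)(a,c)$; and the small piece containing the tip/the remaining structure near $0$... actually, re-examining, the one-pointed disk $\QD_{1,1}(b)$ with its marked bulk point $0$ must be the innermost region. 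I would make this correspondence precise by tracking which quantum surface in the welding of~\cite{ASYZ24} carries the marked point $0$ and matching the boundary-length parameters $a,b,c,L$ accordingly, using the conformal invariance of $\mathsf{P}$ to fix the embedding.

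Third, having matched the three decorated regions, I would invoke the uniqueness of conformal welding (conformal removability of $\SLE_{\kappa}$ for $\kappa<4$ and its variants, as recalled in Section~\ref{sec-conformal-welding}) to conclude that the decorated quantum surface $(\D,\phi,\partial^o(\eta[\sigma,\infty)),0)/\mathord\sim_\gamma$ has exactly the law on the right-hand side of~\eqref{eq:weldBM1}, with the overall constant $C$ absorbing the normalization constants from $\QD_{1,0}$, the $\SLE_6$ welding theorem of~\cite{ASYZ24}, and the Poissonian structure of the thin disk. The main obstacle I anticipate is the bookkeeping in the second step: correctly identifying which of the three welded surfaces is which, getting the boundary-length arguments $(a,b,c,L)$ assigned to the right arcs, and in particular verifying that the weight of the thin quantum disk is precisely $\gamma^2-2=\frac23$ rather than some other value — this requires carefully invoking the precise form of the radial $\SLE_6$ welding statement in~\cite{ASYZ24} and relating the last-hitting decomposition $\eta[\sigma,\infty)$ there to the ``inner''/``outer'' boundary structure here. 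A secondary subtlety is ensuring the harmonic-measure law of the $\SLE_6$ starting point (Proposition~\ref{thm-raSLE6-last-hitting}) is compatible with the uniform/quantum-length-weighted choice of welding point implicit in the statement of~\cite{ASYZ24}, so that no extra boundary-length weighting is introduced or lost.
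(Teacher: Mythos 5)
Your high-level plan — replace the Brownian outer boundary by the outer boundary of the post-last-hitting portion of radial $\SLE_6$ via Lemma~\ref{lem-BM-raSLE6}, then invoke the conformal welding description of radial $\SLE_6$ on an LQG disk from~\cite{ASYZ24} — is indeed the paper's starting point. But there are two genuine gaps.

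First, you misstate what the~\cite{ASYZ24} welding result (Theorem~\ref{thm-weld-raSLE6}) actually gives: it welds the \emph{two forested boundary arcs of a single forested quantum triangle} $\widetilde{\QT}(\frac23,\frac23,\frac23;\ell,\ell)$ to itself. It does not directly present the decorated surface as a weld of $\QD_{1,1}$, $\QD_{0,4}$, and a thin disk. That three-piece presentation is the \emph{conclusion} of Theorem~\ref{thm:weld-BM1}, not the input, and getting from one to the other is precisely where the work lies.

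Second, the work has two nontrivial stages that your ``bookkeeping'' handwave glosses over. Stage one (Proposition~\ref{prop-weld-last-hit-raSLE} and Corollary~\ref{prop-weld-last-hit-raSLE-2} in the paper) passes from $\QD_{1,1}\otimes\raSLE_6$ to $\QD_{1,0}\otimes\raSLE_6^\sigma$: one must locate the last hitting point $\eta(\sigma)$ on both sides of the welding identity, treat separately the event $A$ that $\eta[0,\sigma]$ winds counterclockwise (using a reflection symmetry to handle $A^c$), and repeatedly decompose and reassemble forested line segments and forested quantum triangles via Lemmas~\ref{lem-add-point}, \ref{lem-f.d.=f.q}, \ref{lem-weld-two-f.d.}; the outcome is the identity $\QD_{1,0}\otimes\raSLE_6^\sigma=C\int_0^\infty\Weld(\widetilde{\QT}(2,\frac23,2;\ell,\ell))d\ell$. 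Stage two (Proposition~\ref{prop-weld-QT=QD}) takes that welding and, keeping \emph{only the outer boundary} of the interface (the simple loop), reorganizes it into the form $\int\Weld(\QD_{1,1}(b),\QD_{0,4}(a,b,c,L),\cM^{\disk}_{0,2}(\frac23)(a,c))\,da\,db\,dc\,dL$; this uses Proposition~\ref{thm-weld-f.l.s}, Lemma~\ref{lem-weld-QT-surround-QD}, and several further recombinations of quantum triangles/disks. Neither stage is a consequence of conformal removability alone; removability only guarantees that welding is well-posed once the pieces are identified, but identifying them is the entire proof. Your proposal correctly flags where the difficulty is (assigning $(a,b,c,L)$ to the right arcs, confirming the weight $\frac23$), but offers no argument for it, and without the forested-surface machinery this is not a bookkeeping exercise.

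A smaller issue: you are not quite sure which piece contains $0$ (you first say $\QD_{0,4}$, then correct to $\QD_{1,1}$). The correct picture, as in Figure~\ref{fig:weld-BM1}, is that $\QD_{1,1}(b)$ is the region inside the loop containing $0$, $\QD_{0,4}(a,b,c,L)$ is the unexplored region touching $\S^1$, and the thin disk $\cM^{\disk}_{0,2}(\frac23)(a,c)$ is the chain of bubbles between them — a consequence of the decomposition in Proposition~\ref{prop-weld-QT=QD}, not something one can read off directly from the $\SLE_6$ welding.
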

\begin{figure}[htbp]
    \centering
    \includegraphics[width=0.6\linewidth]{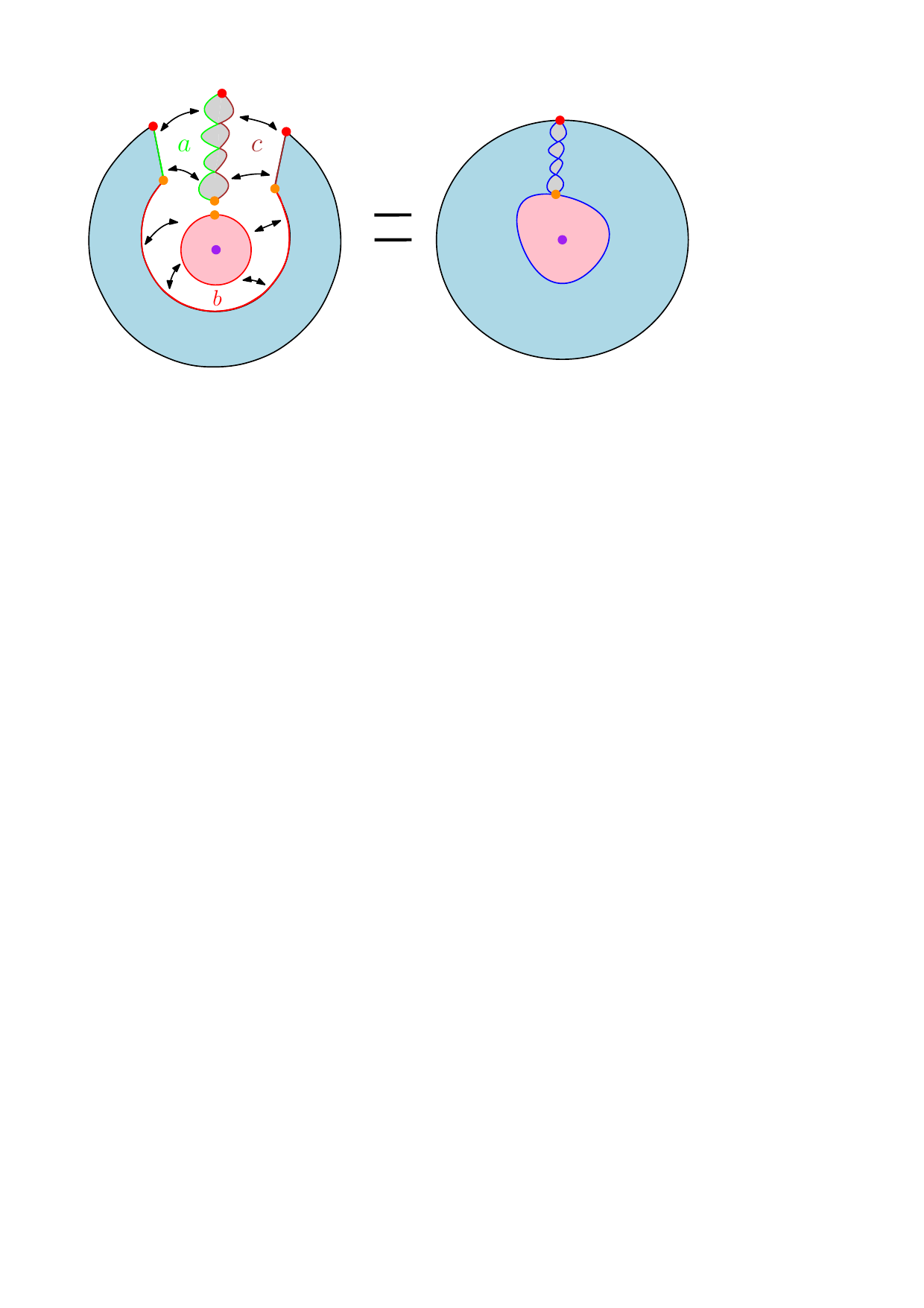}
    \caption{Illustration of Theorem \ref{thm:weld-BM1}. \textbf{Left:} samples of $\QD_{1,1}(b)$, $\QD_{0,4}(a,b,c,L)$ and $\cM^{\disk}_{0,2}\left(\frac23\right)(a,c)$. \textbf{Right:} the blue interface is the outer boundary of $(B_t)_{0\le t\le\tau_{\D}}$.}
    \label{fig:weld-BM1}
\end{figure}

In order to prove Theorem~\ref{thm:weld-BM1}, we first recall some backgrounds of forested quantum surfaces in Section~\ref{section-generalized-quantum-surface}, and review related conformal welding results in Section~\ref{section-welding-forested-surface}. Then we prove Theorem~\ref{thm:weld-BM1} in Section~\ref{section-proof-of-welding}, based on the equivalence between Brownian motion and radial $\SLE_6$ (see the end of Section~\ref{section 2.1}) as well as the conformal welding description for radial $\SLE_6$~\cite{ASYZ24}.

\subsection{Forested lines and forested quantum surfaces}
\label{section-generalized-quantum-surface}

We first recall the definition of forest lines. Let $(X_t)_{t\ge0}$ be a stable L\'evy process of index $\frac{4}{\gamma^2}$ with only upward jumps starting from 0. 
Let $G:=\{(s,X_s)_{s\ge0}\}\cup\{(s,x):s\ge0,x\in[X_{s-},X_s]\}$.
For $s<t$ and $(s,x),(t,x)\in G$, let $(s,x)\sim(t,x)$ if the horizontal line segments connecting $(s,x)$ and $(t,x)$ stays below the graph $X|_{[s,t]}$. We also set $(t,X_{t-})\sim(t,X_t)$ for each time $t$ at which $X$ jumps. We call the quotient of $G$ under the equivalence relation $\sim$ to be the \textit{loop-tree} corresponding to $X$, see e.g.~\cite{CK13looptree} for more details.
The \textit{forested line} is then a beaded quantum surface defined via the following procedure. 
Let $o=(0,0)$ be the \textit{root} of the loop-tree. 
For each jump in $X$ with size $\ell$, we sample an independent quantum disk from $\QD(\ell)^\#$ and topologically glue
the quantum disk onto the corresponding loop in a clockwise length-preserving way, where the rotation is uniformly chosen. Then the output beaded quantum surface is called a forested line, and denoted by $\mathcal{L}^o$. See \cite{DMS14,MSW-non-simple-2021,AHSY23} for more details.

The closure of the collection of the points on the boundaries of the quantum disks is called the \textit{forested boundary arc}, while the set of the points corresponding to the running infimum on the graph of $(X_t)_{t\ge0}$ is called the \textit{line boundary }\textit{arc}. For $s>0$, let $p_s$ be the point on the line boundary arc at which $X$  first takes the value $-s$, then the \textit{quantum length} between $o$ and $p_s$ is defined to be $s$. For any two points on the forested boundary arc, the \textit{generalized quantum length} between these points is defined to be the length of the corresponding time interval of $(X_t)_{t\ge0}$.

For $t>0$, the truncation of $\mathcal{L}^o$ at quantum length $t$ is the union of the line boundary arc and the quantum disks on the forested boundary arc between $o$ and $p_t$. Denote the truncation of $\mathcal{L}^o$ at quantum length $t$ by $\mathcal{L}_t$. We call the beaded quantum surface $\mathcal{L}_t$ a \textit{forested line segment}.
\begin{definition}
    \label{def-f.l.}
Let $\st$ be sampled from $\Leb_{\R_+}$, and truncate an independent forested line at quantum length $\st$.
Define $\cM^{\fl}_2$ to be the law of the resulting beaded quantum surface.
\end{definition}

We can also disintegrate the measure $\cM^{\fl}_2$ over the quantum length and generalized quantum length:
$\cM^{\fl}_2=\int_{\R_+^2}\cM^{\fl}_2(t,\ell)dtd\ell$,
where $\cM^{\fl}_2(t,\ell)$ is a measure on forested line segments with quantum length $t$ and generalized quantum length $\ell$. Let $\cM^{\fl}_2(\ell):=\int_0^\infty\cM^{\fl}_2(t,\ell)dt$. 
Next, we recall some forested quantum surfaces appeared in \cite{AHSY23,ASYZ24}.

\begin{definition}
    \label{def-f.d.}
For $\ell,t>0$, let $(\mathcal{L},\mathcal{D})$ be sampled from
$\int_{0}^\infty \cM^{\fl}_2(s,\ell)\times\cM^{\disk}_{0,2}(\frac{2}{3})(s,t)ds$, and glue the line boundary arc of $\mathcal{L}$ to the left boundary arc of $\mathcal{D}$ according to quantum length. Let $\cM^{\fd}_{0,2}(\frac{2}{3})(\ell,t)$ be the law of resulting beaded quantum surface. We also define $\cM^{\fd}_{0,2}(\frac{2}{3})(\ell):=\int_0^\infty\cM^{\fd}_{0,2}(\frac{2}{3})(\ell,t)dt$ and $\cM^{\fd}_{0,2}(\frac{2}{3}):=\int_{0}^\infty\cM^{\fd}_{0,2}(\frac{2}{3})(\ell)d\ell$. We call a sample from $\cM^{\fd}_{0,2}(\frac{2}{3})$ a half forested quantum disk.

\end{definition}

\begin{definition}
\label{def-forested-triangle}
    For $W_1,W_2,W_3\in\{\frac{2}{3},2\}$ and $\ell_1,\ell_2,t_3>0$, let $(\mathcal{L}^1,\mathcal{D},\mathcal{L}^2)$ be sampled from
$$
\int_{\R_+^2}\cM^{\fl}_2(t_1,\ell_1)\times\QT(W_1,W_2,W_3;t_1,t_2,t_3)\times\cM^{\fl}_2(t_2,\ell_2)dt_1dt_2,
$$
    and then glue the line boundary arc of $\mathcal{L}^1$ and $\mathcal{L}^2$ to the boundary arcs of $\mathcal{D}$ with quantum length $t_1$ and $t_2$ according to quantum length respectively. Define $\widetilde{\QT}(W_1,W_2,W_3;\ell_1,\ell_2,t_3)$ to be the law of resulting beaded quantum surface. We also write $\widetilde{\QT}(W_1,W_2,W_3;\ell_1,\ell_2):=\int_0^\infty\widetilde{\QT}(W_1,W_2,W_3;\ell_1,\ell_2,t_3)dt_3$ and $\widetilde{\QT}(W_1,W_2,W_3):=\int_{\R_+^2}\widetilde{\QT}(W_1,W_2,W_3;\ell_1,\ell_2)d\ell_1d\ell_2$. We call a sample from $\widetilde{\QT}(W_1,W_2,W_3)$ a forested quantum triangle.
\end{definition}

The following lemma relates the half forested quantum disk with the forested quantum triangle.
\begin{lemma}
    \label{lem-f.d.=f.q}
    For a sample from $\cM^{\fd}_{0,2}(\frac{2}{3})$, let $\ell$ be its generalized quantum length. For a beaded quantum surface sampled from $\ell\cM^{\fd}_{0,2}(\frac{2}{3})$, we sample a marked point on the forested boundary arc according to the probability measure proportional to the generalized quantum length measure.
    Then the law of the resulting quantum surface is a constant multiple of $\wt\QT(\frac{2}{3},\frac{2}{3},\frac{2}{3})$.
\end{lemma}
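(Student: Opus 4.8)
\textbf{Proof plan for Lemma~\ref{lem-f.d.=f.q}.} The plan is to reduce this statement about forested surfaces to its ``unforested'' counterpart, Lemma~\ref{lem-add-point}, which says that adding a uniform boundary point to a thin quantum disk $\cM^{\disk}_{0,2}(\frac23)$ (weighted by its left boundary length) produces a constant multiple of $\QT(2,\frac23,\frac23)$. The key observation is that forestation is a local operation carried out \emph{independently} along the forested boundary arc: a half forested quantum disk $\cM^{\fd}_{0,2}(\frac23)$ is built from a sample of $\cM^{\disk}_{0,2}(\frac23)$ by attaching an independent forested line to its left boundary arc, and a forested quantum triangle $\wt\QT(\frac23,\frac23,\frac23)$ is built from $\QT(\frac23,\frac23,\frac23)$ by attaching independent forested lines to its three boundary arcs. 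So I would like to ``undo'' the forestation on both sides, apply Lemma~\ref{lem-add-point}, and then ``redo'' the forestation, checking that the weighting by generalized quantum length and the choice of a uniform point on the forested boundary arc are compatible with these operations.

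First I would make precise the decomposition of a forested boundary arc into a line boundary arc plus the attached quantum disks, so that a sample from $\cM^{\fd}_{0,2}(\frac23)(\ell,t)$ can be described as: sample $s>0$ from $\Leb_{\R_+}$, sample $\cM^{\fl}_2(s,\ell)$ for the forested line, sample $\cM^{\disk}_{0,2}(\frac23)(s,t)$ for the disk, and glue along quantum length $s$ (Definition~\ref{def-f.d.}). The generalized quantum length $\ell$ of the resulting surface is exactly the generalized quantum length of the forested line segment, i.e. the length of the Lévy time interval, which is \emph{unchanged} if we replace the underlying $\cM^{\disk}_{0,2}(\frac23)$ by any other surface with the same boundary length $s$. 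So weighting $\cM^{\fd}_{0,2}(\frac23)$ by $\ell$ and then sampling a uniform point (with respect to generalized quantum length) on the forested boundary arc only affects the forested-line part; the disk part is an independent ``passenger.'' Concretely, I would write the $\ell$-weighted, marked measure as an integral over $s$ of $\bigl[\ell\,\cM^{\fl}_2(s,\ell)\text{ with a uniform marked point on its forested boundary arc}\bigr]\times\cM^{\disk}_{0,2}(\frac23)(s,t)$, glued along length $s$.

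Next comes the main point: a uniformly chosen marked point on the forested boundary arc of an $\ell$-weighted forested line segment of line-length $s$ splits it into two forested line segments meeting at the marked point, with line-lengths $s_1,s_2$ summing to $s$ and generalized lengths $\ell_1,\ell_2$ summing to $\ell$. After this split the object is: two forested line segments attached to the two sub-arcs (of lengths $s_1$ and $s_2$) of the left boundary of a $\cM^{\disk}_{0,2}(\frac23)$ surface. But this is precisely the description of $\wt\QT(\frac23,\frac23,\frac23)$ obtained by first forming $\QT(2,\frac23,\frac23)$ from $\cM^{\disk}_{0,2}(\frac23)$ with an added boundary point \`a la Lemma~\ref{lem-add-point} (the two $\frac23$-vertices being the two ends of the original forested boundary arc, the $2$-vertex being the two non-forested ends glued together---more precisely one should track which boundary arcs of $\QT(2,\frac23,\frac23)$ get forested), and then attaching independent forested lines to the two boundary arcs of $\QT(2,\frac23,\frac23)$ that carry the $\frac23$-weights. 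I would therefore invoke Lemma~\ref{lem-add-point} to identify, at the level of the unforested surface, the $\ell$-weighted $\cM^{\disk}_{0,2}(\frac23)$ with an added uniform left-boundary point as a constant multiple of $\QT(2,\frac23,\frac23)$ with appropriate boundary-length disintegration, then re-forest both sides and match Definition~\ref{def-forested-triangle} for $\wt\QT(\frac23,\frac23,\frac23)$, keeping careful track of the constant $(\frac{4}{\gamma^2}-1)$-type factors from Definitions~\ref{def-QD}, \ref{def-f.l.} and \ref{def-forested-triangle}.

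The main obstacle I anticipate is bookkeeping rather than conceptual: matching the disintegration measures and multiplicative constants across the forestation operation, and in particular verifying that the two notions of ``uniform point'' (uniform with respect to generalized quantum length on the forested arc, versus the Lebesgue/boundary-length sampling built into Lemma~\ref{lem-add-point} and Definitions~\ref{def-QD}, \ref{def-forested-triangle}) are consistent after disintegrating over the line-length variable $s$ and the split $s=s_1+s_2$. A secondary subtlety is that $\wt\QT(\frac23,\frac23,\frac23)$ as defined in Definition~\ref{def-forested-triangle} has a distinguished ``non-forested'' arc of length $t_3$ (the one carrying no forested line if one reads the weights literally), so I need to check that the $2$-weighted vertex pair of $\QT(2,\frac23,\frac23)$ maps correctly and that the half forested quantum disk's genuine (non-forested, right) boundary arc is consistently the arc between the two $\frac23$-vertices---this is exactly the sort of arc-identification that Lemma~\ref{lem-add-point} is designed to furnish, so once the dictionary is set up the verification should be a direct computation.
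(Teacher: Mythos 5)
Your high-level plan (undo the forestation, apply Lemma~\ref{lem-add-point}, redo the forestation) is sensible, but the ``main point'' step contains a genuine error, and as a result the proposal produces the wrong object.

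You claim that a uniformly (generalized-quantum-length) chosen marked point on the forested boundary arc of a forested line segment ``splits it into two forested line segments meeting at the marked point.'' This is false. The generalized quantum length measure is the image of Lebesgue measure on L\'evy time, while the line boundary arc consists of the running-infimum times of the L\'evy process, a set of Lebesgue measure zero (Hausdorff dimension $1-\gamma^2/4<1$). So almost surely the marked time $T$ is \emph{not} a cut time: the marked point sits on the boundary of some bubble in the interior of the loop-tree, not on the line boundary arc glued to $\cM^{\disk}_{0,2}(\frac23)$. Consequently the L\'evy excursion does not split cleanly into two truncated-at-infimum pieces; the decomposition at $T$ produces two forested line segments \emph{together with a spine}---a Poissonian chain of bubbles between the marked point and the line boundary arc---and the point at which that spine meets the line is the one that gets transported onto the left boundary of the thin disk.

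This missing spine is not a bookkeeping issue: it is exactly what makes the new vertex a \emph{thin} vertex. Your description of $\wt\QT(\frac23,\frac23,\frac23)$ as ``$\QT(2,\frac23,\frac23)$ with an added boundary point plus forested lines'' does not match Definition~\ref{def-forested-triangle}, which builds $\wt\QT(\frac23,\frac23,\frac23)$ from $\QT(\frac23,\frac23,\frac23)$, i.e.\ $\QT(2,2,2)$ with thin disk chains at all \emph{three} vertices. The chain at the new vertex cannot be extracted from Lemma~\ref{lem-add-point}, which only produces a weight-$2$ vertex there; it must come from the spine of the Bismut-type decomposition of the forested line at the marked time. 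Without it you would end up proving that the marked surface is $\QT(2,\frac23,\frac23)$ with two forested arcs, which is a different (and incorrect) conclusion.

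For comparison, the paper does not re-derive this fact: it cites~\cite[Lemma 4.1]{ASYZ24}, which carries out precisely the L\'evy-excursion decomposition (spine plus two forested halves) that your plan elides, and then notes that the only difference here is that one boundary arc of the triangle is left unforested. If you want a self-contained argument, the correct route is to formulate and prove that disintegration of $\ell\,\cM^{\fl}_2$ under a uniform generalized-length mark (producing $\cM^{\disk}_{0,2}(\frac23)$-spine plus two forested line segments), and only then invoke Lemma~\ref{lem-add-point} on the underlying thin disk; this is a substantially larger task than the proposal acknowledges.
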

\begin{proof}
    This follows from \cite[Lemma 4.1]{ASYZ24}, except that in Definition \ref{def-f.d.} and \ref{def-forested-triangle}, one boundary arc is not glued to the forested line segments. 
\end{proof}

\subsection{Radial SLE$_6$ and conformal welding of forested quantum surfaces}
\label{section-welding-forested-surface}

Let $\kappa'=16/\gamma^2=6$.
According to \cite{DMS14}, if we draw an independent $\SLE_{\kappa'}$-type curve $\eta$ on a certain $\gamma$-LQG surface $\mathcal{D}$, then $\mathcal{D}$ is cut into two independent forested quantum surfaces $\mathcal{D}_1$ and $\mathcal{D}_2$. Moreover, under this coupling, the pair $(\mathcal{D},\eta)$ is measurable with respect to $(\mathcal{D}_1,\mathcal{D}_2)$. This measurable function is also called the \textit{conformal welding} of $\mathcal{D}_1$ and $\mathcal{D}_2$ (see e.g.~\cite[Section 1.2]{AHSY23} for more details). We first recall the conformal welding of two independent forested line segments.
Note that $\cM_{0,2}^{\disk}(2-\frac{\gamma^2}{2})$ is defined in Definition~\ref{def-QD} since $2-\frac{\gamma^2}{2}=\gamma^2-2=\frac{2}{3}$ when $\gamma=\sqrt{8/3}$.
\begin{proposition}[{\cite[Proposition 3.25]{AHSY23}}]
    \label{thm-weld-f.l.s} 
    Let $\cD$ be sampled from $\cM_{0,2}^{\disk}(2-\frac{\gamma^2}{2})$, and let $\tilde\eta$ be the concatenation of independent $\SLE_{\kappa'}(\frac{\kappa'}{2}-4;\frac{\kappa'}{2}-4)$ curves on each bead of $\cD$. Then there exists a constant $C>0$, such that $\tilde\eta$ divides $\cD$ into two forested lines segments $\tcL_-,\tcL_+$, whose law is
\begin{equation}
    \label{eq-weld-f.l.s}
    C\int_0^\infty\cM^{\fl}_2(\ell)\times\cM^{\fl}_2(\ell)d\ell.
\end{equation}
    Moreover, $\tcL_{\pm}$ a.s. uniquely determine $(\cD,\tilde\eta)$ in the sense that $(\cD,\tilde\eta)$ is measurable w.r.t. the $\sigma$-algebra generated by $\tcL_{\pm}$.
\end{proposition}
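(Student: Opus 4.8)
The plan is to derive Proposition~\ref{thm-weld-f.l.s} from the mating-of-trees description of $\SLE_{\kappa'}$ on quantum wedges, together with the Poissonian structure shared by the thin quantum disk $\cM_{0,2}^{\disk}(2-\frac{\gamma^2}2)$ and the forested line. The guiding picture is that $\SLE_{\kappa'}$ drawn on an LQG surface always carves out a looptree of bubbles whose quantum boundary lengths evolve as a stable process, while a forested line is by construction exactly such a looptree with independent quantum disks glued on; so the statement should reduce to matching these two descriptions.

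First I would set up the local picture. On any single bead of $\cD\sim\cM_{0,2}^{\disk}(2-\frac{\gamma^2}2)$, embedded as a doubly-marked quantum disk $\QD_{0,2}(\ell_1,\ell_2)$ with marked points $p_-,p_+$, the restriction of $\tilde\eta$ is an independent $\SLE_{\kappa'}(\frac{\kappa'}2-4;\frac{\kappa'}2-4)$ from $p_-$ to $p_+$. Since $\kappa'=6>4$ this curve is non-simple: it touches both boundary arcs of the bead and pinches off a countable family of bubbles on each side, each of which—by the Markov property and target invariance of $\SLE_{\kappa'}$ together with the conformal Markov property of LQG disks—is, conditionally on its quantum boundary length, a quantum disk sampled from $\QD(\cdot)^{\#}$. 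Moreover the two outermost complementary components (the ones with $p_\pm$ on their boundary) have $\SLE_\kappa$-type boundaries by $\SLE_{\kappa'}/\SLE_\kappa$ duality. Hence within a bead each side of $\tilde\eta$ is, as a beaded quantum surface, a looptree of quantum disks strung along an $\SLE_\kappa$-type trunk, with the corresponding boundary arc of the original bead playing the role of the line boundary arc.

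Next I would identify the laws. The key input is the mating-of-trees theory of $\SLE_{\kappa'}$ from \cite{DMS14}: parametrizing $\tilde\eta$ by quantum natural time, the quantum boundary length of the bubbles cut off on a fixed side evolves as a totally asymmetric stable Lévy process of index $\frac{4}{\gamma^2}=\frac{\kappa'}4\in(1,2)$, with the running infimum tracing the line boundary arc and the excursions above it enumerating the successive bubbles — exactly the Lévy-process/looptree data used in Section~\ref{section-generalized-quantum-surface} to build the forested line (Definition~\ref{def-f.l.}). Transferring from the wedge setting of \cite{DMS14} to the finite setting uses local absolute continuity of quantum disks to quantum wedges near an interior point of $\tilde\eta$ — so the forestation rule is unchanged — together with the explicit quantum-length laws of $\QD_{0,2}$ and of $\cM_{0,2}^{\disk}(2-\frac{\gamma^2}2)$, to check that the ``thin part'' (the interface together with its two line boundary arcs) has precisely the law of two truncated stable processes matched in duration. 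Summing over the Poissonian chain of beads of $\cD$ (Definition~\ref{def-QD}), the jumps of the ambient stable process index all the bubbles, which are precisely the beads of the two forested lines sampled from $\QD(\cdot)^{\#}$, so the two sides $\tcL_-$, $\tcL_+$ of $\tilde\eta$ become independent forested line segments conditioned to share the same generalized quantum length $\ell$ (the total quantum-natural-time duration); tracking the normalizing constant from \cite{DMS14} yields~\eqref{eq-weld-f.l.s}. Measurability of $(\cD,\tilde\eta)$ with respect to $(\tcL_-,\tcL_+)$ then follows from conformal removability of the $\SLE_\kappa$-type curves involved (the trunk and the individual bubble boundaries) via the standard uniqueness-of-welding argument.

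I expect the main obstacle to be the per-bead identification in the third step: showing that $\SLE_{\kappa'}(\frac{\kappa'}2-4;\frac{\kappa'}2-4)$ on a quantum disk disintegrates into exactly the forested-line data — equivalently, pinning down the quantum natural time normalization and proving that the bubble-boundary-length process is the stable process of index $\frac{4}{\gamma^2}$ with the correct jump intensity. This rests on the quantitative mating-of-trees estimates for $\SLE_{\kappa'}$; by contrast, the reduction to a single bead, the Poissonian bookkeeping, and the removability argument are comparatively routine.
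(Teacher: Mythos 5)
The paper does not prove this proposition; it cites it directly from~\cite{AHSY23} (Proposition~3.25), so there is no in-paper argument to compare your proposal against. Your reconstruction correctly identifies the framework on which the cited proof rests: the mating-of-trees encoding of $\SLE_{\kappa'}$ from~\cite{DMS14}, the stable L\'evy process of index $\frac{4}{\gamma^2}$ for bubble boundary lengths in quantum natural time, local absolute continuity to transfer from wedges to finite disks, Poissonian bookkeeping over the beads, and conformal removability for the measurability claim. You also correctly locate the quantitative core of the argument -- matching the bead-chain intensity $(\frac{4}{\gamma^2}-1)^{-2}\Leb_{\R_+}$ and the $\QD_{0,2}$ boundary-length law against the L\'evy jump measure and the truncation time in Definition~\ref{def-f.l.}, so that the two sides end up as a pair of truncated stable processes of common duration -- and candidly defer it to the literature; that computation is essentially the whole content of the cited result.

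One conceptual slip worth fixing. You describe each side of $\tilde\eta$ within a bead as ``a looptree of quantum disks strung along an $\SLE_\kappa$-type trunk,'' and then separately say the bead boundary arc plays the role of the line boundary arc. But the line boundary arc \emph{is} the trunk: in the embedding it is exactly that boundary arc of the bead (of quantum length $\ell_1$ or $\ell_2$), carrying the quantum-length parametrization given by the running infimum of the L\'evy process, and it is not a fractal curve at all. The $\SLE_\kappa/\SLE_{\kappa'}$ duality produces $\SLE_\kappa$-type curves only as the \emph{forested} boundary arcs -- the bubble boundaries tracing the two sides of $\tilde\eta$ -- and these are what actually get conformally welded. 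This is also where conformal removability of $\SLE_\kappa$ for $\kappa\in(2,4)$ enters in the uniqueness step; there is no separate removable ``spine'' to worry about.
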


Next, we recall the conformal welding result for radial $\SLE_{6}$ shown in \cite{ASYZ24}. 
For $\ell>0$ and a sample from $\wt{\QT}(\frac{2}{3},\frac{2}{3},\frac{2}{3};\ell,\ell)$, we can conformally weld the two forested boundary arcs with generalized quantum length $\ell$ together. This yields a single curve-decorated quantum surface, and we denote its law by $\Weld(\wt{\QT}(\frac{2}{3},\frac{2}{3},\frac{2}{3};\ell,\ell))$. Let $(\D,\phi,0,1)$ be an embedding of a sample from $\QD_{1,1}$, and independently sample a radial $\SLE_6$ curve $\eta$ on $\D$ from 1 to 0. We denote the law of $(\D,\phi,\eta,0,1)/\mathord\sim_\gamma$ by $\QD_{1,1}\otimes\raSLE_6$.

\begin{theorem}[{\cite[Theorem 3.1]{ASYZ24}}]
    \label{thm-weld-raSLE6} 
There exists a constant $C>0$, such that
\begin{equation}
    \label{eq-weld-raSLE6}
    \QD_{1,1}\otimes\raSLE_6=C\int_0^\infty\Weld\left(\wt{\QT}\left(\frac{2}{3},\frac{2}{3},\frac{2}{3};\ell,\ell\right)\right)d\ell.
\end{equation}
\end{theorem}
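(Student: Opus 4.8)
Throughout write $\gamma=\sqrt{8/3}$ and $\kappa'=16/\gamma^2=6$. The plan is to realize both sides of~\eqref{eq-weld-raSLE6} as two descriptions of the single operation ``cut $\QD_{1,1}$ along the radial $\SLE_6$ interface $\eta$, then glue back.'' The building block is the conformal welding for chordal $\SLE_{\kappa'}$ with $\kappa'\in(4,8)$: cutting a quantum disk along an independent chordal $\SLE_6$ between two boundary points produces a pair of forested line segments glued along their forested boundary arcs, the two arcs carrying the \emph{same} generalized quantum length; this is the phenomenon underlying Proposition~\ref{thm-weld-f.l.s} and goes back to~\cite{DMS14,MSW-non-simple-2021,AHSY23}. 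Since $\eta$ is a non-self-crossing curve from the boundary point $1$ to the interior point $0$, cutting $\D$ along $\eta$ does not disconnect the surface: one gets a single beaded quantum surface whose boundary, read cyclically, is the left bank of $\eta$, the tip $0'$, the right bank of $\eta$, and the arc $\partial\D$, with three marked boundary points (the tip $0'$ and the two copies $1',1''$ of the base $1$). Because cutting along $\SLE_{\kappa'}$ turns each bank into a forested arc whose beads are $\cM^{\disk}_{0,2}(\gamma^2-2)$-disks and leaves $(\gamma^2-2)$-weight (``thin'') structure at the three marked points, this cut surface should be a forested quantum triangle $\wt\QT(\tfrac23,\tfrac23,\tfrac23)$, and the two banks carry a common generalized quantum length $\ell$ (both being the generalized length of $\eta$). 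Recovering $\QD_{1,1}\otimes\raSLE_6$ from it amounts to welding the two forested arcs of generalized length $\ell$ — which share the vertex $0'$, so that after zipping $0'$ becomes the interior point $0$ and $1',1''$ get identified to the boundary point $1$ — and integrating over $\ell$; this is exactly $C\int_0^\infty\Weld(\wt\QT(\tfrac23,\tfrac23,\tfrac23;\ell,\ell))\,d\ell$.

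To execute this, I would first use the locality of $\SLE_6$ together with Proposition~\ref{thm-raSLE6-last-hitting} to decompose $\eta$ at its last hitting time $\sigma$ of $\partial\D$: the initial piece $\eta[0,\sigma]$ behaves like a localized chordal $\SLE_6$ from $1$ towards the harmonic‑measure point $\eta(\sigma)\in\partial\D$, while $\eta[\sigma,\infty)$ reaches $0$ without returning to $\partial\D$ and its time reversal is a stopped whole‑plane $\SLE_6$ (equivalently, the outer boundary of a Brownian motion from $0$, by Theorem~\ref{thm:equivalence}). This reduces the analysis of the radial interface on the quantum disk to the already-understood chordal/whole-plane weldings, and identifies $\eta(\sigma)$ as the point where the line boundary arc $\partial\D$ meets the two banks. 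Next I would track quantum and generalized quantum lengths across the cut, using the explicit boundary-length laws of $\QD_{1,0}$ and $\QD_{0,n}$ (Lemmas~\ref{prop-one-point-disk-integrability} and~\ref{lem-mass-QD}) and the $\tfrac{4}{\gamma^2}$-stable-L\'evy structure of the forested line, in order to match the two measures exactly and pin down the constant $C>0$; here Lemma~\ref{lem-f.d.=f.q} (identifying $\wt\QT(\tfrac23,\tfrac23,\tfrac23)$ with a half forested quantum disk carrying one extra forested-boundary marked point) and Proposition~\ref{thm-weld-f.l.s} should let me reduce the bookkeeping to the established welding of forested line segments. Finally, conformal removability of $\SLE_{\kappa'}$ for $\kappa'\in(4,8)$ and of the relevant forested/beaded interfaces (as in~\cite{AHSY23}) guarantees the welding map is well defined and the decorated surface is measurable with respect to the welded pieces, so the two descriptions agree as measures rather than merely in a weaker sense.

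I expect the main obstacle to be the passage from the chordal to the radial interface, i.e.\ correctly identifying the beaded geometry of the cut surface as $\wt\QT(\tfrac23,\tfrac23,\tfrac23;\ell,\ell)$: one must show it genuinely has the triangle structure with all three vertices of weight $\gamma^2-2$ — in particular that the regions cut off as $\eta$ winds infinitely around $0$ assemble into the thin-disk chain of a weight-$(\gamma^2-2)$ vertex, and similarly for the structure accumulating near the base $1$ — and that the two banks carry the same generalized quantum length. Controlling $\eta$ near the interior target $0$, where radial $\SLE_6$ winds infinitely and the conformal radius of the unexplored region tends to $0$, is the delicate point; I would expect to spend the bulk of the technical effort there, most likely via a limiting argument sending a boundary target point to the interior and invoking convergence of the corresponding chordal weldings.
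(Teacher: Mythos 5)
The paper does not prove this theorem; it is cited verbatim from \cite[Theorem 3.1]{ASYZ24}, so there is no in-paper argument to compare against. As a standalone proof attempt, your proposal has two genuine gaps, both of which you partially acknowledge but neither of which is resolved by the rest of your plan.

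First, the identification of the cut surface as $\wt\QT(\tfrac23,\tfrac23,\tfrac23;\ell,\ell)$ with a common generalized quantum length $\ell$ on the two banks is the entire content of the theorem, and Proposition~\ref{thm-weld-f.l.s} does not deliver it on its own. In particular, the thin (weight-$(\gamma^2-2)$) structure at all three marked points is not automatic: at the interior target $0$, where the radial $\SLE_6$ winds infinitely and the conformal radius degenerates, one must show the accumulated bubbles organize into a weight-$(\gamma^2-2)$ vertex of a forested quantum triangle rather than, say, into a surface carrying the $\gamma$-insertion inherited from the $\QD_{1,1}$ bulk point. You correctly flag this as the main obstacle, but the plan offers only the suggestion of a limiting argument, with no control on the convergence of the cut-piece laws, the quantum lengths, or the $\wt\QT$ structure in the limit.

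Second, the reduction at the last boundary hitting time $\sigma$ is essentially circular relative to this paper's framework. You invoke an ``already-understood'' welding for $\eta[\sigma,\infty)$, but the welding description of radial $\SLE_6$ after $\sigma$ on a quantum disk is Proposition~\ref{prop-weld-last-hit-raSLE} and Corollary~\ref{prop-weld-last-hit-raSLE-2}, which the paper derives \emph{from} Theorem~\ref{thm-weld-raSLE6}, not independently of it. Establishing a welding statement for whole-plane $\SLE_6$ stopped upon hitting the boundary of a one-pointed quantum disk would itself be a result of comparable depth; Theorem~\ref{thm:equivalence} and Proposition~\ref{thm-raSLE6-last-hitting} identify the curve law but say nothing about how the curve interacts with an independent LQG field. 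Until one of these two gaps is closed by a genuinely new input, the proposal remains a plausible plan rather than a proof.
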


We will also need the following result.

\begin{lemma}
    \label{lem-weld-two-f.d.}
Fix $\ell,t>0$. Let $(\widetilde{\mathcal{D}}_1,\widetilde{\mathcal{D}}_2)$ be sampled from $\int_0^\infty\wt{\QT}(\frac{2}{3},\frac{2}{3},\frac{2}{3};\ell,\ell')\times\cM^{\fd}_{0,2}(\frac{2}{3})(\ell',t)d\ell'$, and
conformally weld the forested boundary arcs of $\widetilde{\mathcal{D}}_1$ and $\widetilde{\mathcal{D}}_2$ with length $\ell'$ according to their generalized quantum lengths, and forget the welding interface. Then there exists a constant $C>0$, such that the law of the resulting forested quantum surface is equal to the following: first sample $(\cD,\mathcal{L})$ from
$
C\int_0^\infty\QT\left(\frac{2}{3},2,2;t',t\right)\times \cM_2^{\fl}(t',\ell)dt'
$, and then glue the line boundary arc of $\mathcal{L}$ to the boundary arc of $\cD$ with quantum length $t'$.
\end{lemma}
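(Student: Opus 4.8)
}
The strategy is to decompose both sides of the claimed identity into their constituent pieces (quantum triangles, forested line segments, half-forested disks) and then to match them by a sequence of welding rearrangements, all of which have already been established in the excerpt. First I would unfold the left-hand side using Definition~\ref{def-forested-triangle} and Definition~\ref{def-f.d.}: a sample from $\wt\QT(\frac23,\frac23,\frac23;\ell,\ell')$ is built from a thick quantum triangle $\QT(\frac23,\frac23,\frac23)$ (itself, via Definition~\ref{def-QT}, a thin quantum disk welded onto $\QD_{0,3}$ at two of its corners) flanked by two forested line segments on the boundary arcs of generalized quantum length $\ell$ and $\ell'$; and a sample from $\cM^{\fd}_{0,2}(\frac23)(\ell',t)$ is a forested line segment of generalized quantum length $\ell'$ glued to the left boundary arc of a $\cM^{\disk}_{0,2}(\frac23)$ of boundary lengths $(s,t)$. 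The welding in the lemma identifies the $\ell'$-length forested boundary arc of the triangle with the forested boundary arc of the half-forested disk.

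The key observation is that welding two forested line segments of matching generalized quantum length along their forested boundary arcs, and forgetting the interface, recovers a single unforested quantum disk together with an $\SLE_6$-type interface — this is exactly the content of Proposition~\ref{thm-weld-f.l.s} (in the form $C\int_0^\infty\cM^{\fl}_2(\ell')\times\cM^{\fl}_2(\ell')\,d\ell'$ versus $\cM^{\disk}_{0,2}(2-\tfrac{\gamma^2}{2})$ decorated by the concatenated $\SLE_{\kappa'}(\tfrac{\kappa'}{2}-4;\tfrac{\kappa'}{2}-4)$ curve). So I would: (i) pull out the forested line segment on the $\ell'$-side of the triangle and the one on the forested side of the half-forested disk; (ii) apply Proposition~\ref{thm-weld-f.l.s} to see that welding these two produces, up to the constant $C$, a $\cM^{\disk}_{0,2}(\frac23)$ bead-chain carrying an interior $\SLE_6$ curve; (iii) absorb this freshly-produced thin quantum disk into the rest of the configuration. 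After this step the surviving data is: the original thick $\QD_{0,3}$ triangle part, the $\ell$-side forested line segment still attached, a thin quantum disk of some boundary length $s$ produced by the welding, and the $\cM^{\disk}_{0,2}(\frac23)$ of lengths $(s,t)$ from the half-forested disk, glued along $s$. Concatenating two thin quantum disks along a common boundary of length $s$ and integrating over $s$ gives again a thin quantum disk (by the Poissonian structure in Definition~\ref{def-QD}), and adding the thick triangle turns the three weight-$\frac23$ corners into the weight pattern $(\frac23,2,2)$ — this is the combinatorics behind $\QT(\frac23,2,2)$, consistent with Lemma~\ref{lem-add-point} and Definition~\ref{def-QT}. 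Tracking the boundary lengths carefully, the unforested $\ell$-adjacent arc of this new triangle has some length $t'$, the forested line segment of generalized length $\ell$ and quantum length $t'$ is glued to it, and the remaining boundary has length $t$; this is precisely the right-hand side $C\int_0^\infty\QT(\frac23,2,2;t',t)\times\cM^{\fl}_2(t',\ell)\,dt'$.

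The main obstacle I expect is bookkeeping rather than conceptual: one must verify that the various disintegration parameters (the internal boundary lengths $s$, $t_1$, $t_2$, the generalized lengths $\ell,\ell'$, and the quantum lengths of the line boundary arcs) transform correctly under each rearrangement, so that the scaling/Lebesgue factors assemble into the single $\int_0^\infty(\cdots)dt'$ on the right with the correct — and finite, nonzero — constant $C$. In particular I would need to be careful that forgetting the $\SLE_6$ interface produced in step (ii) is legitimate (it is, since we only claim equality of laws of the \emph{forested quantum surface}, not the decorated one) and that the measurability/uniqueness clause of Proposition~\ref{thm-weld-f.l.s} is what licenses treating the welded object as a genuine $\cM^{\disk}_{0,2}(\frac23)$ sample. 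A clean way to organize the argument is to first prove the identity at the level of the relevant disintegrated measures for fixed internal lengths and then integrate, invoking Proposition~\ref{thm-weld-f.l.s}, Definition~\ref{def-QD}, and Definition~\ref{def-QT} as black boxes; alternatively, since the statement is structurally parallel to \cite[Theorem 3.1]{ASYZ24} and Lemma~\ref{lem-f.d.=f.q}, one can deduce it by combining Theorem~\ref{thm-weld-raSLE6} with the welding associativity implicit in the construction of forested surfaces.
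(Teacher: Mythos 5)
The high-level strategy — isolate the two forested line segments of generalized length $\ell'$, apply Proposition~\ref{thm-weld-f.l.s} to turn their welding into a sample of $\cM^{\disk}_{0,2}(\frac23)$ decorated by an $\SLE_6$-type curve, then recombine with the remaining pieces — is indeed the paper's approach. The paper's one-line proof is ``by combining Proposition~\ref{thm-weld-f.l.s}, [AHS20, Theorem 2.2] and [ASY22, Theorem 1.1],'' so step (ii) of your plan agrees with the first ingredient.

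The gap is in step (iii). You assert that ``concatenating two thin quantum disks along a common boundary of length $s$ and integrating over $s$ gives again a thin quantum disk (by the Poissonian structure in Definition~\ref{def-QD}).'' This is not the operation that occurs, and the conclusion is also wrong. After applying Proposition~\ref{thm-weld-f.l.s}, the freshly produced $\cM^{\disk}_{0,2}(\frac23)$ bead chain has its $t_2$- and $s$-boundaries identified, via the gluings built into Definitions~\ref{def-forested-triangle} and~\ref{def-f.d.}, with the $t_2$-arc of the quantum triangle on one side and the left boundary arc of the $\cM^{\disk}_{0,2}(\frac23)(s,t)$ on the other. These are conformal weldings along full boundary arcs, not combinatorial concatenations at a marked point, and the Poissonian $\Leb_{\R_+}$ time-additivity in Definition~\ref{def-QD} says nothing about them. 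Welding two samples of $\cM^{\disk}_{0,2}(\frac23)$ along a shared boundary arc produces, by weight additivity, a quantum disk of weight $\frac43$ decorated by an interface, and welding that into the $\QT(\frac23,\frac23,\frac23)$ triangle changes the vertex weights — this is exactly what the cited \cite[Theorem 2.2]{AHS20} (conformal welding of two-pointed quantum disks) and \cite[Theorem 1.1]{ASY22} (conformal welding involving quantum triangles) supply, and without them the bookkeeping you flag as ``the main obstacle'' cannot actually be carried out. Your alternative route via Theorem~\ref{thm-weld-raSLE6} is also not applicable here: that theorem is specific to the radial $\SLE_6$ on $\QD_{1,1}$ and has no bulk marked point analogue in this configuration. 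So the plan is salvageable, but only after replacing the ``Poissonian concatenation'' claim with the two external welding theorems the paper invokes.
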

\begin{proof}
    This is by combining Proposition \ref{thm-weld-f.l.s}, \cite[Theorem 2.2]{AHS20} and \cite[Theorem 1.1]{ASY22}.
\end{proof}

\begin{figure}[htbp]
    \centering
    \includegraphics[width=0.7\linewidth]{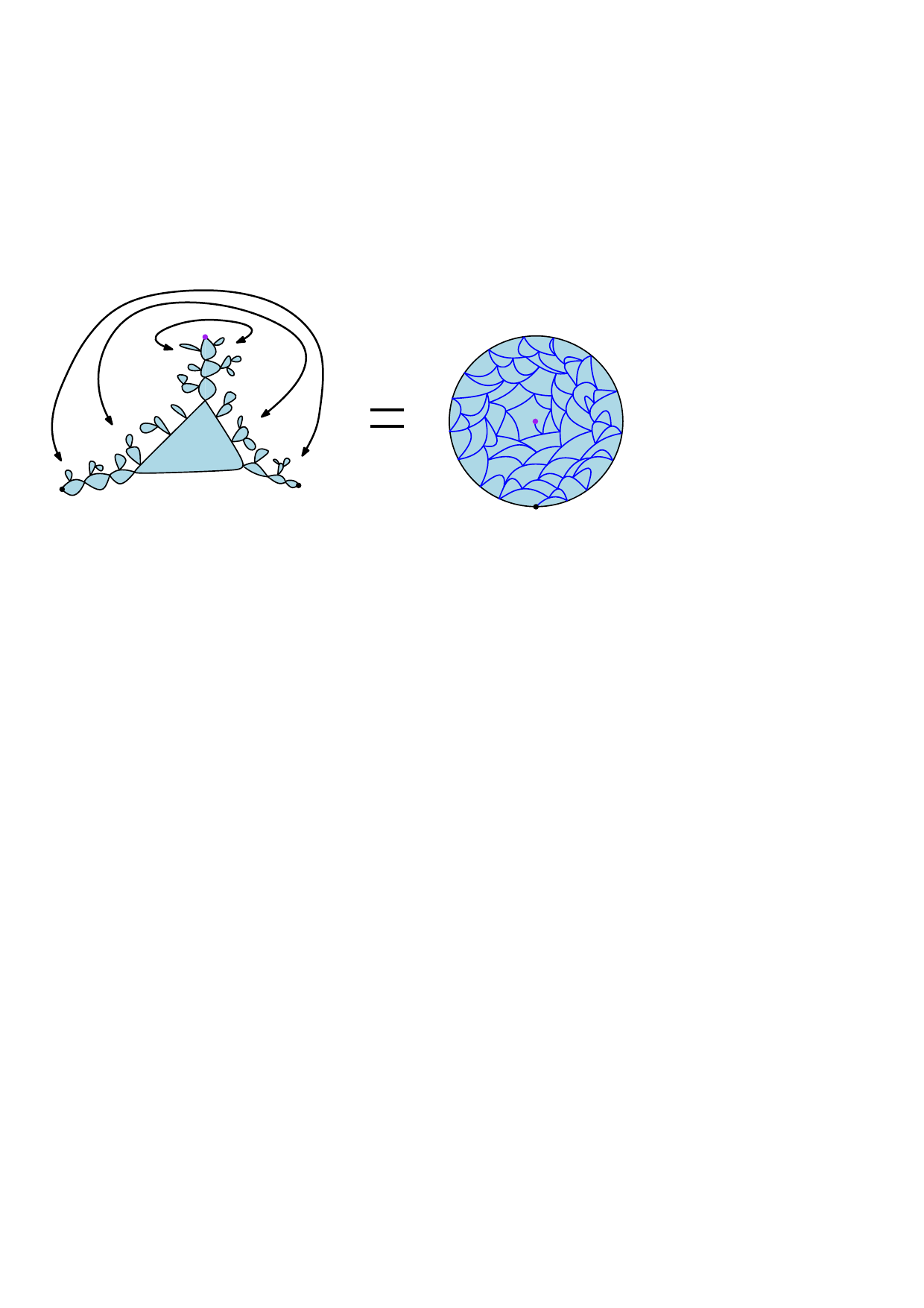}
    \caption{Illustration of Theorem~\ref{thm-weld-raSLE6}.  
    }
    \label{fig:weld-BM2}
\end{figure}

\subsection{Proof of Theorem \ref{thm:weld-BM1}}
\label{section-proof-of-welding}

Recall the setup in Theorem~\ref{thm-weld-raSLE6}.
Let $(\D,\phi,0,1)$ be an embedding of a sample from $\QD_{1,1}$, and let $\eta$ be an independent radial $\SLE_6$ in $\D$ from 1 to 0. Let $\sigma$ be the last time that $\eta$ hits $\S^1$. In the following, we denote $\raSLE_6^\sigma$ to be the law of $\eta[\sigma,\infty)$, and denote $\QD_{1,1}\otimes\raSLE_6^\sigma$ to be the law of curve-decorated quantum surface $(\D,\phi,\eta[\sigma,\infty),0,1)/\mathord\sim_\gamma$.

For $\ell>0$ and a sample from $\widetilde{\QT}(2,\frac{2}{3},2;\ell,\ell)$, let $L$ be the quantum length of its unforested boundary arc. For a sample from $L\widetilde{\QT}(2,\frac{2}{3},2;\ell,\ell)$, we sample a marked point on the unforested boundary arc according to the probability measure proportional to the quantum length measure. Then we denote the law of the resulting beaded quantum surface by $\widetilde{\QT}_{\bullet}(2,\frac{2}{3},2;\ell,\ell)$.
For a sample from $\widetilde{\QT}_\bullet(2,\frac{2}{3},2;\ell,\ell)$, we can conformally weld the two forested boundary arcs according to their generalized quantum lengths. Let $\Weld(\widetilde{\QT}_\bullet(2,\frac{2}{3},2;\ell,\ell))$ be the law of the resulting curve-decorated quantum surface.

We first give the following description of the quantum surface $\QD_{1,1}\otimes\raSLE_6^\sigma$.
\begin{proposition}
    \label{prop-weld-last-hit-raSLE}
    There exists a constant $C>0$ such that
    \begin{equation*}
    \QD_{1,1}\otimes\raSLE_6^\sigma=C\int_0^\infty\Weld\left(\widetilde{\QT}_\bullet\left(2,\frac{2}{3},2;\ell,\ell\right)\right)d\ell.
    \end{equation*}
\end{proposition}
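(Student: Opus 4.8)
The plan is to deduce Proposition~\ref{prop-weld-last-hit-raSLE} from the full-curve welding statement Theorem~\ref{thm-weld-raSLE6} by ``cutting off'' the initial segment of the radial $\SLE_6$ that occurs before the last hitting time $\sigma$ of $\S^1$. Recall from Theorem~\ref{thm-weld-raSLE6} that $\QD_{1,1}\otimes\raSLE_6 = C\int_0^\infty \Weld(\wt\QT(\tfrac23,\tfrac23,\tfrac23;\ell,\ell))\,d\ell$, where the two forested boundary arcs of generalized quantum length $\ell$ are welded together. The point $\eta(\sigma)$ lies on the welding interface, and the part of $\eta$ before $\sigma$ is a curve in $\D$ from $1$ to $\eta(\sigma)\in\S^1$ that stays ``near the boundary''; geometrically, $\eta[0,\sigma]$ together with an arc of $\S^1$ bounds a simply connected region, and in the welding picture this region is precisely a union of beads/forests hanging off the outer boundary. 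So I expect the decorated surface $\QD_{1,1}\otimes\raSLE_6$, when we additionally record $\eta(\sigma)$, to split as the concatenation of (i) a forested-line-like piece corresponding to $\eta[0,\sigma]$ and (ii) the remaining surface $\QD_{1,1}\otimes\raSLE_6^\sigma$.

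Concretely, first I would use Proposition~\ref{thm-raSLE6-last-hitting}: up to time change, the reversal of $\eta[\sigma,\infty)$ is whole-plane $\SLE_6$ from $0$ stopped on first hitting $\S^1$, and the law of $\eta(\sigma)$ is harmonic measure on $\S^1$, i.e.\ (with $0$ the bulk marked point) uniform on $\S^1$. On the LQG side, on a $\QD_{1,1}$-surface with embedding $(\D,\phi,0,1)$, the Markov/restriction structure of radial $\SLE_6$ says that the law of $\eta(\sigma)$ given the surface, weighted appropriately, is proportional to quantum length on $\S^1$ — this is exactly why $\wt\QT_\bullet$ appears with the extra marked point sampled from quantum length on the unforested boundary arc. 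I would therefore introduce the marked point $p:=\eta(\sigma)$ into the welding of Theorem~\ref{thm-weld-raSLE6}; since $p$ lies on the welding interface between the two forested boundary arcs of $\wt\QT(\tfrac23,\tfrac23,\tfrac23;\ell,\ell)$, and by Lemma~\ref{lem-f.d.=f.q} (and Definition~\ref{def-forested-triangle}) marking a point on a forested boundary arc of a half-forested disk turns it into a forested triangle, adding $p$ promotes the configuration to one built from $\wt\QT(\tfrac23,\tfrac23,\tfrac23)$ with an extra forested vertex.

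The key combinatorial step is then to re-group the pieces of the marked welded surface. Cutting along $\eta[0,\sigma]$ separates the surface into: a piece $\mathcal{A}$ (the component of $\D\setminus\eta[0,\sigma]$ not containing $0$, which is a beaded surface hanging off the boundary of $\D$ near $1$) and a piece $\mathcal{B}$ (the component containing $0$, which carries $\eta[\sigma,\infty)$). I would argue that $\mathcal{A}$ is independent of $\mathcal{B}$ with an explicit law — a half-forested disk / forested-line segment type surface matching the ``$\eta[0,\sigma]$ before last hitting'' picture — using Proposition~\ref{thm-weld-f.l.s} to recognize the pair of forested line segments that $\eta[0,\sigma]$ cuts out, together with \cite[Theorem~2.2]{AHS20} and \cite[Theorem~1.1]{ASY22} (as packaged in Lemma~\ref{lem-weld-two-f.d.}) to re-weld and re-identify laws. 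Tracking the length parameters and the $\int_0^\infty\cdots d\ell$, the residual surface $\mathcal{B}=\QD_{1,1}\otimes\raSLE_6^\sigma$ must then be $C\int_0^\infty\Weld(\wt\QT_\bullet(2,\tfrac23,2;\ell,\ell))\,d\ell$: the boundary weights change from $(\tfrac23,\tfrac23,\tfrac23)$ to $(2,\tfrac23,2)$ precisely because two of the three forested/Lévy-type corners of the original triangle get ``absorbed'' into the cut-off piece $\mathcal{A}$ and become ordinary thick ($W=2$) corners, leaving one forested corner (weight $\tfrac23$) plus the extra marked point on the now-unforested arc.

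The main obstacle I anticipate is making the decomposition along $\eta[0,\sigma]$ rigorous at the level of quantum surfaces: one must show that conditionally on the surface, $\eta[0,\sigma]$ and $\eta[\sigma,\infty)$ decouple in the right way, that the cut-off region $\mathcal{A}$ has the claimed explicit law independent of $\mathcal{B}$, and — crucially — that the welding interface is conformally removable so that the law of $\mathcal{B}$ is determined by its constituent pieces (here one invokes removability of $\SLE_6$-type interfaces and the $\QD$-welding uniqueness as in Section~\ref{sec-conformal-welding}). Matching the multiplicative constants and the Lebesgue factors $dadbdc$-style length integrals is routine bookkeeping once the qualitative decomposition and the identification of $\mathcal{A}$'s law are in place. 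I would organize the write-up as: (1) introduce $p=\eta(\sigma)$ and upgrade Theorem~\ref{thm-weld-raSLE6} to a marked statement via $\wt\QT_\bullet$-type surfaces; (2) describe geometrically the cut along $\eta[0,\sigma]$ and identify the two sides using Propositions~\ref{thm-weld-f.l.s} and the reversal Proposition~\ref{thm-raSLE6-last-hitting}; (3) apply Lemma~\ref{lem-weld-two-f.d.} and \cite[Theorem~1.1]{ASY22} to rewrite the law of the residual surface and read off the weights $(2,\tfrac23,2)$; (4) collect constants.
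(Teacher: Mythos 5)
Your overall instinct — start from Theorem~\ref{thm-weld-raSLE6}, locate $\eta(\sigma)$ in the welding picture, and ``remove'' $\eta(0,\sigma)$ — matches the paper's approach, and you correctly identify the key auxiliary lemmas (Lemma~\ref{lem-f.d.=f.q}, Proposition~\ref{thm-weld-f.l.s}, Lemma~\ref{lem-weld-two-f.d.}). However there are two genuine gaps.

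The first is a conceptual one: you set up the argument as ``cut along $\eta[0,\sigma]$ into a piece $\mathcal A$ (not containing $0$) and a piece $\mathcal B$ (containing $0$),'' and then assert $\mathcal B = \QD_{1,1}\otimes\raSLE_6^\sigma$. But $\QD_{1,1}\otimes\raSLE_6^\sigma$ is the law of $(\D,\phi,\eta[\sigma,\infty),0,1)/\mathord\sim_\gamma$, i.e.\ the \emph{full} quantum disk with the curve decoration truncated; it is not the quantum surface obtained by deleting the region bounded by $\eta[0,\sigma]$. Passing from $\QD_{1,1}\otimes\raSLE_6$ to $\QD_{1,1}\otimes\raSLE_6^\sigma$ is an operation on the \emph{decoration} (forget $\eta(0,\sigma)$), not on the \emph{surface}. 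The paper never cuts off $\mathcal A$; instead it re-expresses the welding data of $\wt\QT(\tfrac23,\tfrac23,\tfrac23;\ell,\ell)$ so that the interfaces welded ``behind'' $\eta(\sigma)$ are welded back up and forgotten (this is where Lemma~\ref{lem-weld-two-f.d.} and Proposition~\ref{thm-weld-f.l.s} are actually used), while all the surface mass is retained. Your framing, taken literally, would produce a smaller surface and cannot yield the claimed identity.

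The second gap is the winding direction. Once you identify $\eta(\sigma)$ with the concatenation point in the decomposition of one of the two forested arcs of $\wt\QT(\tfrac23,\tfrac23,\tfrac23;\ell,\ell)$, you must keep track of whether $\eta[0,\sigma]$ winds counterclockwise or clockwise — equivalently, on \emph{which} forested arc the last boundary-hitting point sits. The paper introduces an event $A$ for exactly this, together with the accompanying length constraint ($a_1<x_1<a_1+a_2$), and uses the reflection symmetry of radial $\SLE_6$ to handle the complementary event $A^c$. Without this, both the identification of $\eta(\sigma)$ and the final constants are off. Your appeal to Proposition~\ref{thm-raSLE6-last-hitting} (harmonic-measure law of $\eta(\sigma)$) is not what does the work here; the relevant bookkeeping is the event-$A$ reduction on the quantum-surface side. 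So while the ingredients you cite are the right ones, the organizing skeleton of the argument needs to be replaced: work at the level of the algebraic decomposition of $\wt\QT$, mark the candidate point, restrict to $A$, forget the relevant sub-welding interface, and re-group, rather than cutting the surface along $\eta[0,\sigma]$.
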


\begin{figure}[htbp]
    \centering
    \includegraphics[width=0.9\linewidth]{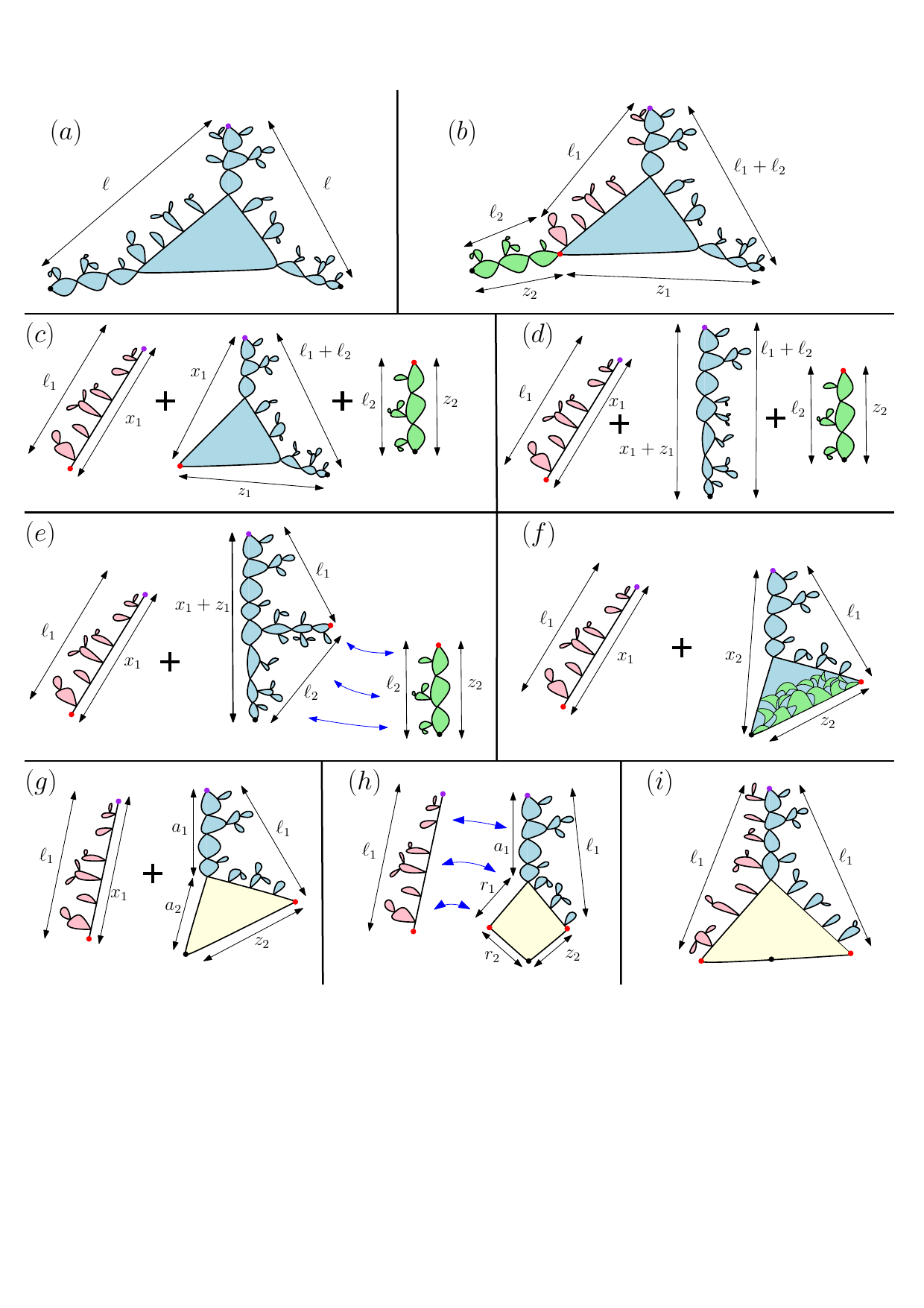}
    \caption{
        Diagram for the proof of Proposition~\ref{prop-weld-last-hit-raSLE}. 
        \textbf{(a) and (b)} represent the left and the right sides of~\eqref{eq-proof-weld-last-hit-raSLE-1}, respectively. 
        The event $A$ corresponds to that the red point in (b) is the last hitting point $\eta(\sigma)$.
        \textbf{(d)} represent the welding in~\eqref{eq-proof-weld-last-hit-raSLE-2}.
        \textbf{From (b) to (d)}, we distinguish the pink forested line segment (see~\textbf{(c)}) and forget the red marked point on the light blue surface. 
        \textbf{(f)} represent the welding in~\eqref{eq-proof-weld-last-hit-raSLE-3}.
        \textbf{From (d) to (f)}, we first add the red marked point on the forested boundary arc of the light blue surface according to generalized quantum length (see~\textbf{(e)}), weld the light blue surface and green surface together and integrate over $\ell_2$, and forget the welding interface and set $x_2:=x_1+z_1$. 
        \textbf{(g) and (h)} represent~\eqref{eq-proof-weld-last-hit-raSLE-4} and \eqref{eq-proof-weld-last-hit-raSLE-5} respectively.
        \textbf{From (f) to (g)}, we set $x_2=a_1+a_2$, and the event $A$ is equivalent to the condition $x_1>a_1$ in (g).
        \textbf{From (g) to (h)}, we separate $a_2$ into $r_1+r_2$ such that $a_1+r_1=x_1$ and mark the red point on the yellow surface.
        \textbf{From (h) to (i)}, we glue the pink forested line segment according to quantum length. 
    }
    \label{fig:prop-weld-last-hit-raSLE}
\end{figure}

\begin{proof}
    See Figure \ref{fig:prop-weld-last-hit-raSLE} for an illustration. The idea is to use~\eqref{eq-weld-raSLE6} in Theorem~\ref{thm-weld-raSLE6}, find the last hitting point $\eta(\sigma)$ on both sides of~\eqref{eq-weld-raSLE6} and then forget the welding interface $\eta(0,\sigma)$. Throughout this proof, the constant $C>0$ can be varying from line to line.
    
    Let $\ell=\ell_1+\ell_2$. Combining Definitions~\ref{def-QT} and~\ref{def-forested-triangle} gives
\begin{equation}
    \label{eq-proof-weld-last-hit-raSLE-1}
    \int_0^\infty\widetilde{\QT}\Big(\frac{2}{3},\frac{2}{3},\frac{2}{3};\ell,\ell\Big)d\ell=\int_{\R_+^4}\cM^{\fd}_{0,2}\Big(\frac{2}{3}\Big)(\ell_2,z_2)\times\widetilde{\QT}\left(2,\frac{2}{3},\frac{2}{3};\ell_1,\ell_1+\ell_2,z_1\right)d\ell_1d\ell_2dz_1dz_2.
\end{equation}
Let $A$ be the event that the common endpoint which concatenates a pair of forested quantum surfaces sampled from the right side of \eqref{eq-proof-weld-last-hit-raSLE-1} is the last hitting point $\eta(\sigma)$. The event $A$ is that the red marked point in (b) of Figure~\ref{fig:prop-weld-last-hit-raSLE} is the last hitting point $\eta(\sigma)$. Equivalently, $A$ happens if the path $(\eta_t)_{0\le t\le \sigma}$ winds counterclockwise, and its complement $A^c$ happens if the path $(\eta_t)_{0\le t\le \sigma}$ winds clockwise. Due to the reflection symmetry of radial SLE$_6$, the measure ${\bf{1}}_A\raSLE_6$ is the pushforward of ${\bf{1}}_{A^c}\raSLE_6$ under the complex conjugate $z\mapsto\bar{z}$.
Restricted on $A$, by Lemma \ref{lem-add-point}, distinguishing all the forested line segments from the forested quantum triangle, and forgetting the marked point of quantum triangle with weight 2, the right side of~\eqref{eq-proof-weld-last-hit-raSLE-1} then becomes
\begin{equation}
\label{eq-proof-weld-last-hit-raSLE-2}
    C\int_{\R_+^5}1_A~\cM^{\fd}_{0,2}\Big(\frac{2}{3}\Big)(\ell_2,z_2)\times \cM^{\fl}_2(x_1,\ell_1)\times\cM^{\fd}_{0,2}\Big(\frac{2}{3}\Big)(x_1+z_1,\ell_1+\ell_2)
    d\ell_1d\ell_2dx_1dz_1dz_2.
\end{equation}
Due to Lemma \ref{lem-f.d.=f.q}, when adding a marked point on the forested boundary arc of a sample from $\cM^{\fd}_{0,2}\Big(\frac{2}{3}\Big)(x_1+z_1,\ell_1+\ell_2)$ (which separate this arc into two arcs with length $\ell_1,\ell_2$),the output triply marked surface has the law $\wt{\QT}(\frac{2}{3},\frac{2}{3},\frac{2}{3};\ell_1,\ell_2,x_1+z_1)$.
Then according to Lemma~\ref{lem-weld-two-f.d.}, we know that by conformally welding $\cM^{\fd}_{0,2}\Big(\frac{2}{3}\Big)(\ell_2,z_2)$ and $\wt{\QT}(\frac{2}{3},\frac{2}{3},\frac{2}{3};\ell_1,\ell_2,x_1+z_1)$, integrating over $\ell_2$ and finally forgetting the welding interface (this corresponds to forget $\eta(0,\sigma)$), \eqref{eq-proof-weld-last-hit-raSLE-2} becomes
\begin{equation}
    \label{eq-proof-weld-last-hit-raSLE-3}
    C\int_{\R_+^5}1_A1_{x_2>x_1}~\cM^{\fl}_2(x_1,\ell_1)\times\QT\Big(2,\frac{2}{3},2\Big)(x_2,y,z_2)\times\cM^{\fl}_2(y,\ell_1)d\ell_1dx_1dx_2dz_2dy,
\end{equation}
where we set $x_2:=x_1+z_1$.
According to Definition~\ref{def-QT}, by setting $y=y_1+y_2,x_2=a_1+a_2$, we can further decompose~\eqref{eq-proof-weld-last-hit-raSLE-3} into
\begin{equation}
    \label{eq-proof-weld-last-hit-raSLE-4}
    \begin{aligned}
        C\int_{\R_+^7}1_A1_{a_1+a_2>x_1}~\cM^{\fl}_2(x_1,\ell_1)\times
    \cM^{\disk}_{0,2}\Big(\frac{2}{3}\Big)(a_1,y_1)\times
    \QT(2,2,2)(a_2,y_2,z_2)\times\cM^{\fl}_2(y_1+y_2,\ell_1)\\
    d\ell_1dx_1da_1da_2dz_2dy_1dy_2.
    \end{aligned}
\end{equation}
Note that restricted on $A$ is equivalent to requiring $x_1>a_1$. Let $r_1:=x_1-a_1$ and $r_2:=a_2-r_1$. Now \eqref{eq-proof-weld-last-hit-raSLE-4} becomes
\begin{equation}
    \label{eq-proof-weld-last-hit-raSLE-5}
    \begin{aligned}
        C\int_{\R_+^7}\cM^{\fl}_2(a_1+r_1,\ell_1)\times
    \cM^{\disk}_{0,2}\Big(\frac{2}{3}\Big)(a_1,y_1)\times
    \QD_{0,4}(r_1,r_2,z_2,y_2)\times\cM^{\fl}_2(y_1+y_2,\ell_1)\\
    d\ell_1da_1dr_1dr_2dz_2dy_1dy_2,
    \end{aligned}
\end{equation}
which indeed equals $C\int_0^\infty\widetilde{\QT}_\bullet(2,\frac{2}{3},2;\ell_1,\ell_1)d\ell_1$ (see Definitions \ref{def-QT} and \ref{def-forested-triangle}). In conclusion, when restricted on the event $A$, the law of the quantum surface obtained by $\int_0^\infty\Weld(\widetilde{\QT}(\frac{2}{3},\frac{2}{3},\frac{2}{3};\ell,\ell))d\ell$ and forgetting the welding interface before the last hitting point  is equal to $C\int_0^\infty\Weld(\widetilde{\QT}_\bullet(2,\frac{2}{3},2;\ell,\ell))d\ell$.
This  also holds when not restricted on $A$ due to the symmetry of $A$ and $A^c$ explained above.
On the other hand, $\QD_{1,1}\otimes\raSLE_6^\sigma$ is equal to $\QD_{1,1}\otimes\raSLE_6$ after forgetting $\eta(0,\sigma)$. Therefore, by Theorem \ref{thm-weld-raSLE6}, we conclude the proof.
\end{proof}

Let $(\D,\phi,0)$ be an embedding of a sample from $\QD_{1,0}$. Sample $\eta$ from radial $\SLE_6$ in $\D$ from 0 to 1 independently. Let $\sigma$ be the last time that $\eta$ hits $\partial\D$. We write $\QD_{1,0}\otimes\raSLE_6^\sigma$ for the law of curve decorated quantum surface $(\D,\phi,\eta[\sigma,\infty),0)/\mathord\sim_\gamma$. The following is a quick consequence of Proposition~\ref{prop-weld-last-hit-raSLE}.
\begin{corollary}
     \label{prop-weld-last-hit-raSLE-2}
    There exists a constant $C>0$ such that
    \begin{equation*}
        \QD_{1,0}\otimes\raSLE_6^\sigma=C\int_0^\infty\Weld\left(\widetilde{\QT}\left(2,\frac{2}{3},2;\ell,\ell\right)\right)d\ell.
    \end{equation*}
\end{corollary}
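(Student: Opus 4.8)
The plan is to deduce Corollary~\ref{prop-weld-last-hit-raSLE-2} from Proposition~\ref{prop-weld-last-hit-raSLE} by the ``forget the extra marked point'' operation. Recall that $\QD_{1,1}$ is obtained from $L\QD_{1,0}$ by adding a boundary marked point sampled from $\nu_h^\#$, i.e.\ $\QD_{1,1}=\int_0^\infty \ell\,\QD_{1,0}(\ell)\otimes(\text{uniform point on the length-}\ell\text{ boundary})\,d\ell$. In the setup of Proposition~\ref{prop-weld-last-hit-raSLE} the extra marked point is the point $1$, which is the \emph{starting point} of the radial $\SLE_6$ curve $\eta$; since $\sigma$ is the last hitting time of $\partial\D$, the point $\eta(\sigma)$ is a random boundary point, and after restricting to $\eta[\sigma,\infty)$ the starting point $1=\eta(0)$ plays no further role in the decorated quantum surface except as a marked boundary point. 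Concretely, $\QD_{1,1}\otimes\raSLE_6^\sigma$ is obtained from $\QD_{1,0}\otimes\raSLE_6^\sigma$ by weighting by the total boundary length $L$ and then adding an independent uniform boundary marked point.

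First I would make this precise: starting from $(\D,\phi,0)$ an embedding of $\QD_{1,0}$ with an independent radial $\SLE_6$ from $0$ to a \emph{uniform} boundary point (equivalently, by rotational invariance of $\QD_{1,0}$ after embedding, from $0$ to $1$), the pair $(\phi,\eta[\sigma,\infty))$ has law $\QD_{1,0}\otimes\raSLE_6^\sigma$; weighting by $L$ and sampling the target point $1$ from $\nu_h^\#$ recovers exactly the law $\QD_{1,1}\otimes\raSLE_6^\sigma$ described above Proposition~\ref{prop-weld-last-hit-raSLE}, because the starting point of the curve and the $\QD_{1,1}$-marked point are identified. Hence, conversely, $\QD_{1,0}\otimes\raSLE_6^\sigma$ is the image of $L^{-1}(\QD_{1,1}\otimes\raSLE_6^\sigma)$ under forgetting the marked point $1$.

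Next I would apply this forgetting operation to the right-hand side of Proposition~\ref{prop-weld-last-hit-raSLE}. By definition, $\widetilde{\QT}_\bullet(2,\frac23,2;\ell,\ell)$ was obtained from $\widetilde{\QT}(2,\frac23,2;\ell,\ell)$ by weighting by the unforested boundary length $L$ and adding a uniform marked point on that arc. Under the welding of the two forested boundary arcs (which does not touch the unforested arc), the unforested arc of the triangle becomes the unforested boundary of the resulting surface, and its length $L$ is precisely the total boundary length of the welded surface. Therefore the forgetting-and-unweighting operation carries $\int_0^\infty \Weld(\widetilde{\QT}_\bullet(2,\frac23,2;\ell,\ell))\,d\ell$ to $\int_0^\infty \Weld(\widetilde{\QT}(2,\frac23,2;\ell,\ell))\,d\ell$, up to the same multiplicative constant. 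Combining with the identification from the previous paragraph yields the claimed formula.

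The main point to be careful about is the bookkeeping of the base point / weighting: one must check that the point $1$ appearing as the $\QD_{1,1}$-marked point is the \emph{same} as the root of the curve $\eta$ (so that ``forget the curve's starting point'' and ``forget the $\QD_{1,1}$ point'' agree), and that under the welding in Theorem~\ref{thm-weld-raSLE6} resp.\ Proposition~\ref{prop-weld-last-hit-raSLE} this marked point indeed sits on the unforested arc of the triangle with the correct length weighting, so that $L\,\widetilde{\QT}(2,\frac23,2;\ell,\ell)$ plus a uniform point is exactly $\widetilde{\QT}_\bullet(2,\frac23,2;\ell,\ell)$. Once this correspondence of marked points is pinned down, the rest is a direct unwinding of definitions and the corollary follows immediately from Proposition~\ref{prop-weld-last-hit-raSLE}.
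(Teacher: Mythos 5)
Your overall approach is the same as the paper's: go from Proposition~\ref{prop-weld-last-hit-raSLE} to the corollary by unweighting by the boundary length and forgetting the marked point, on both sides simultaneously. Your handling of the right side is correct: the welding does not touch the unforested arc, so $\widetilde{\QT}_\bullet \mapsto \widetilde{\QT}$ is exactly the weight-and-mark inverse of the left side, and the multiplicative constant from Proposition~\ref{prop-weld-last-hit-raSLE} carries over unchanged.

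The one place where your argument is not airtight is the justification of the left-hand forgetting. You assert that after restricting to $\eta[\sigma,\infty)$ the point $\eta(0)$ ``plays no further role'' and that one can therefore re-sample the marked point from $\nu_h^\#$ independently of $\eta[\sigma,\infty)$. What is actually needed is the conditional independence of $\eta(0)$ and $\eta[\sigma,\infty)$ given the field, equivalently that the conditional law of $\eta[\sigma,\infty)$ given $(\phi,\eta(0))$ does not depend on $\eta(0)$. Rotational invariance of $\QD_{1,0}$ alone does \emph{not} deliver this: rotational invariance of the joint law of $(\phi,\eta(0),\eta[\sigma,\infty))$ only forces the conditional law of $\eta[\sigma,\infty)$ given $(\phi,\eta(0)=e^{i\theta})$ to be the $\theta$-rotation of the one given $\eta(0)=1$, which in general can still depend on $\theta$ (e.g.\ nothing in rotational invariance forbids $\eta(\sigma)$ being deterministically $-\eta(0)$). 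The correct input, which is precisely what the paper cites, is Proposition~\ref{thm-raSLE6-last-hitting}: the time-reversal of $\eta[\sigma,\infty)$ is whole-plane $\SLE_6$ from $0$ stopped upon exiting, hence its law is genuinely independent of where $\eta$ was started on $\partial\D$, giving the rotational invariance of that conditional law and with it the required independence. Once you replace the ``rotational invariance of $\QD_{1,0}$'' appeal with a citation of Proposition~\ref{thm-raSLE6-last-hitting}, the rest of your bookkeeping is exactly the paper's argument.
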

\begin{proof}
    By Theorem \ref{thm-raSLE6-last-hitting}, the law of $\eta[\sigma,\infty)$ does not rely on the position of the starting point of $\eta$. Then the result follows from Proposition~\ref{prop-weld-last-hit-raSLE} by forgetting the boundary marked point.
\end{proof}

The following proposition gives a conformal welding description of $\int_0^\infty\Weld\left(\widetilde{\QT}\left(2,\frac{2}{3},2;\ell,\ell\right)\right)d\ell$ obtained in Corollary~\ref{prop-weld-last-hit-raSLE-2}.

\begin{proposition}
\label{prop-weld-QT=QD}
Consider the quantum surface sampled from $\int_0^\infty \Weld\left(\widetilde{\QT}\left(2,\frac{2}{3},2;\ell,\ell\right)\right)d\ell$. If we only keep the outer boundary of the welding interface (and forget the remaining part of the welding interface), then there exists a constant $C>0$ such that the resulting curve decorated quantum surface has the same law as
\begin{equation}
    \label{eq-weld-QT=QD}
    \begin{aligned}
        C   \int_{\R_+^4}\Weld\left(\QD_{1,1}(b),\QD_{0,4}(a,b,c,L),\cM^{\disk}_{0,2}\left(\frac23\right)(a,c)\right)dadbdcdL.
    \end{aligned}
\end{equation}

\end{proposition}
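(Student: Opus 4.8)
The plan is to start from the conformal welding description of $\int_0^\infty \Weld(\wt\QT(2,\frac23,2;\ell,\ell))d\ell$ and peel off, one at a time, the constituent pieces of the forested quantum triangles $\wt\QT(2,\frac23,2;\ell,\ell)$, identifying how the outer boundary of the welding interface interacts with each piece. Recall from Definition~\ref{def-forested-triangle} that a sample from $\wt\QT(2,\frac23,2;\ell,\ell)$ is built from a thick quantum triangle $\QT(2,\frac23,2;t_1,t_2,t_3)$ with two forested line segments glued onto the arcs of quantum length $t_1,t_2$ (these being the arcs incident to the weight-$\frac23$ vertex), leaving the unforested arc of length $t_3$. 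When we weld the two forested boundary arcs of the $\wt\QT(2,\frac23,2;\ell,\ell)$ sample, the welding interface is an $\SLE_6$-type curve; by Proposition~\ref{thm-weld-f.l.s}, the welding of the two forested line segments is governed by $\cM_{0,2}^\disk(\frac23)$ decorated by a non-simple $\SLE_{\kappa'}(\frac{\kappa'}2-4;\frac{\kappa'}2-4)$ curve. The \emph{outer boundary} of this non-simple interface is a simple curve, and the region it cuts off on the "disk side" — the $\QT(2,\frac23,2)$ side — is exactly the complement of a forest, i.e., the outer boundary sees a thick quantum surface, while on the other side the forested structure and the bubbles cut off by the interface get absorbed.

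The key steps, in order: (i) Apply Lemma~\ref{lem-weld-two-f.d.} (or rather the two-sided welding version of Proposition~\ref{thm-weld-f.l.s}) to the welding of the two copies of $\cM_2^{\fl}$ that appear in the two $\wt\QT$'s. Then I would pass to outer boundaries: the outer boundary of the non-simple welding interface bounds, on one side, a thick surface and, on the other side, the union of the $\cM_{0,2}^\disk(\frac23)$ coming from the forest-welding together with whatever bubbles of the $\SLE_{\kappa'}$ interface hang off it — and that union, by the structure of forested lines and Proposition~\ref{thm-weld-f.l.s}, reassembles into a single $\cM_{0,2}^\disk(\frac23)$ of the appropriate generalized length; this produces the $\cM^\disk_{0,2}(\frac23)(a,c)$ factor in~\eqref{eq-weld-QT=QD}. (ii) The thick core, obtained by welding $\QT(2,\frac23,2;t_1,t_2,t_3)$ with $\QT(2,\frac23,2;t_1',t_2',t_3')$ along the $t_1\leftrightarrow t_1'$ and $t_2\leftrightarrow t_2'$ arcs, should be identified — using~\cite[Theorem 1.1]{ASY22} and/or~\cite[Theorem 2.2]{AHS20} as in the proof of Lemma~\ref{lem-weld-two-f.d.} — with a welding of $\QD_{1,1}$ and $\QD_{0,4}$: the weight-$\frac23$ vertices of the two triangles, each of weight $\frac23$, weld to produce a bulk point (weight $\gamma^2=\frac83$, matching $\QD_{1,1}$), while the two weight-$2$ vertices on each side contribute the four boundary marked points of $\QD_{0,4}$ together with the length parameters $a,b,c,L$. (iii) Carefully bookkeep all the length integrals: the single integration variable $\ell$ in $\int_0^\infty\Weld(\wt\QT(2,\frac23,2;\ell,\ell))d\ell$ together with the internal length variables $t_1,t_2,t_3$ (and their primed counterparts) should match, after the identifications above, the four variables $a,b,c,L$ plus the overall $dL$ integration in~\eqref{eq-weld-QT=QD}, with the Jacobian being an explicit constant absorbed into $C$. (iv) Invoke conformal removability of $\SLE_{8/3}$ (the outer boundary is locally an $\SLE_{8/3}$-type curve) to guarantee the welded surface together with the kept outer boundary is measurable with respect to the pieces, so that the law is well-defined.

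The main obstacle I expect is step (i): making precise what happens to the \emph{bubbles} cut off by the non-simple $\SLE_6$-type welding interface when we retain only its outer boundary. One must argue that, on the "forest side", these bubbles — together with the thin disk $\cM_{0,2}^\disk(\frac23)$ from the forest welding of Proposition~\ref{thm-weld-f.l.s} — recombine into a single thin quantum disk $\cM_{0,2}^\disk(\frac23)(a,c)$, whereas on the "triangle side" the outer boundary coincides with a boundary arc of the thick surface so no bubbles are absorbed there. This requires either a direct decomposition of the non-simple $\SLE_6$ into its outer boundary plus a Poissonian field of bubbles (each filled by a quantum disk, à la the forested-line construction in Section~\ref{section-generalized-quantum-surface}), or an appeal to the known structure of $\SLE_{\kappa'}$ on LQG and the duality with $\SLE_{16/\kappa'}=\SLE_{8/3}$. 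Once this "outer-boundary-of-non-simple-interface $=$ simple interface welding a thin disk into a thick surface" statement is in hand, the rest is a matter of assembling the already-established welding results and tracking length variables.

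A secondary subtlety is orienting and matching the four boundary marked points so that $\QD_{0,4}(a,b,c,L)$ appears with the \emph{specific} cyclic ordering $(a,b,c,L)$ claimed in~\eqref{eq-weld-QT=QD}: the two weight-$2$ vertices of each triangle become, after welding, four points on $\partial\D$, and one must check that the arc lengths between consecutive points are precisely $a$ (welding to the thin disk), $b$ (the arc carrying the $\QD_{1,1}$ welding interface), $c$ (again welding to the thin disk), and $L$ (the free unforested arc, which becomes the outer boundary of the Brownian motion in Theorem~\ref{thm:weld-BM1}); this is a routine but careful inspection of Figure~\ref{fig:weld-BM1} combined with Definitions~\ref{def-QT} and~\ref{def-forested-triangle}.
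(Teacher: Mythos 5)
The central gap is that you have misread the geometry of the welding and, as a consequence, misidentified the mechanism producing the $\QD_{1,1}(b)$ factor. The measure $\Weld\big(\wt\QT(2,\frac23,2;\ell,\ell)\big)$ is the \emph{self}-welding of a single forested quantum triangle along its two forested arcs (both arcs sharing the common weight-$\frac23$ vertex), not a welding of two copies of $\QT(2,\frac23,2)$ as your step~(ii) asserts. There is no second triangle, so there is no pair of weight-$\frac23$ vertices to "merge into a bulk point of weight $\gamma^2$"; indeed this purported weight arithmetic has no basis in the LQG literature. What actually produces the $\QD_{1,1}$ is Theorem~\ref{thm-weld-raSLE6}: the paper first decomposes the weight-$\frac23$ vertex so that the portion of the forested triangle that ends up \emph{inside} the outer-boundary loop is exactly a $\wt\QT(\frac23,\frac23,\frac23;\ell_1,\ell_1)$, and then invokes that theorem to recognize its self-welding (with the interface forgotten) as a $\QD_{1,1}(t_3)$. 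Your proposal never uses this theorem, so it has no route to the $\QD_{1,1}$ factor. Also a minor but telling slip: $\QT(2,\frac23,2)$ is not a thick quantum triangle — it has a thin (weight-$\frac23$) vertex.

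Your step~(i) is also not quite right. You suggest that the $\cM_{0,2}^{\disk}(\frac23)$ produced by Proposition~\ref{thm-weld-f.l.s} from welding the two forested line segments, together with the SLE$_6$ bubbles, recombines directly into the factor $\cM_{0,2}^{\disk}(\frac23)(a,c)$ appearing in~\eqref{eq-weld-QT=QD}. In fact, in the paper's argument the thin disk produced by the forest welding (which is $\cM_{0,2}^{\disk}(\frac23)(t_3+t_2,t_1)$ in the intermediate expression~\eqref{eq-proof-weld-QT=QD-6}) must be \emph{split further}: a marked point is added (Lemma~\ref{lem-add-point}) and the resulting $\QT(2,\frac23,\frac23)$ is decomposed by~\eqref{eq-proof-weld-QT=QD-*} into a $\QT(2,\frac23,2)$ piece — which is subsequently absorbed into $\QD_{1,1}(b)$ via Lemma~\ref{lem-weld-QT-surround-QD} — and a smaller $\cM_{0,2}^{\disk}(\frac23)(c,a)$, which is the one that survives in~\eqref{eq-weld-QT=QD}. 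Without this additional split, the surface inside the outer-boundary loop in your account is only $\QD_{1,1}(t_3)$ rather than $\QD_{1,1}(b)$, which does not match the theorem. In short, your high-level picture (non-simple interface; bubbles; a thin disk across the interface) is reasonable, but the essential algebraic reorganizations — the decomposition of the weight-$\frac23$ vertex in~\eqref{eq-proof-weld-QT=QD-1}, the application of Theorem~\ref{thm-weld-raSLE6}, and the further split of the thin disk via~\eqref{eq-proof-weld-QT=QD-*} together with Lemma~\ref{lem-weld-QT-surround-QD} — are all missing, and in their place is an incorrect "two-triangle" model.
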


Note that Theorem~\ref{thm:weld-BM1} quickly follows by combining Corollary~\ref{prop-weld-last-hit-raSLE-2} and Proposition~\ref{prop-weld-QT=QD}.
\begin{proof}[Proof of Theorem \ref{thm:weld-BM1} given Proposition \ref{prop-weld-QT=QD}.]
    By the equivalence between Brownian motion and radial $\SLE_6$ in Lemma~\ref{lem-BM-raSLE6}, if we only keep the outer boundary (and forget the remaining part of the curve) in a sample from $\QD_{1,0}\otimes\raSLE_6^\sigma$, the resulting curve decorated quantum surface has the law $\QD_{1,0}\otimes\mathsf{P}$. Then Theorem \ref{thm:weld-BM1} follows from Corollary~\ref{prop-weld-last-hit-raSLE-2}  and  Proposition~\ref{prop-weld-QT=QD}.
\end{proof}

The remaining of this section is devoted to proving Proposition \ref{prop-weld-QT=QD}. We first need the following lemma on conformal welding of a quantum triangle and a quantum disk. For $b>0$, define $\QT(2,\frac{2}{3},2;b)=\int_0^\infty\QT(2,\frac{2}{3},2;b,b')db'$.
We write $\int_0^\infty\Weld(\QD_{1,1}(b),\QT(2,\frac{2}{3},2;b))db$ for the law of the curve decorated quantum surface obtained by the conformal welding of a pair of quantum surfaces sampled from $\int_0^\infty\QD_{1,1}(b)\times\QT(2,\frac{2}{3},2;b)db$.

\begin{lemma}
        \label{lem-weld-QT-surround-QD}
Consider a sample from $\int_0^\infty\Weld(\QD_{1,1}(b),\QT(2,\frac{2}{3},2;b))db$. If we forget the welding interface and the boundary marked point of $\QD_{1,1}(b)$, then the output quantum surface has the law of a constant multiple of $\QD_{1,1}$. See (d) in Figure \ref{fig:weld-QT=QD}.
\end{lemma}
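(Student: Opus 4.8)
The plan is to reduce the statement to an existing conformal welding identity for the quantum disk, namely the two-pointed version of the standard disk welding. First I would recall that a sample from $\QT(2,\frac23,2;b)$ can, by Definition~\ref{def-QT}, be decomposed as the quantum surface obtained from $\QT(2,2,2)$ together with a thin quantum disk $\cM^\disk_{0,2}(\frac23)$ glued at one of its weight-$\frac23$ vertices; equivalently $\QT(2,\frac23,2)$ is a constant multiple of $\QD_{0,3}$ with a thin bead attached. The key structural input is that welding $\QD_{1,1}(b)$ to the weight-$\frac23$ boundary arc of the triangle along matching quantum length $b$ should, after forgetting the interface and the extra boundary point, reconstitute a quantum disk with one bulk and one boundary marked point. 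I would make this precise by invoking the welding results for quantum disks across a weight-$W$ arc from \cite{ASY22} (and the thin-disk concatenation from \cite{AHS20}), in the regime $W=\frac23=\gamma^2-2$ with $\gamma=\sqrt{8/3}$, which is exactly the threshold case where such welds produce thin surfaces.

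The main steps, in order, would be: (1) Use Definition~\ref{def-QT} to write $\QT(2,\frac23,2;b)$ as a weld of $\QT(2,2,2;t',b)$ with a thin disk $\cM^\disk_{0,2}(\frac23)$ carrying generalized length $b$ and line length $t'$, integrating over $t'$; this exposes the weight-$\frac23$ arc of length $b$ as the line-side boundary of a thin disk. (2) Observe that $\QD_{1,1}(b)$ has a simple boundary of length $b$, and that welding it to the thin disk of matching length $b$ is, by the thin-disk welding identity (the analogue of \cite[Proposition 3.25]{AHSY23} or the $W\le\gamma^2/2$ welding in \cite{AHS20,ASY22}), a quantum disk with a bulk point and the two original endpoints as boundary points — i.e.\ a constant multiple of $\QD_{1,3}$-type surface, and after forgetting one marked point, of $\QD_{1,1}$ with some weight. (3) Track the marked points carefully: the boundary marked point of $\QD_{1,1}(b)$ is forgotten by hypothesis, the bulk point $0$ is retained, and the two vertices of the triangle that bounded the weld arc become ordinary boundary points, one of which is also forgotten in the final surface; conclude that the result is a constant multiple of $\QD_{1,1}$, with the constant absorbing the $L$-integral implicit in passing from $\QD_{1,1}(b)$ and the disintegration over $b$. (4) Verify the length-exponent bookkeeping using Lemmas~\ref{prop-one-point-disk-integrability} and~\ref{lem-mass-QD} so that the $b$-integral of $|\QD_{1,1}(b)|\cdot|\QT(2,\frac23,2;b)|$ (times the appropriate Jacobian) is finite and the disintegrations match up to a constant.

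I expect the main obstacle to be step (2)–(3): correctly identifying which welding theorem from \cite{AHS20,ASY22,AHSY23} applies when one of the two surfaces being welded is a genuine (thick) $\QD_{1,1}(b)$ rather than another thin disk, and then keeping precise track of the marked-point structure through the weld so that what emerges is $\QD_{1,1}$ and not, say, $\QD_{1,2}$ or a thin variant. The subtlety is that Definition~\ref{def-QT} glues the thin bead at a \emph{vertex} (a single point), so the weld of $\QD_{1,1}(b)$ with $\QT(2,\frac23,2;b)$ is really the weld of $\QD_{1,1}(b)$ with a thin disk followed by gluing the result at a point to $\QT(2,2,2)$; I would therefore phrase the argument as: first weld $\QD_{1,1}(b)$ to the thin disk to get (a constant times) a doubly-boundary-marked quantum disk, then glue that at a vertex to the thick triangle $\QT(2,2,2)$, and finally recognize the composite — after forgetting the interface and the auxiliary boundary points — as $\QD_{1,1}$ via \cite[Proposition A.8]{DMS14} and the uniqueness of conformal welding. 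The rest (Green's-function / Liouville-field normalization constants, finiteness of integrals) is routine given the lemmas already recorded in Section~\ref{section-pre}.
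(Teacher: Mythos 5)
Your proposal contains a genuine geometric misconception that undermines steps (1)--(3). When $W_2=\frac23$ is a thin vertex, the weight-$\frac23$ vertex $a_2$ is the far endpoint of the thin-disk chain $\widetilde D_2$, which is attached to the thick triangle $\widetilde D$ at a \emph{single} point $\tilde a_2$. The boundary arc of $\QT(2,\frac23,2)$ from $a_1$ to $a_2$ therefore consists of a piece of $\partial\widetilde D$ (from $\tilde a_1$ to $\tilde a_2$) \emph{followed by} one side of the thin disk (from $\tilde a_2$ to $a_2$). In particular, the length-$b$ arc is not ``the line-side boundary of a thin disk'' and is not contained in the thin bead; it straddles both pieces. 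So ``the weld of $\QD_{1,1}(b)$ with $\QT(2,\frac23,2;b)$ is really the weld of $\QD_{1,1}(b)$ with a thin disk followed by gluing the result at a point to $\QT(2,2,2)$'' is not correct: the welding interface necessarily runs through the interior of the thick triangle as well. A correct decomposition would have to split the arc of length $b$ into a piece $s_1$ on $\partial\widetilde D$ and a piece $b-s_1$ on the thin disk (as is done, e.g., in~\eqref{eq-proof-weld-QT=QD-*}), and then weld the two pieces separately to two sub-arcs of $\partial\QD_{1,1}(b)$, which is a much more involved bookkeeping problem than the one you describe. Your use of ``generalized length'' and ``line length'' for $\cM^\disk_{0,2}(\frac23)$ is also misplaced; those notions belong to forested line segments, whereas thin disks in Definition~\ref{def-QD} carry left and right boundary lengths.

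The paper's argument sidesteps all of this by noticing that the relevant operation on the triangle side is not a decomposition into thick and thin pieces but a single topological identification of the two endpoints $a_1$ and $a_2$: welding the circle $\partial\QD_{1,1}(b)$ to the interval $(a_1,a_2)$ forces those two vertices to meet at the boundary marked point of $\QD_{1,1}(b)$. By~\cite[Lemma~4.7]{SXZ24}, concatenating the weight-$2$ and weight-$\frac23$ vertices of a $\QT(2,\frac23,2)$ sample produces a once-marked pinched quantum annulus $\widetilde\QA_\bullet(\frac23)$, so the welding on the left side of~\eqref{eq-proof-weld-QT-surround-QD} equals the uniform conformal welding $\int_0^\infty b\,\Weld(\QD_{1,0}(b),\widetilde\QA_\bullet(\frac23)(b))\,db$, and then~\cite[Proposition~4.23]{SXZ24} identifies this, after forgetting the interface, as a constant multiple of $\QD_{1,1}$. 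If you want to pursue your route, you would need to reformulate step (1) with the correct arc decomposition and then track two separate welding interfaces meeting at a common point, which is substantially harder and not what the cited welding theorems directly give.
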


\begin{proof}
    By \cite[Lemma 4.7]{SXZ24}, if we concatenate two vertices of a sample from  $\QT(2,\frac{2}{3},2)$ with weight $2$ and $\frac{2}{3}$ together, then we obtain a pinched quantum annulus with one marked point, whose law is called $\widetilde{\QA}_{\bullet}(\frac{2}{3})$ there. Then we have
    \begin{equation}\label{eq-proof-weld-QT-surround-QD}
        \int_0^\infty \Weld\left(\QD_{1,1}(b),\QT\left(2,\frac{2}{3},2;b\right)\right)db=\int_0^\infty b\Weld\left(\QD_{1,0}(b),\widetilde{\QA}_{\bullet}\left(\frac{2}{3}\right)(b)\right)db
    \end{equation}
where the right side of \eqref{eq-proof-weld-QT-surround-QD} is the uniform conformal welding. By~\cite[Proposition 4.23]{SXZ24}, the right side of \eqref{eq-proof-weld-QT-surround-QD} after forgetting the welding interface is a constant multiple of $\QD_{1,1}$.
\end{proof}

Fix $\ell_1,\ell_2>0$. Let $(\mathcal{L},\widetilde{\mathcal{D}})$ be sampled from $\int_0^\infty\cM^{\fl}_2(t,\ell_1)\times\cM^{\fd}_{0,2}(\frac{2}{3})(\ell_2,t)dt$, and glue the line boundary arc of $\mathcal{L}$ to the unforested boundary arc of $\widetilde{\mathcal{D}}$ with length $t$
according to the quantum length. We denote the law of the resulting forested quantum surface by $\cM^{\fdd}_{0,2}(\frac{2}{3})(\ell_1,\ell_2)$.

\begin{figure}[!htbp]
    \centering
    \includegraphics[width=1\linewidth]{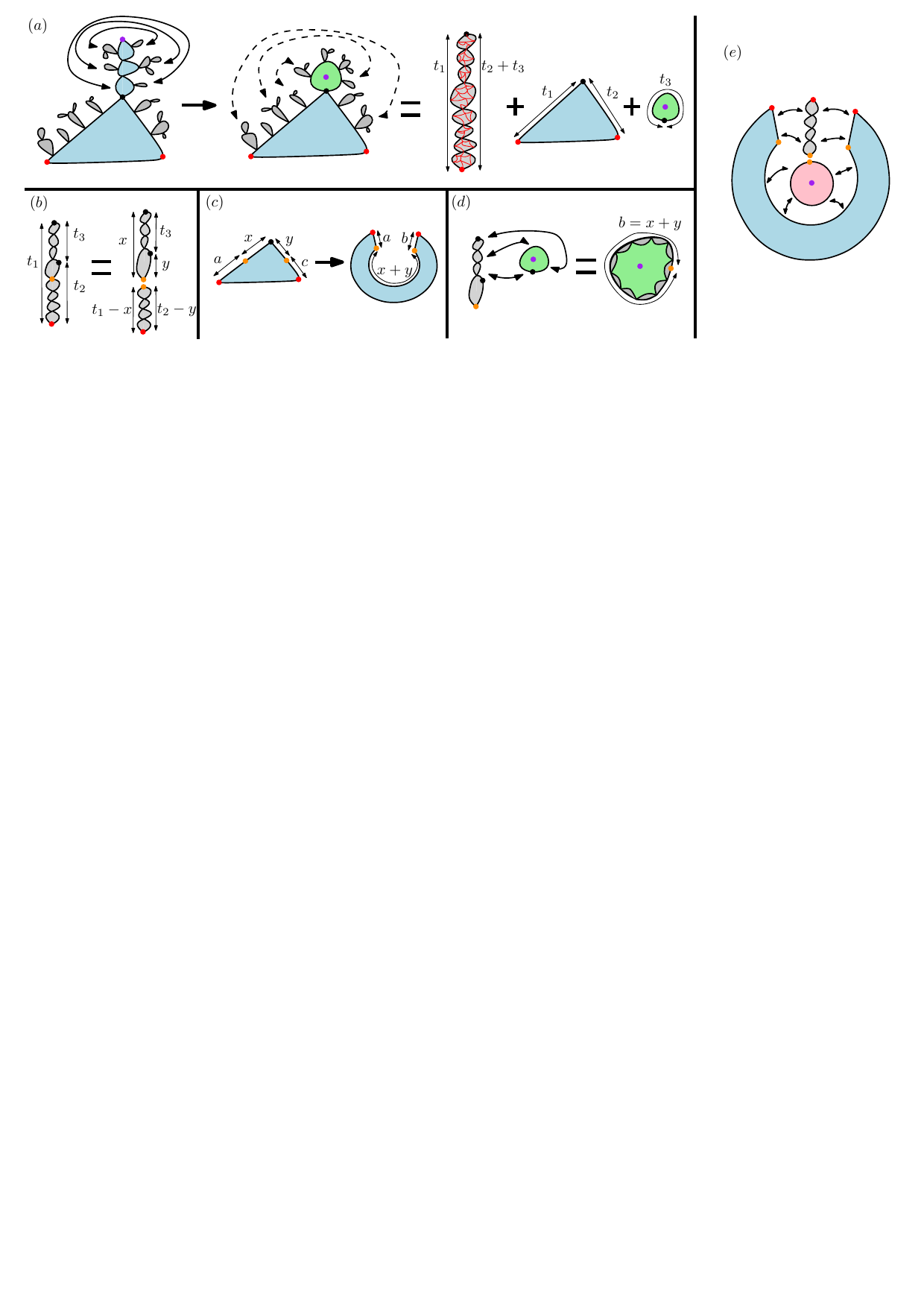}
    \caption{
    Diagram for the proof of Proposition~\ref{prop-weld-QT=QD}. 
        \textbf{(a)} \textbf{Left:} A sample from $\widetilde{\QT}\left(2,\frac{2}{3},2;\ell,\ell\right)$, whose law is given by~\eqref{eq-proof-weld-QT=QD-1}. 
        By \cite[Lemma 4.1]{ASYZ24} and Theorem \ref{thm-weld-raSLE6}, welding $\cM^{\fdd}_{0,2}\left(\frac{2}{3}\right)(\ell_1,\ell_1+\ell_3)$ and forgetting the welding interface gives the middle panel. 
        \textbf{Middle:} Conformally weld the two forested boundary arcs. 
        \textbf{Right:} The joint law of the three quantum surfaces is given by~\eqref{eq-proof-weld-QT=QD-6}. 
        \textbf{(b)} Mark the point on $\cM^{\disk}_{0,2}\left(\frac{2}{3}\right)(t_3+t_2,t_1)$ such that the quantum length on the right boundary between the two black marked points is $t_3$, which yields a quantum triangle $\QT(2,\frac{2}{3},\frac{2}{3};t_3,t_1,t_2)$. Then decompose it using~\eqref{eq-proof-weld-QT=QD-*}.  
        \textbf{(c)} Mark two points in $\QD_{0,3}(x+a,y+c,\cdot)$ and forget the black point.
        \textbf{(d)} Apply Lemma \ref{lem-weld-QT-surround-QD} to conformally weld $\QD_{1,1}(t_3)$ and $\QT(2,\frac{2}{3},2;t_3,x,b-x)$ from (\ref{eq-proof-weld-QT=QD-7}), and then forget the welding interface, which gives $\QD_{1,1}(b)$.
        \textbf{(e)} Combine the three components obtained in (b), (c), and (d), whose joint law is given by (\ref{eq-weld-QT=QD}).
    }
    \label{fig:weld-QT=QD}
\end{figure}

\begin{proof}[Proof of Proposition \ref{prop-weld-QT=QD}]
See Figure~\ref{fig:weld-QT=QD} for an illustration.
The constant $C$ appears in the following can vary from line to line. By Definitions~\ref{def-forested-triangle} and~\ref{def-QT} and change of variables, we have
\begin{equation}
\label{eq-proof-weld-QT=QD-1}
    \int_0^\infty \widetilde{\QT}\left(2,\frac{2}{3},2;\ell,\ell\right)d\ell=\int_{\R_+^3}1_{\ell>\ell_1,\ell>\ell_2}\widetilde{\QT}(2,2,2;\ell-\ell_1,\ell-\ell_2)\times\cM^{\fdd}_{0,2}\left(\frac{2}{3}\right)(\ell_1,\ell_2)d\ell_1d\ell_2d\ell.
\end{equation}

By symmetry, we can assume $\ell_2>\ell_1$ without loss of generality. Set $\ell_3:=\ell_2-\ell_1,\ell_4:=\ell-\ell_2$, and then the right side of \eqref{eq-proof-weld-QT=QD-1} becomes
\begin{equation}
\label{eq-proof-weld-QT=QD-2}
    \int_{\R_+^3}\widetilde{\QT}(2,2,2;\ell_3+\ell_4,\ell_4)\times\cM^{\fdd}_{0,2}\left(\frac{2}{3}\right)(\ell_1,\ell_1+\ell_3)d\ell_1d\ell_3d\ell_4.
\end{equation}

By \cite[Lemma 4.1]{ASYZ24}, if we mark the point on the forested boundary arc with length $\ell_1+\ell_3$ of a sample from $\cM^{\fdd}_{0,2}\left(\frac{2}{3}\right)(\ell_1,\ell_1+\ell_3)$, which separates the arc into two forested arcs with length $\ell_1$ and $\ell_3$, then we obtain $\int_0^\infty\widetilde{\QT}(\frac{2}{3},\frac{2}{3},\frac{2}{3};\ell_1,\ell_1,t_3)\times\cM^{\fl}_2(t_3,\ell_3)dt_3$. By Theorem \ref{thm-weld-raSLE6}, if we weld a sample from $\widetilde{\QT}(\frac{2}{3},\frac{2}{3},\frac{2}{3};\ell_1,\ell_1,t_3)$, integrate over $\ell_1$, and forget the welding interface, then the resulting quantum surface has the law $C\QD_{1,1}(t_3)$. Then \eqref{eq-proof-weld-QT=QD-2} becomes
\begin{equation}
\label{eq-proof-weld-QT=QD-3}
    C\int_{\R_+^3}\widetilde{\QT}(2,2,2;\ell_3+\ell_4,\ell_4)\times\QD_{1,1}(t_3)\times\cM^{\fl}_2(t_3,\ell_3)
    dt_3d\ell_3d\ell_4.
\end{equation}
Then by distinguishing forested line segments from the forested quantum triangle in \eqref{eq-proof-weld-QT=QD-3}, we have
\begin{equation}
    \label{eq-proof-weld-QT=QD-4}
C\int_{\R_+^5}\QT(2,2,2;t_1,t_2)\times\QD_{1,1}(t_3)\times\cM^{\fl}_2(t_1,\ell_3+\ell_4)\times\cM^{\fl}_2(t_2,\ell_4)\times\cM^{\fl}_2(t_3,\ell_3)dt_1dt_2dt_3d\ell_3d\ell_4.
\end{equation}
Fix $t_2,t_3$ and $\ell':=\ell_3+\ell_4$, concatenate a sample from $\cM^{\fl}_2(t_2,\ell_4)$ and a sample from  $\cM^{\fl}_2(t_3,\ell'-\ell_4)$, 
\begin{equation*}
    \int_{0}^{\ell'}\cM^{\fl}_2(t_2,\ell_4)\times \cM^{\fl}_2(t_3,\ell'-\ell_4)d\ell_4=\cM^{\fl}_2(t_3+t_2,\ell').
\end{equation*}
Then \eqref{eq-proof-weld-QT=QD-4} becomes
\begin{equation}
    \label{eq-proof-weld-QT=QD-5}
C\int_{\R_+^4}\QT(2,2,2;t_1,t_2)\times\QD_{1,1}(t_3)\times\cM^{\fl}_2(t_1,\ell')\times\cM^{\fl}_2(t_3+t_2,\ell')dt_1dt_2dt_3d\ell'.
\end{equation}
By Proposition \ref{thm-weld-f.l.s}, we 
weld two forested line segments in \eqref{eq-proof-weld-QT=QD-5} according to generalized quantum length, integrate over $\ell'$, and forget the welding interface, then \eqref{eq-proof-weld-QT=QD-5} becomes
\begin{equation}
    \label{eq-proof-weld-QT=QD-6}
    C\int_{\R_+^3}\QT(2,2,2;t_1,t_2)\times\QD_{1,1}(t_3)\times\cM^{\disk}_{0,2}\left(\frac{2}{3}\right)(t_3+t_2,t_1)dt_1dt_2dt_3.
\end{equation}
See (a) in Figure~\ref{fig:weld-QT=QD}.

By Lemma \ref{lem-add-point}, if we mark the point on the boundary arc with length $t_3+t_2$ of a sample from $\cM^{\disk}_{0,2}(\frac{2}{3})(t_3+t_2,t_1)$ which separate this arc into two boundary arcs with length $t_3$ and $t_2$, then the resulting quantum surface has the law $\QT(2,\frac{2}{3},\frac{2}{3};t_3,t_1,t_2)$. By Definition~\ref{def-QT}, we have
\begin{equation}
    \label{eq-proof-weld-QT=QD-*}
    \QT\left(2,\frac{2}{3},\frac{2}{3};t_3,t_1,t_2\right)=\int_0^{t_1}\int_{0}^{t_2}\QT\left(2,\frac{2}{3},2;t_3,x,y\right)\times\cM^{\disk}_{0,2}\left(\frac{2}{3}\right)(t_2-y,t_1-x)dydx.
\end{equation}
See (b) in Figure~\ref{fig:weld-QT=QD}.
Further set $a:=t_1-x,c:=t_2-y$ and substitute the decomposition \eqref{eq-proof-weld-QT=QD-*} into \eqref{eq-proof-weld-QT=QD-6}, then~\eqref{eq-proof-weld-QT=QD-6} becomes
\begin{equation}
    \label{eq-proof-weld-QT=QD-7}
    C\int_{\R_+^5}\QT(2,2,2;x+a,y+c)\times\QD_{1,1}(t_3)\times\QT\left(2,\frac{2}{3},2;t_3,x,y\right)\times\cM^{\disk}_{0,2}\left(\frac{2}{3}\right)(c,a)dxdydadcdt_3.
\end{equation}

Note that from Definition \ref{def-QT} we have $\QT(2,2,2;x+a,y+c)=C\QD_{0,3}(x+a,y+c,\cdot)$. For a sample from $\QD_{0,3}(x+a,y+c,\cdot)$, we mark two points on the two boundary arcs with length $x+a$ and $y+c$ respectively, which separate the two boundary arcs into four boundary arcs with length $a,x,y,c$ in clockwise order, then the output quantum surface with five marked points has the law $\QD_{0,5}(a,x,y,c,\cdot)$. Furthermore, for a sample from $\QD_{0,5}(a,x,y,c,\cdot)$, we forget the marked point which is the common endpoint of two boundary arcs with length $x$ and $y$, then the resulting quantum surface with four marked point has the law $\QD_{0,4}(a,x+y,c,\cdot)$. See (c) in Figure~\ref{fig:weld-QT=QD}.

Now for $b>0$, if we forget the welding interface of $\int_0^b\int_0^\infty\Weld(\QD_{1,1}(t_3),\QT(2,\frac{2}{3},2;t_3,x,b-x))dt_3dx$, then the resulting quantum surface has the law $C\QD_{1,1}(b)$ by Lemma \ref{lem-weld-QT-surround-QD}.
Therefore,~\eqref{eq-proof-weld-QT=QD-7} becomes (see (e) in Figure~\ref{fig:weld-QT=QD})
\begin{equation}
    \label{eq-proof-weld-QT=QD-8}
    C\int_{\R_+^4}\QD_{0,4}(a,b,c,L)\times\QD_{1,1}(b)\times\cM^{\disk}_{0,2}\left(\frac{2}{3}\right)(c,a)dadbdcdL.
\end{equation}

In conclusion, we find that the quantum surface from $\int_0^\infty \widetilde{\QT}\left(2,\frac{2}{3},2;\ell,\ell\right)d\ell$ (only keeping the outermost welding interfaces) in the left side of~\eqref{eq-proof-weld-QT=QD-1} is equal to the quantum surface from~\eqref{eq-proof-weld-QT=QD-8}. The proposition then follows.
\end{proof}

\section{Proof of Theorem~\ref{thm:sm}}
\label{section:proof of thm1.2}

In this section we prove Theorem~\ref{thm:sm} based on the conformal welding result in Theorem~\ref{thm:weld-BM1}. We fix $\gamma=\sqrt{8/3}$ throughout this section as before.
Recall that for a Brownian motion $(B_t)_{0\le t\le\tau_\D}$ in $\D$ from $0$ to the boundary, $\ell$ is the outer boundary of $(B_t)_{0\le t\le\tau_\D}$, and $\sm$ is the law of the boundary $\eta$ of the connected component containing $0$ of $\D\setminus\ell$. Let $A_\eta$ and $D_\eta$ be the two connected components of $\D\backslash\eta$ with annular and disk topology, respectively.

For $L>0$, let $(\D,\phi,0)$ be an embedding of a sample from $\QD_{1,0}(L)$. Sample $\eta$ from $\sm$ independently. We denote the law of $(\D,\phi,0,\eta)/\mathord\sim_\gamma$ by $\QD_{1,0}(L)\otimes \sm$ and the total quantum length of $\eta$ under $\phi$ by $\nu_\phi(\eta)$. According to Theorem \ref{thm:weld-BM1}, the quantum length measure of $(A_\eta,\phi)/\mathord\sim_\gamma$ and $(D_\eta,\phi)/\mathord\sim_\gamma$ agree on $\eta$, then $\nu_\phi(\eta)$ is well-defined.

We now introduce the key  quantum surface $\RA$ in this section.
Note that for any Borel set $E\subset\R$ with zero Lebesgue measure, $\QD_{1,0}(L)\otimes\sm[\nu_\phi(\eta)\in E]=0$ by Theorem \ref{thm:weld-BM1}. Hence we can define the disintegration of $\QD_{1,0}(L)\otimes\sm$ over $\nu_\phi(\eta)$, which we denote by $\{\QD_{1,0}(L)\otimes\sm(b),b\in(0,\infty)\}$.

\begin{definition}\label{def:RA}
For $b,L>0$, suppose $(\D,\phi,0,\eta)/\mathord\sim_\gamma$ is sampled from $\QD_{1,0}(L)\otimes\sm(b)$. Let $\RA'(b,L)$ be the law of the quantum surface $(A_\eta,\phi)/\mathord\sim_\gamma$. Let $\RA(b,L)$ be such that
$$b|\QD_{1,0}(b)|\RA(b,L)=\RA'(b,L),$$
and let $\RA=\int_{\R_+^2}\RA(b,L)dbdL$.
\end{definition}

In this section we use $\RA$ as a  tool to prove Theorem~\ref{thm:sm}.
In Section~\ref{section-weld-matter-BA} we will relate $\RA$ to  the Brownian annulus decorated with a non-disconnecting Brownian excursion.
We need the following conformal welding results of $\RA$.

\begin{proposition}
    \label{prop-weld-RA}
For $L>0$, we have
\begin{equation*}
    \QD_{1,0}(L)\otimes \sm=\int_0^\infty b\Weld(\QD_{1,0}(b),\RA(b,L))db.
\end{equation*}
    Here $\Weld(\QD_{1,0}(b),\RA(b,L))$ denotes the uniform conformal welding of a pair of quantum surfaces sampled from $\QD_{1,0}(b)\times\RA(b,L)$.
\end{proposition}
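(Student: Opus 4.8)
The plan is to unfold the definitions and reduce the statement to Theorem~\ref{thm:weld-BM1}. Recall that $\QD_{1,0}(L)\otimes\sm$ is the law of the curve-decorated quantum surface $(\D,\phi,0,\eta)/\mathord\sim_\gamma$ where $\eta$ is sampled from $\sm$ independently of $(\D,\phi,0)$. The curve $\eta$ splits $\D$ into a disk component $D_\eta\ni0$ and an annular component $A_\eta$. First I would observe that, by the uniqueness of conformal welding (conformal removability of $\SLE_{8/3}$-type interfaces, as recalled in Section~\ref{sec-conformal-welding}), the decorated surface $(\D,\phi,0,\eta)/\mathord\sim_\gamma$ is a.s.\ measurably recovered from the pair of quantum surfaces $\big((D_\eta,\phi,0)/\mathord\sim_\gamma,\ (A_\eta,\phi)/\mathord\sim_\gamma\big)$ together with the identification of their common boundary by quantum length. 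So the whole issue is to identify the \emph{joint law} of this pair.

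Next I would compute that joint law. By construction, $(D_\eta,\phi,0)/\mathord\sim_\gamma$ is a quantum disk with one interior marked point whose boundary length equals $\nu_\phi(\eta)$; by the absolute continuity argument already invoked in the text (no Borel null set of boundary lengths carries mass), we may disintegrate over $b=\nu_\phi(\eta)$. The content is then: \emph{conditionally on $b$, the law of $\big((D_\eta,\phi,0)/\mathord\sim_\gamma,\ (A_\eta,\phi)/\mathord\sim_\gamma\big)$ is a constant multiple of $\QD_{1,0}(b)\times\RA'(b,L)$, welded uniformly.} The factorization into a product here is not automatic — it is exactly what Theorem~\ref{thm:weld-BM1} provides. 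Indeed, in~\eqref{eq:weldBM1} the full outer-boundary-decorated surface is the welding of three independent pieces $\QD_{1,1}(b)\times\QD_{0,4}(a,b,c,L)\times\cM^{\disk}_{0,2}(\frac23)(a,c)$; the simple loop $\eta\subset\ell$ surrounding $0$ is precisely the interface between $\QD_{1,1}(b)$ and the concatenation of the other two surfaces. Forgetting the boundary marked point of $\QD_{1,1}(b)$ turns it into $\QD_{1,0}(b)$ (up to the constant $|\QD_{1,0}(b)|$ relating $\QD_{1,1}$ and $L\,\QD_{1,0}$, and with the marked boundary point becoming the uniform welding point), and the remaining welded pair $\Weld(\QD_{0,4}(a,b,c,L),\cM^{\disk}_{0,2}(\frac23)(a,c))$ integrated over $a,c$ is by Definition~\ref{def:RA} exactly $\RA'(b,L)$, i.e.\ $b|\QD_{1,0}(b)|\RA(b,L)$. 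Collecting the powers of $b$: $\QD_{1,1}(b)$ contributes the reweighting $L\,\QD_{1,0}$ giving a factor $|\QD_{1,0}(b)|$ and a uniform welding point contributes the usual factor $b$ when passing to $\QD_{1,0}(b)$, so that $\QD_{1,1}(b)=b|\QD_{1,0}(b)|\,\big(\QD_{1,0}(b)\ \text{with uniform marked point}\big)$; matched against $\RA'(b,L)=b|\QD_{1,0}(b)|\RA(b,L)$ the two $b|\QD_{1,0}(b)|$ factors combine with the $\int db$ and the normalization of $\sm$, leaving exactly one factor $b$ in $\int_0^\infty b\,\Weld(\QD_{1,0}(b),\RA(b,L))\,db$.

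Then I would assemble the pieces: apply the map $F$ that sends the outer boundary $\ell$ to the simple loop $\eta\subset\ell$ surrounding $0$ to both sides of Theorem~\ref{thm:weld-BM1}; since $F$ acts only on the curve data and commutes with conformal welding on the deterministic level, $F_*(\QD_{1,0}(L)\otimes\mathsf P)=\QD_{1,0}(L)\otimes\sm$ while on the right side $F$ acts inside the welding, replacing $\Weld(\QD_{1,1}(b),\QD_{0,4}(a,b,c,L),\cM^{\disk}_{0,2}(\frac23)(a,c))$ by its image, which after the bookkeeping above is $b\,\Weld(\QD_{1,0}(b),\RA(b,L))$ with $\RA(b,L)$ as in Definition~\ref{def:RA}. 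Integrating out $a,c$ (absorbed into $\RA'$, hence $\RA$) and keeping $b$ as the disintegration variable gives the claimed formula. I expect the main obstacle to be purely bookkeeping: tracking the precise power of $b$ and the constants that arise from (i) passing from $\QD_{1,1}(b)$ to a uniformly-marked $\QD_{1,0}(b)$, (ii) the relation $\RA'=b|\QD_{1,0}(b)|\RA$ in Definition~\ref{def:RA}, and (iii) the normalization of the probability measure $\sm$; I would handle this by carefully comparing against the computation sketched in the commented-out block following the proof of Proposition~\ref{prop-weld-QT=QD}, which already lays out the $F$-pushforward argument, and by fixing all multiplicative constants at the end via a single consistency check (e.g.\ total mass on a fixed length interval). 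The conformal removability needed for the uniqueness of the welding is standard here since all interfaces are locally $\SLE_{8/3}$ or $\SLE_6$-type forested, as discussed in Section~\ref{sec-conformal-welding}.
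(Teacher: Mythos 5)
Your high-level structure is right: the factorization into $\QD_{1,0}(b)\times\RA(b,L)$ is indeed read off from Theorem~\ref{thm:weld-BM1} and Definition~\ref{def:RA}, and the observation that $(\D,\phi,0,\eta)$ is measurably recovered from the two pieces via conformal removability is correct. But there is a genuine gap in the step where you assert the welding is \emph{uniform}. You write that ``forgetting the boundary marked point of $\QD_{1,1}(b)$ turns it into $\QD_{1,0}(b)$ \dots with the marked boundary point becoming the uniform welding point,'' and cite the commented-out block near Proposition~\ref{prop-weld-QT=QD}, which itself just asserts ``by the same argument.'' This does not work as stated. The marked point produced by Theorem~\ref{thm:weld-BM1} is the cut point $p$ of $\ell$ on $\eta$ — a concrete point determined by the curve, not an a priori uniform one. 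What Theorem~\ref{thm:weld-BM1} gives you is that, given $b=\nu_\phi(\eta)$, the pair $\bigl((D_\eta,\phi,0,p),(A_\eta,\phi,p)\bigr)$ is conditionally independent with $(D_\eta,\phi,0,p)\sim\QD_{1,1}(b)^\#$. To conclude ``uniform welding'' you must produce a marked point $\omega$ on $\eta$ that is conditionally uniform given \emph{both} sides and such that $(D_\eta,\phi,0,\omega)\perp(A_\eta,\phi,\omega)$ given $b$; the re-rooting invariance of $\QD_{1,1}(b)^\#$ alone (the fact you gesture at) gives neither of these directly, because $\omega$ built from $p$ depends on $(A_\eta,\phi,p)$.

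The paper's proof closes exactly this gap with an explicit re-rooting argument: fix $U$ uniform on $(0,1)$, let $\omega$ be at counterclockwise quantum length $U\nu_\phi(\eta)$ from $p$. Conditioned on $(A_\eta,\phi,p)$ and $U$, the law of $(D_\eta,\phi,0,p)$ is $\QD_{1,1}(b)^\#$ (by the product structure), and re-rooting invariance of $\QD_{1,1}(b)^\#$ shows the law of $(D_\eta,\phi,0,\omega)$ is \emph{still} $\QD_{1,1}(b)^\#$, regardless of the value of $(A_\eta,\phi,p)$ or $U$. Since $(A_\eta,\phi,\omega)$ is a function of $(A_\eta,\phi,p)$ and $U$, this yields the conditional independence of $(D_\eta,\phi,0,\omega)$ and $(A_\eta,\phi,\omega)$ given $b$; combined with the uniformity of $\omega$ on $\eta$ given $(A_\eta,\phi)$, this is exactly the definition of uniform conformal welding. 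Two smaller remarks: (i) no $F$-pushforward map is needed in the paper's proof — that machinery was an earlier draft and was replaced by the argument above; and (ii) there are no multiplicative constants to fix at the end: the equality is exact because $\RA(b,L)$ is \emph{defined} (Definition~\ref{def:RA}) precisely so that the factor $b|\QD_{1,0}(b)|$ comes out right, so your worry about a ``single consistency check'' is a red herring.
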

\begin{proof}

    Recall the setup in Theorem~\ref{thm:weld-BM1}. Let $(\D,\phi,0)$ be an embedding of a sample from $\QD_{1,0}(L)$, and sample $\ell$ from $\mathsf{P}$ independently. Let $\eta\subset\ell$ be the Jordan loop surrounding 0 (thus the decorated quantum surface $(\D,\phi,0,\eta)/\mathord\sim_\gamma$ has the law $\QD_{1,0}(L)\otimes\sm$). Then according to Theorem~\ref{thm:weld-BM1} and Definition~\ref{def:RA}, the pair of quantum surfaces $(D_\eta,\phi,0)/\mathord\sim_\gamma, (A_\eta,\phi)/\mathord\sim_\gamma$ has the law $\int_0^\infty b\QD_{1,0}(b)\times\RA(b,L)db$, where $b=\nu_\phi(\eta)$. It then suffices to show the welding is uniform.

     Let $p$ be the cut point of $\ell$ on $\eta$. By Theorem \ref{thm:weld-BM1}, 
    conditioned on $(A_\eta,\phi,p)/\mathord\sim_\gamma$, the conditional law of $(D_\eta,\phi,0,p)/\mathord\sim_\gamma$ is $\QD_{1,1}(\nu_\phi(\eta))^{\#}$. 
Let $U$ be an independent uniform random variable on $(0,1)$ and $\omega\in\eta$ such that the counterclockwise arc on $\eta$ from $p$ to $\omega$ has quantum length $U\nu_\phi(\eta)$. 
The re-rooting invariance of  $\QD_{1,1}(\nu_\phi(\eta))^{\#}$ then implies that given $(A_\eta,\phi,p)/\mathord\sim_\gamma$ and $U$, the conditional law of $(D_\eta,\phi,0,\omega)/\mathord\sim_\gamma$ is still $\QD_{1,1}(\nu_\phi(\eta))^{\#}$. Since $(A_\eta,\phi,\omega)/\mathord\sim_\gamma$ is determined by  $(A_\eta,\phi,p)/\mathord\sim_\gamma$ and $U$, we conclude that conditioned on $\nu_\phi(\eta)$, $(A_\eta,\phi,\omega)/\mathord\sim_\gamma$ and $(D_\eta,\phi,0,\omega)/\mathord\sim_\gamma$ is conditionally independent, and the conditional law of $(D_\eta,\phi,0,\omega)/\mathord\sim_\gamma$ is $\QD_{1,1}(\nu_\phi(\eta))^{\#}$. The result then follows by the definition of uniform conformal welding (see Section~\ref{sec-conformal-welding}).
\end{proof}

In order to prove Theorem~\ref{prop: key lemma}, the main ingredient is the following Liouville field description of $\mathcal{BA}$.
For $\tau>0$, let $\mathcal{C}_\tau=[0,\tau]\times[0,1]/\mathord\sim$ be a finite horizontal cylinder with modulus $\tau$, where $\sim$ means we identify $(x,0)$ and $(x,1)$ for each $x\in[0,\tau]$.  
Let $\mathbb{P}_{\tau,\rho}$ denote the law of the free boundary GFF on $\mathcal{C}_\tau$ constructed in Section~\ref{sec:LF}. For $(h,\textbf{c})$ sampled from $\mathbb{P}_{\tau,\rho}\times dc$, define the Liouville field $\phi$ on $\mathcal{C}_\tau$ to be $\phi:=h+\textbf{c}$, and we denote the law of $\phi$ by $\LF_\tau$. Note that the measure $\LF_\tau$ does not depend on the choice of $\rho$ due to the translation invariance of the Lebesgue measure $dc$.

\begin{proposition}\label{prop:LF-description}
Recall $f(\tau)=\sum_{n\ge1}(-1)^{n-1}n\sin(\frac{2\pi}{3} n)\exp(-\frac{2\pi}{3}n^2\tau)$ in~\eqref{eq:f}. Let $(\phi,\tau)$ be sampled from from ${\bf 1}_{\tau>0} \LF_\tau(d\phi) f(\tau)d\tau$. Then the law of $(\mathcal{C}_\tau,\phi)/\mathord\sim_\gamma$ equals a constant multiple of $\mathcal{BA}$.
\end{proposition}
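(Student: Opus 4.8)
The plan is to turn the definition of $\RA$ into a conformal welding of standard quantum surfaces via Theorem~\ref{thm:weld-BM1}, recognize it as an $\SLE_6$-decorated quantum annulus, and then read off its modulus law from a Liouville-field computation on the cylinder, in the spirit of the annulus computations of~\cite{ARS2022moduli, SXZ24}. The first step is to make the welding description of $\RA$ explicit: starting from Theorem~\ref{thm:weld-BM1} and disintegrating over the quantum length $b=\nu_\phi(\eta)$ of the simple loop $\eta$, one forgets the disk component $D_\eta$ carrying the bulk marked point; conditionally on $b$ its law is $\QD_{1,1}(b)$, of total mass $b\,|\QD_{1,0}(b)|=C_\gamma b^{-1/2}$ since $4/\gamma^2=3/2$. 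This identifies $\RA'(b,L)$ with a constant multiple of $b\,|\QD_{1,0}(b)|\int_{\R_+^2}\Weld\big(\QD_{0,4}(a,b,c,L),\cM^{\disk}_{0,2}(\tfrac23)(a,c)\big)\,da\,dc$ with the welding interfaces erased, so that after Definition~\ref{def:RA} and integration over $b,L$ the surface $\RA$ is a constant multiple of
\[
\int_{\R_+^4}\Weld\Big(\QD_{0,4}(a,b,c,L),\,\cM^{\disk}_{0,2}\big(\tfrac23\big)(a,c)\Big)\,da\,db\,dc\,dL ,
\]
where the length-$a$ and length-$c$ boundary arcs of $\QD_{0,4}$ are welded to the two sides of the thin disk, the interfaces are forgotten, and the arcs of lengths $b$ and $L$ become the two boundary circles of the resulting annular surface.

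The second step is to compute the law of this welded surface. Since $\gamma^2-2=\tfrac23$ for $\gamma=\sqrt{8/3}$, the thin disk $\cM^{\disk}_{0,2}(\tfrac23)$ is precisely the beaded surface produced by cutting a $\gamma$-LQG surface along an $\SLE_6$-type curve (cf.\ Section~\ref{section-generalized-quantum-surface} and Proposition~\ref{thm-weld-f.l.s}), so the displayed measure is the law of an $\SLE_6$-decorated quantum annulus with the interface curves erased. Using the conformal welding results for $\SLE_6$ on quantum disks recalled in Sections~\ref{section-pre} and~\ref{section: BM-welding}, together with the explicit Liouville-field descriptions of $\QD_{0,4}$ and of $\cM^{\disk}_{0,2}(\tfrac23)$, one reorganizes the welding so as to compute the law of the embedding of $\RA$ on $\mathcal{C}_\tau$; by the integrability of Liouville conformal field theory on the annulus~\cite{Wu22, ARS2022moduli}, this law should be a Liouville field $\LF_\tau$ reweighted by an explicit modulus-dependent density, all the area and boundary-length reweightings contributed by the chain of disks combining into a single function of $\tau$. (An alternative would be to weld a quantum disk onto each of the two boundary circles of $\RA$, compare with the quantum-sphere welding identities such as Proposition~\ref{prop-weld-loop}, and extract the density from there.)

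The last step is to identify this density with $f(\tau)=\sum_{n\ge1}(-1)^{n-1}n\sin(\tfrac{2\pi}3 n)\,e^{-\frac{2\pi}3 n^2\tau}$ up to a multiplicative constant, and this is where I expect the main difficulty to lie. It requires, on the one hand, carefully carrying every quantum-length-dependent constant through the chain of welding identities — in particular the factor $b\,|\QD_{1,0}(b)|$ entering the passage from $\RA'$ to $\RA$ — and, on the other hand, performing the Liouville/GMC computation on $\mathcal{C}_\tau$ and recognizing the resulting theta-type series, whose shape is dictated by the insertion $\gamma=\sqrt{8/3}$ and the weight $\tfrac23$ and already encodes the Kac weights $h_{r,s}$ appearing in Theorem~\ref{thm:main}. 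Once Proposition~\ref{prop:LF-description} is established, Theorem~\ref{thm:sm} follows by combining it with Proposition~\ref{prop-weld-RA} and a conformal welding description of the $\SLE_{8/3}$ loop on the quantum disk; the Dedekind $\eta$ factor in~\eqref{eq-thm-sm} arises from that last input rather than from Proposition~\ref{prop:LF-description} itself.
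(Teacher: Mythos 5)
The high-level strategy is right — first argue that given its modulus $\tau$ the field of $\RA$ is $\LF_\tau$, then compute the modulus density $m(d\tau)$ and match it with $f(\tau)\,d\tau$ — and your first step (the welding description of $\RA$ with the $\QD_{1,1}(b)$ disk forgotten, and the mass factor $b|\QD_{1,0}(b)|=C_\gamma b^{-1/2}$) is correct and is exactly what the paper uses. Your closing observation that the Dedekind $\eta$ in Theorem~\ref{thm:sm} comes from the definition of $\BA$, not from Proposition~\ref{prop:LF-description}, is also correct.

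However, your second step has a genuine gap. You assert that, because the welded pieces have explicit Liouville-field descriptions, the law of $\RA$ under uniform embedding onto $\mathcal C_\tau$ ``should be'' $\LF_\tau$ reweighted by a modulus density. This does not follow from the welding identities or from the integrability of Wu~\cite{Wu22} (which is about correlation functions \emph{of} $\LF_\tau$, not a criterion for a surface law to \emph{be} $\LF_\tau$). The paper instead invokes the characterization of $\LF_\tau$ by its Markovian resampling property on the two boundary circles (\cite[Proposition 2.13]{ARS2022moduli}); one boundary resamples because the loop $\eta$ is independent of the field, but the other requires the nontrivial symmetry $\RA(b,L)=\RA(L,b)$ (Proposition~\ref{prop-symmetry-RA}), proved via inversion invariance of $\SLE_{8/3}^{\sep}$ and of Brownian motion on the two-pointed quantum sphere. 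Without that symmetry step your argument does not establish that the field is $\LF_\tau$; this is the main missing piece.

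Your third step is also routed differently — and, as you suspect, your route is much harder. The paper does \emph{not} ``perform the Liouville/GMC computation on $\mathcal C_\tau$'' from the welded pieces: once $\RA = \LF_\tau(d\phi)\,m(d\tau)$ is known, it uses the explicit mass $|\RA(b,L)|=C(b+L)^{-2}$ (from $|\QD_{0,4}(\cdot)|$ and the thin-disk mass) to compute the Mellin/Laplace-type observable $\RA[Le^{-L}b^{ix}]$, compares it with the known formula for $\LF_\tau[Le^{-L}b^{ix}]$ from~\cite{Wu22}, and inverts a one-variable Laplace transform (\cite[1.445.4]{Table-of-integrals}) to read off $m(d\tau)=f(\tau)\,d\tau$. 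This turns the identification of the density into a short calculus exercise rather than a full GMC calculation; you should adopt this observable-matching step rather than attempt to re-derive the annulus partition function from scratch.
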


In the remaining part of this section, we first finish the proof of Theorem~\ref{thm:sm} in Section~\ref{sec:proof-of-1.2} based on Proposition~\ref{prop:LF-description}, and then prove Proposition~\ref{prop:LF-description} in Section~\ref{sec:proof-of-4.3}. The proof of Proposition~\ref{prop:LF-description} has two steps.
The first step is to show that there exists some measure $m(d\tau)$ on $\R_+$ such that $\RA$ is equal to $\LF_\tau(d\phi)m(d\tau)$. This will follow from the symmetry of the two boundaries of $\RA$ and a standard argument in~\cite[Section 4.2]{ARS2022moduli}. The second step is to identify the expression of $m(d\tau)$, where we will use the conformal welding description of $\RA$ from Theorem~\ref{thm:weld-BM1} as well as the integrability of $\LF_\tau$ from~\cite{Wu22}.

\subsection{Proof of Theorem~\ref{thm:sm} given Proposition~\ref{prop:LF-description}}\label{sec:proof-of-1.2}

We first recall the conformal welding arising from the $\SLE_{8/3}$ loop decorated Brownian disk~\cite{ARS2022moduli}. 

\begin{definition}[{\cite{ARS2022moduli}}]\label{def:BA}
    Let $\gamma=\sqrt{8/3}$ and sample $(\phi,\tau)$ from ${\bf{1}}_{\tau>0}\LF_\tau(d\phi)\eta(2i\tau)d\tau$. Define $\BA$ to be  the law of quantum surface $(\mathcal{C}_\tau,\phi)/\mathord\sim_\gamma$.
    We also write $(\BA(b,L))_{b,L>0}$ to be the disintegration of $\BA$ under two boundary quantum lengths, i.e. $\BA=\iint_{\R_+^2}\BA(b,L)dbdL$.
\end{definition}

For $L>0$, let $(\D,\phi,0)$ be an embedding of a sample from $\QD_{1,0}(L)$.
Recall that $\SLE_{8/3,\D}^{\lp}$ is the $\SLE_{8/3}$ loop measure restricted to the loops contained in $\D$ and surrounding 0.
Sample $\eta$ independently from $\SLE_{8/3,\D}^{\lp}$, and denote the law of $(\D,\phi,0,\eta)/\mathord\sim_\gamma$ by $\QD_{1,0}(L)\otimes \SLE_{8/3}^{\lp}$.

The following gives the conformal welding description of $\QD_{1,0}(L)\otimes \SLE_{8/3}^{\lp}$, which is implicit in~\cite{ARS2022moduli}. For completeness,
we provide a proof in Appendix~\ref{sec:loop-app}.

\begin{proposition}\label{thm:weld-BA}
There exists a constant $C>0$, such that for each $L>0$, we have
\begin{equation*}
        \QD_{1,0}(L)\otimes\SLE_{8/3}^{\lp}=C\int_0^\infty b\Weld(\QD_{1,0}(b),\BA(b,L))db.
    \end{equation*}
    
\end{proposition}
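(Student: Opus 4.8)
The plan is to derive this from Proposition~\ref{prop-weld-loop} together with the structure of $\QD_{1,0}(L)\otimes\SLE_{8/3}^{\lp}$ as arising from the restriction of the whole-plane $\SLE_{8/3}$ loop measure. The starting point is the conformal welding statement Proposition~\ref{prop-weld-loop}, which describes the law of $(\C,\phi,\eta,0,\infty)/\mathord\sim_\gamma$ for $\phi$ sampled from $\QS_2$ and $\eta$ from $\SLE_{8/3}^{\sep}$ (the $\SLE_{8/3}$ loop on $\C$ restricted to loops separating $0$ and $\infty$) as $C\int_0^\infty \ell\,\Weld(\QD_{1,0}(\ell),\QD_{1,0}(\ell))\,d\ell$. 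The first step is to cut this picture along $\eta$: the loop $\eta$ splits $\C$ into two components, one containing $0$ and one containing $\infty$, and the quantum surface structure on each side is a quantum disk $\QD_{1,0}$ (with marked point $0$ resp.\ $\infty$) by the welding decomposition. I would then condition on the component containing $\infty$ — more precisely, disintegrate over its embedding data.

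The second step is to recognize $\BA$ inside this decomposition. The key input is that $\SLE_{8/3,\D}^{\lp}$ is obtained from $\SLE_{8/3}^{\sep}$ by restricting to loops contained in $\D$; combined with the conformal restriction property of the $\SLE_{8/3}$ loop (Proposition~\ref{prop:werner}), restricting the loop to lie in the domain carved out by the quantum disk on the $0$-side. The cited reference~\cite{ARS2022moduli} establishes exactly that the quantum surface lying between $\eta$ and the outer boundary — i.e.\ the annular region — is distributed as $\BA$ after disintegrating over the two boundary lengths, since $\BA$ is by Definition~\ref{def:BA} precisely $\LF_\tau(d\phi)\eta(2i\tau)d\tau$ and the modulus law $\eta(2i\tau)$ is the one appearing in the $\SLE_{8/3}$-loop-decorated Brownian disk. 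So after cutting along $\eta$ in the welding identity, the $0$-side gives a sample from (a length-weighted) $\QD_{1,0}(b)$, and the remaining annulus between $\eta$ and the original boundary gives $\BA(b,L)$ where $b=\nu_\phi(\eta)$ and $L$ is the outer boundary length.

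The third step is to check that the welding is the \emph{uniform} conformal welding along the interface $\eta$ of length $b$. This is the same kind of argument as in the proof of Proposition~\ref{prop-weld-RA}: one uses the rerooting invariance of $\QD_{1,1}(b)^{\#}$ (the marked-point version of the quantum disk) to see that the marked point on $\eta$ where the interface is rooted can be taken uniform with respect to quantum length, independently of the two surfaces being welded, conditionally on $b$. Assembling these three steps and tracking the multiplicative constant $C$ (which absorbs the constant from Proposition~\ref{prop-weld-loop} and from $\QS_2$ versus $\QD_{1,0}$) yields the stated identity. Since the detailed verification relies on~\cite{ARS2022moduli}, I would defer the careful bookkeeping to Appendix~\ref{sec:loop-app} as the paper indicates. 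The main obstacle I anticipate is making precise the claim that restricting $\SLE_{8/3}^{\sep}$ to loops inside the $0$-sided quantum disk, under the welding, is compatible with the definition of $\SLE_{8/3,\D}^{\lp}$ and correctly produces the modulus weighting $\eta(2i\tau)d\tau$ — i.e.\ matching the LQG side of~\cite{ARS2022moduli} with the loop-restriction side without picking up spurious Jacobian factors. Beyond that, the argument is a direct rearrangement of already-established welding results.
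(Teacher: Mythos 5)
Your proposal starts from the right place (Proposition~\ref{prop-weld-loop} with $\QS_2$ and $\SLE_{8/3}^{\sep}$) and correctly anticipates that conformal restriction of the $\SLE_{8/3}$ loop and the $\BA$-identification from~\cite{ARS2022moduli} are the two main inputs, but there is a genuine gap at the heart of the argument: you never explain where the ``outer boundary'' comes from. You write that the annulus ``between $\eta$ and the original boundary gives $\BA(b,L)$,'' but in the sphere picture of Proposition~\ref{prop-weld-loop} there is no boundary at all; cutting along $\eta$ only gives two one-marked-point disks $\QD_{1,0}(\ell)$, not a disk plus an annulus. ``Conditioning on the component containing $\infty$'' or ``disintegrating over its embedding data'' does not create a boundary circle of length $L$.

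The missing mechanism is the one used in Appendix~\ref{sec:loop-app}: one passes to the $\sqrt{8/3}$-LQG metric $d_\phi$ (identifying $\QS_2$ with the Brownian sphere up to constants), restricts to the event $F=\{d(0,\infty)>1\}$, removes the filled metric ball $B^{\bullet}(\infty,1)$, and reweights by $1_F/g(\mathcal{L})$ with $g(\ell)=\ell e^{-\frac92\ell}$ and $\mathcal{L}$ the boundary length of the ball. By~\cite{ARS2022moduli}, under this reweighted measure the complement $(\widehat\C\setminus B^{\bullet}(\infty,1),\phi,0)/\!\sim_\gamma$ is a (constant multiple of) $\QD_{1,0}$; this produces the boundary of length $L$. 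Restricting $\eta$ to avoid the ball (the event $E=\{d(\eta,\infty)>1\}$) and invoking conformal restriction then gives $\QD_{1,0}\otimes\SLE^{\lp}_{8/3}$. The identification of the region between $\eta$ and $\partial B^{\bullet}(\infty,1)$ with $\BA$ again uses~\cite[Proposition 6.10]{ARS2022moduli}, combined with a second application of Proposition~\ref{prop-weld-loop}, rather than the circular appeal to Definition~\ref{def:BA} that you give. (Also, your third step about rerooting invariance to enforce \emph{uniform} welding, while harmless, is not needed here: $\QD_{1,0}$ and $\BA$ carry no boundary marked points, so the welding in the statement is uniform by construction, unlike in Proposition~\ref{prop-weld-RA} where the marked point $p$ must be forgotten.) Without the filled-metric-ball step and its reweighting factor $1/g(\mathcal{L})$, the proof does not go through.
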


The proof of Theorem~\ref{prop: key lemma} is then based on comparing the conformal welding descriptions of $\sm(d\eta)$ and $\SLE_{8/3,\D}^\lp(d\eta)$. Namely, according to Proposition~\ref{prop-weld-RA} (resp.~Proposition~\ref{thm:weld-BA}), we see that $\sm(d\eta)$ (resp.~$\SLE_{8/3,\D}^\lp(d\eta)$) can be realized as the welding interface of a quantum disk and $\RA$ (resp.~$\BA$). Furthermore, by Proposition~\ref{prop:LF-description} and Definition~\ref{def:BA}, we know that $\RA$ is mutually absolutely continuous with respect to $\BA$ (and their Radon-Nikodym derivative is exactly given by the ratio of moduli densities, i.e.~$f(\tau)/\eta(2i\tau)$). This then readily implies Theorem~\ref{prop: key lemma}.

\begin{proof}[Proof of Theorem~\ref{prop: key lemma}]
For $L>0$, let $\cM(L)$ be the collection of the pairs $(\cS_1,\cS_2)$ of $\sqrt{8/3}$-quantum surfaces, such that $\cS_1$ is of disk topology with one bulk marked point, $\cS_2$ is of annular topology, and the quantum length of the boundary $\partial\cS_1$ of $\cS_1$ coincides with one of the boundaries $\partial^{\rm in}\cS_2$ of $\cS_2$, while the other boundary $\partial^{\rm out}\cS_2$ of $\cS_2$ has quantum length $L$. For $(\cS_1,\cS_2)\in\mathcal{M}(L)$, we can uniformly weld $\partial\cS_1$ to $\partial^{\rm in}\cS_2$ to obtain a loop decorated quantum surface $\cS$ with one bulk marked point $\mathbf{p}$, and uniformly embedding\footnote{The uniform embedding here means that after requiring the embedding such that $\mathbf{p}$ is embedded to $0$, we choose the remaining degree of freedom (i.e.~rotation) to be uniform on $\S^1$.} $\cS$ onto $\D$ such that $\mathbf{p}$ is embedded to $0$ will output a simple loop $\eta\subset\D$ surrounding $0$. This defines a map $F$ from $\cM(L)\times\S^1\times\S^1$ to the collection of simple loops in $\D$ surrounding $0$; here we identify the randomnesses of uniform welding or embedding by two independent random variables which uniformly take values on $\S^1$. 

Note that $\mu:=\int_0^\infty b\QD_{1,0}(b)\times\RA(b,L)db$ and $\nu:=C\int_0^\infty b\QD_{1,0}(b)\times\BA(b,L)db$ define two measures on $\cM(L)$ (here the constant $C$ is the same as in Proposition~\ref{thm:weld-BA}). Comparing the Liouville field descriptions of $\BA$ and $\RA$ in Definition~\ref{def:BA} and Proposition~\ref{prop:LF-description} then implies
\begin{equation}\label{eq:rn-uv}
\frac{d(\mu\otimes({\rm Unif}_{\S^1})^2)}{d(\nu\otimes({\rm Unif}_{\S^1})^2)}((\cS_1,\cS_2),\theta_1,\theta_2)=\frac{1}{C}\frac{f(\tau)}{\eta(2i\tau)},\quad\quad\text{ where }\tau\text{ is the modulus of }\cS_2.
\end{equation}
Since the right side of~\eqref{eq:rn-uv} only depends on the modulus of $\cS_2$, for any simple loop $\eta\subset\D$ surrounding $0$, the above Radon-Nikodym derivative~\eqref{eq:rn-uv} equals $\frac{f(\Mod(A_\eta))}{\eta(2i\Mod(A_\eta))}$ on its pre-image set $F^{-1}(\eta)$ of the map $F$, hence is a constant on $F^{-1}(\eta)$. Therefore, for the push-forward measures $F_*(\mu\otimes({\rm Unif}_{\S^1})^2)$ and $F_*(\nu\otimes({\rm Unif}_{\S^1})^2)$ on the space of simple loops in $\D$, we have
\begin{equation}\label{eq:rn-uv-2}
\frac{dF_*(\mu\otimes({\rm Unif}_{\S^1})^2)}{dF_*(\nu\otimes({\rm Unif}_{\S^1})^2)}(\eta)=\frac{1}{C}\frac{f(\tau)}{\eta(2i\tau)},\quad\quad\text{ where }\tau=\Mod(A_\eta).
\end{equation}

On the other hand, according to Proposition~\ref{prop-weld-RA} (resp.~Proposition~\ref{thm:weld-BA}), for $((\cS_1,\cS_2),\theta_1,\theta_2)$ sampled from $\mu\otimes({\rm Unif}_{\S^1})^2$ (resp.~$\nu\otimes({\rm Unif}_{\S^1})^2$), the law of the output loop $F((\cS_1,\cS_2),\theta_1,\theta_2)$ is given by $\sm(d\eta)$ (resp.~$\SLE_{8/3,\D}^\lp$). Hence,~\eqref{eq:rn-uv-2} implies that $\frac{d\sm}{d\SLE_{8/3,\D}^{\lp}}(\eta)=\frac{1}{C}\frac{f(\tau)}{\eta(2i\tau)}$ for $\tau=\Mod(A_\eta)$, which concludes the proof.
\end{proof}

\subsection{The Liouville field identification of $\RA$}\label{sec:proof-of-4.3}

This section is devoted to prove Proposition \ref{prop:LF-description}.
We first show that given the modulus $\tau$, the field of $\RA$ is indeed described by $\LF_\tau$.
\begin{proposition}
    \label{prop-LF-identify-RA}
    There exists a positive Borel measure $m(d\tau)$ on $\R_+$, such that if we sample $(\phi,\tau)$ from $\LF_\tau(d\phi)m(d\tau)$, the quantum surface $(\mathcal{C}_\tau,\phi)/\mathord\sim_\gamma$ has the same law as $\RA$.
\end{proposition}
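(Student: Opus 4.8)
The plan is to follow the two-step strategy outlined right after the statement of Proposition~\ref{prop:LF-description}: first establish that $\RA$, disintegrated over its modulus $\tau$, has conditional field law $\LF_\tau$ (this is Proposition~\ref{prop-LF-identify-RA}), then compute the modulus density $m(d\tau)$. The present proposal concerns the first step.

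The core idea is the now-standard ``reflection/symmetry + Markov property'' argument from \cite[Section 4.2]{ARS2022moduli}. First I would record the structural input: by Definition~\ref{def:RA}, $\RA$ is built from the Brownian-motion welding of Theorem~\ref{thm:weld-BM1}, so a sample from $\RA(b,L)$ is (up to the explicit reweighting $b|\QD_{1,0}(b)|$) the annular quantum surface $(A_\eta,\phi)/\mathord\sim_\gamma$ cut out by the loop $\eta\subset\ell$ surrounding $0$; equivalently, from \eqref{eq:weldBM1}, it is the conformal welding of $\QD_{0,4}(a,b,c,L)$ and $\cM^{\disk}_{0,2}(\tfrac23)(a,c)$ along the two arcs of length $a$ and $c$. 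Each of the pieces entering this welding is (locally) absolutely continuous with respect to a GFF plus a smooth (possibly $\log$-singular) function, and the welding interfaces are $\SLE_{8/3}$-type curves, which are conformally removable. Embedding the resulting annular surface in a fixed cylinder $\mathcal{C}_\tau$ with $\tau=\Mod(A_\eta)$, the field $\phi$ is therefore a.s.\ absolutely continuous on compact subsets of the open cylinder with respect to the free-boundary GFF on $\mathcal{C}_\tau$, with no bulk or boundary insertions (the marked points of $\QD_{0,4}$ are non-singular boundary points). This pins down the law of $\phi$ up to its average mode: conditionally on $\tau$ and on the projection of $\phi$ onto the orthogonal complement of constants, the only remaining randomness is the additive constant $\mathbf{c}$.

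The key point is then to show that this additive constant is distributed as Lebesgue measure $dc$ on $\R$, which is exactly what characterizes $\LF_\tau$ (recall $\LF_\tau$ is the law of $h+\mathbf{c}$ with $\mathbf{c}\sim dc$, independent of the GFF $h$). This is where the symmetry between the two boundaries of $\RA$ is used, combined with a scaling/translation-invariance argument: applying the conformal automorphism of the cylinder that reflects $[0,\tau]\times[0,1]$ about $\{x=\tau/2\}$ swaps the two boundary components, and by the inner/outer symmetry of the Brownian outer boundary construction (reflected in the appearance of the same arc lengths $a,c$ on both sides of the welding, and the $b\leftrightarrow L$ structure after the reweighting in Definition~\ref{def:RA}) this symmetry must be respected by the law of $\phi$ modulo the $\sim_\gamma$ equivalence. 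The translation $\phi\mapsto\phi+$const acts on $\mathcal{C}_\tau$ by scaling quantum lengths and area, so the disintegration over boundary lengths $\BA(b,L)$ transforms covariantly; matching this with the known transformation of $\LF_\tau$ under adding a constant forces the law of $\mathbf{c}$ to be a translation-invariant measure on $\R$, hence a multiple of Lebesgue measure. More precisely, I would compute the law of $(\phi,\tau)$ under $\RA$ weighted and disintegrated appropriately and show it factors as $\LF_\tau(d\phi)\, m(d\tau)$ for some positive Borel $m$; the positivity and $\sigma$-finiteness of $m$ follow since $\RA$ is a $\sigma$-finite measure with $\RA[\Mod\in E]>0$ for sets $E$ of positive Lebesgue measure (a consequence of Theorem~\ref{thm:weld-BM1}, as already noted before Definition~\ref{def:RA}).

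The main obstacle I anticipate is the rigorous justification that no insertion appears and that the additive-constant argument is watertight. Concretely: (i) one must verify that after the welding of Theorem~\ref{thm:weld-BM1} and embedding into $\mathcal{C}_\tau$, the field really is a free-boundary GFF plus an \emph{a.s.\ finite} constant, i.e.\ there is no hidden logarithmic singularity at the welding seams or at the marked points---this uses that $\QD_{0,4}$ has weight-$2$ (non-singular) boundary marked points and that the $\cM^{\disk}_{0,2}(\tfrac23)$ beads contribute only boundary structure, together with local absolute continuity of all pieces with respect to GFF-type fields; and (ii) one must handle the fact that $\RA$ is an infinite measure, so ``conditioning on $\tau$'' means working with disintegrations throughout, and the symmetry/covariance identities must be stated at the level of measures rather than probability laws. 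I expect (i) to be the genuinely delicate part, and I would handle it by tracking the local absolute continuity through the explicit Liouville-field descriptions of $\QD_{0,4}$ and $\cM^{\disk}_{0,2}(\tfrac23)$ in Section~\ref{section-pre}, exactly as in \cite[Section 4.2]{ARS2022moduli}; the rest should be a routine adaptation of that reference's argument, with the symmetry input supplied by the manifest left--right symmetry of the welding in \eqref{eq:weldBM1}.
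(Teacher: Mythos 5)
Your high-level plan (symmetry of the two boundaries plus the template of \cite[Section 4.2]{ARS2022moduli}) is the right one, but there are two substantive problems with how you propose to execute it, one of which would leave a genuine gap.

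First, and most importantly, you treat the inner/outer symmetry of $\RA$ as ``manifest'' from the left--right appearance of the welding in \eqref{eq:weldBM1}. It is not. The right side of \eqref{eq:weldBM1} welds $\QD_{1,1}(b)$ onto the arc of length $b$, while on the arc of length $L$ nothing is welded at all (that is the free outer boundary), so there is no visible $b\leftrightarrow L$ symmetry in that formula; the arcs $a$ and $c$ are also not on ``both sides'' in the way you describe, they are the seams with the thin quantum disk. The symmetry $\RA(b,L)=\RA(L,b)$ is a nontrivial statement that the paper isolates as Proposition~\ref{prop-symmetry-RA}, and its proof goes through a different picture: weld a second $\QD_{1,1}(L)$ on the outside to obtain a $\QS_2$-decorated sphere with $\SLE_{8/3}^{\sep}$ and a Brownian outer boundary, then use inversion invariance of $\QS_2$, of $\SLE_{8/3}^{\sep}$, and of the Brownian-motion construction to deduce the symmetry. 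Without this separate argument, the ``reflection about $\{x=\tau/2\}$ swaps the boundaries'' step has nothing to stand on.

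Second, the mechanism you propose for identifying the field law --- local absolute continuity with the free-boundary GFF on compact subsets of the open cylinder, followed by a translation-invariance argument to pin the additive constant --- does not actually characterize $\LF_\tau$. Local absolute continuity on interior compacts says nothing about boundary behavior: a field with a boundary log-singularity, or a GFF with a different boundary law, is also locally AC on interior compacts. The paper instead invokes the resampling characterization of $\LF_\tau$ from \cite[Proposition 2.13]{ARS2022moduli}, which is a Markov-property statement about both boundary neighborhoods simultaneously, and verifies the resampling on one side from the independence of field and curve in $\QD_{1,0}\otimes\sm$ (via Proposition~\ref{prop-weld-RA} and \cite[Lemmas 4.7--4.8]{ARS2022moduli}) and on the other side from Proposition~\ref{prop-symmetry-RA}. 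Your remark that the additive constant should be Lebesgue because of how quantum lengths scale is in the right spirit, but as stated it is a consequence of the resampling property rather than a substitute for it, and you would still need the boundary Markov structure to close the argument. So the proposal needs to (i) supply Proposition~\ref{prop-symmetry-RA} as an independent lemma rather than asserting it, and (ii) replace the local-AC-plus-constant argument with the two-sided resampling characterization.
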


We will follow the approach in~\cite[Section 4.2]{ARS2022moduli} to prove Proposition~\ref{prop-LF-identify-RA}.
The key ingredient is the following symmetry of the inner and outer boundaries of $\RA$.
\begin{proposition}
\label{prop-symmetry-RA}
For each $b,L>0$, we have $\RA(b,L)=\RA(L,b)$.
\end{proposition}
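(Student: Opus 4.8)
The plan is to derive the symmetry $\RA(b,L)=\RA(L,b)$ from the fact that the outer boundary of a Brownian motion, when we keep only the loop surrounding the starting point, enjoys a symmetry exchanging the two sides of that loop. More precisely, recall that $\RA(b,L)$ was defined through $\QD_{1,0}(L)\otimes\sm$, where $\sm$ is the law of the inner loop $\eta$ of the outer boundary $\ell$ of a Brownian motion in a disk, and $\RA'(b,L)$ is the law of the annular piece $(A_\eta,\phi)/\mathord\sim_\gamma$. The annular surface $A_\eta$ has two boundaries: the ``inner'' one of quantum length $b$ (welded to the disk $D_\eta$) and the ``outer'' one of quantum length $L$ (the boundary of the ambient quantum disk). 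The claim is that these two boundaries play symmetric roles.

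First I would unpack the conformal welding description from Theorem~\ref{thm:weld-BM1}, which writes $\QD_{1,0}(L)\otimes\mathsf P$ as the welding of $\QD_{1,1}(b)$, $\QD_{0,4}(a,b,c,L)$, and $\cM^{\disk}_{0,2}(\tfrac23)(a,c)$. Passing to the inner loop $\eta\subset\ell$ only keeps the outer boundary of the interface: the disk $\QD_{1,1}(b)$ becomes $D_\eta$ after forgetting its marked point, so by Definition~\ref{def:RA}, $\RA(b,L)$ is (a constant times) the law of the annular surface obtained by welding $\QD_{0,4}(a,b,c,L)$ and $\cM^{\disk}_{0,2}(\tfrac23)(a,c)$ along the arcs of length $a$ and $c$, then integrating over $a$ and $c$. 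Concretely,
\begin{equation}\label{eq:RA-decomp}
\RA(b,L)=C\int_{\R_+^2}\Weld\Big(\QD_{0,4}(a,b,c,L),\cM^{\disk}_{0,2}\big(\tfrac23\big)(a,c)\Big)\,da\,dc.
\end{equation}
In this description, the boundary of length $b$ is one of the four boundary arcs of $\QD_{0,4}(a,b,c,L)$, and the boundary of length $L$ is the opposite arc; the two arcs of lengths $a$ and $c$ are the ones welded to the thin quantum disk. The key structural point is that $\QD_{0,4}(a,b,c,L)$ is invariant (as a quantum surface) under the dihedral symmetry of its four marked points that fixes the pair $\{a,c\}$ and swaps $b\leftrightarrow L$ — this is immediate from Definition~\ref{def-QD-n-marked}, since $\QD_{0,4}$ is built from the conformally invariant $\QD$ by adding four uniform boundary points, so relabeling the points counterclockwise versus clockwise (equivalently, applying an anticonformal reflection) exchanges the roles of the arcs in the required way while preserving the law.

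Therefore I would argue: apply the reflection (complex conjugation) to the embedded picture of \eqref{eq:RA-decomp}. This sends $\QD_{0,4}(a,b,c,L)$ to $\QD_{0,4}(c,L,a,b)$ (reading the arcs in the reversed cyclic order, which has the same law by the reflection invariance just noted), and sends $\cM^{\disk}_{0,2}(\tfrac23)(a,c)$ to $\cM^{\disk}_{0,2}(\tfrac23)(c,a)$ — and the thin quantum disk is symmetric under swapping its two sides, since it is a concatenation of $\QD_{0,2}(a_i,c_i)$ beads and $\QD_{0,2}$ is symmetric in its two boundary lengths by \cite[Proposition A.8]{DMS14}. Reflection commutes with conformal welding along quantum-length-matched arcs (the welding is characterized by length-preserving identification, which is reflection-equivariant), so the reflected surface has law $C\int\Weld(\QD_{0,4}(c,L,a,b),\cM^{\disk}_{0,2}(\tfrac23)(c,a))\,da\,dc$, which after renaming the integration variables $a\leftrightarrow c$ is exactly the right-hand side of \eqref{eq:RA-decomp} with $b$ and $L$ interchanged, i.e. $\RA(L,b)$. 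Since the reflection is a measure-preserving involution on the space of quantum surfaces (quantum surfaces are equivalence classes under orientation-preserving conformal maps, but area and length measures and all the relevant laws — GFF, $\QD$, thin disks, welding — are reflection-invariant), we get $\RA(b,L)=\RA(L,b)$.

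The main obstacle I anticipate is bookkeeping the reflection symmetry at the level of \emph{decorated} quantum surfaces with care, since quantum surfaces are defined modulo orientation-preserving conformal maps, so one must verify that each ingredient ($\QD_{0,4}$, the thin disk $\cM^{\disk}_{0,2}(\tfrac23)$, and the conformal welding operation) is genuinely invariant in law under an orientation-reversing map, and that the welding of the reflected pieces really reproduces the reflected welded surface (rather than some other gluing). This is true because all these objects are built from the free-boundary GFF, whose law is reflection-invariant, and from boundary-length-matched identifications, which are reflection-equivariant; but I would want to state this cleanly, perhaps by invoking the analogous symmetry argument already used in \cite[Section 4.2]{ARS2022moduli} for $\BA$, rather than re-proving reflection invariance of each building block from scratch. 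An alternative, cleaner route that avoids \eqref{eq:RA-decomp} entirely is to go back to the Brownian-motion side: the loop $\eta$ is the outer boundary of a Brownian loop (Theorem~\ref{thm:equivalence} plus the last-hitting/whole-plane description), and the pair (inside surface, outside surface) cut by $\eta$ should inherit the reflection symmetry $z\mapsto\bar z$ of Brownian motion together with the inside/outside exchange built into the radial $\SLE_6$ time-reversal of Proposition~\ref{thm-raSLE6-last-hitting}; I would fall back on this if the welding bookkeeping becomes unwieldy.
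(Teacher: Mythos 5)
Your proposal and the paper's proof share the same intuition (use a symmetry of the ambient picture to swap the two boundary arcs of the annulus), but they take genuinely different routes, and yours has a subtle conceptual pressure point worth naming.

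The paper's proof does \emph{not} stay in the disk welding picture. Instead it embeds $\RA$ on the sphere: sample $\eta$ from $\SLE_{8/3}^{\sep}$ (separating $0$ and $\infty$), run an independent Brownian motion from $0$ until it hits $\eta$, and let $\ell$ be its outer boundary. Combining Proposition~\ref{prop-weld-loop} with Theorem~\ref{thm:weld-BM1}, the decorated surface $(\widehat{\C},\phi,\eta\cup\ell,0,\infty)$ cut by $\eta\cup\ell$ decomposes as $\QD_{1,1}(b)\times\QD_{0,4}(a,b,c,L)\times\cM^{\disk}_{0,2}(\tfrac23)(a,c)\times\QD_{1,1}(L)$, i.e.\ $\RA$ sits sandwiched between two $\QD_{1,0}$ disks attached to $0$ and $\infty$. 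The symmetry used is the inversion $z\mapsto 1/z$, which exchanges the marked points $0\leftrightarrow\infty$ and therefore canonically exchanges the two boundary components of the middle annulus. Inversion invariance of the whole picture is established by a Liouville-field uniqueness argument (uniform embedding), and then $\RA(b,L)=\RA(L,b)$ drops out.

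Your proposal instead works directly with the disk-welding decomposition $\RA(b,L)=C\int \Weld(\QD_{0,4}(a,b,c,L),\cM^{\disk}_{0,2}(\tfrac23)(a,c))\,da\,dc$ and attempts to swap $b\leftrightarrow L$ using the dihedral (reflection) symmetry of $\QD_{0,4}$ that fixes the pair of welded arcs $\{a,c\}$ and exchanges the two free arcs. This can be made to work, but it is not as immediate as your write-up suggests, for the following reason. A reflection of the ambient disk fixes the inner and outer boundary components; it does not turn the annulus inside-out. The way your argument nonetheless swaps the two boundary \emph{labels} is that the abstract reflection of the quadrilateral (followed by the cyclic relabeling that restores the counterclockwise convention) puts the old fourth arc in the second-arc slot and vice versa. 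To then deduce $\RA(b,L)=\RA(L,b)$ you need two separate inputs used together: (i) the welding map is equivariant under this reflection-plus-relabeling, and the individual laws $\QD_{0,4}$ and $\cM^{\disk}_{0,2}(\tfrac23)$ push forward correctly under it (reflection invariance of the GFF-based laws), giving that the reflection sends a sample of $\RA(b,L)$ with swapped boundary labels to a sample of $\RA(L,b)$; and (ii) reflection invariance of $\RA(b,L)$ itself (with labels preserved), so that you may drop the reflection and keep only the label swap. You gesture at (ii) in the last paragraph, but it is doing real work and should be stated as a separate step. You also need to be careful that reflecting the thin disk reverses left/right while reversing its chain order, and that these two reversals compose to something preserving the law $\cM^{\disk}_{0,2}(\tfrac23)(a,c)$ compatibly with how the $a$ and $c$ arcs are glued. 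Finally, a small slip: the reflection sends $\QD_{0,4}(a,b,c,L)$ to a quadrilateral with arc lengths $(L,c,b,a)$ read counterclockwise, not $(c,L,a,b)$; after a cyclic rotation this is $(a,L,c,b)$, which is what your argument actually needs.

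In short: your route is plausible and, with the bookkeeping done carefully, should yield the proposition; but the paper's inversion-on-the-sphere argument is cleaner precisely because $z\mapsto 1/z$ intrinsically swaps the two boundary components (it exchanges the marked points $0$ and $\infty$), whereas a reflection does not — you then have to manufacture the swap through a relabeling convention, which is exactly where the care is needed. Your "fallback" alternative hints at something closer to the paper's idea (using $\SLE_6$ time reversal), but without passing to the sphere it does not by itself produce the inside/outside exchange.
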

\begin{proof}
Recall that $\SLE_{8/3}^{\sep}$ is the $\SLE_{8/3}$ loop measure on $\widehat{\C}$ restricted to separate 0 and $\infty$. Let $\eta$ be sampled from $\SLE_{8/3}^{\sep}$. Given $\eta$, sample an independent Brownian motion $(B_t)_{0\le t\le \tau_\D}$ on $\widehat{\C}$ starting from 0 until first hitting $\eta$. Let $\ell$ be the outer boundary of $(B_t)_{0\le t\le \tau_\D}$. Denote the law of $\eta\cup\ell$ to be $\mathsf{n}$. Let $(\widehat{\C},\phi,0,\infty)$ be an embedding of an independent sample from $\QS_2$. Then by combining Proposition~\ref{prop-weld-loop} and Theorem~\ref{thm:weld-BM1}, the law of $(\widehat{\C},\phi,\eta\cup\ell,0,\infty)/\mathord\sim_\gamma$ is a constant multiple of
\begin{equation}
    \label{eq-proof-symmetry-RA-1}
    \begin{aligned}
        \int_{\R_+^4}\Weld(\QD_{1,1}(b),\QD_{0,4}(a,b,c,L),\cM^{\disk}_{0,2}(\frac{2}{3})(a,c),\QD_{1,1}(L))dadbdcdL.
    \end{aligned}
\end{equation}
Conversely, according to~\cite[Theorem B.5]{AHS21}, uniformly embedding the decorated quantum surface~\eqref{eq-proof-symmetry-RA-1} onto $\C$ gives $\LF_\C^{(\gamma,0),(\gamma,\infty)}\times\mathsf{n}$. 

Let $\wt\sn$ be the pushforward of $\sn$ under the inversion $z\mapsto\frac{1}{z}$. We claim that $\mathsf{n}=\wt\sn$. Indeed, let $\wt\eta$ and $\wt\ell$ be the image of $\eta$ and $\ell$ under $z\mapsto\frac{1}{z}$. The inversion invariance of $\SLE_{8/3}^{\sep}$ and the Brownian motion implies that $\wt\eta$ is also distributed as $\SLE_{8/3}^{\sep}$, and $\wt\ell$ is equal in law to the outer boundary seen from 0 of an independent Brownian motion starting from $\infty$ until first hitting $\wt\eta$. Hence, the law of the quantum surface $(\C,\phi,\wt\eta,\wt\ell,0,\infty)$ is given by~\eqref{eq-proof-symmetry-RA-1} as well, and uniformly embedding~\eqref{eq-proof-symmetry-RA-1} gives $\LF_\C^{(\gamma,0),(\gamma,\infty)}\times\wt\sn$. Hence, we find that $\sn=\wt\sn$.

For $\eta\cup\ell$ sampled from $\sn$, let $\eta'$ be the boundary of the connected component of $\wh\C\setminus(\eta\cup\ell)$ containing 0. Then the law of $(\eta,\eta')$ is invariant under $z\mapsto\frac{1}{z}$ due to the inversion invariance of $\sn$ above.
Let $D_1,D_2$ be the connected components of $\widehat{\C}\backslash(\eta\cup\tilde\eta)$ which contains 0 and $\infty$ respectively, and let $A$ be the annular connected component of $\widehat{\C}\backslash(\eta\cup\tilde\eta)$. By Propositions~\ref{prop-weld-loop} and~\ref{prop-weld-RA}, the joint law of $(D_1,\phi,0)/\mathord\sim_\gamma,(A,\phi)/\mathord\sim_\gamma,(D_2,\phi,\infty)/\mathord\sim_\gamma$ is a constant multiple of $\int_{\R_+^2}bL~\QD_{1,0}(b)\times\RA(b,L)\times\QD_{1,0}(L)dbdL$, which also equals $\int_{\R_+^2}bL~\QD_{1,0}(b)\times\RA(L,b)\times\QD_{1,0}(L)dbdL$ due to the inversion invariance of $(\eta,\eta')$. Therefore, we conclude that $\RA(b,L)=\RA(L,b)$.
\end{proof}

\begin{proof}[Proof of Proposition \ref{prop-LF-identify-RA}]
    Given Proposition \ref{prop-weld-RA} and Proposition \ref{prop-symmetry-RA}, the remaining part of proof is parallel to~\cite[Proposition 4.6]{ARS2022moduli} hence we will be brief.  Let $(\D,\phi,0,\eta)$ be an embedding of a sample from $\QD_{1,0}\otimes\sm$. Recall that under the reweighted measure $\frac{1}{|\QD_{1,1}(\nu_\phi(\eta))|}\QD_{1,0}\otimes\sm$, $(A_\eta,\phi)/\mathord\sim_\gamma$ has the law $\RA$. It suffices to show that condition on $\Mod(A_\eta)$ to be some $\tau>0$, the conditional law of $\phi|_{A_\eta}$ after uniformly embedding to $\mathcal{C}_\tau$ is $\LF_\tau$. According to~\cite[Proposition 2.13]{ARS2022moduli}, $\LF_\tau$ is characterized by its resampling property on two sides of boundaries. The resampling property on one side of $\phi$ follows from the resampling property of $\frac{1}{|\QD_{1,1}(\nu_\phi(\eta))|}\QD_{1,0}\otimes\sm$ due to the independence of the field and the curve, see~\cite[Lemma 4.7 and 4.8]{ARS2022moduli}. The resampling property on the other side follows from the symmetry of $\RA$ in Proposition~\ref{prop-symmetry-RA}. Then we conclude the proof.
\end{proof}

Once we know the joint law of the two boundary quantum lengths of $\RA$, we can obtain the explicit form of $m(d\tau)$ in Proposition~\ref{prop-LF-identify-RA} via the inverse Laplace transform. The following result is extracted in~\cite[Theorem 1.6]{ARS2022moduli}, which is based on~\cite[Theorem 1.3]{Wu22}.

\begin{proposition}
    \label{prop-LF-integrability}
For $\tau>0$ and a sample from $\LF_\tau$, let $b$ and $L$ be the quantum length of two boundaries. Then for $x\in\R$, we have
\begin{equation*}
    \LF_\tau[Le^{-L}b^{ix}]=\frac{\pi\gamma x\Gamma(1+ix)}{2\sinh(\frac{\gamma^2}{4}\pi x)}e^{-\frac{\pi}{4}\gamma^2\tau x^2}.
\end{equation*}
\end{proposition}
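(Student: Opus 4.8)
The plan is to derive Proposition~\ref{prop-LF-integrability} from the exact solvability of boundary Liouville CFT on the annulus established by Wu~\cite{Wu22}; indeed this statement is \cite[Theorem 1.6]{ARS2022moduli}, so the only real task is to record how our normalization of $\LF_\tau$ (a free-boundary GFF on the finite cylinder $\mathcal{C}_\tau$ plus a Lebesgue constant mode) matches the setup there. The substantive content is an explicit formula for a Gaussian multiplicative chaos (GMC) moment on the cylinder, which I would reach in two steps.

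\emph{Step 1: reduction to a GMC moment.} First I would integrate out the additive constant $\textbf{c}$ in the definition of $\LF_\tau$. Write $\phi = h + \textbf{c}$ with $h$ the free-boundary GFF on $\mathcal{C}_\tau$ and $\textbf{c}$ Lebesgue-distributed, and set $L_0 := \nu_h(I_0)$ and $b_0 := \nu_h(I_\tau)$ for the two boundary circles $I_0, I_\tau$, so that $L = e^{\gamma\textbf{c}/2}L_0$ and $b = e^{\gamma\textbf{c}/2}b_0$. Substituting $u = e^{\gamma\textbf{c}/2}L_0$ and using $\int_0^\infty u^{ix}e^{-u}\,du = \Gamma(1+ix)$ gives
\[
\LF_\tau[Le^{-L}b^{ix}] \;=\; \tfrac{2}{\gamma}\,\Gamma(1+ix)\,\E\!\left[(b_0/L_0)^{ix}\right],
\]
so it remains to compute the imaginary moment $\E[(b_0/L_0)^{ix}]$ of the ratio of the boundary GMC masses at the two ends of the cylinder, and to show that it equals $\frac{\pi\gamma^2 x}{4\sinh(\frac{\gamma^2}{4}\pi x)}\,e^{-\frac{\pi}{4}\gamma^2\tau x^2}$.

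\emph{Step 2: the GMC moment.} Here I would split $h = \bar h + \widetilde h$ into its circle-average process $\bar h$ on $[0,\tau]$ and the lateral fluctuation $\widetilde h$, which are independent; then $b_0/L_0 = e^{\frac{\gamma}{2}(\bar h(\tau) - \bar h(0))}\,(M_\tau/M_0)$, where $M_0, M_\tau$ are the circle-GMC masses of $\widetilde h$ at the two ends. By independence,
\[
\E\!\left[(b_0/L_0)^{ix}\right] \;=\; e^{-\frac{\gamma^2 x^2}{8}\,\mathrm{Var}(\bar h(\tau)-\bar h(0))}\cdot\E\!\left[(M_\tau/M_0)^{ix}\right],
\]
and a direct computation of the covariance of the circle-average process gives $\mathrm{Var}(\bar h(\tau)-\bar h(0)) = 2\pi\tau$, which produces the Gaussian factor $e^{-\frac{\pi}{4}\gamma^2\tau x^2}$. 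The remaining factor $\E[(M_\tau/M_0)^{ix}]$ is a boundary-Liouville structure constant; by the probabilistic conformal bootstrap it turns out to be independent of $\tau$ and equal to $\frac{\pi\gamma^2 x}{4\sinh(\frac{\gamma^2}{4}\pi x)}$, which is the content of \cite[Theorem 1.3]{Wu22}. Combining the two displays yields the claimed identity.

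\emph{Main obstacle.} The hard part is Step 2: evaluating $\E[(M_\tau/M_0)^{ix}]$ requires the full integrability machinery for boundary Liouville CFT on the annulus — GMC moment identities, the Liouville reflection coefficient (the source of the $\Gamma$ and $\sinh$ factors), and an OPE/bootstrap argument — and this is exactly the external input we borrow. For the purposes of this paper I would simply invoke \cite[Theorem 1.6]{ARS2022moduli}, which already assembles this, rather than reprove it.
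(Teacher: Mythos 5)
Your proposal is correct and takes exactly the paper's approach: the paper provides no proof of Proposition~\ref{prop-LF-integrability}, stating only that it ``is extracted in \cite[Theorem 1.6]{ARS2022moduli}, which is based on \cite[Theorem 1.3]{Wu22}.'' Your Steps~1 and~2 are an accurate gloss on what that citation contains (the zero-mode integration producing $\Gamma(1+ix)$, the independence of circle-average drift and lateral field, and the variance $\mathrm{Var}(\bar h(\tau)-\bar h(0))=2\pi\tau$ which accounts for the full $\tau$-dependence), but since you ultimately invoke the same external reference, there is no substantive departure from the paper.
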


We now finish the proof of Proposition \ref{prop:LF-description} by combining Theorem~\ref{thm:weld-BM1} and Proposition~\ref{prop-LF-integrability}.
\begin{proof}[Proof of Proposition \ref{prop:LF-description}]
In the following, we fix $\gamma=\sqrt{8/3}$ and the constant $C$ can be varing from line to line.
We first compute $|\RA(b,L)|$ for $b,L>0$.  Note that by Definition \ref{def:RA} and Theorem \ref{thm:weld-BM1}, we have that $|\RA(b,L)|=C\int_{\R_+^2}|\QD_{0,4}(a,b,c,L)|\times|\cM^{\disk}_{0,2}(\frac{2}{3})(a,c)|dadc$ for some $C>0$. By \cite[Proposition 3.6]{AHS21}, we have $\int_0^{\ell}|\cM^{\disk}_{0,2}(\frac{2}{3})(a,\ell-a)|da=C\ell^{-\frac{1}{2}}$. Combined with Lemma \ref{lem-mass-QD}, we obtain that $|\RA(b,L)|=C(b+L)^{-2}$.\\

Now we can compute the explicit form of $m(d\tau)$ from Proposition~\ref{prop-LF-integrability}. Note that
\begin{equation}
\label{eq-proof-compute-m(dtau)-2}
    \RA[Le^{-L}b^{ix}]=C\int_{\R_+^2}\frac{Le^{-L}b^{ix}}{(b+L)^2}dbdL~
    \overset{b:=tL}{=\!=\!=\!=}~C\int_0^\infty L^{ix}e^{-L}dL\int_0^\infty\frac{t^{ix}}{(1+t)^2}dt
    =C\frac{\pi x\Gamma(1+ix)}{\sinh(\pi x)},~x\in\R.
\end{equation}
On the other hand, by Proposition~\ref{prop-LF-integrability} we have
\begin{equation}\label{eq-proof-compute-m(dtau)-2.5}
\RA[Le^{-L}b^{ix}]=\int_0^\infty\LF_\tau[Le^{-L}b^{ix}]m(d\tau)=\frac{\sqrt{6}\pi}{3}\frac{ x\Gamma(1+ix)}{\sinh(\frac{2}{3}\pi x)}\int_0^\infty e^{-\frac{2}{3}\pi\tau x^2}m(d\tau)
,~x\in\R
\end{equation}
Combining~\eqref{eq-proof-compute-m(dtau)-2} and~\eqref{eq-proof-compute-m(dtau)-2.5}, we find
\begin{equation}
\label{eq-proof-compute-m(dtau)-1}
\int_0^\infty e^{-\frac{2}{3}\pi\tau x^2}m(d\tau)=C\frac{\sinh(\frac{2}{3}\pi x)}{\sinh(\pi x)},~x\in\R.
\end{equation}
It remains to show that the Laplace transform of $f(\tau)d\tau$ in Proposition~\ref{prop:LF-description} also equals the right side of~\eqref{eq-proof-compute-m(dtau)-1}. Indeed, we have 
\begin{equation*}
    \int_0^\infty e^{-\frac{2}{3}\pi\tau x^2}f(\tau)d\tau=\frac{3}{2\pi}\sum_{n\ge1}(-1)^{n-1}\frac{n\sin(\frac{2\pi}{3}n)}{n^2+x^2}=\frac{3}{4}\frac{\sinh(\frac{2}{3}\pi x)}{\sinh(\pi x)}
\end{equation*}
where the last equality follows from \cite[Formula 1.445.4]{Table-of-integrals}. Hence we conclude the proof.
\end{proof}

\begin{remark}
\label{remark-qian}
For a simply connected domain $D$ containing $0$, denote $\CR(D;0)$ to be its conformal radius seen from 0. We remark that for $\eta$ sampled from $\sm$, one can further compute the joint law of $\CR(D_\eta;0)$ and $\tau=\Mod(A_\eta)$, following the approach in~\cite[Section 5.2]{ARS2022moduli}. Namely, for $\lambda_1,\lambda_2\in\R$ satisfy $4\lambda_2+1>2\lambda_1$ and $\lambda_2>-\frac{2}{3}$, we have
    \begin{equation*}
\sm\left[\left(e^{\pi\tau}\right)^{\lambda_1}\CR(D_\eta;0)^{\lambda_2}\right]=\frac{\sqrt{1-12\lambda_2}}{\sin\left(\frac{\pi}{3}\sqrt{1-12\lambda_2}\right)}\frac{\sin\left(\frac{\pi}{3}\sqrt{1-12\lambda_2+6\lambda_1}\right)}{\sin\left(\frac{\pi}{2}\sqrt{1-12\lambda_2+6\lambda_1}\right)}
    \end{equation*}
In particular, for $x\in(0,1)$, $\sm[\CR(D_\eta;0)\le x]=\sum_{n\ge1}(-1)^n\frac{16n^2}{\pi(4n^2-1)}x^{\frac{1}{3}n^2-\frac{1}{12}}$, which coincides with \cite[Theorem 1.5]{qian2019}.  
\end{remark}

\begin{remark}
    One can indeed evaluate all the constants in Theorems~\ref{thm:sm},~\ref{thm:weld-BM1} and Proposition~\ref{prop:LF-description}. However, since they will be absorbed into the expression of the non-disconnection probability $\mathbb P[G_\tau]$ (see~\eqref{eq:proba-c} below) and normalized such that $P(0)=1$, we do not need to specify them individually here.
\end{remark}

\section{Proof of Theorem~\ref{thm:main} via conformal restriction}\label{section: conformal restriction}

For general simply connected domain $D$ containing 0, let $\ell_D$ be the outer boundary of $B[0,\tau_D]$, and let $\sm_D$ be the law of the boundary of the connected component of $D\setminus\ell_D$ containing 0. Note that $\sm_D$ is conformally invariant, and $\sm_\D$ equals the measure $\sm$ defined above.

The main goal in this section is to establish the following relation between the non-disconnection probability $P(\tau):=\P[G_\tau]$ and the measure family $(\sm_D)$. Recall that $\Mod(\eta,D)$ is the conformal modulus of the annular domain between $\eta$ and $\partial D$.

\begin{proposition}\label{prop:restriction}
Let $D\subset D'\subset\C$ be two simply connected domain containing $0$. Then
\begin{equation*}
    \frac{d\sm_D}{d\sm_{D'}}(\eta)=\frac{P(\tau)/\tau}{P(\tau')/\tau'}{\bf 1}_{\eta\subset D},
\end{equation*}
here $\tau$ and $\tau'$ denotes for $\Mod(\eta,D)$ and $\Mod(\eta,D')$, respectively. 
\end{proposition}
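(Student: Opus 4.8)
The plan is to derive the Radon--Nikodym derivative from the conformal restriction property of the outer boundary of Brownian motion, packaged together with the basic fact that the last-hitting decomposition of stopped Brownian motion produces a Brownian excursion on the annulus between $\eta$ and $\partial D$, whose total mass is $1/\Mod(\eta,D)$ and whose non-disconnection probability is $P(\Mod(\eta,D))$.

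\medskip

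\noindent\textbf{Step 1: Reduce $\sm_D$ to a statement about the full outer boundary $\ell_D$ together with an annular Brownian excursion.}
Fix $D\subset D'$. For a Brownian motion $(B_t)_{0\le t\le\tau_{D'}}$ from $0$ killed on $\partial D'$, let $\tau_D$ be its hitting time of $\partial D$ and let $\ell_{D'}$ be the outer boundary of $B[0,\tau_{D'}]$. I will first describe the joint law of $(\ell_{D'}, B[\tau_D,\tau_{D'}])$, or rather of the pair $(\eta', W)$ where $\eta'=\partial^o B[0,\sigma]$ is the outermost simple-loop portion of the outer boundary and $W$ is the remaining excursion. By Theorem~\ref{thm:equivalence} and Lemma~\ref{lem-BM-raSLE6} together with the last-hitting decomposition recalled at the end of Section~\ref{section 2.1}: conditionally on the loop $\eta=\partial$ of the $0$-component of $D\setminus\ell_D$, the trajectory of $B$ after its last visit to $\eta$ is a Brownian excursion on the annular region $A_\eta$ between $\eta$ and $\partial D$, sampled from $\BE(A_\eta)^\#$, and it is restricted to the event $G$ that it does not disconnect the two boundaries of $A_\eta$ (otherwise $\eta$ would not be the boundary of the $0$-component). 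Conversely, given $\eta$ and such a non-disconnecting excursion $W$, filling in the $0$-component with an independent copy of stopped Brownian motion in $D_\eta$ recovers a Brownian motion in $D$. The upshot: $\sm_D(d\eta)$ is, up to the universal constant $1$, the marginal on $\eta$ of the measure $\sm_D(d\eta)$ whose disintegration over $\eta$ gives $\BE(A_\eta)^\#$ restricted to non-disconnection; and since $|\BE(A_\eta)| = 1/\Mod(\eta,D)=1/\tau$ while the non-disconnection probability under $\BE(A_\eta)^\#$ is exactly $P(\tau)$ by definition of $P(\tau)$, the \emph{unnormalized} annular-excursion measure restricted to non-disconnection has total mass $P(\tau)/\tau$.

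\medskip

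\noindent\textbf{Step 2: Use conformal restriction of the Brownian outer boundary to compare $D$ and $D'$.}
The key input is that the law of the full outer boundary $\ell_D$ (equivalently of the whole-plane $\SLE_6$ outer boundary stopped on hitting $\partial D$, by Theorem~\ref{thm:equivalence}) satisfies the conformal restriction property: for $D\subset D'$, the law of $\ell_{D'}$ restricted to the event $\{\ell_{D'}\subset D\}$ is absolutely continuous with respect to the law of $\ell_D$, with a Radon--Nikodym derivative that is a deterministic function of the pair $(\ell_D$-configuration$, D, D')$ — in fact, by the Markov/restriction structure of Brownian motion, the derivative only sees ``how much excursion happens between $\partial D$ and $\partial D'$.'' Concretely, I will realize both sides on the same probability space: run $B$ from $0$ until $\tau_{D'}$; on the event that $B$ never exits $D$ before forming its outer loop — more precisely on the event $\{\eta\subset D\}$ where $\eta$ is the outermost simple loop — the configuration of Brownian motion inside $D$ has a law absolutely continuous w.r.t.\ the killed-at-$\partial D$ motion, and the extra weight is precisely the total mass of the annular Brownian excursion between $\eta$ and $\partial D'$ restricted to non-disconnection (this is the continuum analog of the fact that adding the outer layer is a Poissonian/excursion-type operation). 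Hence, for a test function $g(\eta)$,
\begin{equation}
\label{eq:plan-restriction}
\sm_{D'}[g(\eta)\,{\bf 1}_{\eta\subset D}] \;=\; c\cdot \sm_{D}\!\left[g(\eta)\,\frac{P(\tau)/\tau}{P(\tau')/\tau'}\right],
\end{equation}
where $\tau=\Mod(\eta,D)$, $\tau'=\Mod(\eta,D')$, and $c$ accounts for the ratio of the overall normalizations of stopped Brownian motion in $D$ versus $D'$. Tracking the normalizations carefully (using $|\BE(A_\eta)|=1/\tau$ on both sides and that the $0$-component contributes identically once conditioned on $\eta$) forces $c=1$. Rearranging \eqref{eq:plan-restriction} gives exactly the claimed formula
$$\frac{d\sm_D}{d\sm_{D'}}(\eta)=\frac{P(\tau)/\tau}{P(\tau')/\tau'}{\bf 1}_{\eta\subset D}.$$

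\medskip

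\noindent\textbf{Step 3: Main obstacle and how to handle it.}
The delicate point is Step~2: making precise the statement that ``passing from $D$ to $D'$ reweights $\sm$ by the mass of the annular non-disconnecting Brownian excursion between $\eta$ and $\partial D'$, divided by the analogous quantity for $\partial D$.'' The cleanest route is to avoid $\SLE_6$ entirely and argue directly with Brownian motion via the last-exit/excursion decomposition: decompose $B[0,\tau_{D'}]$ at the last time it is in $D_\eta$, then at the last time it is in the $0$-component of $D'\setminus\ell_{D'}$, and identify the two successive annular pieces as (conditionally independent, given the respective loops) normalized Brownian excursions restricted to non-disconnection. Composing the two conditional laws and using the tower property yields the Radon--Nikodym derivative as the ratio of the two excursion masses, which by Step~1 equals $(P(\tau)/\tau)/(P(\tau')/\tau')$. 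One must also check measurability and that $\{\eta\subset D\}$ is exactly the event on which $\sm_{D'}$ and $\sm_D$ can be coupled this way (on $\{\eta\not\subset D\}$ the derivative vanishes, consistent with the indicator). I expect the measure-theoretic bookkeeping of the conditional laws — rather than any hard estimate — to be the only real work; everything else is the conformal invariance of $\BE$ and the two mass identities $|\BE(A_\eta)|=1/\tau$ and $\BE(A_\eta)^\#(G)=P(\tau)$ already recorded in Section~\ref{section 2.1}.
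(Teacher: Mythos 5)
Your proposal correctly assembles the structural ingredients (last-exit decomposition, excursion mass $|\BE(A_\eta)|=1/\tau$, non-disconnection probability $P(\tau)$), but it leaves the central step unproved. The displayed identity $\sm_{D'}[g(\eta)\,{\bf 1}_{\eta\subset D}] = c\cdot\sm_{D}[g(\eta)\,\tfrac{P(\tau)/\tau}{P(\tau')/\tau'}]$ with $c=1$ is exactly the content of the proposition; Step~2 asserts it as a consequence of a vaguely stated ``conformal restriction property of the outer boundary,'' and Step~3 acknowledges this is where all the work is without actually doing it. There are two concrete obstructions. First, the outermost simple loop is a genuinely different object for the motion stopped at $\partial D$ versus $\partial D'$ (a single path has no canonical identification between the two), so your same-probability-space coupling does not yield a relation between $\sm_D(d\eta)$ and $\sm_{D'}(d\eta)$ at the same $\eta$. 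The paper handles this by passing to the auxiliary measure $\wt\sm_D$ (counting measure over the full nested family $\{\ell_n\}$ of candidate loops from Definition~\ref{def:counting}), for which $d\sm_D/d\wt\sm_D=P(\tau)$ (Lemma~\ref{lem:derivative}) and for which a genuine restriction identity can be proved via the cut-time filtration of~\cite{Brownian-beads} (Lemma~\ref{lem: sm-restriction}).

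Second, and more seriously, even granting a restriction identity for $\wt\sm_D$, the derivative comes out in Lemma~\ref{lem: sm-restriction} as $\int_{\partial D}\P^z(\mathrm{hit}\ \partial D'\ \mathrm{before}\ \eta)\nu_{\eta;D}(dz)$, which is \emph{not} manifestly $\tau/\tau'$. Identifying it requires the conditional exit-point law $\nu_{\eta;D}$, which the paper computes (Corollary~\ref{cor-condition-law-point}) only \emph{after} Proposition~\ref{prop:restriction} is established — so the direct route you sketch is circular. The paper breaks the circle by first invoking Theorem~\ref{prop: key lemma} (the LQG comparison with the $\SLE_{8/3}$ loop measure) together with Lemma~\ref{lem:derivative} to deduce that $d\wt\sm_D/d\SLE_{8/3,D}^{\lp}=H(\tau)$ depends on the modulus alone, so that the exact restriction property of $\SLE_{8/3}^{\lp}$ yields $\tfrac{d\wt\sm_D}{d({\bf 1}_{\eta\subset D}\wt\sm_{D'})}=H(\tau)/H(\tau')$; then a limiting argument (taking $\eta$ close to a circle in Lemma~\ref{lem: sm-restriction}) forces $H(\tau)=C/\tau$. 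Your closing remark that ``everything else is the conformal invariance of $\BE$ and the two mass identities'' is too optimistic: those identities alone do not determine the restriction derivative, and without the LQG input (or a substitute for it) the modulus-only dependence that makes the formula close is not available.
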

Theorem~\ref{thm:main} then straightforwardly follows by combining Propositions~\ref{prop:restriction} and Theorem~\ref{prop: key lemma}.
\begin{proof}[Proof of Theorem~\ref{thm:main}, given Proposition~\ref{prop:restriction}]
Define $\sn_D(d\eta)=\frac{\tau}{P(\tau)}\sm_D(d\eta)$ for any simply connected domain $D$ containing 0. Note that $(\sn_D)$ is a family of conformally invariant measures on simple loops surrounding 0. Moreover, for any simply connected domain $D\subset D'$, ${\bf 1}_{\eta\subset D}\sn_{D'}(d\eta)=\sn_D(d\eta)$ according to Proposition~\ref{prop:restriction}. Hence, $(\sn_D)$ satisfies conformal restriction, and is equal to $(\SLE_{8/3, D}^\lp)$ by Proposition~\ref{prop:werner}. Combining with Theorem~\ref{prop: key lemma}, we obtain that
\begin{equation}\label{eq:proba-c}
P(\tau)=C\tau \frac{f(\tau)}{\eta(2i\tau)}
\end{equation}
for some constant $C>0$. The value of $C$ can be determined by the fact that the $\tau\to0$ limit on both sides of~\eqref{eq:proba-c} needs to be $1$. Note that $\eta(2i\tau)\sim\frac{1}{\sqrt{2\tau}}\exp(-\frac{\pi}{24\tau})$ as $\tau\to0$, as well as
    $$
    \begin{aligned}
        f(\tau)&=\frac{1}{2}\sum_{n\in\mathbb{Z}} (-1)^{n-1}n\sin\Big(\frac{2\pi}{3}n\Big)\exp\Big(-\frac{2\pi}{3}n^2\tau\Big)\\
        &=\frac{3\sqrt{6}}{8\tau^{\frac{3}{2}}}\sum\limits_{k\in\mathbb{Z}} \left(k+\frac{1}{6}\right)\exp\left(-\frac{3\pi}{2\tau}\left(k+\frac{1}{6}\right)^2\right)\sim \frac{\sqrt{6}}{16\tau^{3/2}}\exp(-\frac{\pi}{24\tau}),\quad \tau\to0.
    \end{aligned}
    $$
    Then we conclude that $C=\frac{8}{\sqrt{3}}$, as desired.
\end{proof}

The rest of this section is devoted to proving Proposition~\ref{prop:restriction}. Let $D$ be a simply connected domain containing $0$.
For a Brownian path $(B_t)_{0\le t\le \tau_D}$ starting from $0$ until hitting the boundary of $D$, we say $s\in(0,\tau_D]$ is a \emph{cut time} for $(B_t)_{0\le t\le \tau_D}$ if $B[0,s)\cap B(s,\tau_D]=\emptyset$. Note that by~\cite[Theorem 2.2]{Brownian-cut-point}, a.s.~for any $\varepsilon\in(0,\tau_D)$, there exists a cut time $s\in(0,\varepsilon)$ for $(B_t)_{0\le t\le \tau_D}$.

To relate $\sm(d\eta)$ with the non-disconnection probability $P(\tau)$, we need the following definition.

\begin{definition}\label{def:counting}
Let $t_0=\tau_D$. For each $n\ge1$, let $t_n$ be the supremum of $s\in(0,t_{n-1})$ such that $s$ is a cut point for $(B_t)_{0\le t\le \tau_D}$ and the outer boundary of $B[0,s]$ is a simple loop. For ${\bf{t}}$ sampled from the counting measure on $\{t_n\}_{n\ge1}$,  let $\wt\sm_D$ be the resulting law of the outer boundary of $B[0,{\bf{t}}]$.
\end{definition}
\begin{figure}[htbp]
    \centering
    \includegraphics[width=0.4\linewidth]{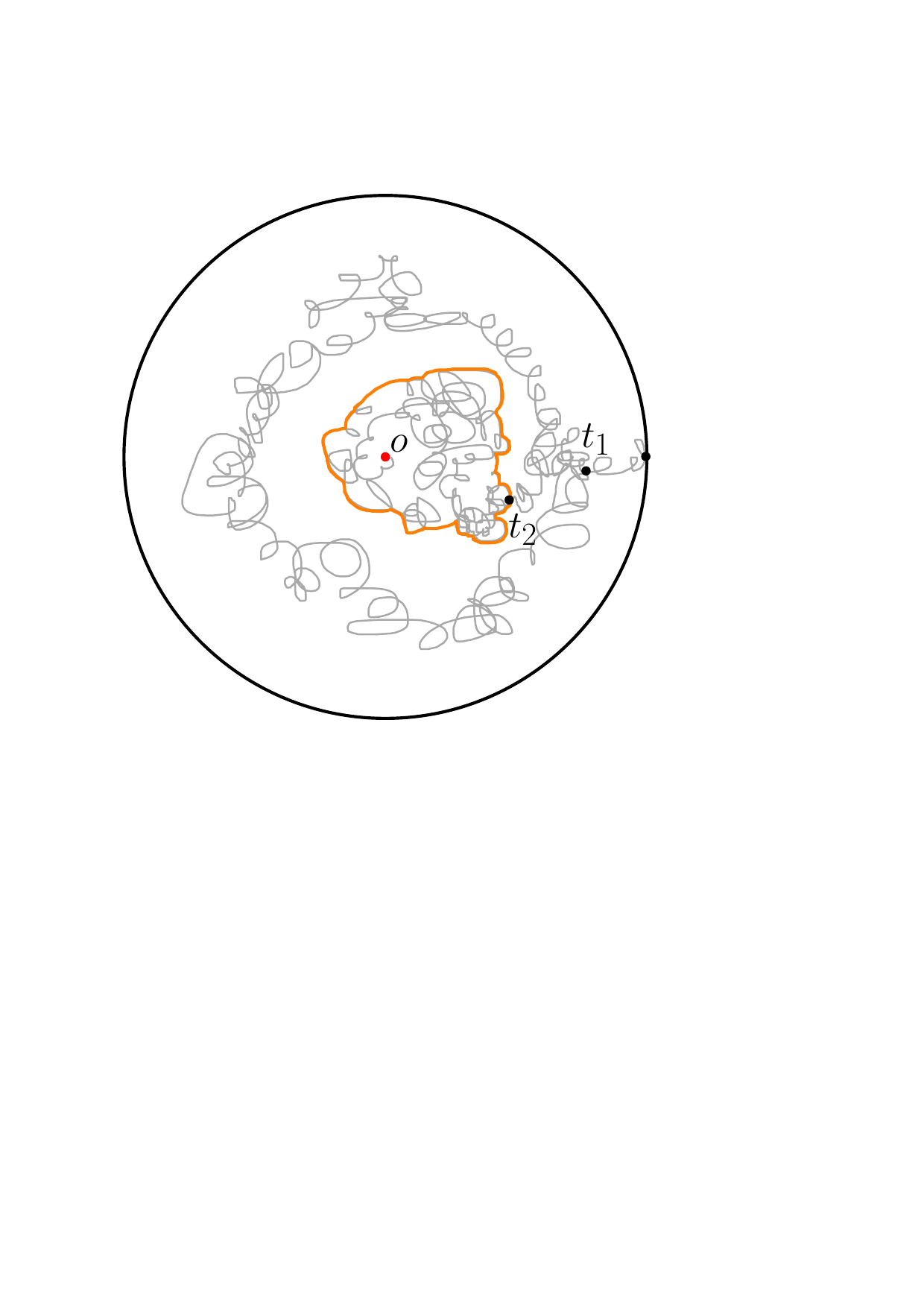}
    \caption{Illustration of Definition~\ref{def:counting}. The gray trajectory is $B[0,\tau_\D]$, and the orange loop is the outer boundary of $B[0,t_2]$.}
    \label{fig:counting}
\end{figure}

The proof of Proposition~\ref{prop:restriction} relies on the following two basic properties of $(\wt\sm_D)$.
\begin{lemma}\label{lem:derivative}
    We have $\frac{d\sm_D}{d\wt\sm_D}(\eta)=P(\tau)$ for $\tau=\Mod(\eta,D)$.
\end{lemma}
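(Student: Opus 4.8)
The plan is to couple both measures to a single Brownian trajectory $(B_t)_{0\le t\le\tau_D}$ and to read off the Radon--Nikodym derivative from the last-exit decomposition recalled in Section~\ref{section 2.1}. For a Jordan loop $\eta\subset D$ around $0$ let $U_\eta\ni0$ and $A_\eta$ be the disk and annular components of $D\setminus\eta$, and set $\sigma_\eta:=\sup\{s\le\tau_D:B(s)\in\eta\}$. I would first record the bookkeeping lemma that a time $s$ is a good cut time (cut time with $\partial^oB[0,s]$ a simple loop around $0$) with $\partial^oB[0,s]=\eta$ \emph{if and only if} $s=\sigma_\eta$ and $\partial^oB[0,\sigma_\eta]=\eta$. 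Indeed, once $\partial^oB[0,\sigma_\eta]=\eta$ one has $B[0,\sigma_\eta)\subset\overline{U_\eta}$, while the continuation $B(\sigma_\eta,\tau_D]$ is connected, avoids $\eta$ (it comes after the last visit to $\eta$) and reaches $\partial D$, hence lies in $A_\eta$; so $B[0,\sigma_\eta)\cap B(\sigma_\eta,\tau_D]=\emptyset$ automatically, and any earlier good cut time with outer boundary $\eta$ is ruled out because $B(\sigma_\eta)\in\eta$ would then be a double point. Consequently the good cut times $t_1>t_2>\cdots$ are in bijection with the nested ``realized loops'' $\eta_1\succ\eta_2\succ\cdots$, and $\wt\sm_D=\sum_n(\text{law of }\eta_n)$ is exactly the intensity measure of this random family.

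Next I would identify the sample of $\sm_D$ inside this family: $\eta^*=\eta_1$ almost surely. If $\eta$ is a realized loop whose continuation $B[\sigma_\eta,\tau_D]$ does not disconnect $\eta$ from $\partial D$, then since $B(\sigma_\eta,\tau_D]$ never returns to $\eta$ it cannot ``bridge'' arcs of $\eta$, so $\eta$ stays exposed to the component of $D\setminus B[0,\tau_D]$ meeting $\partial D$ and is the innermost such loop, i.e.\ $\eta=\eta^*$; conversely a realized loop lying outside another has a disconnecting continuation (a crosscut of $A_\eta$ already present inside $\eta$ blocks it), so at most one realized loop has a non-disconnecting continuation. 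It then remains to see that $\eta_1$'s continuation is a.s.\ non-disconnecting, which is forced by the maximality in Definition~\ref{def:counting}: were $B[t_1,\tau_D]$ to disconnect $\eta_1$ from $\partial D$, the first time it encircled $\eta_1$ would be a cut time whose (simple) outer boundary surrounds $\eta_1$ and which is strictly later than $t_1$, a contradiction. Making this last point rigorous is where one invokes the density of Brownian cut points, \cite{Brownian-cut-point}. In sum, $\sm_D$ is the law of the unique realized loop with non-disconnecting continuation.

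For the derivative itself I would use the last-exit decomposition of Section~\ref{section 2.1}: since $\sigma_\eta$ is exactly the last exit of $\eta$ by the whole path, conditionally on a realized loop $\eta$ the continuation $B[\sigma_\eta,\tau_D]$ has the law $\BE(A_\eta)^\#$ of the normalized Brownian excursion on $A_\eta$ (the Markov property at $\sigma_\eta$ shows the conditioning ``$\eta$ realized'', which only constrains $B[0,\sigma_\eta]$, leaves this excursion law untouched), and by conformal invariance of $\BE$ one may compute on the standard annulus. Hence the conditional probability that $\eta$'s continuation does not disconnect is $P(\Mod(\eta,D))=P(\tau)$ by the very definition of $P$. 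Combining this with the previous step through a Campbell--Mecke (Palm) disintegration of the point process $\{\eta_n\}$ of realized loops, $\sm_D(d\eta)$ is obtained from $\wt\sm_D(d\eta)$ by weighting by this conditional probability, i.e.\ $\dfrac{d\sm_D}{d\wt\sm_D}(\eta)=P(\tau)$ with $\tau=\Mod(\eta,D)$.

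The analytic heart, and the step I expect to be the main obstacle, is the almost-sure identification in the second paragraph --- that $\eta^*$ coincides with the outer boundary at the last good cut time --- which requires weaving together the structure of Brownian cut points with the maximality built into Definition~\ref{def:counting}; the correspondence in the first paragraph and the excursion identification in the third are then soft consequences of the last-exit decomposition and conformal invariance.
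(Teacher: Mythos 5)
Your proposal takes a genuinely different route from the paper. The paper never attempts to disintegrate the law of $(B_t)_{0\le t\le\tau_D}$ with respect to a fixed realized loop $\eta$; instead it works with $\varepsilon$-neighborhoods $A_\varepsilon$ of a reference loop $\gamma$, introduces the stopping time $\rho$ for the cut-time filtration $(\mathcal G_t)$ of Vir\'ag's Brownian beads, and sandwiches the conditional probability $\P[E_\varepsilon\mid\wt E_\varepsilon]$ between $P(\tau_\varepsilon^i)$ and $P(\tau_\varepsilon^o)$ before letting $\varepsilon\to 0$. You instead propose a direct Palm-type disintegration of the point process $\{\eta_n\}$. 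Both routes ultimately need the same engine, but you have misidentified where the work lies.

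The genuine gap is in your third paragraph, which you call ``a soft consequence of the last-exit decomposition.'' The assertion that, given $\eta$ realized, $B[\sigma_\eta,\tau_D]$ has law $\BE(A_\eta)^\#$ is not an application of the Markov property, for two reasons. First, $\sigma_\eta$ is not a stopping time of $(\mathcal F_t)$; last-exit times are co-optional, and the classical last-exit decomposition is only available for a \emph{deterministic} target set. Second, $\eta$ here is a random set extracted from the trajectory, and the defining property ``$\eta$ realized'' is not $\mathcal F_{\sigma_\eta}$-measurable in a straightforward sense, since $\sigma_\eta$ itself is determined by the whole path (it is the \emph{last} hit of a set that is only defined through that path). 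The paper resolves exactly this self-referential difficulty by passing to the cut-time filtration from \cite{Brownian-beads}, for which the time $\rho$ \emph{is} a genuine stopping time and the conditional law of $B[\rho,\tau_D]$ given $\mathcal G_\rho$ is a Brownian excursion. Indeed, the statement you take for granted is precisely Lemma~\ref{lem-condtion-wt-sm} of the paper, and its proof is a near-verbatim rerun of the $\varepsilon$-approximation argument in the proof of Lemma~\ref{lem:estimate}, not a soft corollary.

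A further caution: the full claim that the conditional law is $\BE(A_\eta)^\#$ also requires identifying the conditional distribution of $B_{\sigma_\eta}$ on $\eta$, which is the content of Lemma~\ref{lem-condition-law-point}. That lemma is derived from Corollary~\ref{cor-condition-law-point}, which in turn relies on Proposition~\ref{prop:restriction} and hence on Lemma~\ref{lem:derivative} itself; invoking it here would be circular. You can avoid this, because for the purposes of computing $\P[\text{non-disconnection}\mid\eta]$ the starting distribution on $\eta$ is irrelevant: by conformal invariance and rotational symmetry on the standard annulus, the non-disconnection probability of a Brownian excursion started from \emph{any} fixed point on the inner boundary equals $P(\tau)$. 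So you only need the weaker statement ``given $\eta$ and $B_{\sigma_\eta}$, the continuation is a Brownian excursion from $B_{\sigma_\eta}$,'' but even this requires the cut-time filtration. Once that step is supplied rigorously, the bookkeeping in your first, second and fourth paragraphs is fine and does give a valid alternative proof.
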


For $\eta$ sampled from $\wt\sm_D$, let $\nu_{\eta;D}(dz)$ be the conditional law of $B_{\tau_D}\in\partial D$ given $\eta$. The following lemma gives the restriction property for the family of measures $(\wt\sm_D)$.
\begin{lemma}
\label{lem: sm-restriction}
    Let $D,D'\subset\C$ be two simply connected domains containing $0$ such that $D\subset D'$. Then
\begin{equation*}
        \frac{d\big({\bf 1}_{\eta\subset D}\wt\sm_{D'}\big)}{d\wt\sm_D}(\eta)=\int_{\partial D} \P^z(\mathrm{hit}\ \partial D'\  \mathrm{ before\ hitting}\ \eta)\nu_{\eta;D}(dz)
\end{equation*}
Here $\P^z$ denotes the law of planar Brownian motion starting from $z$.
\end{lemma}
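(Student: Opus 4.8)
The plan is to exploit the natural coupling between the Brownian motion defining $\wt\sm_D$ and the one defining $\wt\sm_{D'}$: run a single Brownian motion $(B_t)_{t\ge0}$ from $0$, and observe it up to the hitting time $\tau_{D'}$ of $\partial D'$, which is larger than the hitting time $\tau_D$ of $\partial D$. First I would recall that, by Definition~\ref{def:counting}, both $\wt\sm_D$ and $\wt\sm_{D'}$ are obtained by sampling a point $\mathbf t$ from the \emph{counting measure} on the discrete set of ``good'' cut times (those $s$ for which $s$ is a cut time and $\partial^oB[0,s]$ is a simple loop) and reporting the outer boundary $\partial^oB[0,\mathbf t]$. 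The key structural observation is that a good cut time $s$ for $(B_t)_{0\le t\le \tau_{D'}}$ with $\partial^oB[0,s]\subset D$ is automatically a good cut time for $(B_t)_{0\le t\le\tau_D}$, and conversely every good cut time for $(B_t)_{0\le t\le\tau_D}$ is a good cut time for $(B_t)_{0\le t\le\tau_{D'}}$ provided the piece $B[\tau_D,\tau_{D'}]$ does not return to touch $B[0,s]$. Indeed $s$ being a cut time means $B[0,s)\cap B(s,\tau_{D'}]=\emptyset$; restricting the second factor to $(s,\tau_D]$ only weakens the condition, so every cut time for $(B_t)_{0\le t\le\tau_{D'}}$ is a cut time for $(B_t)_{0\le t\le \tau_D}$. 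For the reverse, if $s$ is a cut time for $(B_t)_{0\le t\le\tau_D}$, then $s$ is a cut time for $(B_t)_{0\le t\le\tau_{D'}}$ exactly when $B(\tau_D,\tau_{D'}]\cap B[0,s]=\emptyset$; and since $\partial^oB[0,s]$ is already a simple loop surrounding $0$ with $B[0,s]$ in its closed interior, this is the event that the excursion $B[\tau_D,\tau_{D'}]$ reaches $\partial D'$ before hitting $\partial^oB[0,s]=:\eta$.

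Next I would make this precise as an absolute-continuity computation between the two counting-measure-weighted laws. Write $\Xi_D$ (resp.\ $\Xi_{D'}$) for the random set of good cut times of $(B_t)_{0\le t\le\tau_D}$ (resp.\ $(B_t)_{0\le t\le\tau_{D'}}$). By the above, on the event $\{\partial^oB[0,\tau_D]\text{ has a good cut time, i.e. }\Xi_{D}\ne\emptyset\}$, which holds a.s.\ by~\cite[Theorem 2.2]{Brownian-cut-point}, one has $\Xi_{D'}\cap\{s:\partial^oB[0,s]\subset D\}=\Xi_D\cap E$, where $E=\{B(\tau_D,\tau_{D'}]\cap B[0,\tau_D]=\emptyset\}$ — but more carefully, membership of a particular $s\in\Xi_D$ in $\Xi_{D'}$ depends on $s$ only through $\eta_s:=\partial^oB[0,s]$, via the event that $B[\tau_D,\tau_{D'}]$ hits $\partial D'$ before $\eta_s$. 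Therefore, for a test function $g$ of the loop,
\begin{equation*}
\int g(\eta)\,{\bf 1}_{\eta\subset D}\,\wt\sm_{D'}(d\eta)
=\E\!\Big[\sum_{s\in\Xi_{D'}}g\big(\partial^oB[0,s]\big){\bf 1}_{\partial^oB[0,s]\subset D}\Big]
=\E\!\Big[\sum_{s\in\Xi_D}g\big(\partial^oB[0,s]\big)\,q\big(\partial^oB[0,s]\big)\Big],
\end{equation*}
where, writing $\eta=\partial^oB[0,s]$ and conditioning on $\big(B_t\big)_{0\le t\le\tau_D}$,
\begin{equation*}
q(\eta):=\P\big[B[\tau_D,\tau_{D'}]\text{ hits }\partial D'\text{ before }\eta \,\big|\, (B_t)_{0\le t\le\tau_D}\big]
=\int_{\partial D}\P^z(\text{hit }\partial D'\text{ before hitting }\eta)\,\nu_{\eta;D}(dz),
\end{equation*}
using the strong Markov property at $\tau_D$ and the fact that, given $\eta$ (hence given the outer boundary), the conditional law of $B_{\tau_D}$ is $\nu_{\eta;D}$ and the future increment $B[\tau_D,\tau_{D'}]$ depends on the past only through $B_{\tau_D}$. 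The middle equality above is the heart of the matter: it says the two sums over cut times are the \emph{same} sum, just weighted by the indicator that each cut loop ``survives'' to be a cut loop for the larger domain. Dividing through by $\E[\sum_{s\in\Xi_D}g(\partial^oB[0,s])]=\int g\,\wt\sm_D$ (formally, recognizing that $\wt\sm_D$ is exactly the intensity measure of $\sum_{s\in\Xi_D}\delta_{\partial^oB[0,s]}$) yields the claimed Radon–Nikodym derivative.

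The main obstacle I anticipate is the careful bookkeeping of the cut-time sets under enlargement of the domain: one must check that no good cut time is lost \emph{other} than through the surviving-to-$D'$ mechanism (e.g. a cut time of $(B_t)_{0\le t\le\tau_D}$ could in principle fail to be a cut time of $(B_t)_{0\le t\le\tau_{D'}}$ for reasons unrelated to hitting $\eta$, but in fact the only new piece of path is $B[\tau_D,\tau_{D'}]$, so the only way $B[0,s)\cap B(s,\tau_{D'}]$ becomes nonempty is $B[\tau_D,\tau_{D'}]$ meeting $B[0,s]\subset\overline{\mathrm{int}(\eta_s)}$, i.e. meeting $\eta_s$ from outside — here one uses that $\eta_s$ is a simple loop so its closed interior separates $B[0,s]$ from $\partial D'$), and conversely that a good cut time appearing for $D'$ with loop inside $D$ was already present for $D$ (immediate from the cut-time definition restricting to a shorter time interval, plus the observation that the simple-loop-outer-boundary condition on $\partial^oB[0,s]$ is intrinsic to $B[0,s]$ and does not reference the ambient domain). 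I would also need to justify passing the sum through the conditional expectation and interchanging with the integral defining $\nu_{\eta;D}$, which is routine given that $\Xi_D$ is a.s.\ a discrete set and everything in sight is nonnegative, so Tonelli applies; the only mild care is that $\wt\sm_D$ and $\wt\sm_{D'}$ are infinite measures, so the identity should be stated and proved against nonnegative test functions $g$.
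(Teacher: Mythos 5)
Your proposal is correct and arrives at the same identity, but by a genuinely different route from the paper's. You work directly with the point process $\sum_{s\in\Xi_D}\delta_{(\partial^oB[0,s],\,B_{\tau_D})}$: you identify exactly which good cut times $s\in\Xi_D$ survive to lie in $\Xi_{D'}$ (namely, those for which $B[\tau_D,\tau_{D'}]$ reaches $\partial D'$ before $\eta_s$), and then invoke the strong Markov property at $\tau_D$ together with the Campbell/disintegration formula defining $\nu_{\eta;D}$. The paper's proof of Lemma~\ref{lem: sm-restriction} instead uses an $\varepsilon$-localization argument, mirroring the proof of Lemma~\ref{lem:derivative}: it fixes a reference loop $\gamma$, considers the $\varepsilon$-neighborhood $A_\varepsilon$ and the events $\wt E_\varepsilon$, $\wt E_\varepsilon'$, sandwiches $\P[\wt E_\varepsilon'\mid\mathcal F_{\tau_D}]$ between the hitting probabilities of $\gamma_\varepsilon^o$ and $\gamma_\varepsilon^i$, and sends $\varepsilon\to 0$ to recover the density. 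Both rest on the same core observation about how a cut time of $(B_t)_{0\le t\le\tau_D}$ persists under extension to $\tau_{D'}$; your version is shorter and avoids the limiting step, at the cost of leaning on the disintegration interpretation of $\nu_{\eta;D}$ (which the paper takes as its definition, so this is fine), while the paper's version is more hands-on and uniform in style with its companion lemma. One small notational slip worth flagging: the quantity you denote $q(\eta)$ and define as $\P\bigl[B[\tau_D,\tau_{D'}]\text{ hits }\partial D'\text{ before }\eta\mid(B_t)_{0\le t\le\tau_D}\bigr]$ is actually $\P^{B_{\tau_D}}(\text{hit }\partial D'\text{ before hitting }\eta)$, a function of the pair $(\eta,B_{\tau_D})$ rather than of $\eta$ alone; the displayed chain of equalities conflates two steps. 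The honest sequence is: first replace the indicator by its conditional expectation given $\mathcal F_{\tau_D}$ (giving the $B_{\tau_D}$-dependent probability), then integrate against the Palm kernel $\nu_{\eta;D}(dz)$ to pass from the intensity of $\sum_s\delta_{(\eta_s,B_{\tau_D})}$ to its marginal $\wt\sm_D$. Your final expression is the correct one, so the argument goes through once this middle step is unpacked.
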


Intuitively, Lemma~\ref{lem:derivative} states that given a realization $\eta$ of $\wt\sm_D$, the ``conditional probability" of $\eta$ being the outermost layer equals the non-disconnecting probability on the annulus $A(\eta,\partial D)$. Lemma~\ref{lem: sm-restriction} is of the similar fashion, saying that given a realization $\eta$ of $\wt\sm_D$, then $\eta$ is still in the support of $\wt\sm_{D'}$ if and only if the Brownian motion after first hitting $\partial D$ does not intersect $\eta$ before hitting $\partial D'$. However, though the above explanation seems straightforward, it involves some technical issues to make precise sense of conditioning on a zero-probability event. The proofs are based on techniques from~\cite{Brownian-beads}, and we postpone them to Section~\ref{sec:pf-derivative}.

\subsection{Proof of Proposition~\ref{prop:restriction}}

In this section we finish the proof of Proposition~\ref{prop:restriction}, based on Lemmas~\ref{lem:derivative} and~\ref{lem: sm-restriction} above.

\begin{proof}[Proof of Proposition~\ref{prop:restriction}]
By Theorem~\ref{prop: key lemma} and Lemma~\ref{lem:derivative}, we see that
\begin{equation}
\label{eq-derivative-wt-sm-SLE}
    \frac{d\wt\sm_D}{d\SLE_{8/3,D}^{\lp}}(\eta)=\frac{f(\tau)}{P(\tau)\eta(2i\tau)}=:H(\tau),\quad \tau=\Mod(\eta,D).
\end{equation}
In particular, we have
\begin{equation}\label{eq:restriction1}
\frac{d\wt\sm_D}{d(1_{\eta\subset D}\wt\sm_{D'})}(\eta)=\frac{H(\tau)}{H(\tau')}\quad \mathrm{for}\ \tau'=\Mod(\eta,D').
\end{equation}
In the following, we show that $H(\tau)=\frac{C}{\tau}$ for some constant $C\in(0,\infty)$, and Proposition~\ref{prop:restriction} then follows by combining~\eqref{eq:restriction1} and Lemma~\ref{lem:derivative}.

Let $\D_r:=e^{2\pi r}\D$ for $r\in\R$. For $\rho>0$, by Lemma~\ref{lem: sm-restriction}, we have
\begin{equation}\label{eq:restriction2}
    \frac{d\big(1_{\eta\subset\D}\wt\sm_{\D_\rho}\big)}{d\wt\sm}(\eta)=\int_{\partial\D} \P^z(\text{hit }\partial\D_\rho \text{ before }\eta)\nu_{\eta;\D}(dz).
\end{equation}
Comparing~\eqref{eq:restriction2} with ~\eqref{eq:restriction1}, we see for any $\eta\subset\D$,
\begin{equation}\label{eq:H-function}
    H(\Mod(\eta;\D_\rho))=\int_{\partial\D} \P^z(\text{hit }\partial\D_\rho \text{ before }\eta)\nu_{\eta;\D}(dz)H(\Mod(\eta;\D)).
\end{equation}
In particular, for $\tau>0$ and $\delta\in(0,\tau)$, if we choose $\eta$ to be contained in $\D_{-\tau+\delta}\backslash\D_{-\tau}$, then
\begin{equation*}
        \tau-\delta\le \Mod(\eta;\D)\le \tau,\quad \tau-\delta+ \rho\le\Mod(\eta;\D_\rho)\le\tau+\rho,
\end{equation*}
and hence the probability
$$\frac{\tau-\delta}{\tau-\delta+\rho}\le\P^z(\text{hit }\partial\D_\rho\text{ before } \eta)\le\frac{\tau}{\tau+\rho}.$$
Plugging above into~\eqref{eq:H-function} and using that $\int_{\partial\D}\nu_{\eta,\D}(dz)=1$, we find
\[
\inf_{x\in[\tau-\delta+\rho,\tau+\rho]}H(x)\le \frac{\tau}{\tau+\rho}\sup_{x\in[\tau-\delta,\tau]}H(x)\quad {\rm and}\quad \sup_{x\in[\tau-\delta+\rho,\tau+\rho]}H(x)\ge \frac{\tau-\delta}{\tau-\delta+\rho}\inf_{x\in[\tau-\delta,\tau]}H(x).
\]
Taking $\delta\to0$, the continuity of $H$ then implies $H(\tau+\rho)=H(\tau)\frac{\tau}{\tau+\rho}$. Since $\tau$ and $\rho$ are arbitrary, we conclude that $\tau H(\tau)=C$ for some constant $C\in(0,\infty)$.
\end{proof}

As a consequence, we can further give the explicit description of the conditional law $\nu_{\eta;\D}(dz)$.
\begin{corollary}
\label{cor-condition-law-point}
     For $\eta\subset\D$ surrounding 0, let $f:A_\eta\to\A_\tau$ be a conformal map preserving $\S^1$. Then $\nu_{\eta;\D}(dz)=\frac{1}{2\pi}|f'(z)|dz$.
\end{corollary}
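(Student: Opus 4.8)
\textbf{Proof proposal for Corollary~\ref{cor-condition-law-point}.}

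The plan is to deduce the explicit form of $\nu_{\eta;\D}(dz)$ directly from the restriction identity~\eqref{eq:H-function} together with the normalization $\tau H(\tau)=C$ obtained in the proof of Proposition~\ref{prop:restriction}. Fix $\eta\subset\D$ surrounding $0$, write $\tau=\Mod(\eta,\D)$, and let $f:A_\eta\to\A_\tau$ be the conformal map fixing (setwise) $\S^1=\partial^o\A_\tau$. First I would observe that $H(\tau)=C/\tau$ turns~\eqref{eq:H-function} into the clean identity
\begin{equation*}
\frac{\tau}{\Mod(\eta,\D_\rho)}=\int_{\partial\D}\P^z(\text{hit }\partial\D_\rho\text{ before }\eta)\,\nu_{\eta;\D}(dz)\qquad\text{for all }\rho>0.
\end{equation*}
The left side is a genuinely $\rho$-dependent quantity, so the identity pins down $\nu_{\eta;\D}$ once we understand the $\rho$-dependence of the integrand on the right.

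The key computation is to identify, for $z\in\partial\D=\S^1$, the harmonic function $z\mapsto \P^z(\text{hit }\partial\D_\rho\text{ before }\eta)$ on the annular domain between $\S^1$ and $\eta$. Conformally map this domain $\{$region between $\eta$ and $\partial\D_\rho\}$ — or rather work on $A_\eta$ first and then extend — by a map to a standard annulus so that hitting probabilities become linear in the $\log|\cdot|$ coordinate. Concretely, let $g:\{\text{annular region between }\eta\text{ and }\partial\D_\rho\}\to\A_{\tau+\rho}$ be the conformal map sending $\eta$ to the inner circle $e^{-2\pi(\tau+\rho)}\S^1$ and $\partial\D_\rho$ to $\S^1$; then $\P^z(\text{hit }\partial\D_\rho\text{ before }\eta)=1+\frac{\log|g(z)|}{2\pi(\tau+\rho)}$, the bounded harmonic function with the correct boundary values. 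Since $\S^1=\partial\D\subset\A_{\tau+\rho}$ sits at ``radial coordinate'' $-\tfrac{\log|g(z)|}{2\pi}=\Mod(\eta,\D_z):=$ the modulus of the annulus between $\eta$ and an infinitesimal piece — more carefully, for $z\in\S^1$ the value $-\frac{\log|g(z)|}{2\pi}$ depends only on $z$ and $\eta$ (not on $\rho$), and in fact equals the value of the conformal coordinate of $z$ in $\A_\tau$ under $f$, namely $-\frac{\log|f(z)|}{2\pi}=0$ since $f$ preserves $\S^1$. Thus I need to be slightly more careful: the right normalization is that $g$ restricted to $A_\eta$ agrees with $f$ up to the rotation/scaling fixed by matching moduli, and $g(\partial\D)$ is a fixed analytic curve inside $\A_{\tau+\rho}$ independent of $\rho$ in the appropriate coordinates. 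Expanding $\P^z(\text{hit }\partial\D_\rho\text{ before }\eta)=\frac{\tau+u(z)}{\tau+\rho}+o(1)$ as $\rho\to\infty$, where $u(z)$ is the harmonic extension to $\partial\D$ of the coordinate data of $\eta$, and plugging into the displayed identity forces $\int_{\partial\D}u(z)\,\nu_{\eta;\D}(dz)=0$ and, comparing the next order, $\int_{\partial\D}(\tau+u(z))\,\nu_{\eta;\D}(dz)=\tau$; differentiating the identity in $\rho$ at finite $\rho$ then yields that $\nu_{\eta;\D}$ must be the harmonic measure from $\infty$ pushed to the correct coordinate, i.e.\ the pushforward of the uniform measure $\tfrac{1}{2\pi}d\theta$ on the inner circle of $\A_\tau$ under $f^{-1}$, which by the change-of-variables formula is exactly $\frac{1}{2\pi}|f'(z)|\,dz$.

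The main obstacle I anticipate is making the $\rho\to\infty$ (or, equivalently, the $\rho$-differentiation) argument rigorous: one must justify interchanging the limit/derivative with the integral over $\partial\D$, control the conformal maps $g=g_\rho$ uniformly as $\rho$ varies (they converge to $f$ on $A_\eta$ after the natural normalization, by standard compactness of conformal maps), and confirm that the only probability measure $\nu$ on $\partial\D$ satisfying $\int \P^z(\text{hit }\partial\D_\rho\text{ before }\eta)\,\nu(dz)=\tau/\Mod(\eta,\D_\rho)$ for all $\rho$ is the claimed one. This last uniqueness step is clean: the family $\{z\mapsto \P^z(\text{hit }\partial\D_\rho\text{ before }\eta):\rho>0\}$ together with constants spans a set of functions on $\partial\D$ rich enough to separate measures — indeed each such function is $1+\frac{\log|g_\rho(z)|}{2\pi(\tau+\rho)}$ and as $\rho$ ranges over $(0,\infty)$ the functions $\log|g_\rho|$ span (the closure of) all harmonic functions on a neighborhood of $\partial\D$ restricted to $\partial\D$, hence integration against them determines $\nu$. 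Alternatively, and perhaps more cleanly, I would avoid the limit entirely: note that for the \emph{specific} choice $D'=\D_\rho$ the conditional-law characterization forces $\nu_{\eta;\D}$ to be the exit distribution on $\partial\D$ of the $h$-process / the harmonic measure of $\partial\D$ seen from the inner boundary $\eta$ in the domain $\A_\tau$, which is by conformal invariance of harmonic measure precisely $\tfrac{1}{2\pi}|f'(z)|\,dz$ since on $\A_\tau$ itself (with $\eta$ the inner circle) the exit measure on the outer circle $\S^1$ for Brownian motion started from the uniform measure on the inner circle is uniform.
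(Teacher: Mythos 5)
Your plan matches the paper's at the level of strategy---deduce $\nu_{\eta;\D}$ from the restriction identity---but the execution has two genuine gaps. First, you implicitly use $\Mod(\eta,\D_\rho)=\tau+\rho$, which fails for general $\eta$: moduli of nested annuli are only superadditive (Gr\"otzsch), with equality exactly when $\eta$ is concentric, so your map $g$ lands in $\A_{\Mod(\eta,\D_\rho)}\neq\A_{\tau+\rho}$ and the subsequent expansion must be redone with the correct modulus. Second and more seriously, restricting the test family to $D'=\D_\rho$ does not in general pin down $\nu_{\eta;\D}$: when $\eta$ is a concentric circle, $z\mapsto\P^z(\text{hit }\partial\D_\rho\text{ before }\eta)$ is constant in $z$ for every $\rho$, so these test functions determine nothing beyond total mass, and for non-concentric $\eta$ the spanning claim you make about $\{\log|g_\rho|\}$ would need a real argument. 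The paper avoids both issues by working with the restriction identity~\eqref{eq-identity-hitting-prob-1} for \emph{arbitrary} simply connected $D\supset\D$ (Lemma~\ref{lem: sm-restriction} is stated for general $D\subset D'$), which is a manifestly separating family of test functions, and then---crucially---proving the companion identity~\eqref{eq-identity-hitting-prob-2}, which shows $\tfrac{1}{2\pi}|f'(z)|\,dz$ satisfies the same equation; that identity is derived from the strong Markov property, which glues a sample of $\BE(A_{\eta,\D})$ to an independent Brownian motion from its endpoint to produce $\BE(A_{\eta,D})$, together with the conformal covariance of the Poisson kernel. Your ``cleaner alternative'' at the end simply asserts that $\nu_{\eta;\D}$ equals this excursion exit measure without justification; that identification is precisely the content of the corollary, and it is the second half of the paper's proof that supplies it.
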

\begin{proof}
Similar to~\eqref{eq:H-function}, for any simply connected domain $D\supset\D$, we have that for $\eta\subset\D$,
    $$
    H(\Mod(\eta;D))=\int_{\partial\D} \P^z(\text{hit }\partial D \text{ before }\eta)\nu_{\eta;\D}(dz)H(\Mod(\eta;\D)).
    $$
Combining with that $H(\tau)=\frac{C}{\tau}$ for some $C\in(0,\infty)$, we find
\begin{equation}
\label{eq-identity-hitting-prob-1}
    \int_{\partial\D} \P^z(\text{hit }\partial D \text{ before }\eta)\nu_{\eta;\D}(dz)=\frac{\Mod(\eta;\D)}{\Mod(\eta;D)}.
\end{equation}
On the other hand, note that
\begin{equation}
\label{eq-identity-hitting-prob-2}
    \frac{1}{2\pi}\int_{\partial\D} \P^z(\text{hit }\partial D \text{ before }\eta)|f'(z)|dz=\frac{\Mod(\eta;\D)}{\Mod(\eta;D)}.
\end{equation}
We can justify~\eqref{eq-identity-hitting-prob-2} as follows. 
Recall the definition of Brownian excursion measures in Section~\ref{section 2.1}.
Let $W$ be an excursion path on $A_{\eta,\D}$ sampled from $\BE(A_{\eta,\D})$, and denote its endpoint on $\S^1$ by $\textbf{z}$. If we further sample an independent Brownian motion $W'$ from $\textbf{z}$ and restrict on the event that $W'$ hits $\partial D$ before $\eta$, then by the strong Markov property, the concatenation of $W$ and $W'$ is distributed as the Brownian excursion $\BE(A_{\eta,D})$ on $A_{\eta,D}$. Moreover, the conformal covariance of Poisson kernel yields that the law of $\textbf{z}$ under $\BE(A_{\eta,\D})$ is $\frac{1}{2\pi\Mod(\eta;\D)}|f'(z)|dz$ (recall that $\frac{1}{\Mod(\eta;\D)}$ is the total mass of $\BE(A_{\eta,\D})$). Combined, we obtain~\eqref{eq-identity-hitting-prob-2}.

Now the corollary follows by comparing \eqref{eq-identity-hitting-prob-1} and \eqref{eq-identity-hitting-prob-2} due to the arbitrariness of $D$.
\end{proof}

\subsection{Proof of Lemmas~\ref{lem:derivative} and~\ref{lem: sm-restriction}}\label{sec:pf-derivative}

Throughout this section, let $\gamma$ be any Jordan loop in $D$ surrounding 0, and $A_\varepsilon:=\{z\in\C:{\rm dist}(z,\gamma)<\varepsilon\}$ be the $\varepsilon$-neighborhood of $\gamma$. Denote $\mathcal{A}_\varepsilon$ to be the collection of simple loops surrounding $0$ and contained in $A_\varepsilon$. Let $\gamma_\varepsilon^o$ and $\gamma_\varepsilon^i$ be the outer and inner boundaries of $A_\varepsilon$, respectively. For a planar Brownian motion $(B_t)_{t\ge0}$, we write $\mathcal{F}_t:=\sigma((B_s)_{0\le s\le t})$ to be its corresponding filtration.

We first prove Lemma~\ref{lem:derivative}. For a Brownian motion $(B_t)_{0\le t\le\tau_D}$ from 0 until hitting $\partial D$, let $\{\ell_n\}$ be the collection of outer boundaries of $B[0,t_n]$ in Definition~\ref{def:counting}. Let $E_\varepsilon:=\{\ell_1\subset A_\varepsilon\}$, and $\wt E_\varepsilon$ be the event that there exists $n\ge1$ such that $\ell_n\subset A_\varepsilon$. Note that
\begin{equation*}
\wt\sm(\mathcal{A}_\varepsilon)=\P[\wt E_\varepsilon](1+o_\varepsilon(1)),\quad \sm(\mathcal{\mathcal{A}_\varepsilon})=\P[E_\varepsilon].
\end{equation*}
So it suffices to consider the conditional probability $\P[E_\varepsilon|\wt E_\varepsilon]$.
\begin{lemma}\label{lem:estimate}
We have $\P[E_\varepsilon|\wt E_\varepsilon]=P(\tau)(1+o_\varepsilon(1))$ as $\varepsilon\to0$ for $\tau=\Mod(\gamma,D)$.
\end{lemma}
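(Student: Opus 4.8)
\textbf{Proof plan for Lemma~\ref{lem:estimate}.}

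The plan is to condition on the smallest outer-boundary loop $\ell_n$ contained in the $\varepsilon$-neighborhood $A_\varepsilon$, and then analyze the remaining Brownian trajectory using the strong Markov property. First I would set up notation: on the event $\wt E_\varepsilon$, let $N$ be the (a.s.\ finite on $\wt E_\varepsilon$) index such that $\ell_N\subset A_\varepsilon$ but $\ell_{N-1}\not\subset A_\varepsilon$ (so $\ell_N$ is the outermost of the $\ell_n$'s lying inside $A_\varepsilon$), and let $s=t_N$ be the corresponding cut time. By definition of the sequence $\{t_n\}$ in Definition~\ref{def:counting}, $s$ is a cut time, $\ell_N=\partial^o B[0,s]$ is a simple loop, and $B[0,s]$ and $B(s,\tau_D]$ are disjoint. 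The event $E_\varepsilon$ (that $\ell_1\subset A_\varepsilon$, i.e.\ that $\ell_N$ is actually the \emph{outermost} simple-loop layer $\ell_1$) holds, given this decomposition, precisely when the future trajectory $B(s,\tau_D]$ does not create a larger simple outer-boundary loop before exiting $D$ --- equivalently, by the topology of the outer boundary, when $B(s,\tau_D]$ does not disconnect $\ell_N$ from $\partial D$ inside the annular region between $\ell_N$ and $\partial D$. (Here one uses that if the future path never winds all the way around, the outer boundary of $B[0,\tau_D]$ remains the loop $\ell_N$ as its innermost simple layer containing $0$, so $\ell_1=\ell_N$.)

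The key step is then to apply the strong Markov property at time $s$. Conditionally on $\mathcal{F}_s$ restricted to $\{s=t_N,\ \ell_N\subset A_\varepsilon\}$, the path $B(s,\tau_D]$ is a Brownian motion started from the point $B_s\in\ell_N$, run until it exits $D$, and conditioned to avoid $B[0,s]$ (in particular to avoid the region enclosed by $\ell_N$) up to time $s^+$ --- but since $B[0,s]$ is contained in the closed region bounded by $\ell_N$ and the future path must immediately leave that region at the cut point, the relevant conditioning is just that the future path stays in the annular domain $A(\ell_N,\partial D)$ for a short initial time, which has full measure in the appropriate limit. More precisely, the conditional law of $B(s,\tau_D]$, restricted to its portion inside $A(\ell_N,\partial D)$, is (after normalizing) the Brownian excursion-type measure between $\ell_N$ and $\partial D$; and by the discussion in Section~\ref{section 2.1} the probability that such an excursion does not disconnect the two boundary components equals $P(\Mod(\ell_N,D))$ by conformal invariance. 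Since $\ell_N\subset A_\varepsilon$ and $A_\varepsilon\downarrow\gamma$ as $\varepsilon\to0$, we have $\Mod(\ell_N,D)\to\Mod(\gamma,D)=\tau$, and by continuity of $P$ (which follows from the explicit formula, or can be taken as known at this point of the argument) we get $\P[E_\varepsilon\mid\wt E_\varepsilon]=P(\tau)(1+o_\varepsilon(1))$.

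The main obstacle is making the conditioning rigorous: the event $\{s=t_N\}$ and the event of seeing a simple outer-boundary loop inside $A_\varepsilon$ both have subtle measure-theoretic structure (the cut times form a random closed set of measure zero, and the relevant decomposition must be expressed as a limit of genuine conditionings). I would handle this exactly as in the reference \cite{Brownian-beads}: approximate by discretizing the annulus $A_\varepsilon$ into finitely many thinner annuli, use the last-exit decomposition of Brownian motion at the circles bounding these annuli (the analogue of the $\sigma_\tau$ decomposition in Section~\ref{section 2.1}), and show that along this approximation the conditional law of the future path converges to the claimed Brownian excursion measure. The uniform (in $\varepsilon$) control needed to extract the $1+o_\varepsilon(1)$ comes from the fact that the Brownian excursion measure mass $1/\Mod(\cdot)$ and the non-disconnection probability $P(\cdot)$ are both continuous and bounded away from $0$ and $\infty$ on a neighborhood of $\tau$. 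The topological identification --- that $E_\varepsilon$ given the decomposition is exactly the non-disconnection event for the future excursion --- is routine once one recalls that the outer boundary of a Brownian path is determined by which loops around $0$ it "fills in," so I would only sketch it.
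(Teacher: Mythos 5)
Your high-level strategy -- decompose the path at a cut time inside $A_\varepsilon$, apply a strong-Markov-type argument, and identify the future as a Brownian excursion on the annular region -- is the same as the paper's, but the key technical step is done differently, and your version has a gap. The time $s=t_N$ you propose to condition on is \emph{not} a stopping time, not even in the cut-time filtration: deciding that $\ell_N$ is the \emph{outermost} simple outer-boundary loop contained in $A_\varepsilon$ requires looking at all later cut times, i.e.\ at the entire future. You acknowledge this and propose to discretize $A_\varepsilon$ into thinner annuli, but that workaround is not spelled out and is not what Vir\'ag's machinery gives you directly. The paper instead introduces the cut-time filtration $(\mathcal G_t)$, where $\mathcal G_t$ is $\mathcal F_t$ enlarged by the set of cut times before $t$, and defines $\rho$ to be the \emph{first} cut time after hitting $\gamma_\varepsilon^i$ with simple outer boundary. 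Unlike your $t_N$, this $\rho$ is a genuine $\mathcal G$-stopping time (a first-passage time, not a last-passage time), so one can invoke \cite[Proposition~14]{Brownian-beads} directly to conclude that $B[\rho,\tau_D]$ given $\mathcal G_\rho$ is a Brownian excursion on $A_\rho$ conditioned to hit $\partial D$.

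The second place where the paper is more careful is the topological identification of $E_\varepsilon$ with a non-disconnection event for the future excursion. You assert outright that $\P[E_\varepsilon\mid \ell_N]=P(\Mod(\ell_N,D))$, which requires proving an exact equivalence between ``$\ell_N$ remains the outermost simple loop'' and ``the excursion on $A(\ell_N,\partial D)$ does not disconnect.'' The paper avoids proving any such exact equivalence. It sandwiches: on $\wt E_\varepsilon$ it shows $P_\omega(G_1)\le \P[E_\varepsilon\mid\mathcal G_\rho](\omega)\le P_\omega(G_2)$, where $G_1$ is ``the excursion does not disconnect $\partial^o B[0,\rho]$ from $\partial D$'' and $G_2$ is ``the excursion, after its last visit to $\gamma_\varepsilon^o$, does not disconnect $\gamma_\varepsilon^o$ from $\partial D$.'' Then $P_\omega(G_1)\ge P(\tau_\varepsilon^i)$ by monotonicity of $P$, and $P_\omega(G_2)=P(\tau_\varepsilon^o)$ because after the last hit of $\gamma_\varepsilon^o$ the path is exactly a Brownian excursion between $\gamma_\varepsilon^o$ and $\partial D$. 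Both bounds converge to $P(\tau)$, which gives the claim without ever needing the exact identification you assumed. You should replace the retrospectively-defined $t_N$ with a first-passage cut-time stopping time and replace the exact identification of $E_\varepsilon$ by a two-sided bound of this form.
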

\begin{proof}
Let $(\mathcal{G}_t)$ be the cut time filtration defined in~\cite{Brownian-beads}, i.e. $\mathcal{G}_t$ is generated by $\mathcal{F}_t$ and the set $G_t$ of all cut times of $(B_t)_{0\le t\le\tau_D}$ before $t$.
Consider the first cut time ${\rho}$ of $(B_t)_{0\le t\le \tau_D}$ after hitting $\gamma_\varepsilon^i$ such that the outer boundary of $B[0,{\rho}]$ is a simple loop (if it not exists, define ${\rho}$ to be $\tau_D$). Then $\rho$ is a stopping time of $(\mathcal{G}_t)$, and $\wt E_\varepsilon$ is measurable w.r.t. $\mathcal{G}_{\rho}$. Denote $A_{\rho}$ to be the connected component of $D\setminus B[0,{\rho}]$ with $\partial D$ being part of its boundary. Following the same lines as in~\cite[Proposition 14]{Brownian-beads}, the conditional law of $B[{\rho},\tau_D]$ given $\mathcal{G}_{\rho}$ is a Brownian excursion on $A_{\rho}$ starting from $B_{\rho}$ and conditioned to hit $\partial D$, which is denoted by $(W_s)$ in the following. (\cite{Brownian-beads} proved the result for Brownian excursions between boundary points, but the same proof works in our case as well.)

Now consider the conditional probability $\P[E_\varepsilon|\mathcal{G}_{\rho}](\omega)$ for $\omega\in \wt E_\varepsilon$. According to the conditional law of $B[{\rho},\tau_D]$ described above, if we define
\begin{equation*}
\begin{aligned}
&G_1:=\{(W_s) \text{ does not disconnect the outer boundary of } B[0,{\rho}] \text{ from }\partial D\},
\\&G_2:=\{(W_s) \text{ does not disconnect } \gamma_\varepsilon^o \text{ from } \partial D \text{ after the last hitting time of }\gamma_\varepsilon^o\},
\end{aligned}
\end{equation*}
then we have
\begin{equation*}
P_\omega(G_1)\le \P[E_\varepsilon|\mathcal{G}_{\rho}](\omega)\le P_\omega(G_2)\quad \text{for any } \omega\in \wt E_\varepsilon,
\end{equation*}
here $P_\omega$ denotes the probability measure of $(W_s)$ (which depends on $\omega$). On the other hand, let $\tau_\varepsilon^o:=\Mod(\gamma_\varepsilon^o,\partial D)$ and $\tau_\varepsilon^i:=\Mod(\gamma_\varepsilon^i,\partial D)$. Note that $ P_\omega(G_1)\ge P(\tau_\varepsilon^i)$ by the monotonicity of $\tau\mapsto P(\tau)$, while $P_\omega(G_2)=P(\tau_\varepsilon^o)$ since $(W_s)$ after last hitting $\gamma_\varepsilon^o$ is a Brownian excursion on the annular region between $\gamma_\varepsilon^o$ and $\partial D$. Therefore, we obtain $P(\tau_\varepsilon^i)\le \P[E_\varepsilon|\wt E_\varepsilon]\le P(\tau_\varepsilon^o)$. Finally, since both $P(\tau_\varepsilon^i)$ and $P(\tau_\varepsilon^o)$ equals $P(\tau)(1+o_\varepsilon(1))$, the result follows.\qedhere

\end{proof}
\begin{proof}[Proof of Lemma~\ref{lem:derivative}]
By Lemma~\ref{lem:estimate}, we find $\frac{\sm(\mathcal{A}_\varepsilon)}{\wt\sm(\mathcal{A}_\varepsilon)}=P(\tau)(1+o_\varepsilon(1))$ for $\tau=\Mod(\gamma,D)$. Since $\gamma$ is arbitrary, we conclude.
\end{proof}

Now we prove Lemma~\ref{lem: sm-restriction} in a similar fashion.
\begin{proof}[Proof of Lemma~\ref{lem: sm-restriction}]
Consider the Brownian motion $(B_t)_{0\le t\le\tau_{D'}}$ start from $0$ until hitting $\partial D'$. Let $\tau_D$ be the hitting time of $\partial D$.
Let $\{t_n\}_{n\ge 1}$ and $\{t_n'\}_{n\ge 1}$ be the decreasing sequences for $(B_t)_{0\le t\le\tau_D}$ and $(B_t)_{0\le t\le\tau_D'}$ as in Definition~\ref{def:counting}. Let $\ell_n,\ell_n'$ be the outer boundaries of $B[0,t_n]$ and $B[0,t_n']$.
Let $\bm{\eta}$ be sampled from the counting measure on $\{\ell_n\}_{n\ge 1}$ (hence the marginal law of $\bm\eta$ is $\wt\sm_D$).

Recall that $A_\varepsilon\subset D$ is the $\varepsilon$-neighborhood of a Jordan loop $\gamma$ surrounding $0$. Let $\wt E_\varepsilon$ (resp. $\wt E_\varepsilon'$) denote the event that there exists a loop in $\{\ell_n\}$ (resp.$\{\ell_n'\}$) contained in $A_\varepsilon$. The strong Markov property then gives that for $\omega\in \wt E_\varepsilon$ and $\textbf{z}:=B_{\tau_D}$,
\begin{align}\label{eq:smp1}
\P^\textbf{z}(\mathrm{hit}\ \partial D'\  \mathrm{ before\ hitting}\ \gamma_\varepsilon^o)\le \P[\wt E_\varepsilon'|\mathcal{F}_{\tau_D}](\omega)\le \P^\textbf{z}(\mathrm{hit}\ \partial D'\  \mathrm{ before\ hitting}\ \gamma_\varepsilon^i).
\end{align}
Note that ${\bm\eta}\in \mathcal{A}_\varepsilon$ implies that $\wt E_\varepsilon$ holds. Hence for $z\in\partial D$,~\eqref{eq:smp1} implies that
\begin{equation}\label{eq:smp2}
\begin{aligned}
\P^z(\mathrm{hit}\ \partial D'\  \mathrm{ before\ hitting}\ \gamma_\varepsilon^o)-o_\delta(1)&\le \P\otimes{\rm Count}[\wt E_\varepsilon'|{\bm\eta}\in \mathcal{A}_{\varepsilon}, |\textbf{z}-z|<\delta]\\
&\le \P^z(\mathrm{hit}\ \partial D'\  \mathrm{ before\ hitting}\ \gamma_\varepsilon^i)+o_\delta(1),
\end{aligned}
\end{equation}
here we use the continuity of $w\mapsto\P^w(\mathrm{hit}\ \partial D'\  \mathrm{ before\ hitting}\ \gamma_\varepsilon^q), q\in\{i,o\}$.
By the definition of $\nu_{\eta,D}(dz)$ and using this continuity again, let $\delta\to0$, then~\eqref{eq:smp2} yields
\begin{equation}\label{eq:smp3}
\begin{aligned}
\int_{\mathcal{A}_\varepsilon}\int_{\partial D}\P^z(\mathrm{hit}\ \partial D'\  \mathrm{ before\ hitting}\ \gamma_\varepsilon^o)&\nu_{\eta,D}(dz)\wt\sm_D(d\eta)\le \P\otimes{\rm Count}[\wt E_\varepsilon',{\bm\eta}\in \mathcal{A}_{\varepsilon}]\\&\le \int_{\mathcal{A}_\varepsilon}\int_{\partial D}\P^z(\mathrm{hit}\ \partial D'\  \mathrm{ before\ hitting}\ \gamma_\varepsilon^i)\nu_{\eta,D}(dz)\wt\sm_D(d\eta).
\end{aligned}
\end{equation}
Since $\P\otimes{\rm Count}[\wt E_\varepsilon',{\bm\eta}\in \mathcal{A}_{\varepsilon}]=\wt\sm_{D'}(\mathcal{A}_\varepsilon)(1+o_\varepsilon(1))$, we obtain from~\eqref{eq:smp3} that
$$\frac{\wt\sm_{D'}(\mathcal{A}_\varepsilon)}{\wt\sm_{D}(\mathcal{A}_\varepsilon)}=(1+o_\varepsilon(1))\int_{\partial D} \P^z(\mathrm{hit}\ \partial D'\  \mathrm{ before\ hitting}\ \gamma)\nu_{\gamma;D}(dz).$$
As in the proof of Lemma~\ref{lem:derivative}, by varying $\gamma$ and $\varepsilon$, the result then follows.
\end{proof}

\section{Matter-coupled Brownian annulus}
\label{section-weld-matter-BA}

As explained in Section~\ref{subsection-proof-strategy}, the intuition behind our proof of Theorem~\ref{thm:main} comes from the Brownian excursion decorated Brownian annulus conditioned on  not to disconnect the two boundaries, which we called the  matter-coupled Brownian annulus. 
In this section we show that  the matter-coupled Brownian annulus can indeed be obtained by removing the small disk in the Brownian motion decorated Brownian disk in Figure~\ref{fig:weld-BM1}. We will give a precise formulation of this statement and its proof in Section~\ref{subsec:BA}. 
Our proof is based on Theorem~\ref{prop-bdy-BM}, which we prove in Section~\ref{subsec:SLEBM}.

\subsection{Brownian motion and SLE$_{8/3}$ loop on the disk: proof of Theorem~\ref{prop-bdy-BM}}\label{subsec:SLEBM}

The proof of Theorem~\ref{prop-bdy-BM} is based on the following two lemmas, both of which rely on the analysis in Section~\ref{sec:pf-derivative}. Suppose $\bf{t}$ is sampled from the counting measure on $\{t_n\}_{n\ge1}$.
For $t\in(0,\tau_{\D})$, let $A_t$ be the annular connected component of $\D\backslash B[0,t]$.

\begin{lemma}
    \label{lem-condtion-wt-sm}
    Conditioned on $\partial^oB[0,\bf{t}]$ and $B_{\bf{t}}$, the conditional law of $(B_t)_{{\bf{t}}\le t\le\tau_{\D}}$ is the Brownian excursion on $A_{\bf{t}}$ starting from $B_{\bf{t}}$ and conditioned to hit $\S^1$.
\end{lemma}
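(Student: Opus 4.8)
The plan is to follow the strategy already used in the proof of Lemma~\ref{lem:estimate} (and Proposition~14 of~\cite{Brownian-beads}), namely to work with the cut-time filtration $(\mathcal{G}_t)$ rather than the naive filtration, so that the random time $\bf{t}$ becomes a stopping time and the strong Markov property applies. First I would recall that $(\mathcal{G}_t)$ is generated by $\mathcal{F}_t$ together with the set $G_t$ of all cut times of $(B_s)_{0\le s\le\tau_\D}$ occurring before $t$. With respect to this filtration, each $t_n$ from Definition~\ref{def:counting} is a stopping time: indeed $t_{n-1}$ is $(\mathcal{G}_t)$-measurable by induction, and $t_n$ is the largest cut time before $t_{n-1}$ at which the outer boundary of $B[0,\cdot]$ is a simple loop, which is a $(\mathcal{G}_t)$-stopping time since the property ``$s$ is a cut time and $\partial^o B[0,s]$ is a simple loop'' is measurable with respect to $\mathcal{G}_s$ and the event $\{t_n \le t\}$ can be read off from the cut-time structure up to time $t$.

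Next, for each fixed $n$ I would apply the strong Markov property at the $(\mathcal{G}_t)$-stopping time $t_n$. By the description of conditional laws given $\mathcal{G}_\rho$ in~\cite[Proposition~14]{Brownian-beads} (adapted, exactly as remarked in the proof of Lemma~\ref{lem:estimate}, from the boundary-to-boundary setting to the interior-to-boundary setting), the conditional law of $B[t_n,\tau_\D]$ given $\mathcal{G}_{t_n}$ is that of a Brownian excursion in the connected component of $\D\setminus B[0,t_n]$ that has $\S^1$ on its boundary — which is exactly $A_{t_n}$ since $\partial^o B[0,t_n]$ is a simple loop — started from $B_{t_n}$ and conditioned to hit $\S^1$. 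The point is that conditioning on being a cut time forces $B[t_n,\tau_\D]$ to avoid $B[0,t_n]$, turning the post-$t_n$ path into a conditioned excursion in $A_{t_n}$. Note that $\partial^o B[0,t_n]$ and $B_{t_n}$ are both $\mathcal{G}_{t_n}$-measurable, so the conditional law given just $(\partial^o B[0,t_n], B_{t_n})$ is obtained by further averaging, but since the excursion law on $A_{t_n}$ from $B_{t_n}$ conditioned to hit $\S^1$ depends only on $A_{t_n}$ and $B_{t_n}$ — hence only on $\partial^o B[0,t_n]$ and $B_{t_n}$ — this averaging is trivial and the conditional law is unchanged.

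Finally I would assemble the statement for $\bf{t}$. Since $\bf{t}$ is sampled from the counting measure on $\{t_n\}_{n\ge1}$ independently of the Brownian motion's future in the appropriate sense, the claim for $\bf{t}$ follows from the claim for each $t_n$: conditionally on $\{\bf{t}=t_n\}$ together with $(\partial^o B[0,t_n], B_{t_n})$, the law of $(B_t)_{t_n\le t\le\tau_\D}$ is the stated conditioned excursion on $A_{t_n}$, and the right-hand side depends on $n$ only through $(A_{t_n}, B_{t_n})$, i.e.\ only through the conditioning data $(\partial^o B[0,\bf{t}], B_{\bf{t}})$. I expect the main obstacle to be the careful verification that $\bf{t}$ (equivalently each $t_n$) is genuinely a stopping time for $(\mathcal{G}_t)$ and that the Brownian-bead decomposition of~\cite{Brownian-beads} transfers verbatim to the interior-to-boundary excursion setting; once those measurability and strong-Markov inputs are in place, the identification of the conditional law is immediate.
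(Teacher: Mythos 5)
There is a genuine gap in the proposed proof, and it is precisely at the point you yourself flag as the potential obstacle. The times $t_n$ from Definition~\ref{def:counting} are \emph{last-passage} times: $t_n$ is defined as the \emph{supremum} of $s<t_{n-1}$ such that $s$ is a cut time with $\partial^o B[0,s]$ a simple loop. To decide whether $\{t_n\le t\}$ holds you must know that there is \emph{no} such qualifying time in $(t,t_{n-1})$, which is information about cut times \emph{after} time $t$. The cut-time $\sigma$-algebra $\mathcal{G}_t$ only augments $\mathcal{F}_t$ by the set $G_t$ of cut times up to time $t$; it contains no information about which times in $(t,t_{n-1})$ are cut times. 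So $\{t_n\le t\}\notin\mathcal{G}_t$, i.e.\ $t_n$ is not a $(\mathcal{G}_t)$-stopping time, and the strong Markov property cannot be applied at $t_n$ directly. This is exactly the same reason last-exit times fail to be stopping times in the usual filtration.

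The paper's actual proof sidesteps this by working with a different, \emph{forward} stopping time. For each small $\varepsilon$, it takes $\rho$ (as in the proof of Lemma~\ref{lem:estimate}) to be the \emph{first} cut time after the hitting time of $\gamma_\varepsilon^i$ at which $\partial^o B[0,\cdot]$ is a simple loop; this is a genuine $(\mathcal{G}_t)$-stopping time. It then evaluates $\P\times{\rm Count}[F(B[{\bf t},\tau_\D])\mathbf{1}_{\partial^o B[0,{\bf t}]\subset A_\varepsilon,\,f_{\bf t}(B_{\bf t})\in I}]$ as the sum over $n$ of $\E[F(B[t_n,\tau_\D])\mathbf{1}_{\partial^o B[0,t_n]\subset A_\varepsilon,\,f_{t_n}(B_{t_n})\in I}]$, and uses the fact that, conditionally on $\wt E_\varepsilon$, with probability $1-o_\varepsilon(1)$ there is a \emph{unique} $n$ with $\partial^o B[0,t_n]\subset A_\varepsilon$, and that unique $t_n$ coincides with $\rho$; this replaces the sum by $\E[F(B[\rho,\tau_\D])\mathbf{1}_{\partial^o B[0,\rho]\subset A_\varepsilon,\,f_\rho(B_\rho)\in I}](1+o_\varepsilon(1))$. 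The strong Markov property (in the Brownian-beads form of~\cite[Proposition~14]{Brownian-beads}, adapted to the interior-to-boundary setting) is then applied at the legitimate stopping time $\rho$, and one sends $\varepsilon\to 0$, varying $\gamma$ and $I$. So the approximation step that converts the last-passage times $t_n$ into the first-passage time $\rho$ is the essential idea that your plan is missing; without it, the invocation of strong Markov at ${\bf t}$ has no justification.
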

\begin{proof}
    Denote the law of $((B_t)_{0\le t\le\tau_\D},{\bf t})$ to be $\P\times{\rm Count}$.
    Let $f_{\bf{t}}$ be the conformal map from $A_{\bf{t}}$ to $\A_{\tau}$ such that 
    $f_{\bf{t}}(1)=e^{-2\pi\tau}$.
    Let $F$ be any bounded measurable function of $B[\bf{t},\tau_{\D}]$.
    Then for the $\varepsilon$-neighborhood $A_\varepsilon$ of any simple loop $\gamma\subset\D$ surrounding $0$ and any open subset $I$ of $\S^1$, we have
    \begin{equation*}
        \begin{aligned}
            \P\times{\rm Count}[F(B[{\bf{t}},\tau_{\D}])\mathbf{1}_{\partial^oB[0,{\bf{t}}]\subset A_\varepsilon,f_{\bf{t}}(B_{\bf{t}})\subset I}]
            &=\sum\limits_{n\ge1}\E[F(B[t_n,\tau_{\D}])\mathbf{1}_{\partial^oB[0,t_n]\subset A_{\varepsilon},f_{t_n}(B_{t_n})\subset I}]\\
            &=\E[F(B[\rho,\tau_{\D}])\mathbf{1}_{\partial^oB[0,\rho]\subset A_{\varepsilon},f_{\rho}(B_{\rho})\subset I}](1+o_{\varepsilon}(1)),
        \end{aligned}
    \end{equation*}
    where $\rho$ is defined in the proof of Lemma~\ref{lem:estimate}.
 Here, the last line follows from $\P[\exists! n\ge1 \ {\rm s.t.}\ \partial^oB[0,t_n]\subset A_{\varepsilon}|\wt{E}_{\varepsilon}]=1-o_\varepsilon(1)$ as $\varepsilon\to0$.
    Since $\partial^oB[0,\rho]$ and $B_{\rho}$ is measurable with respect to the cut time filtration $\mathcal{G}_{\rho}$, the conditional law of $(B_t)_{\rho\le t\le \tau_{\D}}$ given $\partial^oB[0,\rho]$ and $B_{\rho}$ is the Brownian excursion in $A_{\rho}$ from $B_{\rho}$ to $\S^1$.  Hence we conclude the proof by the arbitrariness of $\gamma$ and $I$.
\end{proof}

\begin{lemma}
    \label{lem-condition-law-point}
      Let $g_{\bf{t}}:A_{\bf{t}}\to\A_{\tau}$ be the conformal map with $g_{\bf{t}}(1)=1$. Then given $\partial^oB[0,\bf{t}]$, the conditional law of $g_{\bf{t}}(B_{\bf{t}})$ is the uniform probability measure on $e^{-2\pi\tau}\S^1$.
\end{lemma}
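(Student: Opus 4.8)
The plan is to reduce the statement to Corollary~\ref{cor-condition-law-point}, which describes the conditional law $\nu_{\eta;\D}(dz)$ of the Brownian endpoint on $\S^1$ given the outermost simple loop $\eta$, via the conformal map $f:A_\eta\to\A_\tau$ preserving $\S^1$. The key observation is a symmetry: the roles of the inner boundary $\S^1$ and the outer boundary $e^{-2\pi\tau}\S^1$ of $\A_\tau$ can be interchanged by the inversion of the annulus, and under the correspondence between $\partial^oB[0,\mathbf{t}]$ (which plays the role of the outer curve seen from $0$) and the starting/ending data, the point $B_\mathbf{t}$ relative to $\partial^oB[0,\mathbf{t}]$ should be distributed exactly as the endpoint of a Brownian excursion reaching the outer curve. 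More precisely, by Lemma~\ref{lem-condtion-wt-sm}, conditioned on $\partial^oB[0,\mathbf{t}]$ and $B_\mathbf{t}$, the remaining path $(B_t)_{\mathbf{t}\le t\le\tau_\D}$ is a Brownian excursion on $A_\mathbf{t}$ from $B_\mathbf{t}$ conditioned to hit $\S^1$; this means $(\partial^oB[0,\mathbf{t}], B_\mathbf{t}, B_{\tau_\D})$ together encode a Brownian-excursion-type configuration on the annulus $A_\mathbf{t}$, with $B_\mathbf{t}$ on the inner-type boundary $\partial^oB[0,\mathbf{t}]$ and $B_{\tau_\D}$ on $\S^1$.

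The steps I would carry out, in order. First, reformulate: fix a realization $\eta=\partial^oB[0,\mathbf{t}]$ of $\wt\sm_\D$, and let $f:A_\eta\to\A_\tau$ be the conformal map sending $\S^1$ to $\S^1$ (so $f$ sends $\eta$ to $e^{-2\pi\tau}\S^1$), while $g_\mathbf{t}:A_\mathbf{t}\to\A_\tau$ is the map with $g_\mathbf{t}(1)=1$. Note $A_\mathbf{t}$ and $A_\eta$ differ in that $A_\eta$ is bounded by $\eta$ and $\S^1$ whereas $A_\mathbf{t}$ is bounded by $\partial^oB[0,\mathbf{t}]=\eta$ and $\S^1$ — these are the same annulus, so $g_\mathbf{t}$ and $f$ differ by a rotation of $\A_\tau$, and $g_\mathbf{t}(B_\mathbf{t})$ is determined by $f(B_\mathbf{t})$ up to this rotation ambiguity. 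Second, invoke the symmetry of $\RA$ (Proposition~\ref{prop-symmetry-RA}): the law of the configuration $(\eta, B_\mathbf{t}, B_{\tau_\D})$ is invariant under swapping the two boundary components of the annulus, which, combined with Corollary~\ref{cor-condition-law-point} applied with the roles of the boundaries exchanged, tells us that relative to $\eta$, the point $B_\mathbf{t}$ is spread on $\eta$ with density proportional to $|\psi'|$ where $\psi:A_\eta\to\A_\tau$ is the conformal map sending $\eta$ to $\S^1$ — equivalently, $g_\mathbf{t}(B_\mathbf{t})$, living on $e^{-2\pi\tau}\S^1$, has a density proportional to the pushforward of the length measure on $\eta$ under the conformal map, which is by conformal invariance exactly the uniform (length) measure on $e^{-2\pi\tau}\S^1$. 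Third, handle the normalization: the measure $\wt\sm_\D$ together with the counting measure on $\{t_n\}$ is $\sigma$-finite, but conditioning on $\partial^oB[0,\mathbf{t}]$ yields a genuine probability measure on the location of $B_\mathbf{t}$ (there is exactly one point $B_\mathbf{t}$ given the loop), so normalizing the density proportional-to-uniform gives the uniform probability measure, as claimed.

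The main obstacle I anticipate is making the symmetry argument rigorous at the level of conditional laws rather than just unconditional ones: Corollary~\ref{cor-condition-law-point} is stated for $\nu_{\eta;\D}(dz)$, the law of $B_{\tau_\D}$ given the outermost loop under $\wt\sm_\D$, and I need the mirror statement for the law of $B_\mathbf{t}$ given $\partial^oB[0,\mathbf{t}]$. The honest route is to re-run the $\varepsilon$-neighborhood/cut-time argument of Section~\ref{sec:pf-derivative} (as in the proof of Lemma~\ref{lem-condtion-wt-sm}), replacing the Brownian excursion measure $\BE(A_{\eta,\D})$ computation in the proof of Corollary~\ref{cor-condition-law-point} by the analogous computation for the endpoint on the \emph{inner} boundary $\eta$ of $\BE(A_{\eta,\D})$, and using the conformal covariance of the boundary Poisson kernel together with $|\BE(A_{\eta,\D})|=1/\Mod(\eta,\D)$ to identify the density as proportional to $|\psi'|$ on $\eta$; pushing forward under $g_\mathbf{t}$ then gives the uniform measure on $e^{-2\pi\tau}\S^1$. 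Alternatively, and perhaps more cleanly, one can observe that by the last-hitting decomposition and conformal invariance of Brownian excursion (Section~\ref{section 2.1}), conditioned on $A_\mathbf{t}$ the path $(B_t)_{\sigma\le t}$ after its last visit to a slightly-shrunk copy of $\eta$ is $\BE(A_\mathbf{t})^\#$, whose inner endpoint has exactly the claimed conformally-uniform law; the loop $\partial^oB[0,\mathbf{t}]$ is then, up to the $o_\varepsilon(1)$ errors already controlled in Lemma~\ref{lem:estimate}, determined by data independent of the azimuthal position of $B_\mathbf{t}$ on $e^{-2\pi\tau}\S^1$. Either way the routine part is the Poisson-kernel bookkeeping; the conceptual content is the annulus symmetry supplied by Proposition~\ref{prop-symmetry-RA}.
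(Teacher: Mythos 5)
Your proposal heads in the wrong direction, and the key step you lean on is not actually available. The paper's proof is a clean deconvolution argument: by Lemma~\ref{lem-condtion-wt-sm}, conditionally on $\partial^oB[0,{\bf t}]$ the density of $\Theta := \arg g_{\bf t}(B_{\tau_\D})$ is the convolution $\int_0^{2\pi} K(\theta-\phi)\,\mu(d\phi)$ of the annulus Poisson kernel $K(\theta-\phi)=H_{\A_\tau}(e^{-2\pi\tau}e^{i\phi},e^{i\theta})$ with the unknown conditional law $\mu$ of $\Phi := \arg g_{\bf t}(B_{\bf t})$; Corollary~\ref{cor-condition-law-point} forces this convolution to be constant; and since the Fourier coefficients $\int_0^{2\pi}K(\alpha)e^{-in\alpha}\,d\alpha = C\frac{e^{2\pi\tau}n}{\sinh(2\pi\tau n)}$ are all nonzero, $\mu$ must itself be uniform. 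Your proposal never identifies this mechanism.

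The gap in your argument is the symmetry claim. You assert that "the law of the configuration $(\eta,B_{\bf t},B_{\tau_\D})$ is invariant under swapping the two boundary components," attributing this to Proposition~\ref{prop-symmetry-RA}. But that proposition concerns the \emph{quantum surface} $\RA(b,L)=\RA(L,b)$, with the Brownian curve and its endpoints discarded; it says nothing about the law of the triple $(\eta,B_{\bf t},B_{\tau_\D})$. There is no statement in the paper that gives this configuration-level symmetry before Lemma~\ref{lem-condition-law-point} is proved; to the contrary, the symmetry you want is essentially equivalent to the content of Theorem~\ref{prop-bdy-BM}, whose proof \emph{uses} Lemma~\ref{lem-condition-law-point} as an input. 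Similarly, invoking "Corollary~\ref{cor-condition-law-point} with the roles of the boundaries exchanged" is not a legitimate move — a mirror version of Corollary~\ref{cor-condition-law-point} for the inner boundary is precisely what is being proved, not a known fact you can cite.

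Your fallback routes do not close the gap either. Re-running the $\varepsilon$-neighborhood/cut-time argument gives Lemma~\ref{lem-condtion-wt-sm} (the conditional law of the forward path given \emph{both} $\eta$ and $B_{\bf t}$), but extracting from this the conditional law of $B_{\bf t}$ given $\eta$ alone still requires an additional idea — it does not drop out from a "Poisson-kernel bookkeeping" of the type used for Corollary~\ref{cor-condition-law-point}, because that corollary's proof compares hitting probabilities on the outer boundary and never constrains the inner endpoint. Your second fallback asserts that $\partial^oB[0,{\bf t}]$ is "determined by data independent of the azimuthal position of $B_{\bf t}$," but $\partial^oB[0,{\bf t}]$ is a deterministic function of $B[0,{\bf t}]$, whose terminal value \emph{is} $B_{\bf t}$; no justification is offered for the asymptotic independence and none is obvious. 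The paper's deconvolution sidesteps all of this: it only needs the already-established Lemma~\ref{lem-condtion-wt-sm} and Corollary~\ref{cor-condition-law-point}, and the injectivity of convolution by $K$.
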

\begin{proof}
    Let $\mu(d\phi)$ be the conditional law of $\Phi=\arg g_{\bf{t}}(B_{\bf{t}})$ given $\partial^oB[0,\bf{t}]$. Then according to Lemma~\ref{lem-condtion-wt-sm}, the conditional density of $\Theta=\arg g_{\bf{t}}(B_{\tau_{\D}})$ given $\partial^oB[0,\bf{t}]$ is proportional to $\int_{0}^{2\pi}K(\theta-\phi)\mu(d\phi)$, where $K(\theta-\phi):=H_{\A_{\tau}}(e^{-2\pi\tau}e^{i\phi},e^{i\theta})$. On the other hand, by Corollary~\ref{cor-condition-law-point}, the conditional law of $\Theta$ given $\partial^oB[0,\bf{t}]$ is uniform on $[0,2\pi)$. Therefore,
    for any $\theta\in[0,2\pi)$, we have $\int_{0}^{2\pi}K(\theta-\phi)\mu(d\phi)=C_\tau$ where $C_\tau>0$ only depends on $\tau$.  In particular, its Fourier transform with respect to $\theta$ vanishes, i.e.
    \begin{equation}\label{eq:merge}
    \int_0^{2\pi}\int_0^{2\pi}K(\theta-\phi)\mu(d\phi)e^{-in\theta}d\theta=0,\quad \forall n\in\Z_+. 
    \end{equation}
    
    It remains to show that $\mu(d\phi)$ is uniform on $[0,2\pi)$.
    Note that for $n\in\Z_+$, we have $\int_0^{2\pi}K(\alpha)e^{-in\alpha}d\alpha=C\frac{e^{2\pi\tau}n}{\sinh(2\pi\tau n)}$ for some constant $C>0$ (see e.g.~\cite[Section 3.2]{Law11}). Hence,
    $$
    \int_0^{2\pi}\int_0^{2\pi}K(\theta-\phi)\mu(d\phi)e^{-in\theta}d\theta=C\frac{e^{2\pi\tau}n}{\sinh(2\pi\tau n)}\int_0^{2\pi}e^{-in\phi}\mu(d\phi).
    $$
    Compared with~\eqref{eq:merge}, we find $\int_0^{2\pi}e^{-in\phi}\mu(d\phi)=0$ for all $n\in\Z_+$, which concludes the proof.
\end{proof}

We now finish the proof of Theorem~\ref{prop-bdy-BM}. Note that the measure $\nu_\eta$ in the statement of Theorem~\ref{prop-bdy-BM} equals the restriction of $\BE(A_\eta)$ to the paths that do not disconnect $\eta$ from $\S^1$.
 \begin{proof}[Proof of Theorem~\ref{prop-bdy-BM}]
    By Lemma~\ref{lem-condtion-wt-sm} and~\ref{lem-condition-law-point}, the conditional law of $(B_t)_{{\bf{t}}\le t\le\tau_{\D}}$ given $\partial^oB[0,\bf{t}]$ is $\BE(A_{\bf{t}})^{\#}$. Moreover, by~\eqref{eq-derivative-wt-sm-SLE} and that $H(\tau)=\frac{C}{\tau}$, the marginal law of $\partial^oB[0,{\bf{t}}]$ (i.e.~$\wt\sm$) is a constant multiple of $|\BE(A_{\eta})|\SLE^{\lp}_{8/3,\D}(d\eta)$ (recall that $|\BE(A)|=\frac{1}{\Mod(A)}$). By restricting the law of $(\partial^oB[0,{\bf{t}}],(B_t)_{{\bf{t}}\le t\le\tau_{\D}})$ on the event that $B[\bf{t},\tau_{\D}]$ does not disconnect $\partial^oB[0,{\bf{t}}]$ from $\S^1$ (i.e.~${\bf{t}}=t_1$), the result then follows.
\end{proof}

\subsection{Matter-coupled Brownian annulus inside the Brownian disk} \label{subsec:BA}
We first give the precise definition of matter-coupled Brownian annulus. Consider a measure $\mathbb M(d\phi, d\tau)$ such that the law of $(\A_\tau,\phi)/\mathord\sim_\gamma$ is  $\BA$
from Definition~\ref{def:BA}.
Given $\tau>0$, let $\BE_\tau=\BE(\A_\tau)$ be the law of a Brownian excursion on $\A_\tau$. Let $\mathcal G$ be the event that a sample $W$ from $\BE_\tau$ does not disconnect $e^{-2\pi\tau}\S^1$ from $\S^1$. Sample $(\tau, \phi, W)$ from  ${\bf 1}_{\mathcal G} \BE_\tau (d W)   \mathbb M(d\phi, d\tau)$.  Let $\tBA$  be the law of $(\A_\tau,\phi,W)/\mathord\sim_\gamma$, which we call the \emph{matter-coupled Brownian annulus}. 
Note that this definition does not rely on the choice of $\mathbb M(d\phi, d\tau)$  due to the conformal invariance of $W$.

Next we define another Brownian motion decorated quantum surface with annular topology from Brownian motion decorated Brownian disk. Let $(B_t)_{t\ge0}$ be a Brownian motion starting from 0 and $\tau_{\D}$ is its hitting time on $\S^1$ and denote the law of $(B_t)_{0\le t\le\tau_\D}$ by $\P$.
Recall the cut times $(t_n)_{n\ge1}$ for 
$(B_t)_{t\in[0,\tau_{\D}]}$ in Definition~\ref{def:counting}. Let $\eta$ be the outer boundary of $B[0,t_1]$ and $A_\eta$ be the annular connected component of $\D\backslash\eta$. 
Suppose $(\D,\phi,0)$ is an embedding of an independent sample from $\QD_{1,0}$ and $\nu_\phi(\eta)$ is the quantum length of $\eta$. Then we define $\wt{\RA}$ to be the law of $(A_\eta,\phi,B[t_1,\tau_{\D}])/\mathord\sim$ under the reweighted measure $\frac{1}{|\QD_{1,1}(\nu_\phi(\eta))|}(\QD_{1,0}\times\P)$. Note that the measure $\RA$ in Definition~\ref{def:RA} can be obtained from a sample from $\wt{\RA}$ by forgetting the decorated Brownian path.

\begin{theorem}\label{thm-weld-matter-BA}\label{thm-equivalence-two-annuli}
    There exists a constant $C>0$, such that $\tBA=C\wt{\RA}$.
\end{theorem}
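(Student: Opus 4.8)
The plan is to establish $\tBA=C\wt{\RA}$ by matching the two descriptions as conformal weldings and then invoking the uniqueness of conformal welding together with the explicit Liouville field identification of $\RA$ obtained in Section~\ref{section:proof of thm1.2}. First I would unpack $\wt{\RA}$: by Theorem~\ref{thm:weld-BM1} together with Theorem~\ref{prop-bdy-BM} and Lemmas~\ref{lem-condtion-wt-sm} and~\ref{lem-condition-law-point}, a sample from $\wt{\RA}$ is the quantum surface $(A_\eta,\phi)$ decorated by the Brownian path $B[t_1,\tau_\D]$, where $\eta=\partial^oB[0,t_1]$ is the outermost simple-loop outer boundary and, crucially, the conditional law of $B[t_1,\tau_\D]$ given $(A_\eta,\phi)$ is the Brownian excursion $\BE(A_\eta)^\#$ restricted to the event $\mathcal G$ that it does not disconnect the two boundaries of $A_\eta$. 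So, as an unmarked-by-field/marked-by-curve object, $\wt{\RA}$ is exactly $\RA(d\phi)$ reweighted and then decorated by an independent conditioned Brownian excursion whose law depends only on the modulus.

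Next I would unpack $\tBA$ in the same terms. By construction, $\tBA$ is a sample $(\A_\tau,\phi)$ from $\BA$ decorated by an independent Brownian excursion on $\A_\tau$ restricted to $\mathcal G$. Both $\wt{\RA}$ and $\tBA$ thus have the form: sample an annular quantum surface from some base measure, then decorate by the conditioned Brownian excursion $\BE^\#$ restricted to $\mathcal G$, this decoration depending on the surface only through its modulus $\tau$. Since the curve-decoration map is a fixed measurable kernel identical on both sides, it suffices to compare the base measures on annular quantum surfaces. On the $\wt{\RA}$ side the base measure is (up to reweighting by $|\QD_{1,1}(\nu_\phi(\eta))|^{-1}$, which is precisely what turns $\RA'$ into $\RA$ in Definition~\ref{def:RA}) the measure $\RA$, which by Proposition~\ref{prop-LF-identify-RA} and Proposition~\ref{prop:LF-description} equals a constant multiple of $\int_0^\infty \LF_\tau(d\phi)\,f(\tau)\,d\tau$. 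On the $\tBA$ side the base measure is $\mathbb M(d\phi,d\tau)$ with $(\A_\tau,\phi)/\mathord\sim_\gamma$ distributed as $\BA$, i.e. a constant multiple of $\int_0^\infty\LF_\tau(d\phi)\,\eta(2i\tau)\,d\tau$ (Definition~\ref{def:BA}). But the Brownian excursion restriction contributes a factor proportional to $|\BE_\tau|\,P(\tau)=P(\tau)/\tau$ (total mass of $\BE_\tau$ times the non-disconnection probability $P(\tau)$) to the modulus density: indeed, after decorating $\BA$ by an excursion restricted to $\mathcal G$, the modulus density becomes $\eta(2i\tau)\cdot\frac{1}{\tau}P(\tau)$, and after decorating $\RA$ by an excursion restricted to $\mathcal G$ it becomes $f(\tau)\cdot\frac{1}{\tau}P(\tau)$ — but by~\eqref{eq:proba-c}, $P(\tau)/\tau=C f(\tau)/\eta(2i\tau)$, so $\eta(2i\tau)\cdot\frac{1}{\tau}P(\tau)=C f(\tau)$ and the two modulus densities agree up to a constant. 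Hence $\tBA=C\wt{\RA}$.

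Concretely, the steps in order: (1) record that $\wt{\RA}$ is $\RA$ decorated by $\BE(\cdot)^\#|_{\mathcal G}$, using Lemmas~\ref{lem-condtion-wt-sm}, \ref{lem-condition-law-point} to pin down the conditional law of the Brownian path given the field and using Theorem~\ref{prop-bdy-BM} to identify the marginal of $(\eta,\phi)$ as $|\BE(A_\eta)|$-weighted $\SLE^\lp_{8/3}$-decorated $\QD_{1,0}$, i.e. exactly the object whose field-part-disintegration is $\RA$; (2) record that $\tBA$ is $\BA$ decorated by $\BE(\A_\tau)^\#|_{\mathcal G}$ by definition; (3) use Definition~\ref{def:BA} and Propositions~\ref{prop-LF-identify-RA}--\ref{prop:LF-description} to write both base measures as $\LF_\tau$ integrated against an explicit modulus density ($\eta(2i\tau)$ resp. $f(\tau)$); (4) compute that restricting the decoration to $\mathcal G$ multiplies each modulus density by the same function $\tau\mapsto P(\tau)/\tau$ (here $P(\tau)$ is non-disconnection probability, $1/\tau=|\BE(\A_\tau)|$), invoking conformal invariance of $\BE$ so the multiplier depends only on $\tau$; (5) conclude via~\eqref{eq:proba-c} that $f(\tau)\cdot P(\tau)/\tau$ and $\eta(2i\tau)\cdot P(\tau)/\tau$ differ by a constant, so the two decorated measures coincide up to a constant. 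The main obstacle I expect is Step~(1): carefully justifying that $\wt{\RA}$, which is \emph{defined} by reweighting $\QD_{1,0}\times\P$ by $|\QD_{1,1}(\nu_\phi(\eta))|^{-1}$ and restricting to $\{{\bf t}=t_1\}$, has its field-part equal to $\RA$ and its curve-part conditionally equal to $\BE^\#|_{\mathcal G}$ — this requires combining Definition~\ref{def:RA}, Theorem~\ref{thm:weld-BM1}, and the fact (from Lemma~\ref{lem-condtion-wt-sm}) that the Brownian path beyond $t_1$ is conditionally a Brownian excursion on $A_\eta$ conditioned to hit $\S^1$ and, on the event ${\bf t}=t_1$, additionally restricted to not disconnect the boundaries. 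One must check that this restriction is consistent with the Radon--Nikodym bookkeeping, i.e. that the weight $|\QD_{1,1}(\nu_\phi(\eta))|^{-1}$ acts purely on the field/surface and commutes with the (modulus-only-dependent) curve kernel; this is where the conditional independence of field and curve given the modulus — already exploited in the proof of Proposition~\ref{prop-LF-identify-RA} — does the work.
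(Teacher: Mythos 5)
Your proposal takes a genuinely different route from the paper's, and it contains a concrete bookkeeping error in steps (4)--(5) that, as written, breaks the modulus-density comparison.

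The paper's proof is much shorter and avoids the Liouville field disintegration entirely. It considers the law of $(A_\eta,\phi,W)/\mathord\sim_\gamma$ under $\frac{1}{|\QD_{1,1}(\nu_\phi(\eta))|}\QD_{1,0}\times\P$, which is $\wt{\RA}$ by definition, then rewrites it: Theorem~\ref{prop-bdy-BM} identifies the law of $(\eta,W)$ as a constant times $\nu_\eta(dW)\SLE_{8/3,\D}^{\lp}(d\eta)$, and Proposition~\ref{thm:weld-BA} shows the $(A_\eta,\phi)$-marginal becomes $\BA$ after the $|\QD_{1,1}|^{-1}$ reweighting. This exhibits the same object as a constant multiple of $\tBA$ directly from the definition of $\tBA$, with no reference to $\LF_\tau$, to $f(\tau)$ or $\eta(2i\tau)$, or to~\eqref{eq:proba-c}. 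Your route instead writes both measures in the $\LF_\tau$ parametrization and invokes~\eqref{eq:proba-c}; this is viable but strictly heavier, and makes the theorem depend on the formula of Theorem~\ref{thm:main} rather than being a clean byproduct of the conformal welding picture (which is presumably why the paper chose its route).

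The error: you treat the decoration kernel as identical on both sides, namely $\mathbf{1}_{\mathcal{G}}\BE_\tau(dW)$ of mass $P(\tau)/\tau$, and in step (5) claim the resulting modulus densities $\eta(2i\tau)\cdot P(\tau)/\tau$ and $f(\tau)\cdot P(\tau)/\tau$ ``differ by a constant.'' They do not --- their ratio is $\eta(2i\tau)/f(\tau)$, precisely the non-constant function appearing in~\eqref{eq-thm-sm}. The point you miss is that the two decoration kernels are not the same. $\tBA$ is decorated by $\mathbf{1}_{\mathcal{G}}\BE_\tau(dW)$ (mass $P(\tau)/\tau$), while $\wt{\RA}$ is decorated by the \emph{conditioned} Brownian excursion $\frac{\tau}{P(\tau)}\mathbf{1}_{\mathcal{G}}\BE_\tau(dW)$ (mass one); this is forced by the fact, which you yourself record early on, that forgetting the curve in a sample of $\wt{\RA}$ recovers exactly $\RA$, so the modulus density of $\wt{\RA}$ remains $f(\tau)$, not $f(\tau)P(\tau)/\tau$. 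The kernel mismatch factor $\tau/P(\tau)$ is exactly what~\eqref{eq:proba-c} reconciles against the surface-density ratio $\eta(2i\tau)/f(\tau)$: one needs $\LF_\tau(d\phi)\,\eta(2i\tau)\,d\tau\cdot\mathbf{1}_{\mathcal{G}}\BE_\tau(dW)=C\,\LF_\tau(d\phi)\,f(\tau)\,d\tau\cdot\frac{\tau}{P(\tau)}\mathbf{1}_{\mathcal{G}}\BE_\tau(dW)$, which is equivalent to~\eqref{eq:proba-c}. Once you correct the bookkeeping so that the full joint measures (not integrated modulus densities with a fictitious common kernel) are compared, your approach does close.
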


\begin{proof}
Let $(\D,\phi,0)$ be an embedding of an independent sample from $\QD_{1,0}$, and $(B_t)_{0\le t\le\tau_\D}$ be an independent Brownian motion on $\D$. Denote $\eta=\partial^o B[0,t_1]$, and $W$ to be the Brownian path $B[t_1,\tau_\D]$. Note that the law of $(A_{\eta},\phi,W)/\mathord\sim_\gamma$ under $\frac{1}{|\QD_{1,1}(\nu_\phi(\eta))|}\QD_{1,0}\times\P$ is $\wt{\RA}$.
On the other hand, by Theorem~\ref{prop-bdy-BM}, the law of $(\eta,W)$ equals a constant multiple of ${\bf 1}_\mathcal{G}\BE(A_\eta)\SLE_{8/3,\D}(d\eta)$ (here $\mathcal{G}$ is the event that $W$ does not disconnect $\eta$ from $\S^1$). Combined with Proposition~\ref{thm:weld-BA}, we find that the law of $(A_{\eta},\phi,W)/\mathord\sim_\gamma$ under $\frac{1}{|\QD_{1,1}(\nu_\phi(\eta))|}\QD_{1,0}\times\P$~is also a constant multiple of $\tBA$. Hence, we conclude that $\tBA=C\wt{\RA}$ for some constant $C>0$.
\end{proof}

As a corollary, we get the following conformal welding results on $\tBA$.

\begin{corollary}
    \label{thm-weld-matter-BA}
Let $(\A_\tau,\phi,W)$ be an embedding of a sample from $\tBA$ and $\eta'$ be the outer boundary of $W$. Then there exists a constant $C>0$, such that $(\A_\tau,\phi,\eta')/\mathord\sim$ has the same law as
    \begin{equation}
        \label{eq-weld-matter-BA}
        C\int_{\R_+^4}\Weld\left(\QD_{0,4}(a,b,c,\ell),\cM^{\disk}_{0,2}\left(\frac{2}{3}\right)(a,c)\right)dadbdcd\ell.
    \end{equation}
\end{corollary}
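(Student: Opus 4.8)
\textbf{Proof proposal for Corollary~\ref{thm-weld-matter-BA}.}
The plan is to combine Theorem~\ref{thm-equivalence-two-annuli} (which identifies $\tBA$ with $\wt{\RA}$) with the conformal welding description of the Brownian-motion–decorated quantum disk from Theorem~\ref{thm:weld-BM1}. First I would recall from Definition~\ref{def:RA} and the definition of $\wt{\RA}$ given just above Theorem~\ref{thm-equivalence-two-annuli} that $\wt{\RA}$ is the law of $(A_\eta,\phi,B[t_1,\tau_\D])/\mathord\sim_\gamma$ under the reweighted measure $\frac{1}{|\QD_{1,1}(\nu_\phi(\eta))|}(\QD_{1,0}\times\P)$, where $\eta=\partial^o B[0,t_1]$. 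Since $\eta'$, the outer boundary of $W=B[t_1,\tau_\D]$, is exactly the simple loop $\ell\subset\partial^oB[0,\tau_\D]$ surrounding $0$ that appears in the welding of Theorem~\ref{thm:weld-BM1}, keeping only $\eta'$ and forgetting the rest of $W$ corresponds precisely to the operation described in Proposition~\ref{prop-weld-QT=QD} and in the passage leading to Theorem~\ref{thm:weld-BM1}: namely, $(\D,\phi,0,\eta')/\mathord\sim_\gamma$ has law $\QD_{1,0}(L)\otimes\sm$ after disintegrating over the outer boundary length $L$.

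The key steps, in order, are: (i) By Theorem~\ref{thm-equivalence-two-annuli}, $\tBA = C\,\wt{\RA}$, so it suffices to give the welding description of $(A_\eta,\phi,\eta')/\mathord\sim_\gamma$ under $\wt{\RA}$, where $\eta'=\partial^o W$ is the innermost simple loop. (ii) By Theorem~\ref{thm:weld-BM1} together with Definition~\ref{def:RA} and Proposition~\ref{prop-weld-RA} (the same reasoning used in Proposition~\ref{prop-weld-RA}), the decorated quantum surface $(A_\eta,\phi,\eta')/\mathord\sim_\gamma$ obtained from $\QD_{1,0}(L)\otimes\sm$ by restricting to the annular component and recording the inner interface $\eta'$ has law a constant multiple of
$$
\int_{\R_+^3}\Weld\!\left(\QD_{0,4}(a,b,c,L),\cM^{\disk}_{0,2}\!\left(\tfrac23\right)(a,c)\right)dadbdc,
$$
since in the welding of Theorem~\ref{thm:weld-BM1} the surface $\QD_{1,1}(b)$ is exactly the disk component $D_\eta$ cut off by $\eta'$, and $A_\eta$ is the union of the $\QD_{0,4}$ and the thin disk. (iii) Integrating over the outer boundary length $\ell=L$ (using that $\wt{\RA}=\int\RA(b,L)\,db\,dL$-type disintegration and the reweighting by $\frac1{|\QD_{1,1}(b)|}$ exactly removes the factor $b|\QD_{1,0}(b)|$ that would otherwise accompany the disk component) yields the claimed formula~\eqref{eq-weld-matter-BA}.

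The main bookkeeping obstacle is tracking the correct power of $L$ (equivalently, the correct disintegration/reweighting constants) when passing from $\QD_{1,0}(L)\otimes\sm$ to $\wt{\RA}$ and then to $\tBA$: one must verify that the weight $\frac1{|\QD_{1,1}(\nu_\phi(\eta))|}$ in the definition of $\wt{\RA}$, combined with the factor $b|\QD_{1,0}(b)|$ relating $\RA'(b,L)$ to $\RA(b,L)$ in Definition~\ref{def:RA}, collapses exactly so that the $\QD_{1,1}(b)$-component present in Theorem~\ref{thm:weld-BM1} disappears and only the triple $\left(\QD_{0,4}(a,b,c,\ell),\cM^{\disk}_{0,2}(\tfrac23)(a,c)\right)$ — welded along the arcs of length $a$ and $c$, with $b$ and $\ell$ free boundary lengths to be integrated — survives. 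Since all multiplicative constants are absorbed into the single constant $C>0$, no constant needs to be evaluated; the argument is essentially a matter of assembling Theorem~\ref{thm-equivalence-two-annuli}, Theorem~\ref{thm:weld-BM1}, and Definition~\ref{def:RA}, and I expect it to be short.
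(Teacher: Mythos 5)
Your overall strategy --- reduce to $\wt{\RA}$ via Theorem~\ref{thm-equivalence-two-annuli} and read off the welding description from Theorem~\ref{thm:weld-BM1}, observing that the reweighting by $\tfrac{1}{|\QD_{1,1}(\nu_\phi(\eta))|}$ (which equals $\tfrac{1}{b\,|\QD_{1,0}(b)|}$) exactly cancels the total mass of the $\QD_{1,1}(b)$ piece --- is the same as the paper's, and your bookkeeping observation at the end is correct. However, there is a genuine confusion in your identification of $\eta'$. You write that $\eta'$, the outer boundary of $W=B[t_1,\tau_\D]$, is exactly the simple loop surrounding $0$ inside $\partial^oB[0,\tau_\D]$, and consequently that $(\D,\phi,0,\eta')/\mathord\sim_\gamma$ has law $\QD_{1,0}(L)\otimes\sm$. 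That is false: the simple loop surrounding $0$ is $\eta=\partial^o B[0,t_1]$, whose law is what $\sm$ records, whereas $\eta'=\partial^o W$ is an entirely different curve --- it is the boundary of the hull of the excursion $W$, touching $\eta$ only at the cut point $B_{t_1}$ and $\S^1$ at $B_{\tau_\D}$, and it does not surround $0$. You repeat the slip when you say that in the welding $\QD_{1,1}(b)$ is the disk component cut off by $\eta'$; it is cut off by $\eta$, while $\eta'$ is the welding interface between $\QD_{0,4}(a,b,c,L)$ and $\cM^{\disk}_{0,2}(\tfrac23)(a,c)$.

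This matters for the logic: $\sm$ only records $\eta$, so $\eta'$ cannot be extracted from $\QD_{1,0}(L)\otimes\sm$, and Proposition~\ref{prop-weld-RA} (which concerns $\sm$ and the undecorated surface $\RA$) does not supply the decorated welding you need. The correct input is $\QD_{1,0}(L)\otimes\mathsf{P}$ from Theorem~\ref{thm:weld-BM1} itself, where the decoration is the full outer boundary of $B[0,\tau_\D]$; from it one reads off both $\eta$ (separating $\QD_{1,1}(b)$ from $\QD_{0,4}$) and $\eta'$ (separating $\QD_{0,4}$ from the thin disk). Reweight by $\tfrac{1}{|\QD_{1,1}(b)|}$, marginalize out the $\QD_{1,1}(b)$ piece (its total mass cancels the reweighting), record $\eta'$, and integrate over $L$; this gives~\eqref{eq-weld-matter-BA} for $\wt{\RA}$, and Theorem~\ref{thm-equivalence-two-annuli} finishes the proof. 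With the identification of $\eta'$ corrected and the detour through $\RA$ and Proposition~\ref{prop-weld-RA} dropped, your argument becomes the paper's.
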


\begin{proof}
    By Theorem~\ref{thm:weld-BM1} and the definition of $\wt{\RA}$, if $(\A_\tau,\phi,W)$ is an embedding of a sample from $\wt{\RA}$ and $\eta'$ is the outer boundary of $W$, then there exists a constant $C>0$ such that $(\A_\tau,\phi,\eta')/\mathord\sim$ has the same law as in~\eqref{eq-weld-matter-BA}. We then conclude the proof by Theorem~\ref{thm-equivalence-two-annuli}.
\end{proof}

\begin{remark}
It is clear in the discrete that inside a random walk decorated random triangulation of a disk, one can find a sample of the random walk excursion decorated random triangulation of an annulus, after a proper reweighting of the boundary distribution of the annulus. It is less clear that the scaling limit of  the complement of this annulus is a Brownian disk.
\end{remark}

\appendix 

\section{Random walk formulation of the non-disconnection probability}\label{appendix_A}
We provide a proof of Equation~\eqref{eq-scaling} that expresses the non-disconnection probability via the scaling limit of simple random walk.
        
\begin{proof}[Proof of Equation~\eqref{eq-scaling}]
    Let $\mathcal{K}$ be the space of continuous curves $\gamma:[0,t_\gamma]\to\C$ connecting the inner boundary $e^{-2\pi\tau}\S^1$ and the outer boundary $\S^1$ of $\A_\tau$, with the metric $d$ defined by 
    \begin{equation}\label{eq:metric}
        d(\gamma,\gamma'):=\sup_{0\le s\le 1}|\gamma(t_\gamma s)-\gamma'(t_{\gamma'}s)|+|t_\gamma-t_{\gamma'}|,
    \end{equation}
     for any $\gamma, \gamma'\in \mathcal{K}$. 
     Let $\mathcal{G}$ be the collection of curves $\gamma\in\mathcal{K}$ that does not disconnect the two boundaries of $\A_\tau$ (which is an open set in $(\mathcal{K},d)$).
     Then we have $\P[G_\tau]=\BE(\A_\tau)^\#[\mathcal{G}]$. In the following, we prove~\eqref{eq-scaling} by establishing the convergence of the corresponding random walk excursion measures. We view a random walk path as a continuous curve by linear interpolation.
     
     Let $\mu_\delta^\#$ be the probability measure on random walk excursions connecting the two boundaries on $\A_{\tau,\delta}$; namely, for $\omega^\delta\in\Omega^\delta$, we define $\mu_\delta^\#[\omega^\delta]=\frac{1}{Z_\delta}(\frac{1}{4})^{|\omega^\delta|}$. We first show that $\mu_\delta^\#$ weakly converges to $\BE(\A_\tau)^\#$ as $\delta\to0$. Let $\mathcal{K}'$ be the space of continuous curves in $\D$ from 0 until hitting $\S^1$, with the same metric as~\eqref{eq:metric}. Note that the simple random walk $(S_{\delta^{-2}t})$ on $\delta\Z^2$ starting from 0 until hitting $\S^1$ converges in law to $(B_t)_{0\le t\le\tau_\D}$, and the laws of their trajectories after last hitting $e^{-2\pi\tau}\S^1$ are given by $\mu_\delta^\#$ and $\BE(\A_\tau)^\#$, respectively. For $\gamma\in \mathcal{K}$, let $T_\tau=T_\tau(\gamma):=\sup\{t>0:|\gamma(t)|=e^{-2\pi\tau}\}$, and define the function $F:\mathcal{K}'\to\mathcal{K}$ such that $F(\gamma)=\gamma[T_\tau,t_\gamma]$. Then for a.s.~Brownian excursion path $W$ sampled from $\BE(\A_\tau)^\#$, since $W[T_\tau-\varepsilon,T_\tau]$ intersects with $e^{-2\pi\tau}\D$ for any $\varepsilon>0$\footnote{Note that $T_\tau$ is a stopping time for the time reverse process of $W$. By the strong Markov property, given $W[T_\tau,t_W]$, $(W_{T_\tau-s})_{s\in[0,T_\tau]}$ is then distributed as a Brownian bridge from $W(T_\tau)$ to $0$  on $\D$, which immediately hits $e^{-2\pi\tau}\D$.}, we find that $W$ is in the continuity set of $F$. Therefore, by considering the pushforward measure under $F$ of the simple random walk $(S_{\delta^{-2}t})$ above, we obtain the desired convergence.

    Note that the right side of~\eqref{eq-scaling} is equal to $\mu_\delta^\#[\mathcal{G}]$. Hence, it suffices to justify that $\mathcal{G}$ is a continuity set for $\BE(\A_\tau)^\#$.
    Suppose $W$ is sampled from $\BE(\A_\tau)^\#$ conditioned on $\mathcal{G}^c$ (i.e.~$W$ is conditioned to disconnect the two boundaries of $\A_\tau$), and define its first disconnecting time $\sigma:=\inf\{t>0:W[0,t] ~\textrm{disconnects}~e^{-2\pi\tau}\S^1~\textrm{from} ~\S^1\}$. We also define $\wt\sigma:=\sup\{0<t<\sigma:W(t)=W(\sigma)\}$. Note that $\sigma$ is a stopping time for $W$. Then by the strong Markov property, given $W[0,\sigma]$, $(W(\sigma+t))_{t\ge 0}$ is distributed as an independent Brownian motion starting from $W(\sigma)$ until hitting $\S^1$. Therefore, for a.s.~such $W$ and any $\wt\sigma<T<\sigma$, $(W(\sigma+t))_{t\ge 0}$ will hit the interior of the complement of the infinite connected component of $\C\setminus W[0,T]$. In particular, $W$ is a.s.~in the interior $\textrm{int}(\mathcal G^c)$ of the set $\mathcal G^c$. Hence we have $\BE(\A_\tau)^\#[\mathcal G^c\setminus \textrm{int}(\mathcal G^c)]=0$, which implies $\BE(\A_\tau)^\#[\partial\mathcal{G}]=0$ since $\mathcal{G}^c$ is closed (and thus $\partial\mathcal{G}=\mathcal G^c\setminus \textrm{int}(\mathcal G^c)$). This concludes the proof.
\end{proof}

\section{The $\SLE_{8/3,\D}^{\lp}$ measure via conformal welding}\label{sec:loop-app}

We provide a proof of Proposition~\ref{thm:weld-BA}. This is by removing the filled metric ball in the setting of Proposition \ref{prop-weld-loop}, following the approach in~\cite[Proposition 6.10, Theorem 7.4]{ARS2022moduli}.

\begin{proof}[Proof of Proposition~\ref{thm:weld-BA}]
Let $(\widehat{\C},\phi,0,\infty)$ be an embedding of a sample from $\QS_2$. Let $d_\phi$ be the $\sqrt{8/3}$-LQG metric associated to $\phi$~\cite{DDDF-tightness,gm-uniqueness}. As demonstrated in \cite{lqg-tbm1}, there exists constants $c_1,c_2>0$, such that the law of the metric measure space $(\widehat{\C},\phi,0,\infty,c_1d_{\phi},c_2\mu_{\phi})$ is given by a constant multiple of $\BS_2$, where $\BS_2$ is the free Brownian sphere with two marked points~\cite{LeGall13,Mie13}, see also \cite[Definition 6.1]{ARS2022moduli}.
In the following, we write $d:=c_1d_\phi$ for short.

Define $g(\ell):=\ell e^{-\frac{9}{2}\ell}$. Let $F$ be the event that $d(0,\infty)>1$. Restricted on $F$, let $B^{\bullet}(\infty,1)$ be the filled metric ball with radius 1 centered at $\infty$, and $\mathcal{L}$ be the boundary length of $\partial B^{\bullet}(\infty,1)$. By \cite[Proposition 6.2, Theorem 6.5 and Lemma 6.9]{ARS2022moduli}, under the law of $\frac{1_F}{g(\mathcal{L})}\QS_2$, $(\widehat{\C}\backslash B^{\bullet}(\infty,1),\phi,0)/\mathord\sim_\gamma$ is a constant multiple of $\QD_{1,0}$. Sample $\eta$ from $\SLE_{8/3}^{\sep}$ independently, and let $E$ be the event that $d(\eta,\infty)>1$ (i.e.~$\eta\subset\widehat{\C}\backslash B^{\bullet}(\infty,1)$). According to the conformal restriction property of $\SLE_{8/3}$ loop measure, under $\frac{1_E}{g(\mathcal{L})}\QS_2\otimes\SLE_{8/3}^{\sep}$, the law of $(\widehat{\C}\backslash B^{\bullet}(\infty,1),\phi,\eta,0)/\mathord\sim_\gamma$ is a constant multiple of $\QD_{1,0}\otimes\SLE_{8/3}^{\lp}$, where $\QD_{1,0}\otimes\SLE_{8/3}^{\lp}:=\int_0^\infty\QD_{1,0}(\ell)\otimes\SLE_{8/3}^{\lp}d\ell$.

Let $D_\eta$ be the bounded connected component of $\widehat{\C}\backslash\eta$ and $\ell$ be the quantum length of $\eta$.
Note that on the other hand, by Proposition \ref{prop-weld-loop} and \cite[Proposition 6.10]{ARS2022moduli}, under $\frac{1_E\QS_2\otimes\SLE_{8/3}^{\sep}}{\ell|\QD_{1,0}(\ell)|g(\mathcal{L})}$, the law of $(\widehat{\C}\backslash(D_\eta\cup B^{\bullet}(\infty,1)),\phi)/\mathord\sim_\gamma$ is a constant multiple of $\BA$. Using Proposition~\ref{prop-weld-loop} again, we have that under $\frac{1_E}{g(\mathcal{L})}\QS_2\otimes\SLE_{8/3}^{\sep}$, the law of $(\widehat{\C}\backslash B^{\bullet}(\infty,1),\phi,\eta,0)/\mathord\sim_\gamma$ is a constant multiple of $\int_0^\infty\ell\Weld(\QD_{1,0}(\ell),\BA(\ell,\cdot))d\ell$, where $\BA(\ell,\cdot):=\int_0^\infty\BA(\ell,L)dL$. The result then follows.
\end{proof}

\bibliographystyle{alpha}

\end{document}